\let\c@equation\c@subsubsection
\newtheorem{cor}[subsubsection]{Corollary}
\newtheorem{lem}[subsubsection]{Lemma}
\newtheorem{prop}[subsubsection]{Proposition}
\newtheorem{thm}[subsubsection]{Theorem}
\theoremstyle{definition}
\newtheorem{defn}[subsubsection]{Definition}
\newtheorem{rem}[subsubsection]{Remark}
\newtheorem{constr}[subsubsection]{Construction}
\newtheorem{ex}[subsubsection]{Example}
\newtheorem{quest}[subsubsection]{Question}
\theoremstyle{remark}
\newcommand{\ssecref}[1]{\sectsign\ref{#1}}
\renewcommand{\eqref}[1]{(\ref{#1})}
\tikzset{shorten <>/.style={shorten >=#1,shorten <=#1}}
\newcommand{\nc}{\newcommand}
\nc{\renc}{\renewcommand}
\nc{\ssec}{\subsection}
\nc{\sssec}{\subsubsection}
\nc{\on}{\operatorname}
\nc{\term}[1]{#1\xspace}
\DeclareMathSymbol{A}{\mathalpha}{operators}{`A}
\DeclareMathSymbol{B}{\mathalpha}{operators}{`B}
\DeclareMathSymbol{C}{\mathalpha}{operators}{`C}
\DeclareMathSymbol{D}{\mathalpha}{operators}{`D}
\DeclareMathSymbol{E}{\mathalpha}{operators}{`E}
\DeclareMathSymbol{F}{\mathalpha}{operators}{`F}
\DeclareMathSymbol{G}{\mathalpha}{operators}{`G}
\DeclareMathSymbol{H}{\mathalpha}{operators}{`H}
\DeclareMathSymbol{I}{\mathalpha}{operators}{`I}
\DeclareMathSymbol{J}{\mathalpha}{operators}{`J}
\DeclareMathSymbol{K}{\mathalpha}{operators}{`K}
\DeclareMathSymbol{L}{\mathalpha}{operators}{`L}
\DeclareMathSymbol{M}{\mathalpha}{operators}{`M}
\DeclareMathSymbol{N}{\mathalpha}{operators}{`N}
\DeclareMathSymbol{O}{\mathalpha}{operators}{`O}
\DeclareMathSymbol{P}{\mathalpha}{operators}{`P}
\DeclareMathSymbol{Q}{\mathalpha}{operators}{`Q}
\DeclareMathSymbol{R}{\mathalpha}{operators}{`R}
\DeclareMathSymbol{S}{\mathalpha}{operators}{`S}
\DeclareMathSymbol{T}{\mathalpha}{operators}{`T}
\DeclareMathSymbol{U}{\mathalpha}{operators}{`U}
\DeclareMathSymbol{V}{\mathalpha}{operators}{`V}
\DeclareMathSymbol{W}{\mathalpha}{operators}{`W}
\DeclareMathSymbol{X}{\mathalpha}{operators}{`X}
\DeclareMathSymbol{Y}{\mathalpha}{operators}{`Y}
\DeclareMathSymbol{Z}{\mathalpha}{operators}{`Z}
\nc{\sA}{\ensuremath{\mathcal{A}}\xspace}
\nc{\sB}{\ensuremath{\mathcal{B}}\xspace}
\nc{\sC}{\ensuremath{\mathcal{C}}\xspace}
\nc{\sD}{\ensuremath{\mathcal{D}}\xspace}
\nc{\sE}{\ensuremath{\mathcal{E}}\xspace}
\nc{\sF}{\ensuremath{\mathcal{F}}\xspace}
\nc{\sG}{\ensuremath{\mathcal{G}}\xspace}
\nc{\sH}{\ensuremath{\mathcal{H}}\xspace}
\nc{\sI}{\ensuremath{\mathcal{I}}\xspace}
\nc{\sJ}{\ensuremath{\mathcal{J}}\xspace}
\nc{\sK}{\ensuremath{\mathcal{K}}\xspace}
\nc{\sL}{\ensuremath{\mathcal{L}}\xspace}
\nc{\sM}{\ensuremath{\mathcal{M}}\xspace}
\nc{\sN}{\ensuremath{\mathcal{N}}\xspace}
\nc{\sO}{\ensuremath{\mathcal{O}}\xspace}
\nc{\sP}{\ensuremath{\mathcal{P}}\xspace}
\nc{\sQ}{\ensuremath{\mathcal{Q}}\xspace}
\nc{\sR}{\ensuremath{\mathcal{R}}\xspace}
\nc{\sS}{\ensuremath{\mathcal{S}}\xspace}
\nc{\sT}{\ensuremath{\mathcal{T}}\xspace}
\nc{\sU}{\ensuremath{\mathcal{U}}\xspace}
\nc{\sV}{\ensuremath{\mathcal{V}}\xspace}
\nc{\sW}{\ensuremath{\mathcal{W}}\xspace}
\nc{\sX}{\ensuremath{\mathcal{X}}\xspace}
\nc{\sY}{\ensuremath{\mathcal{Y}}\xspace}
\nc{\sZ}{\ensuremath{\mathcal{Z}}\xspace}
\nc{\bA}{\ensuremath{\mathbf{A}}\xspace}
\nc{\bB}{\ensuremath{\mathbf{B}}\xspace}
\nc{\bC}{\ensuremath{\mathbf{C}}\xspace}
\nc{\bD}{\ensuremath{\mathbf{D}}\xspace}
\nc{\bE}{\ensuremath{\mathbf{E}}\xspace}
\nc{\bF}{\ensuremath{\mathbf{F}}\xspace}
\nc{\bG}{\ensuremath{\mathbf{G}}\xspace}
\nc{\bH}{\ensuremath{\mathbf{H}}\xspace}
\nc{\bI}{\ensuremath{\mathbf{I}}\xspace}
\nc{\bJ}{\ensuremath{\mathbf{J}}\xspace}
\nc{\bK}{\ensuremath{\mathbf{K}}\xspace}
\nc{\bL}{\ensuremath{\mathbf{L}}\xspace}
\nc{\bM}{\ensuremath{\mathbf{M}}\xspace}
\nc{\bN}{\ensuremath{\mathbf{N}}\xspace}
\nc{\bO}{\ensuremath{\mathbf{O}}\xspace}
\nc{\bP}{\ensuremath{\mathbf{P}}\xspace}
\nc{\bQ}{\ensuremath{\mathbf{Q}}\xspace}
\nc{\bR}{\ensuremath{\mathbf{R}}\xspace}
\nc{\bS}{\ensuremath{\mathbf{S}}\xspace}
\nc{\bT}{\ensuremath{\mathbf{T}}\xspace}
\nc{\bU}{\ensuremath{\mathbf{U}}\xspace}
\nc{\bV}{\ensuremath{\mathbf{V}}\xspace}
\nc{\bW}{\ensuremath{\mathbf{W}}\xspace}
\nc{\bX}{\ensuremath{\mathbf{X}}\xspace}
\nc{\bY}{\ensuremath{\mathbf{Y}}\xspace}
\nc{\bZ}{\ensuremath{\mathbf{Z}}\xspace}
\nc{\dA}{\ensuremath{\mathds{A}}\xspace}
\nc{\dB}{\ensuremath{\mathds{B}}\xspace}
\nc{\dC}{\ensuremath{\mathds{C}}\xspace}
\nc{\dD}{\ensuremath{\mathds{D}}\xspace}
\nc{\dE}{\ensuremath{\mathds{E}}\xspace}
\nc{\dF}{\ensuremath{\mathds{F}}\xspace}
\nc{\dG}{\ensuremath{\mathds{G}}\xspace}
\nc{\dH}{\ensuremath{\mathds{H}}\xspace}
\nc{\dI}{\ensuremath{\mathds{I}}\xspace}
\nc{\dJ}{\ensuremath{\mathds{J}}\xspace}
\nc{\dK}{\ensuremath{\mathds{K}}\xspace}
\nc{\dL}{\ensuremath{\mathds{L}}\xspace}
\nc{\dM}{\ensuremath{\mathds{M}}\xspace}
\nc{\dN}{\ensuremath{\mathds{N}}\xspace}
\nc{\dO}{\ensuremath{\mathds{O}}\xspace}
\nc{\dP}{\ensuremath{\mathds{P}}\xspace}
\nc{\dQ}{\ensuremath{\mathds{Q}}\xspace}
\nc{\dR}{\ensuremath{\mathds{R}}\xspace}
\nc{\dS}{\ensuremath{\mathds{S}}\xspace}
\nc{\dT}{\ensuremath{\mathds{T}}\xspace}
\nc{\dU}{\ensuremath{\mathds{U}}\xspace}
\nc{\dV}{\ensuremath{\mathds{V}}\xspace}
\nc{\dW}{\ensuremath{\mathds{W}}\xspace}
\nc{\dX}{\ensuremath{\mathds{X}}\xspace}
\nc{\dY}{\ensuremath{\mathds{Y}}\xspace}
\nc{\dZ}{\ensuremath{\mathds{Z}}\xspace}
\nc{\bbA}{\ensuremath{\mathbb{A}}\xspace}
\nc{\bbB}{\ensuremath{\mathbb{B}}\xspace}
\nc{\bbC}{\ensuremath{\mathbb{C}}\xspace}
\nc{\bbD}{\ensuremath{\mathbb{D}}\xspace}
\nc{\bbE}{\ensuremath{\mathbb{E}}\xspace}
\nc{\bbF}{\ensuremath{\mathbb{F}}\xspace}
\nc{\bbG}{\ensuremath{\mathbb{G}}\xspace}
\nc{\bbH}{\ensuremath{\mathbb{H}}\xspace}
\nc{\bbI}{\ensuremath{\mathbb{I}}\xspace}
\nc{\bbJ}{\ensuremath{\mathbb{J}}\xspace}
\nc{\bbK}{\ensuremath{\mathbb{K}}\xspace}
\nc{\bbL}{\ensuremath{\mathbb{L}}\xspace}
\nc{\bbM}{\ensuremath{\mathbb{M}}\xspace}
\nc{\bbN}{\ensuremath{\mathbb{N}}\xspace}
\nc{\bbO}{\ensuremath{\mathbb{O}}\xspace}
\nc{\bbP}{\ensuremath{\mathbb{P}}\xspace}
\nc{\bbQ}{\ensuremath{\mathbb{Q}}\xspace}
\nc{\bbR}{\ensuremath{\mathbb{R}}\xspace}
\nc{\bbS}{\ensuremath{\mathbb{S}}\xspace}
\nc{\bbT}{\ensuremath{\mathbb{T}}\xspace}
\nc{\bbU}{\ensuremath{\mathbb{U}}\xspace}
\nc{\bbV}{\ensuremath{\mathbb{V}}\xspace}
\nc{\bbW}{\ensuremath{\mathbb{W}}\xspace}
\nc{\bbX}{\ensuremath{\mathbb{X}}\xspace}
\nc{\bbY}{\ensuremath{\mathbb{Y}}\xspace}
\nc{\bbZ}{\ensuremath{\mathbb{Z}}\xspace}
\nc{\mrm}[1]{\ensuremath{\mathrm{#1}}\xspace}
\nc{\mbf}[1]{\ensuremath{\mathbf{#1}}\xspace}
\nc{\mcal}[1]{\ensuremath{\mathcal{#1}}\xspace}
\nc{\msc}[1]{\ensuremath{\mathscr{#1}}\xspace}
\renc{\bar}[1]{\overline{#1}}
\let\sectsign\S
\let\S\relax
\nc{\sub}{\subset}
\nc{\too}{\longrightarrow}
\nc{\hook}{\hookrightarrow}
\nc*{\hooklongrightarrow}{\ensuremath{\lhook\joinrel\relbar\joinrel\rightarrow}}
\nc{\hooklong}{\hooklongrightarrow}
\nc{\twoheadlongrightarrow}{\relbar\joinrel\twoheadrightarrow}
\nc{\shiso}{\approx}
\nc{\isoto}{\xrightarrow{\sim}}
\nc{\isofrom}{\xleftarrow{\sim}}
\renc{\ge}{\geqslant}
\renc{\le}{\leqslant}
\renc{\geq}{\geqslant}
\renc{\leq}{\leqslant}
\nc{\id}{\mathrm{id}}
\DeclareMathOperator{\rk}{\mathrm{rk}}
\DeclareMathOperator{\Hom}{\mathrm{Hom}}
\nc{\uHom}{\underline{\smash{\Hom}}}
\DeclareMathOperator{\Maps}{\mathrm{Maps}}
\DeclareMathOperator{\Aut}{\mathrm{Aut}}
\DeclareMathOperator{\End}{\mathrm{End}}
\DeclareMathOperator{\Sym}{\mathrm{Sym}}
\nc{\Pre}{\mathrm{PSh}{}}
\nc{\uEnd}{\underline{\smash{\End}}}
\renc{\lim}{\operatorname*{lim}}
\nc{\colim}{\operatorname*{colim}}
\nc{\Cofib}{\on{Cofib}}
\nc{\Fib}{\on{Fib}}
\nc{\initial}{\varnothing}
\nc{\op}{\mathrm{op}}
\let\bigcoprod=\coprod
\renc{\coprod}{\sqcup}
\nc{\bDelta}{\mbf{\Delta}}
\nc{\DM}{\mbf{DM}}
\nc{\eff}{\mathrm{eff}}
\nc{\veff}{\mathrm{veff}}
\nc{\cyc}{{\mrm{cyc}}}
\nc{\corr}{{\on{corr}}}
\nc{\fet}{{\mrm{f\acute et}}}
\nc{\fsyn}{{\mrm{fsyn}}}
\nc{\fqs}{{\mrm{fqs}}}
\nc{\syn}{{\mrm{syn}}}
\nc{\Perf}{\mbf{Perf}}
\nc{\Pic}{\mrm{Pic}}
\nc{\perf}{\mrm{perf}}
\nc{\oblv}{\on{oblv}}
\nc{\exact}{\on{exact}}
\nc{\F}{{\on{F}}}
\nc{\clopen}{{\mrm{clopen}}}
\nc{\B}{\mrm{B}}
\nc{\D}{\mrm{D}}
\nc{\Fin}{\on{Fin}}
\nc{\Cut}{\on{Cut}}
\nc{\Cart}{\on{Cart}}
\nc{\pairs}{\mathsf{pairs}}
\nc{\Pairs}{\mathrm{Pair}}
\nc{\Trip}{\mathrm{Trip}}
\nc{\Lab}{\mathrm{Lab}}
\nc{\coCart}{\mathrm{coCart}}
\nc{\RKE}{\mathrm{RKE}}
\nc{\strict}{\mathrm{strict}}
\nc{\Emb}{\mathrm{Emb}}
\nc{\Split}{\mathrm{Split}}
\nc{\Set}{\mathrm{Set}}
\nc{\sSets}{\mathrm{sSets}}
\nc{\pb}{\mathrm{pb}}
\nc{\fib}{\mathrm{fib}}
\nc{\cofib}{\mathrm{cofib}}
\nc{\diff}{\mrm{diff}}
\nc{\gp}{\mrm{gp}}
\nc{\chr}{\mrm{char}}
\nc{\mgp}{\mrm{mot-gp}}
\nc{\FSyn}{\mrm{FSyn}}
\nc{\FEt}{\mrm{FEt}}
\nc{\Spc}{\mrm{Spc}}
\nc{\Ob}{\mrm{Ob}}
\nc{\Spt}{\mrm{Spt}}
\nc{\T}{\bT}
\nc{\suspinf}{\Sigma^\infty}
\nc{\h}{\mrm{h}}
\nc{\uhom}{\underline{\mathrm{Hom}}}
\nc{\umap}{\underline{\mathrm{Maps}}}
\renc{\H}{\bH}
\nc{\Einfty}{{\sE_\infty}}
\nc{\Eone}{{\sE_1}}
\nc{\Stab}{\mrm{Stab}}
\nc{\lax}{{\mrm{lax}}}
\nc{\cocart}{{\mrm{cocart}}}
\nc{\Sch}{\mrm{Sch}}
\nc{\dSch}{\mrm{dSch}}
\nc{\Aff}{\mrm{Aff}}
\nc{\SmAff}{\mrm{SmAff}}
\nc{\dAff}{\mrm{dAff}}
\nc{\Fr}{\on{Fr}}
\nc{\A}{\mathbf{A}}
\nc{\N}{\mathbf{N}}
\nc{\Z}{\mathbf{Z}}
\nc{\Q}{\mathbf{Q}}
\nc{\Oo}{\mathcal{O}} 
\nc{\red}{{\on{red}}}
\nc{\Voev}{{\on{Voev}}}
\nc{\Corr}{\mrm{Corr}}
\nc{\Span}{\mathbf{Corr}}
\nc{\Gap}{\mrm{Gap}}
\nc{\Filt}{\mrm{Filt}}
\nc{\Corrfr}{\Corr^{\fr}}
\nc{\Corrvfr}{\Corr^{\Vfr}}
\nc{\Spec}{\on{Spec}}
\nc{\Sm}{\mrm{Sm}}
\nc{\QSm}{\mrm{QSm}}
\nc{\Gm}{\mathbf{G}_{\mrm{m}}}
\renc{\P}{\bP}
\nc{\nis}{\mathrm{nis}}
\nc{\zar}{\mathrm{zar}}
\nc{\et}{\mathrm{\acute et}}
\nc{\all}{\mathrm{all}}
\nc{\fold}{\mathrm{fold}}
\nc{\Fun}{\mathrm{Fun}}
\nc{\Ho}{\mathrm{Ho}}
\nc{\Segal}{\mathrm{Segal}}
\nc{\Mon}{\mrm{Mon}{}}
\nc{\Ab}{\mrm{Ab}}
\nc{\Gr}{\mrm{Gr}}
\nc{\Sh}{\on{Sh}}
\nc{\M}{\mrm{M}}
\nc{\Lhtp}{L_{\A^1}}
\nc{\Lmot}{L_{\mrm{mot}}}
\nc{\mot}{\mrm{mot}}
\nc{\SH}{\mbf{SH}}
\nc{\RR}{\mbf{R}}
\nc{\CC}{\mbf{C}}
\nc{\Mod}{\mbf{Mod}}
\nc{\QCoh}{\mbf{QCoh}}
\nc{\MonUnit}{\mbf{1}}
\nc{\tr}{\on{tr}}
\nc{\vop}{\mrm{vop}}
\nc{\fr}{{\on{fr}}}
\nc{\Ar}{\mrm{Ar}}
\nc{\Vfr}{\on{Vfr}}
\nc{\frdiff}{{\on{frdiff}}}
\nc{\frGys}{\on{frGys}}
\nc{\SHfr}{\SH^{\fr}}
\nc{\SHfrdiff}{\SH^{\frdiff}}
\nc{\SHfrGys}{\SH^{\frGys}}
\nc{\InftyCat}{\infty\textnormal{-}\mrm{Cat}}
\nc{\TriCat}{\mathrm{TriCat}}
\nc{\Cat}{\mathrm{1\textnormal{-}Cat}}
\nc{\Th}{\on{Th}}
\def\G{\bG}
\nc{\CMon}{\mrm{CMon}{}}
\nc{\CAlg}{\mrm{CAlg}{}}
\nc{\MGL}{\mrm{MGL}}
\nc{\MSL}{\mrm{MSL}}
\nc{\MSp}{\mrm{MSp}}
\nc{\Seg}{\mrm{Seg}{}}
\nc{\Tw}{\mrm{Tw}}
\nc{\sslash}{/\mkern-6mu/}
\nc{\PrL}{\mrm{Pr}^\mrm{L}}
\nc{\PrR}{\mrm{Pr}^\mrm{R}}
\nc{\pr}{\mrm{pr}}
\nc{\efr}{\mrm{efr}}
\nc{\nfr}{\mrm{nfr}}
\nc{\dfr}{\mrm{fr}}
\nc{\tfr}{\mrm{tfr}}
\nc{\Vect}{\mrm{Vect}}
\nc{\sVect}{\mrm{sVect}}
\nc{\fix}{\mrm{fix}}
\nc{\Hilb}{\mathrm{Hilb}}
\nc{\flci}{\mathrm{flci}}
\nc{\Isom}{\mathrm{Isom}}
\nc{\GL}{\mathrm{GL}}
\nc{\BGL}{\mathrm{BGL}}
\nc{\SL}{\mathrm{SL}}
\nc{\Sp}{\mathrm{Sp}}
\nc{\fin}{\mathrm{fin}}
\nc{\cl}{\mathrm{cl}}
\nc{\cn}{\mathrm{cn}}
\nc{\sm}{\mathrm{sm}}
\nc{\heart}{\heartsuit}
\nc{\ornt}{\mrm{or}}
\nc{\GW}{\mrm{GW}}
\nc{\ev}{\mrm{ev}}
\nc{\FSYN}{\mathcal{FS}\mrm{yn}}
\nc{\FQSM}{\mathcal{FQS}\mathrm{m}}
\nc{\fc}{\mrm{fc}{}}
\let\phi\varphi
\let\emptyset\varnothing
\nc{\inftyCat}{\term{$\infty$-category}}
\nc{\inftyCats}{\term{$\infty$-categories}}
\nc{\inftyOneCat}{\term{$(\infty,1)$-category}}
\nc{\inftyOneCats}{\term{$(\infty,1)$-categories}}
\nc{\inftyGrpd}{\term{$\infty$-groupoid}}
\nc{\inftyGrpds}{\term{$\infty$-groupoids}}
\nc{\inftyTop}{\term{$\infty$-topos}}
\nc{\inftyTops}{\term{$\infty$-toposes}}
\nc{\inftyTwoCat}{\term{$(\infty,2)$-category}}
\nc{\inftyTwoCats}{\term{$(\infty,2)$-categories}}
\title{Modules over algebraic cobordism}
\author[E. Elmanto]{Elden Elmanto}
\address{Department of Mathematics\\
Harvard University\\
1 Oxford St.\\
Cambridge, MA 02138\\
USA}
\email{\href{mailto:elmanto@math.harvard.edu}{elmanto@math.harvard.edu}}
\urladdr{\url{https://www.eldenelmanto.com/}}
\author[M. Hoyois]{Marc Hoyois}
\address{Fakultät für Mathematik\\
Universität Regensburg\\
Universitätsstr. 31\\
93040 Regensburg\\
Germany}
\email{\href{mailto:marc.hoyois@ur.de}{marc.hoyois@ur.de}}
\urladdr{\url{http://www.mathematik.ur.de/hoyois/}}
\thanks{M.H.\ was partially supported by NSF grant DMS-1761718}
\author[A. A. Khan]{Adeel A. Khan}
\address{IHES\\
35 route de Chartres\\
91440 Bures-sur-Yvette\\
France}
\email{\href{mailto:khan@ihes.fr}{khan@ihes.fr}}
\urladdr{\url{https://www.preschema.com}}
\author[V. Sosnilo]{Vladimir Sosnilo}
\address{Laboratory ``Modern Algebra and Applications''\\
St. Petersburg State University\\
14th line, 29B\\
199178 Saint Petersburg\\
Russia}
\email{\href{mailto:vsosnilo@gmail.com}{vsosnilo@gmail.com}}
\author[M. Yakerson]{Maria Yakerson}
\address{Institute for Mathematical Research (FIM)\\
ETH Z\"urich \\
R\"amistr. 101\\  
8092 Z\"urich\\
Switzerland}
\email{\href{mailto:maria.yakerson@math.ethz.ch}{maria.yakerson@math.ethz.ch}}
\urladdr{\url{https://www.muramatik.com}}
\thanks{M.Y.\ was supported by DFG - SPP 1786 ``Homotopy Theory and Algebraic Geometry''}
\date{\today}
\begin{document}

\begin{abstract}
We prove that the $\infty$-category of $\MGL$-modules over any scheme is equivalent to the $\infty$-category of motivic spectra with finite syntomic transfers. Using the recognition principle for infinite $\P^1$-loop spaces, we deduce that very effective $\MGL$-modules over a perfect field are equivalent to grouplike motivic spaces with finite syntomic transfers.

Along the way, we describe any motivic Thom spectrum built from virtual vector bundles of nonnegative rank in terms of the moduli stack of finite quasi-smooth derived schemes with the corresponding tangential structure.
In particular, over a regular equicharacteristic base, we show that $\Omega^\infty_{\P^1}\MGL$ is the $\A^1$-homotopy type of the moduli stack of virtual finite flat local complete intersections, and that for $n>0$, $\Omega^\infty_{\P^1} \Sigma^n_{\P^1} \MGL$ is the $\A^1$-homotopy type of the moduli stack of finite quasi-smooth derived schemes of virtual dimension $-n$.
\end{abstract}

\maketitle

\vspace{0em}
\parskip 0.2cm

\parskip 0pt
\tableofcontents

\parskip 0.2cm

\section{Introduction}

This article contains two main results:
\begin{itemize}
	\item a concrete description of the $\infty$-category of modules over Voevodsky's algebraic cobordism spectrum $\MGL$ (and its variants such as $\MSL$);
	\item a computation of the infinite $\P^1$-loop spaces of effective motivic Thom spectra in terms of finite quasi-smooth derived schemes with tangential structure and cobordisms.
\end{itemize}
Both results have an incarnation over arbitrary base schemes and take a more concrete form over perfect fields. We discuss them in more details in \ssecref{ssec:intro-modules} and \ssecref{ssec:intro-thom} below.

\ssec{Modules over algebraic cobordism}\label{ssec:intro-modules}
If $S$ is a regular scheme over a field with resolutions of singularities, there are well-known equivalences of $\infty$-categories
\begin{gather*}
	\Mod_{H\Z}(\SH(S))\simeq\DM(S),\\
	\Mod_{H\tilde{\Z}}(\SH(S))\simeq \widetilde{\DM}(S),
\end{gather*}
expressing Voevodsky's $\infty$-category of motives $\DM(S)$ and its Milnor–Witt refinement $\widetilde{\DM}(S)$ as $\infty$-categories of modules over the motivic $\Einfty$-ring spectra $H\Z$ and $H\tilde{\Z}$, which represent motivic cohomology and Milnor–Witt motivic cohomology; see \cite{Rondigs:2008,CDintegral,BachmannFasel,ElmantoKolderup}.
These equivalences mean that a structure of $H\Z$-module (resp.\ of $H\tilde{\Z}$-module) on a motivic spectrum is equivalent to a structure of transfers in the sense of Voevodsky (resp.\ a structure of Milnor–Witt transfers in the sense of Calmès and Fasel). 

Our main result, Theorem~\ref{thm:MGL-modules}, gives an analogous description of modules over Voevodsky's algebraic cobordism spectrum $\MGL$: we construct an equivalence between $\MGL$-modules and motivic spectra with \emph{finite syntomic transfers}:
\begin{equation}\label{eqn:intro}
	\Mod_{\MGL}(\SH(S))\simeq \SH^\fsyn(S).
\end{equation}
Notably, we do not need resolutions of singularities and are able to prove this over arbitrary schemes $S$. Furthermore, we obtain similar results for other motivic Thom spectra; for example, $\MSL$-modules are equivalent to motivic spectra with transfers along finite syntomic morphisms with trivialized canonical sheaf (Theorem~\ref{thm:MSL-modules}). 
It is worth noting that, even though both sides of~\eqref{eqn:intro} only involve classical schemes, our construction of the equivalence uses derived algebraic geometry in an essential way.

Over a perfect field $k$, we prove a cancellation theorem for finite syntomic correspondences. This allows us to refine~\eqref{eqn:intro} to an equivalence
\[
 \Mod_\MGL(\SH^\veff(k))\simeq\H^\fsyn(k)^\gp
\]
between \emph{very effective} $\MGL$-modules and grouplike motivic \emph{spaces} with finite syntomic transfers (see Theorem~\ref{thm:MGL-modules2}).

\ssec{Framed correspondences and the motivic recognition principle}\label{ssec:intro-frames}

The starting point of the proof of~\eqref{eqn:intro} is a description of motivic Thom spectra in terms of \emph{framed correspondences}. The notion of framed correspondence was introduced by Voevodsky \cite{voevodsky2001notes} and later developed by Garkusha, Panin, Ananyevskiy, and Neshitov \cite{garkusha2014framed,hitr,agp,gnp}. Subsequently, a more flexible formalism of framed correspondences was developed by the authors in \cite{EHKSY1}.
 There we construct a symmetric monoidal $\infty$-category $\Span^\fr(\Sm_S)$ whose objects are smooth $S$-schemes and whose morphisms are spans
\begin{equation}\label{eqn:intro-span}
\begin{tikzcd}
   & Z \ar[swap]{ld}{f}\ar{rd}{g} & \\
  X &   & Y
\end{tikzcd}
\end{equation}
with $f$ finite syntomic, together with an equivalence $\sL_f\simeq 0$ in $K(Z)$, where $\sL_f$ is the cotangent complex of $f$ and $K(Z)$ is the algebraic K-theory space of $Z$. Starting with the $\infty$-category $\Span^\fr(\Sm_S)$, we can define the symmetric monoidal $\infty$-categories $\H^\fr(S)$ of \emph{framed motivic spaces} and $\SH^\fr(S)$ of \emph{framed motivic spectra}. The \emph{reconstruction theorem} states that there is an equivalence
\begin{equation}\label{eqn:intro-reconstruction}
	\SH(S)\simeq \SH^\fr(S)
\end{equation}
between motivic spectra and framed motivic spectra over any scheme $S$ \cite[Theorem 18]{framed-loc}. This equivalence can be regarded as the ``sphere spectrum version'' of the equivalences discussed in \ssecref{ssec:intro-modules}.
Although the right-hand side of~\eqref{eqn:intro-reconstruction} is obviously more complicated than the left-hand side, the point of the reconstruction theorem is that many motivic spectra of interest admit simpler descriptions as framed motivic spectra. For example, the motivic cohomology spectrum $H\Z_S$ turns out to be the framed suspension spectrum of the constant sheaf $\Z$ \cite[Theorem~21]{framed-loc}.

Over a perfect field $k$, the \emph{motivic recognition principle} states that the framed suspension functor
\[
\Sigma^\infty_{\T,\fr}\colon \H^\fr(k) \to \SH^\fr(k)\simeq \SH(k)
\]
is fully faithful when restricted to grouplike objects, and that its essential image is the subcategory of very effective motivic spectra \cite[Theorem 3.5.14]{EHKSY1}. In particular, the functor $\Omega^\infty_{\T,\fr}\Sigma^\infty_{\T,\fr}$ is computed as group completion on framed motivic spaces. Thus, if a motivic spectrum $E$ over $k$ is shown to be the framed suspension spectrum of a framed motivic space $X$, then its infinite $\P^1$-loop space $\Omega^\infty_\T E$ is the group completion $X^\gp$. This will be our strategy to compute the infinite $\P^1$-loop spaces of motivic Thom spectra.

\ssec{Geometric models of motivic Thom spectra}\label{ssec:intro-thom}

The general notion of motivic Thom spectrum was introduced in \cite[Section 16]{norms}. In particular, there is a motivic Thom spectrum $M\beta$ associated with any natural transformation $\beta\colon B\to K$, where $K$ is the presheaf of K-theory spaces on smooth schemes. For example, if $\beta$ is the inclusion of the rank $n$ summand of K-theory, then $M\beta\simeq \Sigma^n_\T\MGL$. The motivic spectrum $M\beta$ is very effective when $\beta$ lands in the rank $\geq 0$ summand $K_{\geq 0}\subset K$.

In this paper, we show that the motivic Thom spectrum $M\beta$ of any morphism $\beta\colon B\to K_{\geq 0}$ is the framed suspension spectrum of an explicit framed motivic space.
Before stating the general result more precisely, we mention some important special cases:
\begin{enumerate}
	\item If $Y$ is smooth over $S$ and $\xi\in K(Y)$ is a K-theory element of rank $\geq 0$, the Thom spectrum $\Th_{Y/S}(\xi)\in \SH(S)$ is the framed suspension spectrum of the presheaf on $\Span^\fr(\Sm_S)$ sending $X$ to the $\infty$-groupoid of spans~\eqref{eqn:intro-span} where $Z$ is a \emph{derived} scheme and $f$ is finite and quasi-smooth, together with an equivalence $\sL_f\simeq -g^*(\xi)$ in $K(Z)$.
	\item The algebraic cobordism spectrum $\MGL_S$ is the framed suspension spectrum of the moduli stack $\FSYN_S$ of finite syntomic $S$-schemes. More generally, for $n\geq 0$, $\Sigma^n_\T \MGL_S$ is the framed suspension spectrum of the moduli stack $\FQSM_S^{n}$ of finite quasi-smooth derived $S$-schemes of relative virtual dimension $-n$. 
	\item The special linear algebraic cobordism spectrum $\MSL_S$ is the framed suspension spectrum of the moduli stack $\FSYN_S^\ornt$ of finite syntomic $S$-schemes with trivialized canonical sheaf. More generally, for $n\geq 0$, $\Sigma^n_\T \MSL_S$ is the framed suspension spectrum of the moduli stack $\FQSM_S^{\ornt,n}$ of finite quasi-smooth derived $S$-schemes of relative virtual dimension $-n$ with trivialized canonical sheaf.
\end{enumerate}
We now explain the general paradigm. Given a natural transformation $\beta\colon B\to K$ of presheaves on smooth $S$-schemes, we define a \emph{$\beta$-structure} on a morphism $f\colon Z\to X$ between smooth $S$-schemes to be a lift of its shifted cotangent complex to $B(Z)$:
\[
\begin{tikzcd}
	& B \ar{d}{\beta} \\
	Z \ar[dashed]{ur} \ar{r}[swap]{-\sL_f} & K\rlap.
\end{tikzcd}
\]
More generally, if $f\colon Z\to X$ is a morphism between quasi-smooth derived $S$-schemes, we define a $\beta$-structure on $f$ to be a lift of $-\sL_f$ to $\tilde B(Z)$, where $\tilde B$ is the left Kan extension of $B$ to quasi-smooth derived $S$-schemes (although left Kan extension is a rather abstract procedure, it turns out that $\tilde B$ admits a concrete description for every $B$ of interest). Then, for any $\beta\colon B\to K_{\geq 0}$, we show that the motivic Thom spectrum $M\beta \in \SH(S)$ is the framed suspension spectrum of the moduli stack $\FQSM_S^\beta$ of finite quasi-smooth derived $S$-schemes with $\beta$-structure (Theorem~\ref{thm:thom-general}).

Over a perfect field $k$, this result becomes much more concrete. Indeed, using the motivic recognition principle, we deduce that $\Omega^\infty_\T M\beta$ is the motivic homotopy type of the group completion of the moduli stack $\FQSM_k^\beta$ (Corollary~\ref{cor:thom-general}). Even better, the group completion is redundant if $\beta$ lands in the positive-rank summand of K-theory.

The case of the algebraic cobordism spectrum $\MGL_k$ deserves more elaboration. For its infinite $\P^1$-loop space, we obtain an equivalence
\[
\Omega^\infty_\T\MGL_k \simeq L_\zar\Lhtp\FSYN_k^\gp
\]
of $\Einfty$-ring spaces with framed transfers. For $n>0$, we obtain equivalences of $\FSYN_k$-modules
\[
\Omega^\infty_\T\Sigma^n_\T\MGL_k \simeq L_\nis\Lhtp\FQSM_k^{n}
\]
(see Corollary~\ref{cor:MGL}). Finally, using an algebraic version of Whitney's embedding theorem for finite schemes, we can replace the moduli stack $\FSYN_k$ by the Hilbert scheme $\Hilb^\flci(\A^\infty_k)$ of finite local complete intersections in $\A^\infty_k$, which is a smooth ind-scheme and a commutative monoid up to $\A^1$-homotopy (see Theorem~\ref{thm:MGL-Hilb}):
\[
\Omega^\infty_\T\MGL \simeq L_\zar(\Lhtp\Hilb^\flci(\A^\infty_k))^\gp.
\]

\ssec{Analogies with topology}\label{ssec:intro-topology}

A cobordism between two compact smooth manifolds $M$ and $N$ is a smooth manifold $W$ with a proper morphism $W\to \bR$ whose fibers over $0$ and $1$ are identified with $M$ and $N$; see for example \cite[\sectsign 1]{Quillen:1971}. In other words, cobordisms are paths in the moduli stack of compact smooth manifolds (though one needs the notion of quasi-smooth derived manifold to solve the transversality issues in the definition of this moduli stack). The direct algebro-geometric analog of a cobordism is thus an $\A^1$-path in the moduli stack of proper quasi-smooth schemes. As in topology, one can also consider moduli stacks of schemes with some stable tangential structure (see \ssecref{ssec:intro-thom}).
The $\A^1$-localization of such a moduli stack is then analogous to a \emph{cobordism space} of structured manifolds (a space in which points are manifolds, paths are cobordisms, homotopies between paths are cobordisms between cobordisms, etc.).

From this perspective, our computation of $\Omega^\infty_\T M\beta$ for $\beta$ of rank $0$ is similar to the following computation of Galatius, Madsen, Tillmann, and Weiss in topology \cite{GMTW}: given a morphism of spaces $\beta\colon B\to \mathrm{BO}$, the infinite loop space of the Thom spectrum $M\beta$ is the cobordism space of zero-dimensional compact smooth manifolds with $\beta$-structured stable normal bundle. The only essential difference is that in topology these cobordism spaces are already grouplike, due to the existence of nontrivial cobordisms to the empty manifold. In $\FSYN_k^\beta$, however, the empty scheme is not $\A^1$-homotopic to any nonempty scheme, so group completion is necessary.

To our knowledge, when $\beta$ is of positive rank, the topological analog of our computation is not recorded in the literature. One expects for example an identification, for $M$ a smooth manifold and $n>0$, of the mapping space $\Maps(M,\Omega^\infty\Sigma^{n} \mathrm{MU})$ with the cobordism space of complex-oriented submanifolds of $M$ of codimension $n$.
 In the other direction, it is an interesting problem to extend our computation of $\Omega^\infty_\T M\beta$ to $\beta$ of \emph{negative} rank, where the topological story suggests a relationship with a moduli stack of proper quasi-smooth schemes of positive dimension. 
 
 Finally, we note that our description of $\MGL$-modules can be understood as a more coherent version of Quillen's geometric universal property of $\mathrm{MU}$ \cite[Proposition 1.10]{Quillen:1971}.

\ssec{Related work}
Similar computations of motivic Thom spectra in terms of framed correspondences were obtained independently by Garkusha and Neshitov \cite{GarkushaNeshitov}. 
Our approach differs in that we work with tangentially framed correspondences as defined in \cite{EHKSY1} rather than framed correspondences in the sense of Voevodsky (which are called equationally framed correspondences in \cite{EHKSY1}). 
Because tangentially framed correspondences are the morphisms in a symmetric monoidal $\infty$-category, we are able to make much more structured computations, which are crucial for describing $\infty$-categories of modules over motivic Thom spectra.
Our notion of motivic Thom spectrum is also strictly more general than the one in \cite{GarkushaNeshitov}.

The fact that $\MGL$-cohomology groups admit finite syntomic transfers is well known (see \cite{PaninOrientedII} for the case of finite transfers between smooth schemes and \cite{DegliseOrientation} for the general case). They are also an essential feature in the algebraic cobordism theory of Levine and Morel \cite{Levine:2007}. Such transfers were further constructed by Navarro \cite{NavarroGysin} on $E$-cohomology groups for any $\MGL$-module $E$, and Déglise, Jin, and Khan \cite{DJKFundamental} showed that these transfers exist at the level of spaces. Our main result implies that $E$-cohomology spaces admit \emph{coherent} finite syntomic transfers, and that this structure even characterizes $\MGL$-modules. 

In \cite{LowreySchurg}, Lowrey and Schürg give a presentation of the algebraic cobordism groups $\Omega^n(X)\simeq \MGL^{2n,n}(X)$ with projective quasi-smooth derived $X$-schemes as generators (for $X$ smooth over a field of characteristic zero). Our results give a comparable presentation for $n\geq 0$ of the whole $\infty$-groupoid $(\Omega^\infty_\T\Sigma^{n}_\T\MGL)(X)$, which holds also in positive characteristic but is only Zariski-local. We hope that there is a common generalization of both results, namely, a global description of the sheaves $\Omega^\infty_\T\Sigma^{n}_\T\MGL$, for all $n\in \Z$ and in arbitrary characteristic, in terms of quasi-smooth derived schemes.

\ssec{Conventions and notation}
This paper is a continuation of \cite{EHKSY1}, and we use the same notation as in \emph{op.\ cit.} In particular, $\Pre(C)$ denotes the $\infty$-category of presheaves on an $\infty$-category $C$, $\Pre_\Sigma(C)\subset \Pre(C)$ is the full subcategory of presheaves that transform finite sums into finite products (called \emph{$\Sigma$-presheaves} for short), and $L_\Sigma$ is the associated localization functor. 
We denote by $L_\zar$ and $L_\nis$ the Zariski and Nisnevich sheafification functors, by $\Lhtp$ the (naive) $\A^1$-localization functor, and by $L_\mot$ the motivic localization functor.

In addition, we use derived algebraic geometry throughout the paper, following \cite{LurieThesis}, \cite[Chapter 2.2]{HAG2}, and \cite[Chapter 25]{SAG}. By a \emph{derived commutative ring} we mean an object of $\CAlg^\Delta=\Pre_\Sigma(\mathrm{Poly})$, where $\mathrm{Poly}$ is the category of polynomial rings $\Z[x_1,\dotsc,x_n]$ for $n\geq 0$; these are often called simplicial commutative rings in the literature, for historical reasons that will not be relevant here. Given a derived commutative ring $R$, we denote by $\CAlg_R^\Delta$ the $\infty$-category $(\CAlg^\Delta)_{R/}$ and by $\CAlg_R^\sm\subset \CAlg_R^\Delta$ the full subcategory of smooth $R$-algebras. If $R$ is discrete, we further denote by $\CAlg_R^\heart\subset\CAlg_R^\Delta$ the full subcategory of discrete $R$-algebras, which is a 1-category containing $\CAlg_R^\sm$.

We write $\dAff$ for the $\infty$-category of derived affine schemes and $\dSch$ for that of derived schemes.
If $X$ is a derived scheme, we denote by $X_\cl$ its underlying classical scheme. Every morphism $f\colon Y\to X$ in $\dSch$ admits a cotangent complex $\sL_f\in \QCoh(Y)$ \cite[\sectsign 25.3]{SAG}. We say that $f$ is \emph{quasi-smooth} if it is locally of finite presentation and $\sL_f$ is perfect of Tor-amplitude $\leq 1$; the rank of $\sL_f$ is called the \emph{relative virtual dimension} of $f$.

Throughout this paper, $S$ denotes a fixed base scheme, arbitrary unless otherwise specified.

\ssec{Acknowledgments}
We would like to thank Andrei Druzhinin, Joachim Jelisiejew, Jacob Lurie, Akhil Mathew, Fabien Morel, and David Rydh for many useful discussions that helped us at various stages in the writing of this paper.

We would also like to express our gratitude to Grigory Garkusha, Ivan Panin, Alexey Ananyevskiy, and Alexander Neshitov, who realized Voevodsky's ideas about framed correspondences in a series of groundbreaking articles. Our present work would not have been possible without theirs.

The final stages of this work were supported by the National Science Foundation under grant DMS-1440140, while the first two authors were in residence at the Mathematical Sciences Research Institute in Berkeley, California, during the ``Derived Algebraic Geometry'' program in spring 2019.

Finally, this paper was completed during a two-week stay at the Institute for Advanced Study in Princeton in July 2019. We thank the Institute for excellent working conditions.

\section{Twisted framed correspondences}
\label{sec:twisted-frames}

Recall that a framed correspondence from $X$ to $Y$ is a span
\[
\begin{tikzcd}
   & Z \ar[swap]{ld}{f}\ar{rd}{g} & \\
  X &   & Y
\end{tikzcd}
\]
where $f$ is finite syntomic, together with an equivalence $\sL_f\simeq 0$ in $K(Z)$ \cite[Definition 2.3.4]{EHKSY1}. Given $\xi\in K(Y)$ of rank $0$, one can consider a ``twisted'' version of this definition by instead requiring an equivalence $\sL_f+ g^*(\xi)\simeq 0$ in $K(Z)$. In this section, we study this notion of twisted framed correspondence, which we define more generally for $\xi\in K(Y)$ of rank $\geq 0$ using derived algebraic geometry. The connection with Thom spectra in motivic homotopy theory will be made in Section~\ref{sec:thom}.

We begin in \ssecref{ssec:sVect} with some recollections about the relationship between stable vector bundles and K-theory. In \ssecref{ssec:twisted-frames}, we introduce the presheaves $\h^\fr_S(Y,\xi)$, $\h_S^\nfr(Y,\xi)$, and $\h_S^\efr(Y,\xi)$ of $\xi$-twisted tangentially framed, normally framed, and equationally framed correspondences, and in \ssecref{ssec:descent} we prove some of their basic properties. In \ssecref{ssec:fr-comparison}, we show that when $Y$ is smooth the presheaves $\h^\fr_S(Y,\xi)$, $\h_S^\nfr(Y,\xi)$, and $\h_S^\efr(Y,\xi)$ are motivically equivalent. Most proofs in \ssecref{ssec:descent} and \ssecref{ssec:fr-comparison} are almost identical to the proofs of the analogous results for untwisted framed correspondences in \cite[Section 2]{EHKSY1}. Finally, in \ssecref{ssec:base-change}, we prove that the restriction of the presheaf $\h^\fr_S(Y,\xi)$ to smooth $S$-schemes is compatible, up to motivic equivalence, with any base change $S'\to S$; this is a key technical result that will allow us to establish our main results over arbitrary base schemes (rather than just over fields).

\ssec{Stable vector bundles and K-theory}
\label{ssec:sVect}

For $X$ a derived scheme, we denote by $\Vect(X)$ the $\infty$-groupoid of finite locally free sheaves on $X$ \cite[\sectsign 2.9.3]{SAG}.
We define the $\infty$-groupoid $\sVect(X)$ as the colimit of the sequence
\[
\Vect(X)\xrightarrow{\oplus\sO_X} \Vect(X) \xrightarrow{\oplus\sO_X} \Vect(X) \to \dotsb.
\]
Thus, an object of $\sVect(X)$ is a pair $(\sE,m)$ where $\sE\in\Vect(X)$ and $m\geq 0$, which we can think of as the formal difference $\sE-\sO^m_X$; the \emph{rank} of such a pair is $\rk(\sE)-m$. Two pairs $(\sE,m)$ and $(\sE',m')$ are equivalent if they have the same rank and there exists $n\geq 0$ such that $\sE\oplus \sO^{m'}\oplus \sO^n \simeq \sE'\oplus \sO^m\oplus\sO^n$.

Note that $\sVect(X)$ is a $(\Vect(X),\oplus)$-module and the canonical map $\Vect(X)\to K(X)$ factors through $\sVect(X)$.

For $R$ a derived commutative ring, $\sVect(R)$ is noncanonically a sum of copies of $B\GL(R)$. Indeed, if $\xi=(E,m)\in\sVect(R)$, the space of automorphisms of $\xi$ is
\[
\Omega_\xi(\sVect(R)) \simeq \colim_n \Aut_R(E\oplus R^{n}).
\]
If we choose an equivalence $E\oplus F\simeq R^r$, computing the colimit of the sequence
\[
\Aut_R(E) \to \Aut_R(E\oplus F) \to \Aut_R(E\oplus F\oplus E) \to \dotsb
\]
in two ways yields an equivalence $\colim_n \Aut_R(E\oplus R^{n})\simeq \colim_n \Aut_R(R^n) =\GL(R)$.

Recall that a morphism of spaces is \emph{acyclic} if its fibers have no reduced homology. We refer to \cite{Raptis} for a review of the main properties of acyclic morphisms. In particular, by \cite[Theorem 3.3]{Raptis}, the class of acyclic morphisms is closed under colimits and base change, and if $X$ is a space whose fundamental groups have no nontrivial perfect subgroups, then every acyclic morphism $X\to Y$ is an equivalence.

\begin{lem}\label{lem:quillen}
	Let $R$ be a derived commutative ring. Then the canonical map $\sVect(R) \to K(R)$ is a plus construction in the sense of Quillen, i.e., it is the universal map that kills the commutator subgroup of $\pi_1(\sVect(R), \xi)\simeq \GL(\pi_0R)$ for all $\xi\in\sVect(R)$. In particular, it is acyclic.
\end{lem}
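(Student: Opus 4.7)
The plan is to verify that $\sVect(R) \to K(R)$ satisfies the two defining properties of a Quillen plus construction: acyclicity, and inducing on $\pi_1$ the quotient by a perfect normal subgroup. Since the map is the identity on $\pi_0 \simeq K_0(R)$, it suffices to check both on each connected component; by the computation preceding the lemma, each component of $\sVect(R)$ is noncanonically $B\GL(R)$ with $\pi_1 \simeq \GL(\pi_0 R)$, while each component of $K(R)$ is a component of a grouplike $\Einfty$-space and therefore has abelian $\pi_1$.

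For acyclicity I would appeal to the group completion theorem. Here $K(R)$ is by construction the $\Einfty$-group completion of $\Vect(R)$, and $\sVect(R)$ is the colimit in spaces of the translation sequence $\Vect(R) \xrightarrow{\oplus \sO} \Vect(R) \to \cdots$ by $[\sO]$, which is cofinal in the monoid $\pi_0 \Vect(R)$ (every finitely generated projective module is a summand of a free one). The group completion theorem then identifies $\sVect(R) \to K(R)$ as a homology equivalence on each component; in a sufficiently strong formulation (with twisted coefficients, or via an $\Einfty$-$S^{-1}S$ construction), it moreover yields acyclicity, not merely an integral homology equivalence.

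For the identification on $\pi_1$, since the map is a homology equivalence on each connected component, Hurewicz gives $\pi_1 K(R)_\alpha \simeq H_1 K(R)_\alpha \simeq H_1 B\GL(R) \simeq \GL(\pi_0 R)^{\mathrm{ab}}$ (the first isomorphism using that $\pi_1 K(R)_\alpha$ is abelian). Hence the induced map on $\pi_1$ is the abelianization $\GL(\pi_0 R) \twoheadrightarrow \GL(\pi_0 R)^{\mathrm{ab}}$, with kernel the elementary subgroup $E(\pi_0 R) = [\GL(\pi_0 R), \GL(\pi_0 R)]$, which is a perfect normal subgroup of $\GL(\pi_0 R)$ by Whitehead's lemma. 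An acyclic map that kills a perfect normal subgroup of $\pi_1$ and lands in a space with abelian $\pi_1$ is, by Quillen's universal characterization, the plus construction with respect to that subgroup, completing the argument.

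The main obstacle is the derived form of the group completion theorem, especially the acyclicity (not merely integral-homology equivalence) statement, for the $\Einfty$-monoid $\Vect(R)$ when $R$ is not discrete. Once this is secured, the rest reduces to the classical Whitehead--Quillen picture, and the final ``in particular, it is acyclic'' clause is automatic.
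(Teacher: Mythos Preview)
Your proposal is correct and follows essentially the same route as the paper, which simply cites the McDuff--Segal group completion theorem (in the modern formulations of Randal-Williams or Nikolaus) as directly giving the result. Your worry about a ``derived'' form of the group completion theorem is unfounded: $\Vect(R)$ is an $\Einfty$-monoid in spaces for any derived commutative ring $R$, and the group completion theorem applies to such monoids without further hypotheses, so no special derived version is needed.
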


\begin{proof}
	This is an instance of the McDuff–Segal group completion theorem. We refer to \cite[Theorem 1.1]{ORW} or \cite[Theorem 9]{Nikolaus} for modern treatments.
\end{proof}

\begin{rem}\label{rem:KSL-plus}
	Similarly, if $\Vect^\SL(R)$ denotes the monoidal groupoid of finite locally free $R$-modules with trivialized determinant and $K^\SL(R)$ is its group completion, then $\sVect^\SL(R) \to K^\SL(R)$ is a plus construction.
	Here, the group completion theorem \cite[Theorem 1.1]{ORW} does not directly apply because $\Vect^\SL(R)$ is not homotopy commutative. However, we can apply \cite[Proposition 3.1]{deloop4} by viewing $\Vect^\SL(R)$ as a module over its subgroupoid of bundles of even rank, which is an $\Einfty$-monoid.
\end{rem}

\ssec{Presheaves of twisted framed correspondences}
\label{ssec:twisted-frames}

Let $X$ and $Y$ be derived $S$-schemes and let $\xi\in K(Y)$ be of rank $\geq 0$.
The $\infty$-groupoid of \emph{$\xi$-twisted framed correspondences} from $X$ to $Y$ over $S$ is defined by:
\[
\h^{\fr}_S(Y,\xi)(X) = \left\{
\begin{tikzcd}
   & Z \ar[swap]{ld}{f}\ar{rd}{g} & \\
  X &   & Y
\end{tikzcd}+\quad \parbox{3.5cm}{$f$ finite quasi-smooth\\$\sL_f\simeq -g^*(\xi)$ in $K(Z)$}
\right\}.
\]
When $\xi=0$ and $X,Y\in\Sch_S$, this definition recovers the notion of tangentially framed correspondence considered in \cite[\sectsign 2.1]{EHKSY1}, by the following lemma:

\begin{lem}\label{lem:classical}
	Let $f\colon Z\to X$ be a quasi-finite quasi-smooth morphism of derived schemes of relative virtual dimension $0$.
	Then $f$ is flat. In particular, if $X$ is classical, then $Z$ is classical.
\end{lem}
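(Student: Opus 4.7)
The plan is to establish flatness of $f$, after which the classicality of $Z$ when $X$ is classical follows automatically: flatness implies that the higher homotopy sheaves of $\sO_Z$ are obtained by base change from those of $\sO_X$, so they vanish once $X$ is classical.

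The argument will be local on $Z$. Using the local structure theorem for quasi-smooth morphisms, I would factor $f$ as $Z \hookrightarrow V \to X$, where $V \to X$ is smooth of some relative dimension $n \ge 0$ and $Z \hookrightarrow V$ is a quasi-smooth closed immersion; the virtual relative dimension hypothesis forces the virtual codimension of this immersion to equal $n$. Shrinking $V$ further, I present $Z \hookrightarrow V$ as the derived zero locus of $n$ functions $g_1, \dots, g_n \in \Gamma(V, \sO_V)$, so that $\sO_Z$ is computed by the Koszul complex $K(\sO_V; g_1, \dots, g_n)$.

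Since $V \to X$ is smooth and hence flat, it suffices to show that $Z \to V$ is flat. By the fiberwise criterion for flatness of a bounded perfect complex, this reduces to the following statement at each point $z \in Z$ mapping to $v \in V$ and $x \in X$: the Koszul complex on the images of $g_1, \dots, g_n$ in $\sO_{V_x, z}$ is concentrated in degree $0$, where $V_x = V \times_X \Spec \kappa(x)$. Now $V_x$ is smooth of dimension $n$ over $\kappa(x)$, and quasi-finiteness of $f$ forces $z$ to be an isolated point of the classical vanishing locus of $(g_1, \dots, g_n)$ in $V_x$; a standard argument then shows $z$ is a closed point of $V_x$ at which $\sO_{V_x, z}/(g_1, \dots, g_n)$ is Artinian. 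Since $\sO_{V_x, z}$ is a regular local ring of dimension $n$, iterating Krull's Hauptidealsatz implies that $(g_1, \dots, g_n)$ is a system of parameters; regular local rings being Cohen--Macaulay, such a sequence is automatically regular, so the Koszul complex is concentrated in degree $0$ as required.

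The main obstacle is the set-up rather than the computation: one needs to invoke cleanly the local factorization of a quasi-smooth morphism through a smooth morphism of the predicted relative dimension, and the fiberwise flatness criterion in the derived setting. Both are standard in derived algebraic geometry, but require care to deploy over a general, possibly non-Noetherian base. Once the Koszul presentation is in hand, the regularity of the defining sequence is pure classical commutative algebra.
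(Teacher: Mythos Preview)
Your strategy is essentially the same as the paper's: factor locally through a smooth $V\to X$ of relative dimension $n$, present $Z\hookrightarrow V$ as a derived Koszul zero locus of $n$ functions, and then use quasi-finiteness together with Cohen--Macaulayness of the smooth fibers to force the sequence to be regular on each fiber. The commutative algebra core is identical.

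Two points of comparison. First, the sentence ``it suffices to show that $Z\to V$ is flat'' is a slip: a nontrivial closed immersion is never flat. What your subsequent computation actually establishes is that the Koszul complex $\sO_Z$, viewed as a perfect complex on $V$, is flat over $X$ (equivalently, that $Z\to X$ is flat); the reduction via $V\to X$ flat is correct once phrased this way. Second, the paper organizes the reduction differently and thereby sidesteps the need for a fiberwise flatness criterion over a possibly non-classical, non-Noetherian base. It first invokes \cite[Theorem~7.2.2.15]{HA} to reduce to $X$ classical, and then observes that the classical truncation $Z_\cl\to X$ is a relative global complete intersection in the sense of the Stacks Project (Tag~00SP), hence syntomic (Tag~00SW); this packages your regular-sequence-on-fibers argument into a single citation and immediately gives both flatness of $f_\cl$ and $Z_\cl=Z$. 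Your route is fine, but the paper's preliminary reduction to classical $X$ makes the ``care to deploy over a general base'' you flag unnecessary.
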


\begin{proof}
	By \cite[Theorem 7.2.2.15]{HA}, $f$ is flat if and only if $Z_\cl\simeq Z\times_XX_\cl$ and $f_\cl$ is flat, so we can assume $X$ classical.
	The question being local on $Z$, we can assume that $X$ is affine and that $Z$ is a subscheme of $\A^n_X$.
	Since $Z$ is quasi-smooth of relative virtual dimension $0$ over $X$, it is cut out by exactly $n$ equations locally on $\A^n_X$ \cite[Proposition 2.3.8]{KhanVCD}. Thus, we may assume that $Z$ is cut out by $n$ equations in $(\A^n_X)_a$ for some $a\in \sO(\A^n_X)$, hence cut out by $n+1$ equations in $\A^{n+1}_X$. Since $f$ is quasi-finite, this implies that $f_\cl\colon Z_\cl\to X$ is a relative global complete intersection in the sense of \cite[Tag 00SP]{stacks}, hence syntomic \cite[Tag 00SW]{stacks}. In particular, the defining equations of $Z$ in $\A^{n+1}_X$ locally form a regular sequence, whence $Z_\cl=Z$ \cite[Example 2.3.2]{KhanVCD}.
\end{proof}

Note that $\h_S^\fr(Y,\xi)$ is a presheaf on $\dSch_S$. In fact, it is a $\Sigma$-presheaf on the $\infty$-category $\Span^\fr(\dSch_S)$ of framed correspondences (see Appendix~\ref{app:category}). It is also covariantly functorial in the pair $(Y,\xi)$.

To relate $\xi$-twisted framed correspondences with motivic homotopy theory, we will also need to study two auxiliary versions of twisted framed correspondences, namely an ``equationally framed'' version and a ``normally framed'' version.

Let $X,Y\in\dSch_S$ and let $\xi=(\sE,m)\in \sVect(Y)$ be of rank $\geq 0$.
We define:
\[
\h^{\efr}_S(Y,\xi)(X) =\colim_{n\to\infty} \left\{
\begin{tikzcd}
   & Z \ar[swap]{ld}{f} \ar{rd}{g} & \\
  X &   & Y
\end{tikzcd}+\quad \parbox{5.7cm}{$f$ finite\\$i\colon Z\to \A^n_X$ closed immersion over $X$\\$\phi\colon (\A^n_X)_Z^h\to \A^{n-m}\times \bV(\sE)$\\$\phi^{-1}(0 \times Y)\simeq Z$, $\phi|_{Z}\simeq g$}
\right\},
\]
\[
\h^{\nfr}_S(Y,\xi)(X) =\colim_{n\to\infty} \left\{
\begin{tikzcd}
   & Z \ar[swap]{ld}{f}\ar{rd}{g} & \\
  X &   & Y
\end{tikzcd}+\quad \parbox{5.7cm}{$f$ finite quasi-smooth\\$i\colon Z\to \A^n_X$ closed immersion over $X$\\$\sN_i\simeq \sO_Z^{n-m}\oplus g^*(\sE)$}
\right\}.
\]
More precisely, for each $m$, the right-hand sides are functors $\Vect(Y)_{\geq m} \to \Pre(\dSch_S)$ in the variable $\sE$. As $m$ varies, these functors fit together in a cone over the sequence
\[
\Vect(Y)\xrightarrow{\oplus\sO_Y} \Vect_{\geq 1}(Y) \xrightarrow{\oplus\sO_Y} \Vect_{\geq 2}(Y) \to \dotsb,
\]
which induces $\sVect_{\geq 0}(Y)\to \Pre(\dSch_S)$. Here, the notation $X_Z^h$ for $X$ a derived scheme and $Z\subset X$ a closed subset refers to the pro-object of étale neighborhoods of $Z$ in $X$, see \cite[A.1.1]{EHKSY1}, and $\bV(\sE)=\Spec(\Sym(\sE))$ is the vector bundle over $Y$ associated with $\sE$.

For $\xi=0$ and $X,Y\in\Sch_S$, these definitions recover the notions of equationally framed and normally framed correspondences from \cite[Section 2]{EHKSY1} (using Lemma~\ref{lem:classical} for the latter). Unlike in \emph{op.\ cit.}, we will not discuss the ``level $n$'' versions of $\h^\efr$ and $\h^\nfr$, for simplicity. However, it is clear that many of the results below hold at finite level (a notable exception is Proposition~\ref{prop:additivity}).

There are evident forgetful maps
\[
\h^{\efr}_S(Y,\xi)\longrightarrow \h^{\nfr}_S(Y,\xi) \longrightarrow \h^{\fr}_S(Y,\xi)
\]
in $\Pre_\Sigma(\dSch_S)$, similar to the case $\xi=0$ considered in \cite[\sectsign 2]{EHKSY1} (see also the discussion before Proposition~\ref{prop:nfr-to-dfr} for more details on the second morphism).
Note that $\h^{\efr}_S(Y,\xi)(X)$ is discrete (i.e., a set) when $X$ and $Y$ are classical, since $\phi$ determines $g$ as well as the derived closed subscheme $Z$. On the other hand, $\h^{\nfr}_S(Y,\xi)(X)$ is usually not discrete when $\rk \xi\geq 1$.

\begin{rem}\label{rem:voev-lemma}
	Let $X,Y\in\dSch_S$ and $\xi=(\sE,m)\in \sVect_{\geq 0}(Y)$. Then there is a natural equivalence
	\[
	\h^\efr_S(Y,\xi)(X) \simeq \colim_{n\to\infty} \Maps\left(X_+\wedge (\P^1)^{\wedge n},L_\nis\left(\frac{\bV(\sO_Y^{n-m}\oplus\sE)}{\bV(\sO_Y^{n-m}\oplus\sE)-0}\right)\right).
	\]
	If $X$ and $Y$ are classical this is a special case of Voevodsky's Lemma \cite[Corollary A.1.5]{EHKSY1}, which is easily generalized to derived schemes. Indeed, as in the proof of \cite[Proposition A.1.4]{EHKSY1}, it suffices to show the following: if $Y$ is a derived scheme and $Z\subset Y$ is a closed subset, then $Y'\mapsto (Y')^h_Z$ is an étale cosheaf on étale $Y$-schemes; by the topological invariance of the étale $\infty$-topos \cite[Remark B.6.2.7]{SAG}, this follows from the underived statement proved in \cite[Proposition A.1.4]{EHKSY1}.
\end{rem}

\ssec{Descent and additivity}
\label{ssec:descent}

We say that a presheaf $\sF\colon \dSch_S^\op \to\Spc$ satisfies \emph{closed gluing} if $\sF(\emptyset)\simeq *$ and if for any diagram of closed immersions $X\hookleftarrow Z \hook Y$ in $\dSch_S$, the canonical map
\[
\sF(X\sqcup_{Z} Y) \to \sF(X)\times_{\sF(Z)}\sF(Y)
\]
is an equivalence.

We say that a presheaf $\sF\colon \dSch_S^\op\to\Spc$ is \emph{finitary} if, for every cofiltered diagram $(X_\alpha)$ of qcqs derived schemes with affine transition maps, the canonical map
\[
\colim_\alpha\sF(X_\alpha)\to \sF(\lim_\alpha X_\alpha)
\]
is an equivalence.

\begin{prop}\label{prop:fr-descent}
Let $Y\in\dSch_S$ and let $\xi\in K(Y)$ be of rank $\geq 0$.

\noindent{\em(i)}
$\h^{\fr}_S(Y,\xi)$ is a Nisnevich sheaf on qcqs derived schemes.

\noindent{\em(ii)}
If $Y$ is locally finitely presented over $S$, then $\h^{\fr}_S(Y,\xi)$ is finitary.

\noindent{\em(iii)}
Let $R\hook Y$ be a Nisnevich (resp.\ étale) covering sieve generated by a single map. Then $\h^\fr_S(R,\xi)\to \h^\fr_S(Y,\xi)$ is a Nisnevich (resp.\ étale) equivalence.
\end{prop}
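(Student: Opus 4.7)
All three parts adapt the analogous statements for untwisted framed correspondences in \cite[\sectsign 2]{EHKSY1}, treating the K-theoretic trivialization $\alpha\colon \sL_f\simeq -g^*(\xi)$ as an additional layer on top of the geometric span $X\xleftarrow{f}Z\xrightarrow{g}Y$.

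For (i) and (ii), I would decompose the data of a point of $\h^\fr_S(Y,\xi)(X)$ into three layers. First, the moduli of finite quasi-smooth morphisms $Z\to X$ is an fppf (hence Nisnevich) sheaf in $X$ and is finitary when $Y$ is locally of finite presentation over $S$, by standard approximation arguments in derived algebraic geometry. Second, given such $Z\to X$, the mapping space $\Maps_S(Z,Y)$ is Nisnevich-local in $X$ since $Z$ is representable, and commutes with cofiltered limits of qcqs derived schemes with affine transition maps whenever $Y$ is locally of finite presentation. Third, given also $g\colon Z\to Y$, the space of trivializations is a torsor under a loop space of $K(Z)$; K-theory of perfect complexes is a Nisnevich sheaf on derived schemes and is finitary. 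The case $\xi=0$ of each of these facts is established in \cite[\sectsign 2.1]{EHKSY1}, and the arguments extend verbatim with $\xi$ treated as a parameter.

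The substance lies in (iii). Since $R\hookrightarrow Y$ is a sub-presheaf, the induced map $\h^\fr_S(R,\xi)\to \h^\fr_S(Y,\xi)$ is a monomorphism of presheaves: the fiber over a span $(f,g,\alpha)\in \h^\fr_S(Y,\xi)(X)$ is contractible if $g$ lies in $R(Z)$ and empty otherwise. It therefore suffices to verify essential surjectivity on Nisnevich (resp.\ étale) stalks. For $X=\Spec R$ with $R$ a derived henselian (resp.\ strictly henselian) local ring, the affine finite $X$-scheme $Z$ decomposes as a finite disjoint union of spectra of derived henselian (resp.\ strictly henselian) local rings---the classical fact applied to $\pi_0 R$ yields the decomposition on $\pi_0$, and idempotents lift to $R$. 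Consequently the pullback $Z\times_Y Y'\to Z$ of a generator $p\colon Y'\to Y$ of $R$ admits a section, providing the required factorization of $g$ through $p$; the trivialization data $\alpha$ is unchanged, producing the desired lift to $\h^\fr_S(R,\xi)(X)$.

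The principal obstacle is the careful layer-by-layer bookkeeping in (i) and (ii), and in particular verifying that the space-valued K-theory trivialization data transports correctly under Nisnevich base change and cofiltered limits in the derived setting. Once that framework is in place, (iii) reduces to the classical behavior of henselian local rings under finite morphisms, which lifts cleanly from $\pi_0$ to the derived situation via idempotent lifting and the deformation theory of étale morphisms.
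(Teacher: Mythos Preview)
Your proposal is correct and follows essentially the same approach as the paper. For (i) and (ii) the paper is terser, simply invoking the Nisnevich descent and finitary properties of algebraic K-theory rather than spelling out the three-layer decomposition; for (iii) the arguments are identical, with the paper phrasing the passage from the derived to the classical setting via the topological invariance of the étale $\infty$-topos where you invoke idempotent lifting---these are equivalent mechanisms.
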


\begin{proof}
	(i) and (ii) follow from the corresponding properties of algebraic K-theory (see for example \cite[Proposition A.15]{CMNN} and \cite[Lemma 7.3.5.13]{HA}, respectively).
	Let us prove (iii). Since $\h^{\fr}_S(R,\xi)\to \h^{\fr}_S(Y,\xi)$ is a monomorphism, it suffices to show that it is a Nisnevich (resp.\ étale) effective epimorphism \cite[Example 5.2.8.16]{HTT}. Refining the sieve $R$ if necessary, we can assume that it is generated by a single étale map $Y'\to Y$. 
	Given a span $X\leftarrow Z\to Y$ in $\h^{\fr}_S(Y,\xi)(X)$, we must show that the sieve on $X$ consisting of all maps $X'\to X$ such that $X'\times_XZ\to Z\to Y$ factors through $Y'$ is covering in the Nisnevich (resp.\ étale) topology. In other words, we must show that if $X_\cl$ is local and henselian (resp.\ strictly henselian), then the étale map $Z\times_YY'\to Z$ has a section. By the topological invariance of the étale $\infty$-topos \cite[Remark B.6.2.7]{SAG}, it is equivalent to show that the étale map $Z_\cl\times_YY'\to Z_\cl$ has a section.
	Since $Z$ is finite over $X$, $Z_\cl$ is a sum of henselian (resp.\ strictly henselian) local schemes \cite[Proposition 18.5.10]{EGA4-4}, whence the result.
\end{proof}

\begin{prop}\label{prop:fr-additivity}
	Let $Y_1,\dots,Y_k\in\dSch_S$, let $\xi \in K(Y_1\sqcup \dotsb\sqcup Y_k)$ have rank $\geq 0$, and let $\xi_i$ be the restriction of $\xi$ to $Y_i$.
	Then the canonical map 
	\[
		\h^{\fr}_S(Y_1\sqcup \dotsb\sqcup Y_k,\xi) \to \h^{\fr}_S(Y_1,\xi_1)\times\dotsb\times \h^{\fr}_S(Y_k,\xi_k)
	\]
	is an equivalence.
\end{prop}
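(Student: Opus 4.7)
The plan is to construct an explicit inverse to the canonical map by decomposing a framed correspondence according to the coproduct decomposition of its target. Let $Y = Y_1 \sqcup \dotsb \sqcup Y_k$.

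Given a span $X \xleftarrow{f} Z \xrightarrow{g} Y$ representing an element of $\h^{\fr}_S(Y,\xi)(X)$, together with the equivalence $\alpha\colon \sL_f \simeq -g^*(\xi)$ in $K(Z)$, the first step is to split $Z$. Since $g$ is a morphism into a finite disjoint union of derived schemes, the preimages $Z_i := g^{-1}(Y_i)$ form a clopen decomposition $Z \simeq Z_1 \sqcup \dotsb \sqcup Z_k$ in $\dSch_S$ (the classical case is immediate, and the derived case follows since $Z$ and $Z_\cl$ have the same underlying topological space). Let $f_i\colon Z_i \to X$ and $g_i\colon Z_i \to Y_i$ be the induced maps.

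The second step is to check that each summand inherits the required structure. Finiteness and quasi-smoothness are local on the source, so each $f_i$ is finite quasi-smooth. Since $Z_i \hookrightarrow Z$ is a clopen immersion, $\sL_{f_i}$ is identified with $\sL_f|_{Z_i}$, and $g_i^*(\xi_i) = (g^*(\xi))|_{Z_i}$. Now, algebraic K-theory sends finite disjoint unions of derived schemes to products, so the restriction map
\[
K(Z) \isoto K(Z_1) \times \dotsb \times K(Z_k)
\]
is an equivalence (and likewise on path spaces of basepoints). Hence the equivalence $\alpha$ in $K(Z)$ decomposes canonically into equivalences $\alpha_i\colon \sL_{f_i} \simeq -g_i^*(\xi_i)$ in $K(Z_i)$. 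This produces an element of $\prod_i \h^{\fr}_S(Y_i,\xi_i)(X)$, giving a candidate inverse to the canonical map.

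The third and final step is to check the two composites are the identity. In one direction, starting from a tuple $((Z_i,f_i,g_i,\alpha_i))_i$, one forms $Z := \bigsqcup_i Z_i$ with $f := \bigsqcup f_i$ (finite quasi-smooth), $g := \bigsqcup g_i$, and assembles the trivializations using the K-theory product decomposition above; this clearly returns the original tuple. In the other direction, the clopen decomposition $Z = \bigsqcup_i g^{-1}(Y_i)$ reassembles $f$, $g$, and $\alpha$ because coproducts and restrictions in $\dSch_S$ and in $K$ are compatible as explained. No step is genuinely hard: the only subtlety worth noting is the additivity of $K$ for disjoint unions of derived schemes, which ensures the trivialization data decomposes correctly; everything else is formal from the local nature of finiteness, quasi-smoothness, and the cotangent complex.
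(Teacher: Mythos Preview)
Your proof is correct and is precisely an explicit unpacking of what the paper considers obvious: the paper's entire proof reads ``Clear.'' Your argument spells out exactly why, via the clopen decomposition of $Z$ along the coproduct $Y_1\sqcup\dotsb\sqcup Y_k$ and the additivity of K-theory on disjoint unions, so there is nothing to compare.
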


\begin{proof}
	Clear.
\end{proof}

\begin{prop}\label{prop:efr-descent}
Let $Y\in\dSch_S$ and $\xi\in\sVect_{\geq 0}(Y)$.

\noindent{\em(i)}
$\h^\efr_S(Y,\xi)$ is a sheaf for the quasi-compact étale topology.

\noindent{\em(ii)}
$\h^{\efr}_S(Y,\xi)$ satisfies closed gluing.

\noindent{\em(iii)}
If $Y$ is locally finitely presented over $S$, then $\h^{\efr}_S(Y,\xi)$ is finitary.

\noindent{\em(iv)}
Let $R\hook Y$ be a Nisnevich (resp.\ étale) covering sieve generated by a single map. Then $\h^\efr_S(R,\xi)\to \h^\efr_S(Y,\xi)$ is a Nisnevich (resp.\ étale) equivalence.
\end{prop}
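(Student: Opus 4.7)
The plan is to follow the strategy of \cite[Section 2]{EHKSY1} for the untwisted case $\xi=0$ essentially verbatim, since the twist $\sE$ appears only as the target of the framing datum $\phi$ and plays no role in the descent- or gluing-theoretic manipulations. The principal technical input is Voevodsky's Lemma as recalled and derived-extended in \remref{rem:voev-lemma}, whose proof reduces to the fact that $Y'\mapsto (Y')^h_Z$ is an étale cosheaf on étale $Y$-schemes for any closed subset $Z$ of a derived scheme $Y$.

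For (i), I would argue directly from the definition: given a qcqs étale cover $X'\to X$ and an étale descent datum for a correspondence $(Z,f,g,i,\phi)$, the pair $(Z,f)$ descends by étale descent for finite (hence affine) morphisms; the closed immersion $i\colon Z\hook \A^n_X$ descends by étale descent for closed subschemes; and the framing $\phi$ on $(\A^n_X)^h_Z$ descends by the étale cosheaf property of henselization, as in the proof of \remref{rem:voev-lemma}. Part (iv) follows exactly as in the proof of \propref{prop:fr-descent}(iii): the map in question is a monomorphism, and to show it is an effective epimorphism after Nisnevich (resp.\ étale) sheafification one reduces to $X$ with (strictly) henselian local underlying classical scheme, then uses that $Z_\cl$ decomposes as a finite sum of (strictly) henselian local schemes \cite[18.5.10]{EGA4-4} to extract a section of $Y'\to Y$ along $g$.

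For (ii), closed gluing is verified piece by piece: finite morphisms and closed immersions into $\A^n$ satisfy closed gluing on $\dSch_S$; the henselian neighborhood $(\A^n_X)^h_Z$ is computed componentwise by the cosheaf property of henselization; the twist $\sE$ glues by closed gluing for $\Vect$; and the map $\phi$ glues across the resulting pushout. For (iii), the presheaf is finitary because each piece of data is finitely presented: only finite stages of the rank filtration of $\sVect_{\geq 0}(Y)$ contribute for a given $X$ when $Y$ is locally of finite presentation, and all of $f$, $i$, $\phi$, and the formation of $(\A^n_X)^h_Z$ commute with cofiltered limits of qcqs derived schemes with affine transition maps.

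The most delicate point will be ensuring that the framing $\phi$, which lives on the pro-object $(\A^n_X)^h_Z$ rather than on an honest scheme, behaves well under all of the operations above. This is handled uniformly by Voevodsky's Lemma: its proof in \remref{rem:voev-lemma} reduces cosheaf, gluing, and finitariness properties of the henselization pro-object from the derived to the classical setting via topological invariance of the étale $\infty$-topos \cite[Remark B.6.2.7]{SAG}, and in the classical setting the required properties are already established in \cite[Proposition A.1.4]{EHKSY1}.
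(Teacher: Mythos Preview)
Your proposal is correct and takes the same approach as the paper, which simply states that the proof of each point is exactly the same as the corresponding point of \cite[Proposition~2.1.5]{EHKSY1}; you have merely spelled out what that entails. One small quibble: in your sketch for (iii), the remark about ``finite stages of the rank filtration of $\sVect_{\geq 0}(Y)$'' is out of place, since $\xi=(\sE,m)$ is fixed throughout and the only colimit in play is the one over the embedding dimension $n$; the finitariness follows from the fact that each constituent datum (finite morphism, closed immersion into $\A^n$, étale neighborhood, map $\phi$) is of finite presentation when $Y$ is.
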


\begin{proof}
	The proof of each point is exactly the same as the corresponding point of \cite[Proposition 2.1.5(i)]{EHKSY1}.
\end{proof}

\begin{lem}\label{lem:closed-gluing}
	The presheaf
	\[
	\dSch_S^\op \to \Spc,\quad X\mapsto \{\text{quasi-smooth derived $X$-schemes}\}
	\]
	satisfies closed gluing.
\end{lem}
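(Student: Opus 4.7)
The plan is to deduce this from two inputs: (a) the $\infty$-category $\dSch$ itself satisfies closed gluing, and (b) quasi-smoothness is compatible with such gluings. The empty condition $\sF(\emptyset) \simeq *$ is automatic since the empty derived scheme is the unique quasi-smooth derived scheme over the empty base.

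For (a), I would recall that pushouts of derived schemes along closed immersions exist and are computed affine-locally: if $X = \Spec A$, $Y = \Spec B$, $Z = \Spec C$ with $A \twoheadrightarrow C$ and $B \twoheadrightarrow C$ surjective on $\pi_0$, then $X \sqcup_Z Y = \Spec(A \times_C B)$ (this uses that the fiber product $A \times_C B$ computed in $\CAlg^\Delta$ is the correct one; see \cite[\sectsign 16.3]{SAG} or the analogous statement for derived commutative rings). The general case is obtained by affine-local gluing using that the formation of closed pushouts commutes with Zariski open immersions into $X$ and $Y$. As a consequence, the pullback functor
\[
\dSch_{X\sqcup_Z Y}\to \dSch_{X}\times_{\dSch_Z}\dSch_{Y}
\]
is an equivalence of $\infty$-categories, again reducing affine-locally to a statement about derived commutative rings.

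For (b), I would check that under this equivalence the full subcategory of quasi-smooth objects on the left corresponds to the subcategory of compatible pairs of quasi-smooth objects on the right. One direction is immediate: quasi-smoothness is preserved by arbitrary derived base change, so if $W \to X\sqcup_Z Y$ is quasi-smooth, then so are its base changes to $X$ and $Y$. For the converse, suppose $W_X \to X$ and $W_Y \to Y$ are quasi-smooth and agree over $Z$; by (a) they glue to some $W \to X \sqcup_Z Y$ with $W \times_{X \sqcup_Z Y} X \simeq W_X$ and similarly for $Y$. Being locally of finite presentation glues by the affine-local description. By the base change formula for the cotangent complex, the pullbacks of $\sL_{W/X\sqcup_Z Y}$ to $W_X$ and $W_Y$ are $\sL_{W_X/X}$ and $\sL_{W_Y/Y}$; under the closed gluing equivalence $\QCoh(W) \simeq \QCoh(W_X)\times_{\QCoh(W_Z)}\QCoh(W_Y)$ for quasi-coherent sheaves (which follows from the analogous statement on derived rings), the properties of being perfect and of having Tor-amplitude $\le 1$ can be checked on the two factors.

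The main technical obstacle is the closed gluing of derived schemes and their quasi-coherent sheaves in (a), but both are standard results in derived algebraic geometry, so the argument is really a matter of assembling the right references and verifying that quasi-smoothness is transparent to the gluing.
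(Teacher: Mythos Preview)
Your proposal is correct and follows essentially the same strategy as the paper: first invoke closed gluing for all derived schemes (the paper cites \cite[Theorem 16.2.0.1]{SAG} and the proof of \cite[Theorem 16.3.0.1]{SAG}), then verify that quasi-smoothness glues by checking that ``locally of finite presentation'' glues (paper: \cite[Proposition 16.3.2.1(3)]{SAG}) and that perfectness and Tor-amplitude $\le 1$ of the cotangent complex can be checked on the closed pieces (paper: \cite[Proposition 16.2.3.1(3,7)]{SAG}). The only difference is that you sketch the affine-local picture explicitly where the paper simply cites the relevant SAG statements.
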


\begin{proof}
	The assertion without the quasi-smoothness condition follows from \cite[Theorem 16.2.0.1]{SAG} (as in the proof of \cite[Theorem 16.3.0.1]{SAG}, which is the spectral analog).
	It remains to prove the following: if $Y$ is a derived scheme and $f\colon Y\to X_0\sqcup_{X_{01}}X_1$ is a morphism whose base change $f_*\colon Y_*\to X_*$ is quasi-smooth for each $*\in\{0,1,01\}$, then $f$ is quasi-smooth. We have that $f$ is locally finitely presented by \cite[Proposition 16.3.2.1(3)]{SAG}. It remains to show that the cotangent complex $\sL_f$ is perfect and has Tor-amplitude $\leq 1$. Since the cotangent complex is stable under base change, the pullback of $\sL_f$ to $X_*$ is $\sL_{f_*}$. The claim now follows from \cite[Proposition 16.2.3.1(3,7)]{SAG}.
\end{proof}

\begin{prop}\label{prop:nfr-descent}
Let $Y\in\dSch_S$ and $\xi\in\sVect_{\geq 0}(Y)$.

\noindent{\em(i)}
$\h^{\nfr}_S(Y,\xi)$ satisfies Nisnevich excision.

\noindent{\em(ii)}
$\h^{\nfr}_S(Y,\xi)$ satisfies closed gluing.

\noindent{\em(iii)}
If $Y$ is locally finitely presented over $S$, then $\h^{\nfr}_S(Y,\xi)$ is finitary.

\noindent{\em(iv)}
Let $R\hook Y$ be a Nisnevich (resp.\ étale) covering sieve generated by a single map. Then $\h^\nfr_S(R,\xi)\to \h^\nfr_S(Y,\xi)$ is a Nisnevich (resp.\ étale) equivalence.
\end{prop}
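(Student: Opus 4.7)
The plan is to prove the four parts in close analogy with Propositions~\ref{prop:fr-descent} and~\ref{prop:efr-descent}, handling the new quasi-smoothness condition on $f$ by means of Lemma~\ref{lem:closed-gluing}. An element of $\h^\nfr_S(Y,\xi)(X)$ amounts to a package consisting of a map $g\colon Z\to Y$, a finite quasi-smooth morphism $f\colon Z\to X$, a closed immersion $i\colon Z\hook \A^n_X$ over $X$, and a trivialization $\sN_i\simeq \sO_Z^{n-m}\oplus g^*(\sE)$. My first step would be to check that each piece of this data is individually compatible with Nisnevich excision, closed gluing, and the finitary property.

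For finite morphisms of derived schemes, closed immersions into $\A^n_X$, and isomorphisms between finite locally free sheaves, all three properties are standard; the only new ingredient is preservation of the quasi-smoothness of $f$. For (i) and (iii) this is essentially formal: perfectness and Tor-amplitude $\leq 1$ of $\sL_f$ are Zariski-local conditions stable under finitely presented base change, so they pass through Nisnevich descent and through cofiltered limits with affine transition maps. For (ii), the required closed gluing for quasi-smoothness is precisely the content of Lemma~\ref{lem:closed-gluing}. Having verified the defining data piece by piece, I would assemble (i), (ii), and (iii) by the same argument as in \cite[Proposition 2.1.5(i)]{EHKSY1}; the outer colimit over $n$ commutes with each property because the transition maps are induced by adding trivial summands.

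For part (iv), I would transcribe the sieve argument from the proof of Proposition~\ref{prop:fr-descent}(iii). The map $\h^\nfr_S(R,\xi)\to \h^\nfr_S(Y,\xi)$ is a monomorphism, so it suffices to show it is a Nisnevich (resp.\ étale) effective epimorphism. After refining $R$ to a single étale map $Y'\to Y$ and starting from a datum in $\h^\nfr_S(Y,\xi)(X)$, the question reduces to showing that whenever $X_\cl$ is (strictly) henselian local, the étale map $Z\times_Y Y'\to Z$ admits a section. Since $f$ is finite, $Z_\cl$ is a finite sum of (strictly) henselian local schemes, so sections exist.

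The main obstacle is part (ii): gluing derived schemes along closed immersions is subtle, and one must check that quasi-smoothness survives the pushout. This is precisely the role of Lemma~\ref{lem:closed-gluing}, and once it is invoked the remainder of the argument is bookkeeping parallel to the equationally framed case.
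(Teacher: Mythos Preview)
Your proposal is correct and follows essentially the same approach as the paper. The paper's treatment of (i) is slightly more streamlined---it observes that the level-$n$ presheaves are fpqc sheaves outright and then uses that Nisnevich excision is preserved by filtered colimits---but this is just a repackaging of the piece-by-piece verification you describe; parts (ii)--(iv) match the paper's argument exactly, including the appeal to Lemma~\ref{lem:closed-gluing} for (ii) and the reduction to Proposition~\ref{prop:fr-descent}(iii) for (iv).
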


\begin{proof}
	(i) By definition, the presheaf $\h^\nfr_S(Y,\xi)$ is a filtered colimit over $n$ of its level $n$ versions, which are clearly sheaves for the fpqc topology. We conclude since the property of Nisnevich excision is preserved by filtered colimits.
	
	(ii) This follows from the closed gluing property for connective quasi-coherent sheaves \cite[Theorem 16.2.0.1]{SAG} and Lemma~\ref{lem:closed-gluing}.
	
	(iii) Clear.
	
	(iv) The proof is identical to that of Proposition~\ref{prop:fr-descent}(iii).
\end{proof}

\begin{prop}
	\label{prop:additivity}
	Let $Y_1,\dots,Y_k\in\dSch_S$, let $\xi\in\sVect(Y_1\sqcup\dotsb\sqcup Y_k)$ be of rank $\geq 0$, and let $\xi_i$ be the restriction of $\xi$ to $Y_i$.
	Then the canonical maps 
	\begin{gather*}
		\h_S^{\efr}(Y_1\sqcup \dotsb\sqcup Y_k,\xi) \to \h_S^{\efr}(Y_1,\xi_1)\times\dotsb\times \h_S^{\efr}(Y_k,\xi_k) \\
		\h_S^{\nfr}(Y_1\sqcup \dotsb\sqcup Y_k,\xi) \to \h_S^{\nfr}(Y_1,\xi_1)\times\dotsb\times \h_S^{\nfr}(Y_k,\xi_k)
	\end{gather*}
	are $\A^1$-equivalences.
\end{prop}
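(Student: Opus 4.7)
I would adapt the proof of the untwisted analog [EHKSY1, Proposition 2.1.7] to our twisted setting, with the vector bundle data $\sE$ tracked passively since $\bV(\sqcup_i \sE_i) = \sqcup_i \bV(\sE_i)$ decomposes compatibly with the target. The canonical forward maps are the obvious restrictions: a span with $Z \to \sqcup_i Y_i$ decomposes as $Z = \sqcup_i Z_i$ with $Z_i = Z \times_Y Y_i$, and all the framing/immersion/normal bundle data restricts to each component.

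For the inverse direction, the plan is to construct a combining map via a shifting trick. Given $(W_i)_i \in \prod_i \h^\efr_S(Y_i, \xi_i)(X)$ with $W_i = (Z_i \hookrightarrow \A^n_X, \phi_i)$ stabilized to a common level $n$, I would embed each $Z_i$ into $\A^{n+k-1}_X$ by appending the $(i-1)$-st standard basis vector of $\A^{k-1}$ (taking the zero vector for $i = 1$). The resulting images are pairwise disjoint in $\A^{n+k-1}_X$, so
\[
(\A^{n+k-1}_X)^h_{\sqcup_i Z_i} = \sqcup_i\, (\A^{n+k-1}_X)^h_{Z_i},
\]
and the framings $\phi_i$ extend componentwise using the extra $k-1$ coordinates (translated by the chosen shifts) as trivial framing factors, landing in $\A^{n+k-1-m} \times \bV(\sE_i)$ on each component. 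The $\h^\nfr$ case follows the same template, with normal bundle trivializations replacing étale neighborhood maps and using the splitting $\sN_i \simeq \sO^{n-m}_Z \oplus g^*(\sE_i)$ extended by a trivial $\sO^{k-1}_Z$ summand.

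Both round-trip compositions are then $\A^1$-homotopic to the identity. The combine-then-restrict composition is the identity after level stabilization. For the restrict-then-combine composition, a linear scaling homotopy parametrized by $t \in \A^1$ interpolates between the chosen shift vectors (at $t = 1$) and the zero shift (at $t = 0$); throughout the homotopy the $Z_i$ remain disjoint since they are already disjoint as subschemes of $\A^n_X$ (a consequence of $\sqcup_i Z_i \hookrightarrow \A^n_X$ being a closed immersion), so the shift merely spreads them out further in the new coordinates. The main technical point will be verifying that the étale neighborhood data (resp.\ the normal bundle trivializations) deform coherently across this $\A^1$-family so that the $t$-parametrized datum defines a valid element of $\h^\efr_S(\sqcup_i Y_i, \xi)(\A^1 \times X)$ (resp.\ $\h^\nfr_S(\sqcup_i Y_i, \xi)(\A^1 \times X)$); this reduces to the same coordinate bookkeeping used in the untwisted case, with the twisting $\sE$ appearing only as a separate factor on each disjoint component.
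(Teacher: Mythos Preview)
Your proposal is correct and follows essentially the same approach as the paper, which simply refers to the proof of \cite[Proposition 2.2.11]{EHKSY1} (the untwisted case); the shifting trick and $\A^1$-homotopy you describe are exactly that argument, and you correctly observe that the twist $\sE$ plays no active role beyond decomposing as $\bV(\sE)=\sqcup_i\bV(\sE_i)$ on each component.
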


\begin{proof}
	Same as the proof of \cite[Proposition 2.2.11]{EHKSY1}.
\end{proof}

It follows from Proposition~\ref{prop:additivity} that the presheaves $\Lhtp\h_S^{\efr}(Y,\xi)$ and $\Lhtp\h_S^{\nfr}(Y,\xi)$ have canonical structures of $\Einfty$-objects (cf.\ \cite[2.2.9]{EHKSY1}).

\ssec{Comparison theorems}
\label{ssec:fr-comparison}

If $f\colon A\to B$ is a morphism of derived commutative rings, there is a canonical $B$-linear map
\[
\epsilon_f\colon \cofib(f)\otimes_A B \to \sL_f,
\]
called the \emph{Hurewicz map} associated with $f$ \cite[\sectsign 25.3.6]{SAG}.
If $f$ is connective (i.e., if $\Spec(f)$ is a closed immersion), then $\epsilon_f$ is $2$-connective \cite[Proposition 25.3.6.1]{SAG}. In particular, if $i\colon Z\to X$ is a closed immersion between derived affine schemes and $\sI$ is the fiber of $i^*\colon \sO(X)\to \sO(Z)$, we have a $1$-connective map
\[
\sI \otimes_{\sO(X)}\sO(Z) \to \sL_i[-1]=\sN_i.
\]

\begin{lem}\label{lem:connective}
Let
\[
\begin{tikzcd}
	Z_0 \ar{r}{i_0} \ar{d} & X_0 \ar{d} \\
	Z \ar{r}{i} & X
\end{tikzcd}
\]
be a Cartesian square  of derived affine schemes, where all arrows are closed immersions. Let $\sI$ and $\sI_0$ be the fibers of the maps $i^*\colon \sO(X)\to \sO(Z)$ and $i_0^*\colon \sO(X_0)\to \sO(Z_0)$. Then the canonical map
\[
\sI\to \sI_0\times_{\sN_{i_0}}\sN_{i}
\]
is connective.
\end{lem}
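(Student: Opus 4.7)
The plan is to reduce to the $1$-connectivity of the Hurewicz map by comparing two natural fiber sequences. Write $A = \sO(X)$, $A_0 = \sO(X_0)$, $B = \sO(Z)$, $B_0 = \sO(Z_0)$, and set $\sI' = \fib(A \to A_0)$. Since the given square is Cartesian, the corresponding square of derived commutative rings is a pushout, so $B_0 \simeq B \otimes_A A_0$, whence $\sI_0 \simeq \sI \otimes_A A_0$ in $\Mod_A$. By the base-change formula for the cotangent complex, $\sN_{i_0} \simeq \sN_i \otimes_B B_0 \simeq \sN_i \otimes_A A_0$ in $\Mod_A$. Consequently
\[
\fib(\sI \to \sI_0) \simeq \sI \otimes_A \sI' \quad\text{and}\quad \fib(\sN_i \to \sN_{i_0}) \simeq \sN_i \otimes_A \sI'.
\]

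Next, setting $P = \sI_0 \times_{\sN_{i_0}} \sN_i$, I would observe that the natural map $\sI \to P$ fits into a morphism of fiber sequences
\[
\bigl(\sI \otimes_A \sI' \to \sI \to \sI_0\bigr) \longrightarrow \bigl(\sN_i \otimes_A \sI' \to P \to \sI_0\bigr)
\]
which is the identity on the right-hand term (commutativity is just the naturality of $\sI \to \sN_i$ with respect to the Cartesian square). Therefore the fiber of $\sI \to P$ agrees with the fiber of $\sI \otimes_A \sI' \to \sN_i \otimes_A \sI'$, and since this map is obtained from $\sI \to \sN_i$ by tensoring with the connective $A$-module $\sI'$ (and tensor products of connective modules over a connective derived ring remain connective), it suffices to prove that $\sI \to \sN_i$ itself is connective.

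Finally, I would factor $\sI \to \sN_i$ as $\sI \to \sI \otimes_A B \to \sN_i$. The fiber of the first arrow is $\sI \otimes_A \sI$, which is connective because $\sI$ is (the map $A \to B$ being a closed immersion is surjective on $\pi_0$, so $\pi_{-1}(\sI) = 0$). The second arrow is the Hurewicz map $\epsilon_i$, which is $1$-connective by the discussion immediately preceding the lemma. A short fiber sequence argument then shows that $\sI \to \sN_i$ has connective fiber, completing the proof.

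I do not foresee a serious obstacle beyond the bookkeeping of viewing $\sN_i$ as an $A$-module rather than a $B$-module; the only nontrivial input is the base-change identification $\sN_{i_0} \simeq \sN_i \otimes_B B_0$, after which everything reduces to a direct two-step diagram chase in $\Mod_A$.
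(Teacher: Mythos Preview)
Your argument is correct and takes a somewhat different route from the paper's proof. The paper factors the map through the intermediate pullback $\sI_0 \times_{\sI_0\otimes_{A_0}B_0}(\sI\otimes_A B)$: the second half of the factorization is handled using the $1$-connectivity of the Hurewicz maps, while the first half is handled by introducing the pushout $T = Z\sqcup_{Z_0}X_0$ and invoking Milnor patching to identify $\sI\otimes_A\sO(T)$ with that intermediate pullback. Your approach instead observes that the entire commuting square $\sI\to\sN_i$, $\sI_0\to\sN_{i_0}$ is obtained from the single map $\sI\to\sN_i$ by applying $-\otimes_A(A\to A_0)$, so the fiber of $\sI\to P$ is $\fib(\sI\to\sN_i)\otimes_A\sI'$, and you then check that $\sI\to\sN_i$ has connective fiber directly from the Hurewicz map. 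This bypasses the Milnor patching step entirely and is arguably cleaner; the paper's version, on the other hand, keeps the geometric picture of the glued scheme $T$ visible. Both rely on exactly the same nontrivial input, namely the $1$-connectivity of $\epsilon_i$.
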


\begin{proof}
	We factor this map as follows:
	\[
	\sI\xrightarrow{\alpha} \sI_0 \times_{\sI_0\otimes_{\sO(X_0)}\sO(Z_0)}( \sI\otimes_{\sO(X)}\sO(Z))\xrightarrow{\beta} \sI_0\times_{\sN_{i_0}}\sN_{i}
	\]
	By \cite[Proposition 25.3.6.1]{SAG}, the canonical maps $\sI\otimes_{\sO(X)}\sO(Z) \to \sN_i$ and $\sI_0\otimes_{\sO(X_0)}\sO(Z_0) \to \sN_{i_0}$ are $1$-connective. Being a base change of the latter, the projection $(\sI_0\otimes_{\sO(X_0)}\sO(Z_0))\times_{\sN_{i_0}}\sN_i\to \sN_i$ is $1$-connective. It follows that the map
	\[
	\sI\otimes_{\sO(X)}\sO(Z) \to (\sI_0\otimes_{\sO(X_0)}\sO(Z_0))\times_{\sN_{i_0}}\sN_i
	\]
	is connective, hence that its base change $\beta$ is connective.
	It remains to show that $\alpha$ is connective. Let $T$ be the closed subscheme of $X$ obtained by gluing $Z$ and $X_0$ along $Z_0$. We can factor $\alpha$ as
	\[
	\sI\to \sI\otimes_{\sO(X)}\sO(T) \to \sI_0 \times_{\sI_0\otimes_{\sO(X_0)}\sO(Z_0)}( \sI\otimes_{\sO(X)}\sO(Z)).
	\]
	The first map is clearly connective.
	Since $\sI_0\simeq \sI\otimes_{\sO(X)}\sO(X_0)$, the second map is an equivalence by Milnor patching \cite[Theorem 16.2.0.1]{SAG}. Thus, $\alpha$ is connective.
\end{proof}

\begin{prop}\label{prop:extension}
Let $X,Y\in\dSch_{S}$, let $\xi\in \sVect(Y)$ be of rank $\geq 0$, and let $X_0\subset X$ be a closed subscheme. 
Suppose that $X$ is affine and that $Y$ admits an étale map to an affine bundle over $S$. Then the map
\[
\h^{\efr}_S(Y,\xi)(X) \to \h^{\efr}_S(Y,\xi)(X_0) \times_{\h^{\nfr}_S(Y,\xi)(X_0)} \h^{\nfr}_S(Y,\xi)(X)
\]
is an effective epimorphism (i.e., surjective on $\pi_0$).
\end{prop}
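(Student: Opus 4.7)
An element of the target fiber product consists concretely of a normally framed correspondence $(f\colon Z\to X,\, i\colon Z\hook\A^n_X,\, g\colon Z\to Y,\, \alpha\colon \sN_i\simeq \sO_Z^{n-m}\oplus g^*\sE)$ on $X$, together with a map $\phi_0\colon (\A^n_{X_0})^h_{Z_0}\to \A^{n-m}\times\bV(\sE)$ whose underlying normally framed datum matches the restriction of $(i,g,\alpha)$ to $X_0$. The task is to produce a map $\phi\colon (\A^n_X)^h_Z\to \A^{n-m}\times\bV(\sE)$ restricting to $\phi_0$ and inducing $(g,\alpha)$; we may enlarge $n$ since we work in a colimit.

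The first step is to reduce the target to affine space. Using the étale map $Y\to A$ from the hypothesis (with $A$ an affine bundle over $S$) together with the Zariski-local triviality of $\bV(\sE)\to Y$, the target of $\phi$ becomes, étale-locally on $Y$, étale over some $\A^N_S$; this localization is harmless thanks to Propositions~\ref{prop:efr-descent}(iv) and \ref{prop:nfr-descent}(iv). Then, by the henselian property of the pair $((\A^n_X)^h_Z, Z)$ combined with the topological invariance of the étale $\infty$-topos, constructing $\phi$ reduces to constructing $N$ functions on $W := (\A^n_X)^h_Z$ extending the $N$ functions on $W_0 := (\A^n_{X_0})^h_{Z_0}$ specified by $\phi_0$, with the further requirement that a specified subset lies in the ideal $\sI\subset \sO(W)$ of $Z$ and induces the framing $\alpha$ via the Hurewicz map.

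Since $W$ is affine and $W_0\hook W$ is a closed immersion, the individual functions comprising $\phi_0$ can be lifted freely to $W$, yielding a candidate $\tilde\phi$. The required compatibility with $\alpha$ translates into the existence of a lift of a prescribed element $\eta\in \pi_0(\sI_0\times_{\sN_{i_0}}\sN_i)$ to an element of $\pi_0(\sI)$. By Lemma~\ref{lem:connective}, the canonical map $\sI\to \sI_0\times_{\sN_{i_0}}\sN_i$ is connective, hence surjective on $\pi_0$, so such a lift of $\eta$ exists; modifying $\tilde\phi$ by this correction produces the desired $\phi$.

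The cut-out condition $\phi^{-1}(0\times Y)\simeq Z$ then follows automatically from the compatibility with $\alpha$ on conormal bundles: both $Z$ and the derived zero locus of $\phi$ are quasi-smooth closed derived subschemes of $W$ with identical classical supports and isomorphic conormal sheaves via the Hurewicz map, forcing them to coincide. The main obstacle is precisely this last connectivity input, controlling the interaction between the ideal of $Z$ in $W$ and the normal bundles across the closed immersion $X_0\hook X$, which is exactly what Lemma~\ref{lem:connective} is designed to handle; the preceding reductions to affine targets are standard consequences of henselianness and topological invariance of the étale topos.
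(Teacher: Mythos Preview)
Your overall architecture—lift the \'etale neighborhood, extend $\phi_0$, then use the connectivity of $\sI\to\sI_0\times_{\sN_{i_0}}\sN_i$ from Lemma~\ref{lem:connective} together with projectivity to produce the correction—is exactly the paper's strategy. The gap is in your reduction step. You propose to Zariski-localize $Y$ (to trivialize $\sE$ and reduce the target of $\phi$ to affine space), invoking Propositions~\ref{prop:efr-descent}(iv) and~\ref{prop:nfr-descent}(iv). But those results assert that $\h^\efr_S(R,\xi)\to\h^\efr_S(Y,\xi)$ is a Nisnevich equivalence of \emph{presheaves} in the $X$-variable; the present proposition is a pointwise $\pi_0$-surjectivity on a fixed affine $X$, and Nisnevich equivalences do not preserve such pointwise claims. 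Concretely, the given $g\colon Z\to Y$ need not factor through any single chart trivializing $\sE$, so you cannot replace $Y$ by a cover. The paper never localizes $Y$: instead it first extends $h_0\colon U_0\to Y$ and $g\colon Z\to Y$ to a common $h\colon U\to Y$ (by extending the composite to the affine bundle $A$, which is easy since $U\times_SA\to U$ is a vector bundle over the affine $U$, and then lifting along the \'etale map $Y\to A$ via an \'etale-neighborhood argument). Only after $h$ is fixed does one seek a $Y$-morphism $U\to\bV(\sO_Y^{n-m}\oplus\sE)$, which is an $\sO_U$-module map $\sO_U^{n-m}\oplus h^*(\sE)\to\sO_U$; since $h^*(\sE)$ is projective on the affine $U$, no trivialization of $\sE$ is needed to apply Lemma~\ref{lem:connective}.

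Two smaller issues. First, you treat $W=(\A^n_X)^h_Z$ as a scheme, but it is a pro-object; the paper represents $\phi_0$ on an affine \'etale neighborhood $U_0$ and then lifts $U_0$ to an \'etale neighborhood $U$ of $Z$ in $\A^n_X$ via \cite[Lemma~A.2.3]{EHKSY1}, so that $U_0\hook U$ is genuinely a closed immersion of affines. Second, your claim that $Z$ and $\phi^{-1}(0\times Y)$ automatically coincide because they have isomorphic conormal sheaves is too quick: what one gets is that the induced map $Z\to\phi^{-1}(Y)$ is an \'etale closed immersion, but $\phi^{-1}(Y)$ may have extra components, and one must shrink $U$ to an open $U_a$ to remove them.
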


\begin{proof}
	Write $\xi=(\sE,m)$.
	An element in the right-hand side consists of:
	\begin{itemize}
		\item a span $X\xleftarrow{f} Z\xrightarrow{g} Y$ with $f$ finite quasi-smooth;
		\item a closed immersion $i\colon Z\to \A^n_X$ over $X$ with an equivalence $\tau\colon \sN_i\simeq \sO_Z^{n-m}\oplus g^*(\sE)$;
		\item an equational $\xi$-framing of the induced span $X_0\xleftarrow{f_0} Z_0\xrightarrow{g_0} Y$:
		\[
		Z_0\stackrel{i_0}\longrightarrow \A^n_{X_0} \leftarrow U_0 \xrightarrow{\varphi_0} \bV(\sO_Y^{n-m}\oplus \sE), \quad \alpha_0\colon Z_0\simeq \varphi_0^{-1}(Y),
		\]
		where $i_0$ is the pullback of $i$, $U_0\to \A^n_{X_0}$ is an affine étale neighborhood of $Z_0$, and $\phi_0$ extends $g_0$;
		\item an identification of the equivalence $\sN_{i_0}\simeq \sO_{Z_0}^{n-m}\oplus g_0^*(\sE)$ induced by $\tau$ with that induced by $\alpha_0$.
	\end{itemize}
	The goal is to construct an equational $\xi$-framing $(\phi,\alpha)$ of $X\xleftarrow{f} Z\xrightarrow{g} Y$ that simultaneously induces the normal framing $\tau$ and the equational $\xi$-framing $(\phi_0,\alpha_0)$.
	Using \cite[Lemma A.2.3]{EHKSY1}, we can lift the étale neighborhood $U_0$ of $Z_0$ in $\A^n_{X_0}$ to an étale neighborhood $U$ of $Z$ in $\A^n_X$. 
	Refining $U_0$ if necessary, we can assume that $U$ is affine (by \cite[Lemma A.1.2(ii)]{EHKSY1}).
	
	Let $h_0\colon U_0\to Y$ be the composition of $\varphi_0\colon U_0\to \bV(\sO_Y^{n-m}\oplus \sE)$ and the projection $\bV(\sO_Y^{n-m}\oplus \sE)\to Y$.
	We first construct a simultaneous extension $h\colon U\to Y$ of $h_0\colon U_0\to Y$ and $g\colon Z\to Y$. Suppose first that $Y$ is an affine bundle over $S$. Since $U$ is affine, $U\times_SY\to U$ is a vector bundle over $U$. It follows that the restriction map
		\[
		\Maps_S(U,Y) \to \Maps_S(U_0,Y) \times_{\Maps_S(Z_0,Y)} \Maps_S(Z,Y) \simeq \Maps_S(U_0\coprod_{Z_0}Z,Y)
		\]
		is surjective, so the desired extension exists. In general, let $p\colon Y\to A$ be an étale map where $A$ is an affine bundle over $S$. By the previous case, there exists an $S$-morphism $U\to A$ extending $p\circ h_0$ and $p\circ g$. Then the étale map $U\times_{A}Y\to U$ has a section over $U_0\coprod_{Z_0}Z$, so there exists an affine open subset $U'\subset U\times_{A}Y$ that is an étale neighborhood of $U_0\coprod_{Z_0}Z$ in $U$. We can therefore replace $U$ by $U'$, and the projection $U'\to Y$ gives the desired extension.
	
	It remains to construct a $Y$-morphism $\phi\colon U\to\bV(\sO_Y^{n-m}\oplus \sE)$ extending $\phi_0$ and an equivalence $\alpha\colon Z\simeq \phi^{-1}(Y)$ lifting $\alpha_0$ such that the induced trivialization $\sN_i\simeq \sO_Z^{n-m}\oplus g^*(\sE)$ is equivalent to $\tau$. Recall that $Y$-morphisms $U\to \bV(\sO_Y^{n-m}\oplus \sE)$ correspond to morphisms of $\sO_U$-modules $\sO_U^{n-m}\oplus h^*(\sE) \to \sO_U$.
	Let $\sI$ and $\sI_0$ be the fibers of the restrictions map $\sO(U)\to \sO(Z)$ and $\sO(U_0)\to \sO(Z_0)$.
	By Lemma~\ref{lem:connective}, the morphism of $\sO(U)$-modules
	\[
	\sI \to \sI_0\times_{\sN_{i_0}} \sN_i
	\]
	is connective.
	Since $\sO(U)^{n-m}\oplus h^*(\sE)$ is a projective object in connective $\sO(U)$-modules \cite[Proposition 7.2.2.7]{HA}, the morphism $\sO(U)^{n-m}\oplus h^*(\sE) \to \sI_0\times_{\sN_{i_0}} \sN_i$ induced by $\alpha_0$ and $\tau$ lifts to a morphism
	\[
	\sO(U)^{n-m}\oplus h^*(\sE) \to \sI.
	\]
	This defines a $Y$-morphism $\phi\colon U\to \bV(\sO^{n-m}_Y\oplus \sE)$ extending $\phi_0$ together with a factorization of $Z\to U$ through $\phi^{-1}(Y)$, i.e., a $U$-morphism $\alpha\colon Z\to \phi^{-1}(Y)$. By construction, $\alpha$ lifts $\alpha_0$ and induces the equivalence $\tau$ on conormal sheaves; since both $Z$ and $\phi^{-1}(Y)$ are regularly embedded in $U$, $\alpha$ is an étale closed immersion. Thus, there exists a function $a$ on $U$ such that $\alpha$ induces an equivalence $Z\simeq \phi^{-1}(Y)\cap U_a$. Replacing $U$ by $U_a$ concludes the proof.
\end{proof}

\begin{cor}\label{cor:efr-vs-nfr}
	Suppose that $Y\in\Sm_{S}$ is a finite sum of schemes admitting étale maps to affine bundles over $S$ and let $\xi\in\sVect_{\geq 0}(Y)$. Then the map
	\[
	\Lhtp \h^\efr_S(Y,\xi) \to \Lhtp \h^\nfr_S(Y,\xi)
	\]
	is an equivalence on derived affine schemes. In particular, it induces an equivalence
	\[
	L_\zar\Lhtp \h^\efr_S(Y,\xi) \simeq L_\zar\Lhtp \h^\nfr_S(Y,\xi).
	\]
\end{cor}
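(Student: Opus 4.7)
The plan is to upgrade the effective-epimorphism property of \propref{prop:extension} to a genuine equivalence of $\A^1$-localizations on derived affine schemes, following the template of the analogous comparison for untwisted framed correspondences in \cite{EHKSY1}.

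By \propref{prop:additivity}, both $\Lhtp \h^\efr_S(-,\xi)$ and $\Lhtp \h^\nfr_S(-,\xi)$ send finite disjoint unions in the $Y$-variable to finite products, so one may reduce to the case where $Y$ itself admits an étale map to an affine bundle over $S$. In this case the hypothesis of \propref{prop:extension} holds for every pair $(X, X_0)$ with $X$ a derived affine $S$-scheme and $X_0 \hook X$ a closed subscheme.

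The core step is to invoke the following standard principle: given a morphism $\phi \colon F \to G$ in $\Pre_\Sigma(\dSch_S)$ such that for every derived affine $X$ and every closed subscheme $X_0 \hook X$ the map $F(X) \to F(X_0) \times_{G(X_0)} G(X)$ is an effective epimorphism, the induced map $\Lhtp F \to \Lhtp G$ is an equivalence on derived affine schemes. One proves this by evaluating at $X \times \Delta^n_S$ (with $\Delta^n_S$ the algebraic $n$-simplex) with closed subscheme $X \times \partial\Delta^n_S$, and iteratively applying the extension property to fill in the higher simplices of the map of simplicial spaces
\[
[n] \mapsto \big(F(X \times \Delta^n_S) \to G(X \times \Delta^n_S)\big),
\]
exhibiting it as a trivial Kan fibration whose geometric realization is $\Lhtp F(X) \to \Lhtp G(X)$. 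The Zariski-local statement then follows because $L_\zar$ commutes with $\Lhtp$ and an equivalence on all derived affine schemes sheafifies to a Zariski-local equivalence.

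The main obstacle is the simplex-filling step: converting the $\pi_0$-level lifting of \propref{prop:extension} into a full trivial-fibration statement at the level of simplicial spaces. The derived affineness of each slice $X \times \Delta^n_S$ and the closed-subscheme property of $\partial\Delta^n_S \hook \Delta^n_S$ are precisely what make the extension property iteratively applicable. This bookkeeping is standard in the framed setting; the twist $\xi$ enters only through the target of $\phi$, so it passes through the argument unchanged relative to the untwisted analog treated in \cite{EHKSY1}.
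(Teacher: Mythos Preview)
Your overall strategy matches the paper's proof, but there is a genuine gap in the ``standard principle'' you invoke. The condition for $[n]\mapsto F(X\times\Delta^n_S)\to G(X\times\Delta^n_S)$ to be a trivial Kan fibration of simplicial spaces is that
\[
F(X\times\Delta^n_S)\longrightarrow M_n F \times_{M_n G} G(X\times\Delta^n_S)
\]
be an effective epimorphism, where $M_n$ denotes the $n$th matching object. What \propref{prop:extension} gives you is instead surjectivity of
\[
F(X\times\Delta^n_S)\longrightarrow F(X\times\partial\Delta^n_S) \times_{G(X\times\partial\Delta^n_S)} G(X\times\Delta^n_S).
\]
These agree only if the canonical maps $F(X\times\partial\Delta^n_S)\to M_n F$ and $G(X\times\partial\Delta^n_S)\to M_n G$ are equivalences, i.e., only if $F$ and $G$ satisfy \emph{closed gluing} (since $\partial\Delta^n_S$ is an iterated pushout of faces along closed immersions). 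This is not automatic for $\Sigma$-presheaves, and your stated principle is false without that extra hypothesis.

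The paper's proof fills exactly this gap by explicitly invoking Propositions~\ref{prop:efr-descent}(ii) and~\ref{prop:nfr-descent}(ii), which establish closed gluing for $\h^\efr_S(Y,\xi)$ and $\h^\nfr_S(Y,\xi)$. Once you add that step, your argument becomes the paper's argument verbatim. (A minor aside: the clause ``$L_\zar$ commutes with $\Lhtp$'' is not correct and is not needed; the second half of your sentence---that an equivalence on derived affines Zariski-sheafifies to an equivalence---is the right justification.)
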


\begin{proof}
	By Proposition~\ref{prop:additivity}, we can assume that $Y$ admits an étale map to an affine bundle over $S$.
	By Proposition~\ref{prop:extension}, for every $n\geq 0$, the map
	\[
	\h^\efr_S(Y,\xi)^{\A^n} \to \h^\efr_S(Y,\xi)^{\partial\A^n} \times_{\h^\nfr_S(Y,\xi)^{\partial\A^n}} \h^\nfr_S(Y,\xi)^{\A^n}
	\]
	is surjective on affines. 
	By Propositions~\ref{prop:efr-descent}(ii) and~\ref{prop:nfr-descent}(ii), both $\h^\efr_S(Y,\xi)$ and $\h^\nfr_S(Y,\xi)$ satisfy closed gluing. It follows that the map
	\[
	\h^\efr_S(Y,\xi)^{\A^\bullet} \to \h^\nfr_S(Y,\xi)^{\A^\bullet}
	\]
	is a trivial Kan fibration of simplicial spaces when evaluated on any affine scheme, and we conclude using \cite[Theorem A.5.3.1]{SAG}.
\end{proof}

\begin{cor}\label{cor:efr-vs-nfr2}
	Let $Y$ be a smooth $S$-scheme and $\xi\in\sVect(Y)$ of rank $\geq 0$. Then the map
	\[
	\h_S^\efr(Y,\xi) \to \h_S^\nfr(Y,\xi)
	\]
	in $\Pre_\Sigma(\dSch_S)$ is a motivic equivalence.
\end{cor}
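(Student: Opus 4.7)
The plan is to bootstrap from Corollary~\ref{cor:efr-vs-nfr} via Zariski descent on the target~$Y$. Since $Y$ is smooth over~$S$, every point has a Zariski neighborhood admitting an étale map to some affine bundle over~$S$; I would choose such a cover $\{Y_\alpha \to Y\}$ and set $Y' = \bigsqcup_\alpha Y_\alpha$, so that the single morphism $Y' \to Y$ generates a Zariski (hence Nisnevich) covering sieve $R \hook Y$.

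By Propositions~\ref{prop:efr-descent}(iv) and~\ref{prop:nfr-descent}(iv), the maps $\h^\efr_S(R,\xi) \to \h^\efr_S(Y,\xi)$ and $\h^\nfr_S(R,\xi) \to \h^\nfr_S(Y,\xi)$ are Nisnevich, hence motivic, equivalences, so by two-out-of-three it suffices to show that $\h^\efr_S(R,\xi) \to \h^\nfr_S(R,\xi)$ is a motivic equivalence. Realizing~$R$ as the geometric realization of the Čech nerve of~$Y' \to Y$, its $n$th term $(Y')^{\times_Y(n+1)}$ decomposes as the disjoint union $\bigsqcup_{(\alpha_0,\dotsc,\alpha_n)} Y_{\alpha_0} \cap \dotsb \cap Y_{\alpha_n}$, and each intersection, being Zariski open in~$Y_{\alpha_0}$, itself admits an étale map to an affine bundle over~$S$. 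Corollary~\ref{cor:efr-vs-nfr} thus yields a motivic equivalence $\h^\efr_S(-,\xi) \to \h^\nfr_S(-,\xi)$ on each intersection (note that $L_\zar \Lhtp$-equivalence implies motivic equivalence, since Nisnevich localization factors through Zariski localization). Combining via the additivity of Proposition~\ref{prop:additivity} and the closure of motivic equivalences under colimits—in particular under geometric realization—should then assemble these componentwise equivalences into the desired motivic equivalence.

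The main technical point will be the passage to an \emph{a priori} infinite cover: Proposition~\ref{prop:additivity} is formulated for finite disjoint unions, whereas $\{Y_\alpha\}$, and hence the decomposition of~$(Y')^{\times_Y(n+1)}$, may be infinite. I plan to circumvent this by testing motivic equivalence on qcqs derived schemes~$X$: any span $X \leftarrow Z \to (Y')^{\times_Y(n+1)}$ contributing to~$\h^?_S(\cdot)(X)$ has $Z$ finite, hence qcqs, over~$X$, so its image lies in a finite subsum of the decomposition, reducing the additivity step to the finite case. Beyond this quasi-compactness reduction, the argument is a formal assembly from already-established pieces.
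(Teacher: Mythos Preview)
Your approach is correct and is essentially the same as the paper's: reduce via Propositions~\ref{prop:efr-descent}(iv) and~\ref{prop:nfr-descent}(iv) to the \v{C}ech nerve of a Zariski cover by pieces admitting \'etale maps to affine bundles, then invoke Corollary~\ref{cor:efr-vs-nfr}. The only difference is bookkeeping: the paper first writes $Y$ as a filtered union of quasi-compact opens (using the same observation you make, that on quasi-compact test schemes the presheaves are filtered colimits of the subpresheaves), thereby replacing $Y$ by a quasi-compact scheme and obtaining a \emph{finite} cover from the outset; you instead keep the possibly infinite cover and perform the quasi-compactness reduction on the test scheme $X$ at the end. Both orderings work, though the paper's avoids having to think about infinite disjoint unions in the \v{C}ech nerve at all.
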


\begin{proof}
	The scheme $Y$ is the filtered union of its quasi-compact open subschemes, and on quasi-compact derived schemes the presheaves $\h_S^\efr(Y,\xi)$ and $\h_S^\nfr(Y,\xi)$ are the filtered colimits of the corresponding subpresheaves, so we can assume $Y$ quasi-compact. Let $\{U_1,\dotsc,U_k\}$ be an open cover of $Y$ by $S$-schemes admitting étale maps to affine bundles over $S$ \cite[Tag 054L]{stacks}. The map $U_1\sqcup\dotsb\sqcup U_k\to Y$ is a Zariski covering map; by Propositions~\ref{prop:efr-descent}(iv) and \ref{prop:nfr-descent}(iv), $L_\nis\h_S^\efr(-,\xi)$ and $L_\nis\h_S^\nfr(-,\xi)$ preserve the colimit of its Čech nerve. Thus, we can assume that $Y$ is a finite sum of schemes admitting étale maps to affine bundles over $S$. Then the claim follows from Corollary~\ref{cor:efr-vs-nfr}.
\end{proof}

For $Z\to X$ a finite morphism of derived schemes, we denote by $\Emb_X(Z,\A^n_X)$ the space of closed immersions $Z\to \A^n_X$ over $X$ (note that this is not a discrete space in general, because closed immersions of derived schemes are not monomorphisms). We let
\[
\Emb_X(Z,\A^\infty_X) = \colim_{n\to\infty} \Emb_X(Z,\A^n_X).
\]

\begin{prop}\label{prop:Emb-lifting}
	Let $X$ be a derived affine scheme, $Z\to X$ a finite morphism, $X_0\to X$ a closed immersion, and $Z_0=Z\times_XX_0$.
	Suppose that $(X_0)_\cl \to X_\cl$ is finitely presented.
	Then the pullback map
	\[
	\Emb_{X}(Z, \A^\infty_{X}) \to \Emb_{X_0}(Z_0, \A^\infty_{X_0})
	\]
	is an effective epimorphism (i.e., surjective on $\pi_0$).
\end{prop}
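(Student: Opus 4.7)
The plan is to work entirely at the level of $\pi_0$, reducing the statement to a combinatorial question about generating tuples in rings. Since $Z\to X$ is finite and hence affine, a morphism $Z\to \A^n_X$ over $X$ is classified by an $\sO(X)$-algebra map $\sO(X)[t_1,\dotsc,t_n]\to \sO(Z)$, i.e., by an $n$-tuple of global sections of $\sO_Z$, and it is a closed immersion precisely when the induced map $\pi_0(\sO(X))[t_1,\dotsc,t_n]\to \pi_0(\sO(Z))$ is surjective. Consequently $\pi_0\Emb_X(Z,\A^n_X)$ identifies with the set of $n$-tuples in $\pi_0(\sO(Z))$ generating it as a $\pi_0(\sO(X))$-algebra. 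Passing to the colimit in $n$, the statement reduces to the following: given a generating tuple $s_1,\dotsc,s_n\in \pi_0(\sO(Z_0))$ over $\pi_0(\sO(X_0))$, produce $N\ge n$ and a generating tuple $t_1,\dotsc,t_N\in \pi_0(\sO(Z))$ over $\pi_0(\sO(X))$ with $\bar t_i=s_i$ for $i\le n$ and $\bar t_i=0$ for $i>n$.

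I would use three key inputs. First, the map $\pi_0(\sO(Z))\to \pi_0(\sO(Z_0))$ is surjective: this follows from the identification $\sO(Z_0)\simeq \sO(Z)\otimes_{\sO(X)}\sO(X_0)$, the fact that $\pi_0$ commutes with tensor products of connective derived rings, and the surjectivity of $\pi_0(\sO(X))\to \pi_0(\sO(X_0))$ (which holds because $X_0\to X$ is a closed immersion). Second, $\pi_0(\sO(Z))$ is finitely generated as a $\pi_0(\sO(X))$-module, since $Z\to X$ is finite. Third, the ideal $I=\ker(\pi_0(\sO(X))\to \pi_0(\sO(X_0)))$ is finitely generated: this is precisely where the hypothesis that $(X_0)_\cl\to X_\cl$ is finitely presented enters.

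With these inputs in hand, the construction is: lift each $s_i$ to $t_i\in \pi_0(\sO(Z))$ using the first input; fix $\pi_0(\sO(X))$-module generators $v_1,\dotsc,v_r$ of $\pi_0(\sO(Z))$ using the second input and generators $f_1,\dotsc,f_m$ of $I$ using the third; then declare the remaining coordinates to be $t_{n+(l-1)m+j}:=f_jv_l$ for $1\le l\le r$ and $1\le j\le m$. Each such $t_{n+k}$ lies in $I\cdot\pi_0(\sO(Z))$, so it vanishes in $\pi_0(\sO(Z_0))=\pi_0(\sO(Z))/I\pi_0(\sO(Z))$. For the generation property, any $z\in \pi_0(\sO(Z))$ has image $\bar z=\bar P(s_1,\dotsc,s_n)$ for some polynomial $\bar P$ over $\pi_0(\sO(X_0))$; lifting the coefficients of $\bar P$ to $\pi_0(\sO(X))$ gives $P(t_1,\dotsc,t_n)\in\pi_0(\sO(Z))$ with $z-P(t_1,\dotsc,t_n)\in I\cdot\pi_0(\sO(Z))=\sum_{j,l}\pi_0(\sO(X))\cdot f_jv_l$, so $z$ lies in the subalgebra generated by $t_1,\dotsc,t_{n+rm}$.

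The only nontrivial ingredient is the third one: without finite generation of $I$ one would need infinitely many extra coordinates, so no finite $N$ would work and the conclusion would fail for $\A^\infty$ as a filtered colimit of finite-dimensional affine spaces. The finite presentation assumption on $(X_0)_\cl\to X_\cl$ is exactly what makes this manageable; the remainder of the argument is essentially linear-algebraic bookkeeping together with the initial reduction to $\pi_0$.
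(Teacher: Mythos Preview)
Your proof is correct and follows essentially the same approach as the paper's: lift the given generators, then append finitely many elements of $I\cdot\pi_0(\sO(Z))$ that vanish on $Z_0$ to force surjectivity on $\pi_0$. The only cosmetic difference is that the paper picks arbitrary module generators $h_1,\dotsc,h_m$ of $\pi_0\fib(\sO(Z)\to\sO(Z_0))$ (using $\fib(\sO(Z)\to\sO(Z_0))\simeq\fib(\sO(X)\to\sO(X_0))\otimes_{\sO(X)}\sO(Z)$ and finiteness of $Z\to X$), whereas you write down the explicit products $f_jv_l$; your generation argument is also spelled out more fully than the paper's, which leaves the verification that the extended tuple defines a closed immersion implicit.
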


\begin{proof}
	Let $i_0\colon Z_0\to \A^n_{X_0}$ be a closed immersion over $X_0$, given by $n$ functions $g_1$, \dots, $g_n$ on $Z_0$. Let $g_1'$, \dots, $g_n'$ be lifts of these functions to $Z$.
	Note that
	\[
	\fib(\sO(Z)\to \sO(Z_0))\simeq \fib(\sO(X)\to \sO(X_0))\otimes_{\sO(X)}\sO(Z).
	\]
	Since $Z\to X$ is finite and $(X_0)_\cl\to X_\cl$ is finitely presented, the $\sO(X)$-module $\pi_0\fib(\sO(Z)\to \sO(Z_0))$ is finitely generated; let $h_1,\dotsc,h_m\in \sO(Z)$ be the images of a finite set of generators. Then the $n+m$ functions $g_1'$, \dots, $g_n'$, $h_1$, \dots, $h_m$ define a closed immersion $i\colon Z\to \A^{n+m}_X$ over $X$ whose pullback to $X_0$ is equivalent to $i_0$ in $\Emb_X(Z,\A^\infty_X)$.
\end{proof}

\begin{cor}\label{cor:Emb-contractible}
	Let $X$ be a derived affine scheme and $Z\to X$ a finite morphism. Then the presheaf
	\[
	\dAff_X^\op \to \Spc,\quad X'\mapsto \Emb_{X'}(Z\times_XX', \A^\infty_{X'})
	\]
	is $\A^1$-contractible.
\end{cor}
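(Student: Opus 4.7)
The plan is to mirror the argument of \corref{cor:efr-vs-nfr}, with target now the constant simplicial point: I will show that the simplicial space obtained by evaluating $\sF := \Emb_{(-)}(Z\times_X -, \A^\infty_{(-)})$ on $X'\times\A^\bullet$ is a trivial Kan fibration over $\ast$, and conclude via \cite[Theorem A.5.3.1]{SAG}.

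First I would verify that the presheaf $\sF$ on $\dAff_X$ satisfies closed gluing. Given closed immersions $X_0 \hookleftarrow X_{01} \hookrightarrow X_1$ of derived affine schemes with pushout $X'$, Milnor patching \cite[Theorem 16.2.0.1]{SAG} gives $\sO(Z_{X'}) \simeq \sO(Z_{X_0}) \times_{\sO(Z_{X_{01}})} \sO(Z_{X_1})$, so specifying a closed immersion $Z_{X'}\to\A^n_{X'}$ is equivalent to specifying a compatible pair of closed immersions over $X_0$ and $X_1$; taking the filtered colimit over $n$ gives the closed gluing property for $\sF$. Next, for each $n\geq 1$ and each $X'\in\dAff_X$, I would show that the restriction map $\sF(X'\times\A^n) \to \sF(X'\times\partial\A^n)$ is an effective epimorphism. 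This is a direct application of \propref{prop:Emb-lifting} to $X = X'\times\A^n$ and $X_0 = X'\times\partial\A^n$: the ambient scheme is derived affine, the closed subscheme $\partial\A^n \subset \A^n$ is itself affine (being a closed subscheme of the affine scheme $\A^n$), and the inclusion $\partial\A^n\hookrightarrow\A^n$ is cut out by finitely many equations, so is finitely presented on classical truncations. By the closed gluing from the previous step, $\sF(X'\times\partial\A^n)$ coincides with the Reedy matching object of $\sF(X'\times\A^\bullet)$ at level $n$.

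Combining these two facts, \cite[Theorem A.5.3.1]{SAG} yields, exactly as in \corref{cor:efr-vs-nfr}, that $\sF(X'\times\A^\bullet)\to\ast$ is a trivial Kan fibration of simplicial spaces when evaluated on any derived affine $X'$, so $\Lhtp\sF(X')\simeq\ast$, establishing $\A^1$-contractibility. The main point requiring care is Step 2: one must observe that $\partial\A^n$ is genuinely an affine scheme so that \propref{prop:Emb-lifting} applies verbatim with $X_0 = X'\times\partial\A^n$, and that the closed gluing of Step 1 correctly identifies the value of $\sF$ on $X'\times\partial\A^n$ with the matching object of the cosimplicial diagram of faces.
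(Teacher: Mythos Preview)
Your proposal is correct and follows essentially the same approach as the paper's proof, which simply defers to \propref{prop:Emb-lifting} and \cite[Lemma 2.3.22]{EHKSY1}; you have correctly unfolded that argument into the closed-gluing plus trivial-Kan-fibration framework parallel to \corref{cor:efr-vs-nfr}. The only point worth tightening is the closed gluing step: Milnor patching handles the ambient mapping spaces $\Maps_{X'}(Z_{X'},\A^n_{X'})$, but you should also note that a glued map $Z_{X'}\to\A^n_{X'}$ is a closed immersion once its restrictions to $X_0$ and $X_1$ are (the $\pi_0$-cokernel of $\sO(X')[t]\to\sO(Z_{X'})$ is a finite $\pi_0\sO(X')$-module whose localization vanishes at every prime of the closed cover $V(I_0)\cup V(I_1)=\Spec\pi_0\sO(X')$, hence is zero by Nakayama).
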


\begin{proof}
	This follows from Proposition~\ref{prop:Emb-lifting} as in \cite[Lemma 2.3.22]{EHKSY1}.
\end{proof}

Let $Z\to X$ be a finite quasi-smooth morphism of derived schemes of relative virtual codimension $c$. Any closed immersion $i\colon Z\to \A^n_X$ over $X$ is then quasi-smooth, hence has a finite locally free conormal sheaf $\sN_i=\sL_i[-1]$. Thus, we have a morphism
\[
\Emb_X(Z,\A^n_X) \to \Vect_{n+c}(Z), \quad i\mapsto \sN_i.
\]
Taking the colimit over $n$, we get a morphism
\[
\Emb_X(Z,\A^\infty_X) \to \sVect_c(Z)\subset \sVect(Z).
\]
We denote by $\Emb_X^\xi(Z,\A^\infty_X)$ its fiber over $\xi\in\sVect(Z)$. Note that there is a commutative square
\[
\begin{tikzcd}
	\Emb_X(Z,\A^\infty_X) \ar{r} \ar{d} & \sVect(Z) \ar{d} \\
	* \ar{r}{-\sL_f} & K(Z),
\end{tikzcd}
\]
inducing a canonical map
\[
\Emb_X^\xi(Z,\A^\infty_X) \to \Maps_{K(Z)}(\xi,-\sL_f)
\]
on the horizontal fibers over $\xi$.

\begin{prop}\label{prop:nfr-to-dfr}
	Let $f\colon Z\to X$ be a finite quasi-smooth morphism of derived affine schemes and let $\xi\in\sVect(Z)$.
	Then the morphism of simplicial spaces
	\[
	\Emb^{\xi}_{X\times\A^\bullet}(Z\times\A^\bullet,\A^\infty_{X\times\A^\bullet}) \to \Maps_{K(Z\times\A^\bullet)}(\xi,-\sL_f)
	\]
	induces an equivalence on geometric realizations.
\end{prop}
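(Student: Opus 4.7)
The plan is to reduce both sides to loop spaces via the $\A^1$-contractibility of the embedding space (\corref{cor:Emb-contractible}) and then compare the resulting loop spaces using Quillen's plus construction (\lemref{lem:quillen}).

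From the commutative square preceding the proposition, I will form a map of simplicial fiber sequences (writing $\Emb_\bullet := \Emb_{X\times\A^\bullet}(Z\times\A^\bullet,\A^\infty)$ and $\Emb^\xi_\bullet$ for its fiber at $\xi$):
\[
\begin{tikzcd}
\Emb^{\xi}_\bullet \ar{r} \ar{d} & \Emb_\bullet \ar{r} \ar{d} & \sVect_c(Z\times\A^\bullet) \ar{d} \\
\Maps_{K(Z\times\A^\bullet)}(\xi,-\sL_f) \ar{r} & \ast \ar{r} & K(Z\times\A^\bullet),
\end{tikzcd}
\]
with top row the fiber sequence at basepoint $\xi$ and bottom row the fiber sequence at the image $p(\xi)$ of $\xi$ under $\sVect\to K$, where $\ast\to K(Z\times\A^\bullet)$ picks out $-\sL_f$.

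By \corref{cor:Emb-contractible}, the middle column has contractible geometric realization. Provided geometric realization preserves both simplicial fiber sequences -- which I expect to verify by restricting to the rank-$c$ connected component and exploiting the $\Einfty$-structure furnished by \propref{prop:additivity} -- this identifies $|\Emb^{\xi}_\bullet|\simeq\Omega_\xi|\sVect_c(Z\times\A^\bullet)|$ and $|\Maps_{K(Z\times\A^\bullet)}(\xi,-\sL_f)|\simeq\Omega_{p(\xi)}|K(Z\times\A^\bullet)|$, with the induced map being loops on the realized plus construction $|\sVect(Z\times\A^\bullet)|\to|K(Z\times\A^\bullet)|$.

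The hard step is showing that this realized plus construction is an equivalence. By \lemref{lem:quillen}, it is in each simplicial degree a plus construction with acyclic homotopy fiber, controlled by the elementary subgroup $\mathrm{E}(R)=[\GL(R),\GL(R)]$. I expect to show that this fiber becomes $\A^1$-contractible upon realization, using the standard $\A^1$-deformation $e_{ij}(at)$ of each elementary matrix to the identity, combined with a Volodin- or Suslin-type argument to handle the full acyclic fiber coherently. The main obstacle is this $\A^1$-contractibility, which encapsulates a form of $\A^1$-invariance for the plus construction applied to the simplicial ring $\sO(Z)[\Delta^\bullet]$; once it is established, the loop map is an equivalence and the proof concludes.
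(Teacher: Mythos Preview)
Your overall architecture matches the paper's: set up a map of Cartesian squares, use \corref{cor:Emb-contractible} to collapse the embedding space, and use \lemref{lem:quillen} together with the $\A^1$-nullhomotopy of elementary matrices to handle the plus construction. However, your execution diverges in two places, one of which is a genuine detour.

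For preserving the Cartesian squares under realization, your appeal to \propref{prop:additivity} is misplaced; that result concerns additivity of $\h^\efr$ and $\h^\nfr$ in the target scheme and says nothing about the simplicial diagrams here. The paper instead first dispenses with the trivial case $[\xi]\neq[-\sL_f]$ in $\pi_0K(Z)$, and in the remaining case restricts to the connected component $\sVect(Z^\bullet)\langle\xi\rangle\simeq\BGL(Z^\bullet)$, so that the base of each square is degreewise connected. Then \cite[Lemma 5.5.6.17]{HA} applies directly: geometric realization preserves pullbacks over a degreewise connected base.

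More importantly, your ``hard step'' is harder than it needs to be. You propose to show directly that the acyclic fiber of the plus construction becomes $\A^1$-contractible, via a Volodin- or Suslin-type argument; this is delicate and you correctly flag it as the main obstacle. The paper sidesteps it entirely. Acyclic maps are closed under colimits \cite[Theorem 3.3]{Raptis}, so the geometric realization of $\sVect(Z^\bullet)\langle\xi\rangle\to K(Z^\bullet)\langle\xi\rangle$ is itself acyclic. The $\A^1$-homotopies $e_{ij}(at)$ are then used only to show that the \emph{domain} of this realized map has abelian fundamental group (the commutator subgroup of $\GL$ being generated by elementary matrices, which become nullhomotopic). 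An acyclic map whose source has no nontrivial perfect subgroups in $\pi_1$ is automatically an equivalence, and the proof concludes. No analysis of the fiber is required.
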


\begin{proof}
	Let us write $X^\bullet=X\times\A^\bullet$ and $Z^\bullet=Z\times\A^\bullet$ for simplicity.
	Recall that the given morphism comes from a natural transformation of Cartesian squares
	\[
   \begin{tikzcd}
     \Emb^\xi_{X^\bullet}(Z^\bullet,\A^\infty_{X^\bullet}) \ar{r} \ar{d}
       & \Emb_{X^\bullet}(Z^\bullet,\A^\infty_{X^\bullet}) \ar{d}
     \\
     * \ar{r}{\xi}
       & \sVect(Z^\bullet)
   \end{tikzcd}
	\longrightarrow
   \begin{tikzcd}
     \Maps_{K(Z^\bullet)}(\xi,-\sL_f) \ar{r} \ar{d}
       & * \ar{d}{-\sL_f}
     \\
     * \ar{r}{\xi}
       & K(Z^\bullet).
   \end{tikzcd}
	\]
	We consider two cases. If $[\xi]\neq [-\sL_f]$ in $\pi_0K(Z)$, then $\Maps_{K(Z)}(\xi,-\sL_f)$ is empty and the result holds trivially.
	Suppose that $[\xi]=[-\sL_f]$ in $\pi_0K(Z)$. 
	Then $-\sL_f$ lives in the connected component $K(Z)\langle \xi\rangle\subset K(Z)$ containing $\xi$.
	Since $Z$ is affine and $[\sL_f]=[\sO_Z^n]-[\sN_i]$ for any closed immersion $i\colon Z\to \A^n_X$ over $X$, the conormal sheaf $\sN_i$ is stably isomorphic to $\xi$. It follows that the map $\Emb_X(Z,\A^\infty_X)\to \sVect(Z)$ lands in the component $\sVect(Z)\langle\xi\rangle\subset \sVect(Z)$ containing $\xi$. 
	We may therefore rewrite the above Cartesian squares as follows:
	\[
   \begin{tikzcd}
     \Emb^\xi_{X^\bullet}(Z^\bullet,\A^\infty_{X^\bullet}) \ar{r} \ar{d}
       & \Emb_{X^\bullet}(Z^\bullet,\A^\infty_{X^\bullet}) \ar{d}
     \\
     * \ar{r}{\xi}
       & \sVect(Z^\bullet)\langle\xi\rangle
   \end{tikzcd}
	\longrightarrow
   \begin{tikzcd}
     \Maps_{K(Z^\bullet)}(\xi,-\sL_f) \ar{r} \ar{d}
       & * \ar{d}{-\sL_f}
     \\
     * \ar{r}{\xi}
       & K(Z^\bullet)\langle\xi\rangle.
   \end{tikzcd}
	\]
	Recall from \ssecref{ssec:sVect} that $\sVect(Z^\bullet)\langle\xi\rangle$ is equivalent to $\BGL(Z^\bullet)$.
	 The map $\sVect(Z^\bullet)\langle\xi\rangle\to K(Z^\bullet)\langle\xi\rangle$ between the lower right corners is acyclic in each degree by Lemma~\ref{lem:quillen}. Its geometric realization is an acyclic map whose domain has abelian fundamental groups (since the commutator subgroup of $\GL(Z)$ is generated by elementary matrices, which are $\A^1$-homotopic to the identity), hence it is an equivalence. The map between the upper right corners also induces an equivalence on geometric realizations by Corollary~\ref{cor:Emb-contractible}. Since the lower right corners are degreewise connected, it follows from \cite[Lemma 5.5.6.17]{HA} that geometric realization preserves these Cartesian squares, and we obtain the desired equivalence on the upper left corners.
\end{proof}

\begin{cor}\label{cor:nfr-vs-dfr}
	Let $Y\in \dSch_S$ and let $\xi\in\sVect_{\geq 0}(Y)$. Then the map
	\[
	\Lhtp\h^\nfr_S(Y,\xi) \to \Lhtp\h^\fr_S(Y,\xi)
	\]
	is an equivalence on derived affine schemes. In particular, it induces an equivalence
	\[
	L_\zar\Lhtp \h^\nfr_S(Y,\xi) \simeq L_\zar\Lhtp \h^\fr_S(Y,\xi).
	\]
\end{cor}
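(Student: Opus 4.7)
I would model the argument on the proof of Corollary \ref{cor:efr-vs-nfr}, replacing Proposition \ref{prop:extension} by Proposition \ref{prop:nfr-to-dfr}. The second (Zariski-local) assertion is immediate from the first by applying $L_\zar$, so it suffices to prove the equivalence on derived affine schemes. Fix a derived affine scheme $X$. By \cite[Theorem A.5.3.1]{SAG}, it is enough to show that the map of simplicial spaces
$$\h^\nfr_S(Y,\xi)(X\times\A^\bullet)\longrightarrow \h^\fr_S(Y,\xi)(X\times\A^\bullet)$$
induces an equivalence on geometric realization.

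The key structural observation is that both sides sit over a common presheaf of ``underlying spans'': for $U\in\dSch_S$, let $\Pi(U)$ denote the $\infty$-groupoid of spans $U\xleftarrow{f}Z\xrightarrow{g}Y$ with $f$ finite quasi-smooth satisfying $[\sL_f]+[g^*\xi]=0$ in $\pi_0K(Z)$. The forgetful maps $\h^\nfr_S(Y,\xi)\to\Pi$ and $\h^\fr_S(Y,\xi)\to\Pi$ are compatible, so that $\h^\nfr\to\h^\fr$ is the identity on $\Pi$. Over a span $\sigma=(f,g)\in\Pi(U)$, the fiber of $\h^\nfr_S(Y,\xi)(U)\to\Pi(U)$ is the colimit $\Emb^{g^*\xi}_U(Z,\A^\infty_U)$, while the fiber of $\h^\fr_S(Y,\xi)(U)\to\Pi(U)$ is the path space $\Maps_{K(Z)}(g^*\xi,-\sL_f)$. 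The induced map on fibers is precisely the comparison studied in Proposition \ref{prop:nfr-to-dfr}.

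Replacing $U$ by $X\times\A^\bullet$ and passing to realizations, Proposition \ref{prop:nfr-to-dfr} shows that the fiberwise map becomes an equivalence on realization for every span in every simplicial degree. Since $\h^\nfr_S(Y,\xi)(X\times\A^\bullet)$ and $\h^\fr_S(Y,\xi)(X\times\A^\bullet)$ each arise as the simplicial colimit, over $\Pi(X\times\A^\bullet)$, of their respective fibers, and geometric realization commutes with colimits of simplicial spaces, the total map is itself an equivalence on realization.

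\textbf{Main obstacle.} The delicate point is the compatibility of the fiber decomposition with the simplicial variable: the base $\Pi(X\times\A^\bullet)$ varies with $\bullet$, so one must check that face and degeneracy maps carry the fiber over a span $\sigma$ to the fiber over its base change $\sigma'$ (both on the $\Emb$ side and the $\Maps_K$ side), making the colimit decomposition natural in the simplicial direction. Once this naturality is established, the argument amounts to an application of Proposition \ref{prop:nfr-to-dfr} simplex by simplex.
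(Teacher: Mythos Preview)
Your approach is the same as the paper's: both deduce the corollary from Proposition~\ref{prop:nfr-to-dfr} by decomposing $\h^\nfr$ and $\h^\fr$ over the groupoid $\Pi$ of underlying spans and comparing fibers. The paper's proof is a one-liner invoking \cite[Lemma~2.3.12]{EHKSY1} (with ``finite syntomic'' replaced by ``finite quasi-smooth''), which packages precisely the passage from the fiberwise statement to the global one.

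There is, however, a genuine gap in your execution of that passage. Proposition~\ref{prop:nfr-to-dfr} is \emph{not} a levelwise statement: for a fixed span $\sigma$ over an affine $U$, it asserts an equivalence only after geometric realization in an auxiliary $\A^\bullet$-direction over $U$. So the phrase ``apply Proposition~\ref{prop:nfr-to-dfr} simplex by simplex'' does not make sense, and the justification ``geometric realization commutes with colimits'' does not close the argument: the indexing groupoid $\Pi(X\times\A^n)$ varies with $n$, and the fiber over a span $\sigma\in\Pi(X\times\A^n)$ is not known to match the fiber for $G$ at that level---only after introducing a \emph{second} $\A^\bullet$ and realizing. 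The obstacle you flag (naturality of the fiber decomposition in the simplicial variable) is real but routine; the substantive missing step is rather to show that the fiberwise $\A^1$-equivalence over the varying base $\Pi$ implies the global one. Concretely, one forms the bisimplicial object
\[
([m],[n])\longmapsto \colim_{\sigma\in\Pi(X\times\A^m)} P_\sigma\bigl((X\times\A^m)\times\A^n\bigr),
\]
applies Proposition~\ref{prop:nfr-to-dfr} after realizing in $n$ (with base $X\times\A^m$), and then compares the result with $\Lhtp\h^\nfr(X)$ by realizing in $m$; this is what \cite[Lemma~2.3.12]{EHKSY1} encodes. Once you invoke that lemma, your argument is complete and coincides with the paper's.
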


\begin{proof}
	This follows from Proposition~\ref{prop:nfr-to-dfr} using \cite[Lemma 2.3.12]{EHKSY1} (where one can harmlessly replace finite syntomic morphisms by finite quasi-smooth morphisms).
\end{proof}

Combining Corollaries~\ref{cor:efr-vs-nfr2} and \ref{cor:nfr-vs-dfr}, we obtain:

\begin{thm}\label{thm:fr-comparison}
	Let $Y$ be a smooth $S$-scheme and $\xi\in\sVect(Y)$ of rank $\geq 0$. Then the maps
	\[
	\h_S^\efr(Y,\xi) \to \h_S^\nfr(Y,\xi) \to \h_S^\fr(Y,\xi)
	\]
	in $\Pre_\Sigma(\dSch_S)$ are motivic equivalences.
\end{thm}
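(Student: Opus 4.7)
The plan is to conclude directly by chaining the two preceding corollaries, so essentially no new work is required. The first arrow $\h_S^\efr(Y,\xi) \to \h_S^\nfr(Y,\xi)$ is handled by Corollary~\ref{cor:efr-vs-nfr2}, whose hypotheses (smooth $Y$ over $S$, $\xi$ of rank $\geq 0$) coincide with ours and whose conclusion is exactly that this map is a motivic equivalence in $\Pre_\Sigma(\dSch_S)$.

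For the second arrow $\h_S^\nfr(Y,\xi) \to \h_S^\fr(Y,\xi)$, I would invoke Corollary~\ref{cor:nfr-vs-dfr}, which applies to any derived $S$-scheme $Y$ and yields an equivalence $L_\zar\Lhtp \h_S^\nfr(Y,\xi) \simeq L_\zar\Lhtp \h_S^\fr(Y,\xi)$. To promote this to a motivic equivalence, I would observe that the motivic localization $L_\mot$ factors through Zariski sheafification and $\A^1$-localization (since the Nisnevich topology refines the Zariski topology, and $L_\mot$ inverts $\A^1$-projections); thus any map that is an equivalence after $L_\zar\Lhtp$ is \emph{a fortiori} an $L_\mot$-equivalence, hence a motivic equivalence.

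There is no substantive obstacle here; the theorem is a bookkeeping statement packaging the two corollaries. All the real work has been done upstream: the reduction from an arbitrary smooth $Y$ to the case $Y$ admits an étale map to an affine bundle via Zariski descent (Propositions~\ref{prop:efr-descent}(iv) and \ref{prop:nfr-descent}(iv)) and the closed-gluing/lifting argument of Proposition~\ref{prop:extension} on the $\efr$-to-$\nfr$ side, and the acyclicity computation of Proposition~\ref{prop:nfr-to-dfr} identifying $\Emb^\xi_{X\times\A^\bullet}(Z\times\A^\bullet,\A^\infty_{X\times\A^\bullet})$ with $\Maps_{K(Z\times\A^\bullet)}(\xi,-\sL_f)$ on geometric realizations on the $\nfr$-to-$\fr$ side. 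Once those are in hand, the present theorem is just the two-step composition of the resulting motivic equivalences.
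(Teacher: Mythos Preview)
Your proposal is correct and matches the paper's own proof, which is literally the one-line remark ``Combining Corollaries~\ref{cor:efr-vs-nfr2} and~\ref{cor:nfr-vs-dfr}'' preceding the theorem. Your justification that an $L_\zar\Lhtp$-equivalence is a motivic equivalence is also sound, since both $\A^1$-equivalences and Zariski-local equivalences are inverted by $L_\mot$.
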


\ssec{Base change}
\label{ssec:base-change}

Let $X$ be a derived affine scheme and $Z\subset X$ a closed subscheme. We say that the pair $(X,Z)$ is \emph{henselian} if the underlying classical pair $(X_\cl,Z_\cl)$ is henselian \cite[Tag 09XD]{stacks}. By the topological invariance of the étale site \cite[Theorem 7.5.0.6]{HA}, $(X,Z)$ is henselian if and only if, for every étale affine $X$-scheme $Y$, the restriction map
\[
\Maps_X(X,Y) \to \Maps_X(Z,Y)
\]
is an effective epimorphism.

\begin{lem}\label{lem:lifting-isomorphisms}
	Let $(X,Z)$ be a henselian pair of derived affine schemes. Then the induced morphism of $\infty$-groupoids $\Vect(X) \to \Vect(Z)$ is $1$-connective.
\end{lem}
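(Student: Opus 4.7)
My strategy is to reduce to a standard classical statement. By \cite[Corollary 7.2.2.19]{HA} (or the simplicial commutative variant), for a derived affine scheme $W$ the truncation functor $\Vect(W)\to \Vect(W_\cl)$ is an equivalence of $\infty$-groupoids: every finite locally free $\sO_W$-module is the essentially unique flat lift of its restriction to $W_\cl$, and automorphism spaces are likewise computed on $W_\cl$. Applying this to $W=X$ and $W=Z$, the morphism in question is identified with $\Vect(X_\cl)\to\Vect(Z_\cl)$. Since by hypothesis $(X_\cl,Z_\cl)$ is a henselian pair in the classical sense, the problem reduces to a purely classical assertion about a surjection of rings $A\twoheadrightarrow A/I$ with $(A,I)$ henselian.

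Next I unpack what $1$-connectivity means: the fiber over each $\bar E\in\Vect(Z_\cl)$ is nonempty and connected. Nonemptiness (surjectivity on $\pi_0$) is the standard fact that finite projective modules lift along surjections from henselian pairs: idempotent endomorphisms of free $A/I$-modules lift to idempotent endomorphisms over $A$, so any direct summand $\bar E$ of $(A/I)^n$ is the reduction of a direct summand of $A^n$ (see \cite[Tag 0D4A]{stacks} for the henselian lifting of idempotents).

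For connectedness of the fiber, given two lifts $E,E'$ of $\bar E$, I want an isomorphism $\beta\colon E\to E'$ whose reduction is the identity on $\bar E$. Since $E$ is projective, the surjection $\sO_X\twoheadrightarrow \sO_Z$ induces a surjection $\Hom_{\sO_X}(E,E')\twoheadrightarrow \Hom_{\sO_Z}(\bar E,\bar E)$, so one lifts $\id_{\bar E}$ to some $\beta\colon E\to E'$. To see $\beta$ is an isomorphism, choose also a lift $\sigma\colon E'\to E$ of $\id_{\bar E}$; then $\sigma\beta\in \End_{\sO_X}(E)$ has image $\id$ in $\End_{\sO_Z}(\bar E)$, hence unit determinant in $A/I$. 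Since units lift along the henselian pair $(A,I)$, $\det(\sigma\beta)\in A^\times$ and $\sigma\beta$ is an automorphism; similarly for $\beta\sigma$, so $\beta$ is an isomorphism.

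The main obstacle, minor in the end, is the first reduction step: one must make sure that the identification $\Vect(X)\simeq \Vect(X_\cl)$ is compatible with pullback along the closed immersion $Z\hookrightarrow X$, so that the derived and classical restriction functors are identified up to equivalence. This is straightforward from the functoriality of the equivalence (pullback commutes with truncation), so no extra input is needed beyond the classical lifting results cited above.
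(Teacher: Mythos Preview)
Your reduction step contains an error: the map $\Vect(W)\to\Vect(W_\cl)$ is not an equivalence of $\infty$-groupoids. The result \cite[Corollary~7.2.2.19]{HA} only gives a bijection on $\pi_0$. Automorphism spaces differ in higher homotopy: for instance $\Aut_R(R)=R^\times$ has $\pi_i(R^\times)\cong\pi_i(R)$ for $i\geq 1$, whereas $(\pi_0R)^\times$ is discrete; so your claim that ``automorphism spaces are likewise computed on $W_\cl$'' is false whenever $W$ is nonclassical. The fix is easy and is exactly what the paper does: observe instead that $\Vect(W)\to\Vect(W_\cl)$ is $2$-connective, since it is bijective on $\pi_0$ and on $\pi_1$ (the latter because $\pi_0\Hom_R(P,Q)\simeq\Hom_{\pi_0R}(\pi_0P,\pi_0Q)$ for $P$ finite locally free). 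Two-connectivity of the vertical maps in the evident commutative square then suffices to reduce the $1$-connectivity of $\Vect(X)\to\Vect(Z)$ to that of $\Vect(X_\cl)\to\Vect(Z_\cl)$.

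Your treatment of the classical case is correct and somewhat more elementary than the paper's, which cites Gruson for both the lifting of projective modules and the lifting of isomorphisms (the latter viewed as sections of the smooth affine scheme $\Isom(P,Q)$). One small wording issue: ``units lift along the henselian pair'' is not quite what you use --- rather, you use that $I$ lies in the Jacobson radical of $A$ for any henselian pair $(A,I)$, so that an element of $A$ with unit image in $A/I$ is already a unit in $A$.
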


\begin{proof}
	The morphism $\Vect(X_\cl)\to \Vect(Z_\cl)$ is $0$-connective by \cite[Corollaire I.7]{Gruson}. If $P,Q\in \Vect_n(X_\cl)$, isomorphisms $P\simeq Q$ are sections of a $\GL_n$-torsor over $X_\cl$, which is in particular a smooth affine $X_\cl$-scheme. Using \cite[Théorème I.8]{Gruson}, this implies that $\Vect(X_\cl)\to \Vect(Z_\cl)$ is $1$-connective. It remains to observe that $\Vect(X) \to \Vect(X_\cl)$ is $2$-connective, because $\pi_0\Hom_R(P,Q) \simeq \Hom_{\pi_0(R)}(\pi_0(P),\pi_0(Q))$ when $P$ is a finite locally free $R$-module.
\end{proof}

\begin{lem}\label{lem:quasi-smooth-lift}
	Let $X$ be a derived affine scheme, $X_0\to X$ a closed immersion, and $Z_0$ an affine quasi-smooth $X_0$-scheme. Then there exists an affine quasi-smooth $X$-scheme $Z$ such that $Z\times_XX_0\simeq Z_0$.
\end{lem}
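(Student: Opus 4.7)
My plan is to present $Z_0$ Zariski-locally as a derived complete intersection in affine space over $X_0$, lift the defining equations to $X$, and then patch the resulting local lifts together. I first choose a closed immersion $i_0 \colon Z_0 \hookrightarrow \A^n_{X_0}$ by picking generators of $\sO(Z_0)$ as an $\sO(X_0)$-algebra. Because $Z_0 \to X_0$ is quasi-smooth, $i_0$ is a quasi-smooth closed immersion and its conormal $\sN_{i_0} = \sL_{i_0}[-1]$ is a finite locally free sheaf of some rank $r$ on $Z_0$; in particular, $\sN_{i_0}$ is trivial over affine Zariski opens covering $Z_0$.

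Working locally, on a Zariski open $U_0 \subset \A^n_{X_0}$ whose intersection with $Z_0$ has free conormal, I would choose $r$ functions $f_1, \dots, f_r$ in the defining ideal whose images form a basis of $\sN_{i_0}|_{Z_0 \cap U_0}$. Nakayama's lemma then gives, after further shrinking $U_0$, an identification
\[
Z_0 \cap U_0 \simeq U_0 \times^h_{\A^r_{X_0}} X_0,
\]
where $U_0 \to \A^r_{X_0}$ is $(f_1, \dots, f_r)$ and $X_0 \to \A^r_{X_0}$ is the zero section. I then lift each $f_i$ to a polynomial in $\sO(\A^n_X)$ (using surjectivity of $\sO(\A^n_X) \to \sO(\A^n_{X_0})$ on $\pi_0$) and lift $U_0$ to an open $U \subset \A^n_X$ (by topological invariance of the Zariski site along $X_0 \hookrightarrow X$). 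The fiber product $U \times^h_{\A^r_X} X$ is then an affine quasi-smooth $X$-scheme whose base change to $X_0$ recovers $Z_0 \cap U_0$.

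To globalize, I would cover $Z_0$ by opens on which the conormal trivializes, apply the preceding construction to each, and glue the resulting affine quasi-smooth $X$-schemes using Zariski descent for derived schemes. The main obstacle I anticipate is verifying the compatibility of the local lifts on overlaps: checking that two such lifts agree up to canonical equivalence requires the unobstructedness of quasi-smooth deformations, which rests on the Tor-amplitude bound for $\sL_{Z_0/X_0}$ killing the relevant obstruction classes. A cleaner alternative that bypasses the gluing altogether is to produce a single global presentation — for example by embedding $Z_0$ into the total space of a vector bundle over $\A^n_{X_0}$ whose restriction to $Z_0$ is identified with $\sN_{i_0}$ itself — and then lift that presentation to $X$ in one stroke.
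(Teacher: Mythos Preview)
Your main line of argument has a real gap at the gluing step. Local lifts of $Z_0$ over $X$ are far from unique: given two quasi-smooth $X$-schemes restricting to $Z_0\cap U_0$ over $X_0$, there is no mechanism to produce an isomorphism between them on overlaps, let alone a coherent system of such isomorphisms. Unobstructedness of quasi-smooth deformations tells you that lifts \emph{exist} along square-zero extensions, not that two given lifts are isomorphic; so invoking it does not rescue the gluing. Moreover, $X_0\hookrightarrow X$ is typically not a square-zero extension, so deformation theory does not directly apply anyway.

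Your ``cleaner alternative'' is the right instinct and is essentially what the paper does, but you are missing the key step that makes it work. The conormal $\sN_{i_0}$ lives on $Z_0$, and extending it to a vector bundle on $\A^n_{X_0}$ (or on $\A^n_X$) is generally impossible globally. The paper's fix is to replace $\A^n_X$ by an affine \emph{\'etale neighborhood} $V$ of $Z_0$ on which a bundle $\sE$ lifting $\sN_{i_0}$ exists; this uses that $(V,Z_0)$ and $(V,V_\cl)$ are henselian pairs together with \lemref{lem:lifting-isomorphisms}. Once $\sE$ is in hand, the paper does not embed $Z_0$ into $\bV(\sE)$ but rather cuts it out: using projectivity of $\sE_0$ and the Hurewicz surjection $\sI\twoheadrightarrow\sN_{i_0}$, one lifts the identification $\sE_0|_{Z_0}\simeq\sN_{i_0}$ to a map $\phi_0\colon\sE_0\to\sI\subset\sO_{V_0}$, then lifts $\phi_0$ to $\phi\colon\sE\to\sO_V$. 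The derived zero locus of $\phi$ is the desired $Z$ (after shrinking $V$ to discard a spurious component detected by Nakayama). This is a single global construction with no gluing.
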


\begin{proof}
	Choose a smooth affine $X$-scheme $V$ and a closed immersion $Z_0\to V_0=V\times_XX_0$ over $X_0$ (for example, $V=\A^n_X$).
	The conormal sheaf $\sN$ of the immersion $Z_0\to V_0$ is finite locally free. By \cite[Corollaire I.7]{Gruson} and the topological invariance of the étale site, replacing $V$ by an étale neighborhood of $Z_0$ if necessary, there exists a finite locally free sheaf on $V_\cl$ lifting $\sN|(Z_0)_\cl$. Hence, by Lemma~\ref{lem:lifting-isomorphisms} applied to the pairs $(Z_0,(Z_0)_\cl)$ and $(V,V_\cl)$, there exists a finite locally free sheaf $\sE$ on $V$ lifting $\sN$. Let $\sE_0$ be the pullback of $\sE$ to $V_0$ and let $\sI$ be the fiber of $\sO_{V_0} \to \sO_{Z_0}$. Recall that there is a canonical surjection $\epsilon\colon \sI\to \sN$ in $\QCoh^\cn(V_0)$ (see \ssecref{ssec:fr-comparison}). Since $\sE$ and $\sE_0$ are projective in their respective $\infty$-categories of connective quasi-coherent sheaves \cite[Proposition 7.2.2.7]{HA}, we can find successive lifts
	\[
	\begin{tikzcd}
		\sE \ar{d} \ar[dashed]{rr}{\phi} & & \sO_V \ar[two heads]{dd} \\
		\sE_0 \ar[dashed]{dr}{\phi_0} \ar{d} & & \\
		\sN & \sI \ar[two heads]{l}{\epsilon} \ar{r}[swap]{\iota} & \sO_{V_0}\rlap.
	\end{tikzcd}
	\]
	 By Nakayama's lemma, the morphism $\phi_0\colon \sE_0\to \sI$ is surjective in a neighborhood of $Z_0$ in $V_0$; hence, the quasi-smooth closed subscheme of $V_0$ defined by $\iota\circ\phi_0$ (i.e., the zero locus of the corresponding section of the vector bundle $\bV(\sE_0) \to V_0$) has the form $Z_0\sqcup K$. Replacing $V$ by an affine open neighborhood of $Z_0$ if necessary, we can assume $K=\emptyset$. Let $Z\subset V$ be the quasi-smooth closed subscheme defined by $\phi$. By construction, $Z\times_XX_0$ is the quasi-smooth closed subscheme of $V_0$ defined by $\iota\circ\phi_0$, which is $Z_0$.
\end{proof}

\begin{lem}\label{lem:nfr-LKE}
	Let $S=\Spec R$ be an affine scheme, $Y$ a smooth $S$-scheme with an étale map to a vector bundle over $S$, and $\xi\in \sVect_{\geq 0}(Y)$. Then the functor $\h_S^{\nfr}(Y,\xi)\colon \CAlg_R^\Delta\to \Spc$ is left Kan extended from $\CAlg_R^\mathrm{sm}$.
\end{lem}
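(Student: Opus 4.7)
The plan is to show that $\h_S^\nfr(Y,\xi)$ preserves sifted colimits of derived $R$-algebras. Since $\CAlg_R^\Delta = \Pre_\Sigma(\mathrm{Poly}_R)$ is the sifted cocompletion of $\mathrm{Poly}_R$ and $\mathrm{Poly}_R \subset \CAlg_R^\sm$, the $\infty$-category $\CAlg_R^\Delta$ is generated by $\CAlg_R^\sm$ under sifted colimits, so a functor on $\CAlg_R^\Delta$ preserves sifted colimits if and only if it is left Kan extended from $\CAlg_R^\sm$. Sifted colimits are generated by filtered colimits and geometric realizations; preservation of the former is exactly the finitary property (Proposition~\ref{prop:nfr-descent}(iii)), so the substance lies in preservation of geometric realizations. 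Since $\h_S^\nfr(Y,\xi)$ is itself the filtered colimit over $n$ of its level-$n$ pieces $\h_{S,n}^\nfr(Y,\xi)$ and filtered colimits of spaces commute with geometric realizations, it suffices to handle each fixed level $n$.

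At level $n$, I would unpack $\h_{S,n}^\nfr(Y,\xi)(A)$ as the $\infty$-groupoid of tuples $(Z,i,g,\tau)$, where $Z=\Spec B$ is a finite quasi-smooth derived $A$-scheme, $i\colon Z\hook\A^n_A$ is a closed immersion (equivalently, a $\pi_0$-surjective map $\pi\colon A[x_1,\dotsc,x_n]\to B$ of derived $A$-algebras), $g\colon Z\to Y$ is an $S$-morphism, and $\tau\colon \sN_i \simeq \sO_Z^{n-m}\oplus g^*(\sE)$ is a trivialization. I would argue that each component of the tuple assembles into a sifted-colimit-preserving functor of $A$. The pair $(B,\pi)$ lies in the moduli of finite quasi-smooth closed subschemes of $\A^n$, and the ability to descend such a subscheme along closed immersions is exactly Lemma~\ref{lem:quasi-smooth-lift}. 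Using the hypothesized étale map $p\colon Y\to \bV(\sF)$ to a vector bundle over $S$, the morphism $g$ corresponds, after passing to an étale neighborhood of $Z$ in $\A^n_A$ (controlled by the topological invariance of the étale site), to a section of a vector bundle on $Z$, i.e., to linear-algebraic data. Finally, $\tau$ is an isomorphism of finite locally free $B$-modules, whose deformation theory is governed by Lemma~\ref{lem:lifting-isomorphisms}.

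The main obstacle will be the coherent bookkeeping of these descents along a sifted diagram $(A_\alpha)\to A$ with $A_\alpha$ smooth $R$-algebras. Given a tuple $(B,\pi,g,\tau)$ over $A$, one must descend it to a tuple over some $A_\alpha$, together with higher coherences corresponding to the simplicial structure of the resolution. The algebra $B$ descends via Lemma~\ref{lem:quasi-smooth-lift} applied to the closed immersions arising from the stages of the sifted resolution. The map $g$ descends by first choosing its composite with $p$ (a section of a vector bundle, which is linear) and then lifting through the étale map using topological invariance. The trivialization $\tau$ descends by lifting through the projectivity of finite locally free connective modules, as in the proof of Proposition~\ref{prop:extension}. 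Running these descents compatibly at the level of $\pi_0$ and on all mapping spaces, one obtains the desired equivalence, establishing that $\h_{S,n}^\nfr(Y,\xi)$ preserves geometric realizations and hence that $\h_S^\nfr(Y,\xi)$ is left Kan extended from $\CAlg_R^\sm$.
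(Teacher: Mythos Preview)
Your opening equivalence is wrong, and this undermines the whole strategy. Preserving sifted colimits is equivalent to being left Kan extended from $\mathrm{Poly}_R$ (since $\CAlg_R^\Delta=\Pre_\Sigma(\mathrm{Poly}_R)$), and this certainly implies being left Kan extended from the larger category $\CAlg_R^\sm$; but the converse fails. A clean counterexample is $\G_m\colon A\mapsto A^\times$: it is a smooth affine $R$-scheme, hence left Kan extended from $\CAlg_R^\sm$ by Proposition~\ref{prop:akhil2}, but it does not preserve geometric realizations. Indeed, writing $k[\epsilon]/(\epsilon^2)$ as the bar construction $|k[x]\otimes k[y]^{\otimes\bullet}|$ for $k[y]\to k[x]$, $y\mapsto x^2$, and $k[y]\to k$, $y\mapsto 0$, one finds $|B_\bullet^\times|=k^\times$ while $(k[\epsilon]/(\epsilon^2))^\times\simeq k^\times\times k$. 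The same obstruction infects $\h_{S,n}^{\nfr}(Y,\xi)$: the datum $\tau$ involves automorphisms of finite locally free modules, and already in the simplest case $Y=S$, $\xi=0$, $n=1$, the functor records a unit in $A[t]/(f)$, which does not commute with geometric realizations.

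There is also a conceptual conflation in your second and third paragraphs: Lemmas~\ref{lem:quasi-smooth-lift} and~\ref{lem:lifting-isomorphisms} concern lifting along henselian surjections, not descent along arbitrary simplicial resolutions, and the two are genuinely different phenomena. The paper's proof exploits exactly this: it invokes Mathew's criterion (Proposition~\ref{prop:akhil}), which characterizes left Kan extension from $\CAlg_R^\sm$ by three conditions involving only \emph{henselian} surjections—finitary, effective-epi on henselian surjections, and Milnor patching. Condition~(1) is Proposition~\ref{prop:nfr-descent}(iii), condition~(3) is closed gluing (Proposition~\ref{prop:nfr-descent}(ii)), and the substance is condition~(2), where the ingredients you cite (Lemma~\ref{lem:quasi-smooth-lift}, the \'etale map to a vector bundle, Lemma~\ref{lem:lifting-isomorphisms}, together with Proposition~\ref{prop:Emb-lifting}) are used to lift each piece of the normally framed correspondence across a single henselian surjection. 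That is the correct home for those lemmas; they do not assemble into a proof of preservation of sifted colimits.
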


\begin{proof}
	We check conditions (1)–(3) of Proposition~\ref{prop:akhil}. Condition (1) holds by Proposition~\ref{prop:nfr-descent}(iii), and condition (3) is a special case of closed gluing (Proposition~\ref{prop:nfr-descent}(ii)).
	Let $(X,X_0)$ be a henselian pair of derived affine $R$-schemes, $f_0\colon Z_0\to X_0$ a finite quasi-smooth morphism, $i_0\colon Z_0\to \A^n_{X_0}$ a closed immersion over $X_0$, $g_0\colon Z_0\to Y$ an $S$-morphism, and $\tau_0$ an equivalence $\sN_{i_0}\simeq \sO_{Z_0}^{n-m}\oplus g_0^*(\sE)$, where $\xi=(\sE,m)$. We have to construct a lift of this data from $X_0$ to $X$. 
	Since $\h_S^\nfr(Y,\xi)$ is finitary (Proposition~\ref{prop:nfr-descent}(iii)) and $(X,X_1)$ is henselian for any $X_1$ containing $X_0$ \cite[Tag 0DYD]{stacks}, we can assume that $X_0\to X$ is finitely presented.
	By Lemma~\ref{lem:quasi-smooth-lift}, there exists an affine quasi-smooth lift $f\colon Z\to X$ of $f_0$. Replacing $Z$ by an open neighborhood of $Z_0$, we can assume $f$ quasi-finite. Moreover, by \cite[Corollary B.3.3.6]{SAG}, we have $Z=Z'\sqcup Z''$ where $Z'\to X$ is finite and $Z''\to X$ does not hit $X_0$; thus we can assume $f$ finite. By Proposition~\ref{prop:Emb-lifting}, increasing $n$ if necessary, we can also lift $i_0$ to a closed immersion $i\colon Z\to \A^n_X$ over $X$. 
	By assumption, there exists an étale map $h\colon Y \to V$ where $V$ is a vector bundle over $S$. Then the restriction map
	\[
	\Maps_S(Z, V) \to \Maps_S(Z_0,V)
	\]
	is an effective epimorphism (since $Z$ is affine), so the composite $h\circ g_0$ lifts to a map $Z\to V$. The projection $Y\times_VZ\to Z$ has a section over $Z_0$, hence over $Z$ since $h$ is affine étale and $(Z,Z_0)$ is henselian \cite[Proposition 18.5.6(i)]{EGA4-4}. If $g$ is the composite $Z\to Y\times_VZ\to Y$, then $g$ extends $g_0$. Finally, since $(Z,Z_0)$ is henselian, we can lift $\tau_0$ to an equivalence $\sN_{i}\simeq \sO_{Z}^{n-m}\oplus {g}^*(\sE)$ by Lemma~\ref{lem:lifting-isomorphisms}.
\end{proof}

\begin{thm}\label{thm:base-change}
	Let $f\colon S'\to S$ be a morphism of schemes, $Y$ a smooth $S$-scheme, and $\xi\in K(Y)$ of rank $\geq 0$. Then the canonical map
	\[
	f^*(\h_S^\fr(Y,\xi)|\Sm_S) \to \h_{S'}^\fr(Y_{S'},\xi_{S'})|\Sm_{S'}
	\]
	is a motivic equivalence.
\end{thm}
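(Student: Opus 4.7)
The plan is to reduce first to normally framed correspondences via \thmref{thm:fr-comparison}, and then exploit the left Kan extension property provided by \lemref{lem:nfr-LKE} to identify the pullback $f^*$ with the geometric base change. Concretely, by \thmref{thm:fr-comparison} the comparison map $\h^{\nfr}_S(Y,\xi) \to \h^{\fr}_S(Y,\xi)$ (and similarly over $S'$) is a motivic equivalence in $\Pre_\Sigma(\dSch_S)$. These restrict to motivic equivalences on $\Sm_S$, and $f^*$ is a colimit-preserving functor that takes motivic equivalences to motivic equivalences. Hence it suffices to prove the analogous statement with $\h^\nfr$ replacing $\h^\fr$.

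Next I would reduce to the case where $S = \Spec R$ is affine and $Y$ admits an étale map to a vector bundle over $S$. This uses Zariski descent on $S$ together with the Nisnevich descent for $\h^{\nfr}$ in the $Y$ variable (\propref{prop:nfr-descent}(iv)) and the existence of such covers \cite[Tag 054L]{stacks}; the Čech colimits involved are compatible with the base change along $f$, since covers and étale maps pull back. Additivity (\propref{prop:additivity}) allows us to arrange the single-map form needed in \propref{prop:nfr-descent}(iv).

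In this local setting, the key tool is \lemref{lem:nfr-LKE}: both $\h_S^{\nfr}(Y,\xi)$ on $\dAff_S$ and $\h_{S'}^{\nfr}(Y_{S'},\xi_{S'})$ on $\dAff_{S'}$ are left Kan extended from the respective subcategories of smooth affines. The crucial geometric observation is that for $X' \in \SmAff_{S'}$, viewed as a derived affine $S$-scheme via $f$, the canonical factorization $X'\to S'\to S$ produces a bijection between spans $X'\leftarrow Z\to Y$ over $S$ and spans $X'\leftarrow Z\to Y_{S'}$ over $S'$, and the normal framing data (the closed immersion $i\colon Z\hook \A^n_{X'}$ together with the trivialization of $\sN_i$) are unaffected by whether we work over $S$ or over $S'$. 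This yields a natural equivalence
\[
 \h_S^{\nfr}(Y,\xi)(X') \simeq \h_{S'}^{\nfr}(Y_{S'},\xi_{S'})(X').
\]
Combining with the LKE formula, which expresses $\h_S^{\nfr}(Y,\xi)(X')$ as the colimit of $\h_S^{\nfr}(Y,\xi)(X)$ over smooth affine $S$-schemes $X$ under $X'$, identifies the left-hand side with $f^*(\h_S^{\nfr}(Y,\xi)|\Sm_S)(X')$, using that maps $X'\to X$ over $S$ are the same as maps $X'\to X_{S'}$ over $S'$. Via Nisnevich descent on $\Sm_{S'}$, this extends to the required motivic equivalence on all smooth $S'$-schemes.

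The main obstacle will be the careful identification in the last step. One needs to verify that the colimit defining $f^*$, indexed a priori over all smooth $S$-schemes $X$ equipped with a map $X'\to X_{S'}$, agrees motivically with the colimit over smooth affine $S$-schemes coming from the LKE formula; since smooth affines are cofinal Zariski-locally, this is a motivic (and in fact Zariski-local) question. Secondary technical points include confirming that motivic equivalences in $\Pre_\Sigma(\dSch_S)$ indeed restrict to motivic equivalences on $\Sm_S$ in Step 1, and tracking the compatibility of the trivialization of $\sN_i$ under the bijection of spans in the LKE step.
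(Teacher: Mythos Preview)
Your approach is essentially the paper's: reduce to $Y$ admitting an étale map to a vector bundle over an affine $S$, invoke \lemref{lem:nfr-LKE} for the left Kan extension property, and combine with \thmref{thm:fr-comparison}. Two small points: since $\h^\nfr_S(Y,\xi)$ is only defined for $\xi\in\sVect(Y)$, your Steps~1 and~2 should be swapped (reduce $Y$ first so that $\xi$ lifts to $\sVect$), and the paper absorbs your ``main obstacle'' cleanly by observing at the outset that the unrestricted map on all of $\dSch_S$ is tautologically an equivalence, so the entire content is that $L_\mot\h_S^\fr(Y,\xi)$ on affine $S$-schemes is the motivic localization of a colimit of smooth affine representables.
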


\begin{proof}
This is obvious if $f$ is smooth, so we may assume $S$ and $S'$ affine. 
Note that if we do not restrict these presheaves to smooth schemes, this map is obviously an equivalence. It therefore suffices to show that $L_\mot \h_S^\fr(Y,\xi)$, viewed as a presheaf on affine $S$-schemes, is the motivic localization of a colimit of presheaves represented by smooth affine $S$-schemes.
If $Y$ is the filtered colimit of quasi-compact open subschemes, then $\h_S^\fr(Y,\xi)$ is the filtered colimit of the corresponding subpresheaves, so we can assume $Y$ quasi-compact. Then there exists a finite open cover $\{U_1,\dotsc,U_k\}$ of $Y$ by $S$-schemes admitting étale maps to vector bundles over $S$. The map $U_1\sqcup\dotsb\sqcup U_k\to Y$ is a Zariski covering map; by Proposition~\ref{prop:fr-descent}(iii), $L_\nis\h_S^\fr(-,\xi)$ preserves the colimit of its Čech nerve. Together with Proposition~\ref{prop:fr-additivity}, we can assume that $Y$ admits an étale map to a vector bundle over $S$.
 In this case, we know from Lemma~\ref{lem:nfr-LKE} that $\h_S^\nfr(Y,\xi)$ is left Kan extended from smooth affine $S$-schemes. Since we have a motivic equivalence $\h_S^\nfr(Y,\xi)\to \h_S^\fr(Y,\xi)$ by Theorem~\ref{thm:fr-comparison}, we are done.
\end{proof}

\section{Geometric models of motivic Thom spectra}
\label{sec:thom}

The main result of this section, Theorem~\ref{thm:thom-general}, identifies the motivic Thom spectrum $M\beta$ of any $\beta\colon B\to K_{\geq 0}$ with the framed suspension spectrum of a concrete framed motivic space, namely the moduli stack of finite quasi-smooth schemes with $\beta$-structure. We obtain this result in several steps:
\begin{itemize}
	\item In \ssecref{ssec:thom-vector-bundles}, we prove the theorem for Thom spectra of vector bundles over smooth $S$-schemes. This is essentially a generalization to arbitrary base schemes of a theorem of Garkusha, Neshitov, and Panin over infinite fields (which is used also in the proof of the motivic recognition principle). However, it is necessary to reformulate their result using tangentially framed correspondences to obtain an identification that is both natural and multiplicative in the vector bundle.
	\item In \ssecref{ssec:thom-virtual-bundles}, we extend the theorem to Thom spectra of the form $\Th_{Y/S}(\xi)$ where $Y$ is a smooth $S$-scheme and $\xi\in K(Y)$ has rank $\geq 0$.
	\item Finally, in \ssecref{ssec:thom-general}, we introduce the notion of $\beta$-structure, we recall the formalism of motivic Thom spectra, and we deduce the general theorem.
\end{itemize}
At each step we also obtain a computation of the infinite $\P^1$-loop spaces of these Thom spectra over perfect fields, using the motivic recognition principle. In \ssecref{ssec:MGL}, we specialize the main theorem to the motivic Thom spectra $\MGL$ and $\MSL$. Finally, in \ssecref{ssec:hilbert}, we rephrase our computations in terms of Hilbert schemes.

\ssec{Thom spectra of vector bundles}
\label{ssec:thom-vector-bundles}

Let $(\Sm_S)_{/\Vect}\to \Sm_S$ denote the Cartesian fibration classified by $\Vect\colon \Sm_S^\op \to \Spc$ (as the notation suggests, $(\Sm_S)_{/\Vect}$ is also a full subcategory of the overcategory $\Pre(\Sm_S)_{/\Vect}$). An object of $(\Sm_S)_{/\Vect}$ is thus a pair $(Y,\sE)$ where $Y$ is a smooth $S$-scheme and $\sE$ is a finite locally free sheaf on $Y$, and a morphism $(Y,\sE) \to (Y',\sE)$ is a pair $(f,\phi)$ where $f\colon Y\to Y'$ is an $S$-morphism and $\phi\colon \sE\simeq f^*(\sE')$ is an isomorphism in $\Vect(Y)$. Similarly, we denote by $(\Sm_S)_{/K_{\geq 0}}$ the $\infty$-category of pairs $(Y,\xi)$ where $Y$ is a smooth $S$-scheme and $\xi\in K(Y)$ is of rank $\geq 0$.

Since $\Vect$ and $K_{\geq 0}$ are presheaves of $\Einfty$-spaces (under direct sum), the $\infty$-categories $(\Sm_S)_{/\Vect}$ and $(\Sm_S)_{/K_{\geq 0}}$ acquire symmetric monoidal structures with
\[
(Y_1,\xi_1) \otimes (Y_2,\xi_2) = (Y_1\times_SY_2, \pi_1^*(\xi_1) \oplus \pi_2^*(\xi_2))
\]
(see \cite[\sectsign 2.2.2]{HA}).
The assignment $(Y,\xi)\mapsto \h_S^\fr(Y,\xi)$ is a right-lax symmetric monoidal functor $(\Sm_S)_{/K_{\geq 0}}\to \Pre_\Sigma(\Span^\fr(\Sm_S))$ (see Appendix~\ref{app:category}).

\begin{constr}\label{constr:Theta}
	Let $(Y,\sE)\in (\Sm_S)_{/\Vect}$ and let $\bV^\times(\sE)\subset\bV(\sE)$ denote the complement of the zero section of the vector bundle $\bV(\sE)$. We define a morphism
\begin{equation*}\label{eqn:thom-comparsion}
\Theta_{Y/S,\sE}\colon \h_S^\fr(\bV(\sE)/\bV^\times(\sE)) \to \h_S^\fr(Y,\sE)
\end{equation*}
in $\Pre_\Sigma(\Span^\fr(\Sm_S))$ as follows. Let $z\colon Y\hook \bV(\sE)$ be the zero section. Then $z$ is a regular closed immersion with a canonical equivalence $\sL_z \simeq \sE[1]$, whence an equivalence $\tau\colon \sL_z\simeq -\sE$ in $K(Y)$. The span
\[
\begin{tikzcd}
   & Y \ar[swap]{ld}{z}\ar{rd}{\id} & \\
 \bV(\sE) &   & Y
\end{tikzcd}
\]
together with the equivalence $\tau$ defines a canonical element of $\h_S^\fr(Y,\sE)(\bV(\sE))$. Moreover, its image in $\h_S^\fr(Y,\sE)(\bV^\times(\sE))$ is the empty correspondence, which is the zero element. This defines the desired map $\Theta_{Y/S,\sE}$. 
The morphism $\Theta_{Y/S,\sE}$ is clearly natural and symmetric monoidal in the pair $(Y,\sE)\in (\Sm_S)_{/\Vect}$.
\end{constr}

We now consider the diagram of symmetric monoidal $\infty$-categories
\begin{equation}\label{eqn:main-diagram}
\begin{tikzcd}
	(\Sm_S)_{/\Vect} \ar{r}{\Th} \ar{d} & \Pre_\Sigma(\Sm_S)_* \ar{d}{\gamma^*} \ar{r}{L_\mot} \ar[dl,shorten <>=10pt,Rightarrow,"\Theta"'] & \H(S)_* \ar{d}[swap]{\gamma^*} \ar{r}{\Sigma^\infty_\T} & \SH(S) \ar{d}{\simeq}[swap]{\gamma^*} \\
	(\Sm_S)_{/K_{\geq 0}} \ar{r}[swap]{\h^\fr} & \Pre_\Sigma(\Span^\fr(\Sm_S)) \ar{r}[swap]{L_\mot} & \H^\fr(S) \ar{r}[swap]{\Sigma^\infty_{\T,\fr}} & \SH^\fr(S)\rlap,
\end{tikzcd}
\end{equation}
where:
\begin{itemize}
	\item $\Th$ sends $(Y,\sE)$ to the quotient $\bV(\sE)/\bV^\times(\sE)$;
	\item $\h^\fr$ sends $(Y,\xi)$ to the the presheaf $\h^\fr_S(Y,\xi)$;
	\item $\Theta$ is the natural transformation with components $\Theta_{Y/S,\sE}$.
\end{itemize}
Note that $\Th$ and $\h^\fr$ are only right-lax symmetric monoidal, but all the other functors in this diagram are strictly symmetric monoidal (and $\Th$ becomes strictly monoidal after Zariski sheafification). The fact that $\gamma^*\colon \SH(S)\to \SH^\fr(S)$ is an equivalence was proved in \cite[Theorem 18]{framed-loc}.

Our goal in this subsection is to prove the following theorem:

\begin{thm}\label{thm:thom}
	Let $S$ be a scheme, $Y$ a smooth $S$-scheme, and $\sE$ a finite locally free sheaf on $Y$. Then $\Sigma^\infty_{\T,\fr}\Theta_{Y/S,\sE}$ is an equivalence. In other words, the boundary of~\eqref{eqn:main-diagram} is strictly commutative.
\end{thm}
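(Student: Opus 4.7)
The plan is to lift $\Theta_{Y/S,\sE}$ through the equationally framed model $\h^\efr_S(Y,\sE)$, invoke Voevodsky's Lemma (Remark~\ref{rem:voev-lemma}) to describe the latter explicitly as a naive $\T$-infinite loop presheaf of $\Sigma^\infty_\T\Th_{Y/S}(\sE)$, and then conclude via the framed reconstruction theorem of \cite[Theorem~18]{framed-loc}.

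First, the zero section $z\colon Y\hookrightarrow\bV(\sE)$ is a regular closed immersion with canonical conormal trivialization $\sN_z\simeq\sE$, and serves as the datum of level $n=0$ in the equational model for a universal element of $\h^\efr_S(Y,\sE)(\bV(\sE))$. This yields a natural morphism
\begin{equation*}
\Theta^{\efr}_{Y/S,\sE}\colon \h^\fr_S(\bV(\sE)/\bV^\times(\sE)) \to \h^\efr_S(Y,\sE)
\end{equation*}
in $\Pre_\Sigma(\Span^\fr(\Sm_S))$ whose composition with the forgetful map $\h^\efr_S(Y,\sE)\to\h^\fr_S(Y,\sE)$ recovers $\Theta_{Y/S,\sE}$. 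By Theorem~\ref{thm:fr-comparison}, this forgetful map is a motivic equivalence, so it suffices to show that $\Sigma^\infty_{\T,\fr}\Theta^{\efr}_{Y/S,\sE}$ is an equivalence.

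Second, Voevodsky's Lemma combined with the smash-multiplicativity of Thom spaces ($\Th_{Y/S}(\sO_Y^n\oplus\sE)\simeq\T^n\wedge\Th_{Y/S}(\sE)$) produces a natural identification
\begin{equation*}
L_\nis\h^\efr_S(Y,\sE)(X) \simeq \colim_{n\to\infty}\Maps\bigl(X_+\wedge\T^n,\,L_\nis(\T^n\wedge\Th_{Y/S}(\sE))\bigr),
\end{equation*}
presenting $L_\nis\h^\efr_S(Y,\sE)$ as a naive $\T$-infinite loop presheaf of $\Sigma^\infty_\T\Th_{Y/S}(\sE)$. Under this identification, $\Theta^{\efr}_{Y/S,\sE}$ is precisely the inclusion of level $n=0$, i.e., the unit of the $\Sigma^\infty_\T\dashv\Omega^\infty_\T$ adjunction applied to $\Th_{Y/S}(\sE)$.

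Third, by the framed reconstruction equivalence $\gamma^*\colon\SH(S)\simeq\SH^\fr(S)$ of \cite[Theorem~18]{framed-loc}, the source of $\Sigma^\infty_{\T,\fr}\Theta^{\efr}_{Y/S,\sE}$ is identified with $\gamma^*\Sigma^\infty_\T\Th_{Y/S}(\sE)$, while the target is identified with the same spectrum because stabilization sends the unit map to an equivalence in $\SH^\fr(S)$. Under these identifications, $\Sigma^\infty_{\T,\fr}\Theta^{\efr}_{Y/S,\sE}$ becomes the identity, yielding the desired equivalence.

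The main technical obstacle is the second step: one must verify that the identifications coming from Voevodsky's Lemma respect the full framed transfer structure on $\h^\efr_S(Y,\sE)$, not merely its underlying presheaf of spaces on $\Sm_S$. Tracking the $\Span^\fr(\Sm_S)$-functoriality through the sequential colimit in $n$, and matching it with the canonical framed structure on the Thom-space-valued side, is the delicate heart of the argument and is what requires the machinery developed in \ssecref{ssec:twisted-frames}.
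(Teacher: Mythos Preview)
Your proposal has a genuine gap that cannot be patched without importing the hard input you are trying to circumvent.

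The first problem is in Step~1: $\h^\efr_S(Y,\sE)$ is \emph{not} a presheaf on $\Span^\fr(\Sm_S)$. Equationally framed correspondences do not compose to give tangential framed transfers, so there is no morphism $\Theta^{\efr}$ in $\Pre_\Sigma(\Span^\fr(\Sm_S))$ as you claim. The comparison $\h^\efr_S(Y,\sE)\to\h^\fr_S(Y,\sE)$ of Theorem~\ref{thm:fr-comparison} is only a motivic equivalence of presheaves on $\Sm_S$, not of framed presheaves; consequently you cannot apply $\Sigma^\infty_{\T,\fr}$ to $\h^\efr_S(Y,\sE)$ at all.

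The second problem is that your Step~3 is circular. You need to compute $\Sigma^\infty_{\T,\fr}\h^\fr_S(Y,\sE)$ in $\SH^\fr(S)$. Since $\gamma^*$ is an equivalence, one could try to check this after applying $\gamma_*$, but $\gamma_*\Sigma^\infty_{\T,\fr}$ is not $\Sigma^\infty_\T\gamma_*$, so the Voevodsky's Lemma identification of the \emph{underlying} presheaf as a naive $\T$-loop space does not tell you what the \emph{framed} suspension spectrum is. The statement ``stabilization sends the unit map to an equivalence'' holds for $\Sigma^\infty_\T$ in $\SH(S)$, but that is not the functor being applied.

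The paper's proof takes a completely different route: it uses Theorem~\ref{thm:base-change} to show both source and target are stable under base change, reduces via Noetherian approximation and pointwise detection to $S=\Spec k$ for a field $k$, and then invokes the theorem of Garkusha--Neshitov--Panin (Theorem~\ref{thm:GNP}), which asserts precisely that $\h^\efr_k(\A^n_Y/(\A^n_Y-0))\to\h^\efr_k(Y,\sO^n)$ is a motivic equivalence. This GNP theorem is the irreducible hard input; your ``technical obstacle'' of matching framed transfers through Voevodsky's Lemma is essentially a reformulation of it, not a way around it.
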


One of the main inputs is the following theorem of Garkusha–Neshitov–Panin:

\begin{thm}[Garkusha–Neshitov–Panin]
	\label{thm:GNP}
	Let $k$ be an infinite field, $Y$ a smooth separated $k$-scheme of finite type, and $n\geq 0$. Then the canonical map
	\[
	\h^{\efr}_k(\A^n_Y /(\A^n_Y-0)) \to \h^{\efr}_k(Y,\sO^n)
	\]
	of presheaves on $\Sm_k$ is a motivic equivalence.
\end{thm}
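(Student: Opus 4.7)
This theorem is attributed to Garkusha, Neshitov, and Panin, so the plan is essentially to translate our formulation into the framework of \cite{gnp} and invoke their result. The content of the proof is an identification of two slightly different presentations of the same pro-object of framed correspondences, plus a citation of their stability theorem for framed motives.

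The first step is to unpack both sides using Voevodsky's lemma (Remark~\ref{rem:voev-lemma}). Taking $\xi=(\sO^n_Y,0)$ so that $m=0$, the right-hand side rewrites as
\[
\h^{\efr}_k(Y,\sO^n)(X)\simeq \colim_{N\to\infty} \Maps\!\Bigl(X_+\wedge (\P^1)^{\wedge N},\,L_\nis\bigl(\A^{N+n}_Y/(\A^{N+n}_Y-0)\bigr)\Bigr).
\]
On the other side, $\h^{\efr}_k(\A^n_Y/(\A^n_Y-0))$ is, by the level-$N$ description of equationally framed correspondences applied to a pointed motivic space, the colimit over $N$ of mapping spaces from $X_+\wedge(\P^1)^{\wedge N}$ into a Nisnevich-local model of $(\A^n_Y/(\A^n_Y-0))\wedge(\A^N/(\A^N-0))$.

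The canonical comparison map is then induced by the standard motivic identification
\[
(\A^n_Y/(\A^n_Y-0))\wedge(\A^N/(\A^N-0))\simeq \A^{N+n}_Y/(\A^{N+n}_Y-0),
\]
together with $\A^1/(\A^1-0)\simeq \P^1$ in $\H(k)_*$. With both sides cast in this form, I would invoke the main stability result of Garkusha-Neshitov-Panin~\cite{gnp}: for $Y$ smooth separated of finite type over an infinite field $k$, their theorem computes the framed motive of $\T^{\wedge n}\wedge Y_+$ and furnishes precisely the equivalence between framed correspondences into the pointed scheme $\A^n_Y/(\A^n_Y-0)$ and those into $Y$ with framing shifted by the trivial rank-$n$ bundle. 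Iteration in $N$ and passage to the colimit then give the desired motivic equivalence.

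The main obstacle is bookkeeping rather than conceptual: one must verify that the canonical comparison map described above matches, on the nose (at least up to homotopies that are invisible after motivic localization), the stability/suspension map appearing in~\cite{gnp}. This amounts to checking that the Thom isomorphism for the trivial rank-$n$ bundle $\sO^n_Y$ is realized by the obvious ``absorb $n$ coordinates'' map between pointed Thom spaces, compatibly with the equational framing data (the trivializations of conormal sheaves) on both sides. The hypothesis that $k$ is infinite enters only through its use in \cite{gnp}; any weakening of that hypothesis must be inherited from there.
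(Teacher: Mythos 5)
Your overall strategy --- translate the statement into the framework of Garkusha--Neshitov--Panin and cite their theorem --- is exactly what the paper does: its proof is a one-line citation of the level-$0$ part of \cite[Theorem 1.1]{gnp}, ``modulo the notation''. But as written your proposal has a genuine gap in the hypotheses: \cite[Theorem 1.1]{gnp} is proved for an infinite \emph{perfect} field, whereas the statement here allows an arbitrary infinite field. You assert that the infiniteness hypothesis ``enters only through its use in \cite{gnp}'', but the hypothesis you actually inherit from \cite{gnp} is infinite \emph{and perfect}; the paper closes this gap by invoking Druzhinin's generalization \cite{Druzhinin}, where perfectness is removed (it is only needed in \cite{gnp} to identify $L_\mot$ with $L_\nis\Lhtp$ when $n>0$). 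Without this extra input your argument only proves the theorem over infinite perfect fields.

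There is also a conceptual misstep in your translation of the left-hand side. The presheaf $\h^{\efr}_k(\A^n_Y/(\A^n_Y-0))$ is the \emph{formal} extension of $\h^{\efr}_k$ to pointed presheaves, i.e.\ a colimit/quotient of spaces of equationally framed correspondences; Voevodsky's lemma (Remark~\ref{rem:voev-lemma}) does \emph{not} identify it with
\[
\colim_{N\to\infty}\Maps\bigl(X_+\wedge(\P^1)^{\wedge N},\,L_\nis\bigl(\A^{N+n}_Y/(\A^{N+n}_Y-0)\bigr)\bigr).
\]
That description is valid for the right-hand side $\h^{\efr}_k(Y,\sO^n)$, and if it also held for the left-hand side the two presheaves would agree level by level and there would be nothing to prove. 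The entire content of the Garkusha--Neshitov--Panin theorem is precisely that the formal quotient of framed-correspondence spaces agrees, up to motivic equivalence, with the support-defined (Thom-space) model; the comparison map is the explicit one recorded after the statement, $(Z,U,\phi,g)\mapsto(Z\cap g_0^{-1}(0),U,(\phi,g_0),g_1)$. So the step you dismiss as bookkeeping --- matching this map with the map appearing in \cite{gnp} --- is the actual translation to be done, while your intermediate reformulation of the left-hand side should be dropped, since it implicitly assumes the conclusion.
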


Here, the left-hand side uses the formal extension of $\h^\efr_k$ to $\Pre_\Sigma(\Sm_k)_*$ (see \cite[2.1.10]{EHKSY1}). The ``canonical map'' sends an equationally framed correspondence $(Z,U,\phi,g)$ from $X$ to $\A^n_Y$, where $g=(g_0,g_1)\colon U\to \A^n\times Y$, to the correspondence $(Z\cap g_0^{-1}(0),U,(\phi,g_0),g_1)$.

\begin{proof}[Proof of Theorem~\ref{thm:GNP}]
	Modulo the notation, this follows from the level $0$ part of \cite[Theorem 1.1]{gnp}, which assumes that $k$ is an infinite perfect field. The result was generalized by Druzhinin in \cite{Druzhinin}, where it is made clear that it holds as stated here over any infinite field (the perfectness assumption only being needed to ensure that $L_\mot=L_\nis \Lhtp$ when $n>0$).
\end{proof}

\begin{lem}\label{lem:finite-fields}
	Let $k$ be a field and let $\sF\in\Pre_{\Sigma,\A^1}(\Span^\fr(\Sm_k))$. Suppose that $L_\nis(\sF_{K})$ is a grouplike presheaf of $\Einfty$-spaces on $\Sm_K$ for some separable algebraic field extension $K/k$. Then $L_\nis\sF$ is grouplike.
\end{lem}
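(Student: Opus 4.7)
The strategy is to exploit framed transfers along finite separable subextensions $L/k$ of $K$ to descend inverses from $L_\nis\sF_K$ back to $L_\nis\sF$. Given $X \in \Sm_k$ and $a \in \pi_0 L_\nis\sF(X)$, the aim is to produce an inverse in this commutative monoid. The hypothesis provides an inverse of the pullback $a_K \in \pi_0 L_\nis\sF_K(X_K)$, and we intend to push this inverse down to $k$ via a suitable framed transfer.

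Writing $K = \colim_L L$ as the filtered colimit of its finite separable subextensions and using that $L_\nis\sF$ interacts well with this colimit (via a finitary-type property on essentially smooth affines), one shows that the inverse is realized at a finite stage: there exist a finite separable $L/k$ of some degree $n$ and an element $b \in \pi_0 L_\nis\sF(X_L)$ satisfying $p_L^*(a) + b = 0$, where $p_L \colon X_L \to X$ is the base change of $\Spec L \to \Spec k$. Since $p_L$ is finite étale, its cotangent complex is canonically nullhomotopic, so the diagram $X \xleftarrow{p_L} X_L \xrightarrow{p_L} X$ is a framed correspondence in $\Span^\fr(\Sm_k)$. Applying the induced framed transfer $(p_L)_* \colon \sF(X_L) \to \sF(X)$ to the relation above yields
\[
(p_L)_*(p_L)^*(a) + (p_L)_*(b) = 0 \qquad \text{in } \pi_0 L_\nis\sF(X),
\]
so $(p_L)_*(p_L)^*(a)$ admits an inverse in $\pi_0 L_\nis\sF(X)$.

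Via the projection formula, the composite $(p_L)_*(p_L)^*$ can be identified with multiplication by the trace-form class $\langle \mathrm{Tr}_{L/k}\rangle \in \GW(k) = \pi_0 \mathbf{1}_k^{\fr}$, an element of rank $n$; this is verified at the level of $\pi_0$ by base-changing to a Galois closure $L'/k$ of $L$, where the isomorphism $X_L \times_k \Spec L' \simeq \bigsqcup_{i=1}^n X_{L'}$ makes the underlying framed span equivalent to $n$-fold addition. Running the construction for two finite separable extensions $L_1, L_2$ of coprime degrees $n_1, n_2$---available for every finite field via $\bF_{q^{n_i}}$, which is the main use case for this lemma in extending the Garkusha--Neshitov--Panin theorem beyond infinite fields---a B\'ezout-style argument in the commutative monoid $\pi_0 L_\nis\sF(X)$ then produces an inverse of $a$ itself from the inverses of the trace-multiples $\langle \mathrm{Tr}_{L_i/k}\rangle \cdot a$. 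The main obstacles are the descent step from $K$ to a finite subextension (which must be handled carefully in the absence of an explicit finitary assumption on $\sF$) and the $\GW(k)$-algebraic manipulation needed to convert trace-form inverses into integer-multiple inverses; the latter simplifies substantially over finite fields, where $\GW(k)$ has a particularly tractable structure.
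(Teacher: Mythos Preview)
Your overall strategy---descend the inverse to a finite separable subextension $k'\subset K$ and then push it back via a framed transfer---is exactly what the paper does. The divergence, and the gap, is in the final step.

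The paper does not use the canonical \'etale transfer $(p_{k'})_*$ and the trace form. Instead it invokes \cite[Proposition B.1.4]{EHKSY1}, which produces a specific framed correspondence $\phi\colon \Spec k\to \Spec k'$ such that the composite with the pullback acts as multiplication by the element $d_\epsilon=\sum_{i=0}^{d-1}\langle(-1)^i\rangle$ for $d=[k':k]$. The point is that $d_\epsilon=1+\langle-1\rangle\cdot(d-1)_\epsilon$ has $\langle 1\rangle$ as an explicit summand \emph{for every $d\geq 1$}. Writing $d_\epsilon=1+e$, from $d_\epsilon\alpha+\phi^*(\beta)=0$ one gets $\alpha+(e\alpha+\phi^*(\beta))=0$ immediately, with no coprimality or B\'ezout needed.

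Your route runs into two genuine problems. First, the lemma is stated for an arbitrary separable algebraic extension $K/k$; if $K/k$ happens to be finite of prime degree there are no subextensions of coprime degrees, so your argument does not prove the lemma as stated (though it would suffice for the application, where $K$ is chosen infinite). Second, even granting coprime degrees, you have inverses for $\langle\mathrm{Tr}_{L_i/k}\rangle\cdot a$, not for $n_i\cdot a$, and the trace form need not have $\langle 1\rangle$ as a summand (for instance $\langle\mathrm{Tr}_{\bQ(\sqrt 3)/\bQ}\rangle\simeq\langle 2,6\rangle$ does not represent $1$). Your proposal flags this ``$\GW(k)$-algebraic manipulation'' as an obstacle but does not resolve it; the paper's choice of $\phi$ sidesteps it entirely. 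The fix is simply to replace your transfer and B\'ezout step by the single-extension argument using $d_\epsilon$.
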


\begin{proof}
	Let $X$ be the henselization of a point in a smooth $k$-scheme and let $\alpha\in \pi_0(\sF(X))$.
	Note that $X_K$ is a finite sum of henselian local schemes. By the assumption and a continuity argument, there exists a finite separable extension $k'/k$ such that the image of $\alpha$ in $\pi_0(\sF(X_{k'}))$ has an additive inverse $\beta$. By \cite[Proposition B.1.4]{EHKSY1}, there exists a morphism $\phi$ from $\Spec k$ to $\Spec k'$ in $\Span^\fr(\Sm_k)$ such that $\phi^*(\alpha_{k'})=d_\epsilon\alpha$, where $d=[k':k]$. Hence, $d_\epsilon \alpha+\phi^*(\beta)=0$.
	Since $1$ is a summand of $d_\epsilon$, this implies that $\alpha$ has an additive inverse.
\end{proof}

\begin{lem}\label{lem:Nis-connected}
	Let $k$ be a field, $Y$ a smooth $k$-scheme, $\sE$ a finite locally free sheaf on $Y$ of rank $\geq 1$, and $\xi\in K(Y)$ of rank $\geq 1$. Then the Nisnevich sheaves
	\[
	L_\nis\Lhtp\h_k^\fr(\bV(\sE)/\bV^\times(\sE)) \quad\text{and}\quad L_\nis\Lhtp \h_k^\fr(Y,\xi)
	\]
	on $\Sm_k$ are grouplike. If $k$ is infinite, they are connected.
\end{lem}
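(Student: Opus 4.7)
The plan is to prove the connectedness statement (for infinite $k$) first, and then deduce the grouplike statement for arbitrary $k$ from it via \lemref{lem:finite-fields}.

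\textbf{Step 1 (reduction to equationally framed correspondences).} By \thmref{thm:fr-comparison}, the canonical maps $\h^\efr_k(Y, \xi) \to \h^\fr_k(Y, \xi)$ and $\h^\efr_k(\bV(\sE)/\bV^\times(\sE)) \to \h^\fr_k(\bV(\sE)/\bV^\times(\sE))$ (after extending $\h^\efr_k$ to pointed presheaves as in \cite[2.1.10]{EHKSY1}) are motivic equivalences. So it suffices to prove both assertions for $\h^\efr_k$. Moreover, a Zariski covering argument using \propref{prop:efr-descent}(iv), applied to a cover of $Y$ by opens that trivialize $\sE$, reduces the Thom space case to the case $\sE=\sO_Y^n$ with $n\ge 1$.

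\textbf{Step 2 (connectedness for infinite $k$).} Assume $k$ is infinite. \thmref{thm:GNP} (Garkusha–Neshitov–Panin) then yields a motivic equivalence $\h^\efr_k(\A^n_Y/(\A^n_Y-0)) \simeq \h^\efr_k(Y,\sO^n)$, which merges both statements of the lemma into the single assertion: for any smooth $Y$ and any $\xi\in\sVect_{\ge 0}(Y)$ of rank $\ge 1$, the motivic sheaf $L_\nis\Lhtp\h^\efr_k(Y,\xi)$ is connected. By Voevodsky's Lemma (\remref{rem:voev-lemma}), writing $\xi=(\sE,m)$, this presheaf is
\[
\h^\efr_k(Y,\xi)(X) \simeq \colim_{N}\Maps_*\!\bigl(X_+\wedge(\P^1)^{\wedge N},\,L_\nis \Th_Y(\sO^{N-m}\oplus\sE)\bigr),
\]
where the Thom space on the right is that of a vector bundle of rank $N-m+\rk\sE\ge N+1$, i.e.\ one with at least one trivial summand. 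Every equationally framed correspondence $(Z,i,\phi)$ at some finite level $N$ can then be connected to the empty correspondence via an $\A^1$-path obtained by stabilizing to level $N+1$ and using the extra trivial direction to slide the relevant defining function of the framing across $\A^1$ so that, at one endpoint, the support becomes empty. The positive-rank hypothesis is exactly what makes this extra trivial direction available; the argument is a direct analogue of standard contraction arguments for framed correspondences (cf.\ \cite[\sectsign 2]{EHKSY1}).

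\textbf{Step 3 (grouplike for arbitrary $k$).} The presheaves $\Lhtp\h^\fr_k(Y,\xi)$ and $\Lhtp\h^\fr_k(\bV(\sE)/\bV^\times(\sE))$ are $\A^1$-invariant $\Sigma$-presheaves on $\Span^\fr(\Sm_k)$ (by \propref{prop:fr-additivity}), hence objects of $\Pre_{\Sigma,\A^1}(\Span^\fr(\Sm_k))$ carrying a canonical $\Einfty$-structure via framed transfers. Choose any infinite separable algebraic extension $K/k$, e.g.\ $K=k^{\mathrm{sep}}$. By \thmref{thm:base-change}, the restriction to $\Sm_K$ is motivically equivalent to $\h^\fr_K(Y_K,\xi_K)|\Sm_K$ (and similarly for the Thom space version). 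Step 2 applied over $K$ shows these are Nisnevich-locally connected, hence grouplike. \lemref{lem:finite-fields} then upgrades grouplikeness from $K$ back to $k$.

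\textbf{Main obstacle.} The substantive input is the $\A^1$-contraction in Step 2. Once that is available, the comparison theorems, Garkusha–Neshitov–Panin, and base-change machinery do the rest formally. The delicate point in the contraction is verifying that the level-shifted intermediate data still define legitimate equationally framed correspondences of the right virtual dimension throughout the homotopy, which is where the positive-rank hypothesis is essential and where all the index bookkeeping has to be handled carefully.
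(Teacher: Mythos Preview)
Your overall structure matches the paper's: reduce to equationally framed correspondences, prove connectedness over infinite fields, and deduce the grouplike statement via \lemref{lem:finite-fields}. However, there is a genuine gap and a couple of smaller issues.

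\textbf{The gap in Step 2.} The ``slide the extra trivial coordinate'' sketch is not a proof. Having a trivial summand in the target Thom space does not by itself produce an $\A^1$-homotopy to the empty correspondence; one must exhibit an explicit deformation of $(Z,i,\phi)$ that keeps the data a valid equationally framed correspondence throughout and empties the support at one endpoint, which involves real bookkeeping (moving in the étale neighborhood, controlling the zero locus of the new framing function, etc.). You yourself flag this as the ``main obstacle,'' and indeed the paper does not attempt this inline: it reduces to the case where $Y$ admits an étale map to affine space and $\sE,\xi$ are trivial, and then invokes \cite[Lemma A.1]{gnp}, which is precisely the connectedness of $L_\nis\Lhtp\h^\efr_k(\A^n_Y/(\A^n_Y-0))$ and $L_\nis\Lhtp\h^\efr_k(Y,\sO^n)$ for $n\ge 1$ over an infinite field. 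Unless you supply an argument at that level of detail, Step~2 is incomplete.

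\textbf{Two smaller points.} First, in Step~1 you invoke \thmref{thm:fr-comparison} for $\xi\in K(Y)$, but that theorem is stated for $\xi\in\sVect_{\ge 0}(Y)$; you must first Zariski-localize $Y$ (via \propref{prop:fr-descent}(iii) and \propref{prop:fr-additivity}) to make $\xi$ trivial before passing to $\h^\efr$. The paper sidesteps this by working stalkwise: it tests connectedness on a henselian local $X$, observes that the finite $Z$ has a unique closed point, and shrinks $Y$ around its image so that both $\sE$ and $\xi$ become trivial and $Y$ acquires an étale map to affine space; only then does it pass to $\h^\efr$ via Corollaries~\ref{cor:efr-vs-nfr} and~\ref{cor:nfr-vs-dfr}. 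Second, your Step~3 is more elaborate than necessary: no appeal to \thmref{thm:base-change} is required. For $K/k$ separable algebraic and $X\in\Sm_K$ one has $\h^\fr_k(Y,\xi)(X)=\h^\fr_K(Y_K,\xi_K)(X)$ directly from the definition, and $\Lhtp$ commutes with restriction, so $L_\nis(\sF_K)$ is exactly the sheaf you computed over~$K$; \lemref{lem:finite-fields} then applies immediately.
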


In fact, these sheaves are connected even if $k$ is finite, see Remark~\ref{rem:connectivity} below.

\begin{proof}
	By Lemma~\ref{lem:finite-fields}, it suffices to prove the last statement.
	We must show that any section of $\h_k^\fr(\bV(\sE)/\bV^\times(\sE))$ or $\h_k^\fr(Y,\xi)$ over a henselian local scheme $X$ is $\A^1$-homotopic to $0$. We can assume that the finite $X$-scheme in such a section is connected and hence has a unique closed point. Thus, we can shrink $Y$ so that it admits an étale map to an affine space and so that $\sE$ and $\xi$ are trivial.
	 By Corollaries~\ref{cor:efr-vs-nfr} and \ref{cor:nfr-vs-dfr}, it then suffices to show that the sheaves
	\[
	L_\nis\Lhtp\h_k^\efr(\A^n_Y/(\A^n_Y-0)) \quad\text{and}\quad L_\nis\Lhtp \h_k^\efr(Y,\sO^n)
	\]
	are connected when $n\geq 1$. This is precisely \cite[Lemma A.1]{gnp}.
\end{proof}

\begin{prop}\label{prop:GNP}
	Let $k$ be a field, $Y$ a smooth $k$-scheme, and $\sE$ a finite locally free sheaf on $Y$. 
	Then the map $L_\mot\Theta_{Y/k,\sE}$ is an equivalence in $\H^{\fr}(k)$.
\end{prop}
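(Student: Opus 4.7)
My approach is to reduce to the case of trivial $\sE$ via descent, apply the Garkusha--Neshitov--Panin theorem via the equationally framed comparison, and handle finite fields by a transfer argument.

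For the reduction to $\sE = \sO_Y^n$, note that after motivic localization, both $\h^\fr_k(\bV(\sE)/\bV^\times(\sE))$ and $\h^\fr_k(Y,\sE)$ are compatible with Zariski covers of $Y$ trivializing $\sE$. For the right-hand side this uses Propositions~\ref{prop:fr-descent}(iii) and~\ref{prop:fr-additivity}; the left-hand side is similarly controlled by the Zariski-local behaviour of the pointed presheaf $\bV(\sE)/\bV^\times(\sE)$, together with the naturality of $\Theta$ in $(Y,\sE)$. I may therefore assume $\sE = \sO_Y^n$, so that $\bV(\sE)/\bV^\times(\sE)$ identifies with $\A^n_Y/(\A^n_Y - 0)$ and $\Theta_{Y/k,\sE}$ becomes the natural map induced by the zero section $z\colon Y \to \A^n_Y$. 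The case $n=0$ is tautological, so I assume $n\ge 1$.

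For infinite $k$, Theorem~\ref{thm:fr-comparison} gives motivic equivalences $\h^\efr_k \to \h^\fr_k$ (extended to pointed presheaves as in \cite[2.1.10]{EHKSY1}); it therefore suffices to prove the corresponding statement for the equationally framed map $\Theta^\efr$. A direct comparison of definitions shows that $\Theta^\efr$ coincides with the canonical map of Theorem~\ref{thm:GNP}: the span built from $z$ corresponds, under the forgetful map $\h^\efr \to \h^\fr$, to the operation $(Z,U,\phi,g_0,g_1) \mapsto (Z \cap g_0^{-1}(0), U, (\phi,g_0), g_1)$. Theorem~\ref{thm:GNP} then finishes the infinite-field case.

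For finite $k$, I would pass to the stable setting. By Lemma~\ref{lem:Nis-connected}, both the source and the target of $\Theta$ are connected motivic $\Einfty$-spaces (even over finite $k$, by the forthcoming Remark~\ref{rem:connectivity}), so an equivalence in $\H^\fr(k)$ can be checked after passage to $\SH^\fr(k)$. Base changing to an infinite algebraic extension $K/k$, Theorem~\ref{thm:base-change} identifies the resulting map with $L_\mot\Theta$ over $K$, which is an equivalence by the previous step. A transfer argument analogous to the proof of Lemma~\ref{lem:finite-fields}, combined with continuity along $K = \colim_\alpha k_\alpha$ through finite separable subextensions, then shows that the cofiber of $L_\mot\Theta$ in $\SH^\fr(k)$ vanishes: any element in the homotopy groups of this cofiber over a henselian local scheme $X$ is killed upon base change to some $X_{k_\alpha}$, and hence, via the framed transfer $\Spec k_\alpha \to \Spec k$, is annihilated by $[k_\alpha:k]_\epsilon \in \GW(k)$; choosing two such extensions with coprime degrees then forces the element to vanish.

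The main obstacle is this last step. While Lemma~\ref{lem:Nis-connected} and Theorem~\ref{thm:base-change} together reduce the problem over a finite field to a descent statement in $\SH^\fr(k)$, making the transfer argument rigorous — in particular, arranging the coprime-degree extension pattern so that the resulting $\GW(k)$-relations actually kill the homotopy classes — requires a slightly more delicate analysis than the grouplike-descent argument of Lemma~\ref{lem:finite-fields}. The reduction to trivial $\sE$ and the infinite-field case, by contrast, are direct combinations of results already established.
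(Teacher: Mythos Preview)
Your approach is essentially the paper's: reduce to trivial $\sE$, handle infinite $k$ by combining Theorem~\ref{thm:GNP} with Theorem~\ref{thm:fr-comparison} (checking that the equationally framed and tangentially framed comparison maps fit into a commutative square), and handle finite $k$ via Lemma~\ref{lem:Nis-connected} plus a descent/transfer argument.

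Two remarks. First, your appeal to Remark~\ref{rem:connectivity} for connectedness over finite $k$ is circular: that remark relies on Corollary~\ref{cor:thom-bundles}, which in turn depends on the present proposition. This is harmless, however, since you only need that source and target are \emph{grouplike}, which Lemma~\ref{lem:Nis-connected} already gives for any field $k$; combined with the recognition principle (full faithfulness of $\Sigma^\infty_{\T,\fr}$ on $\H^\fr(k)^\gp$ for $k$ perfect, and finite fields are perfect), that suffices to pass to $\SH^\fr(k)$. Second, for the finite-field step the paper simply invokes \cite[Corollary B.2.5(2)]{EHKSY1}, which packages precisely the transfer-via-finite-extensions mechanism you are sketching; your coprime-degree outline is the right idea, but as you correctly flag, the $\GW(k)$-arithmetic (that $d_\epsilon\alpha=0$ and $d'_\epsilon\alpha=0$ with $\gcd(d,d')=1$ force $\alpha=0$) requires the details supplied by that reference rather than a bare analogy with Lemma~\ref{lem:finite-fields}.
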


\begin{proof}
	As in the proof of Theorem~\ref{thm:base-change}, we can assume $Y$ separated of finite type and $\sE$ trivial. We can also assume $\sE$ of rank $\geq 1$, since the statement is tautological when $\sE=0$.
	If $k$ is infinite, the result follows by combining Theorems~\ref{thm:GNP} and \ref{thm:fr-comparison}, noting that the square
\[
\begin{tikzcd}
	\h^{\efr}_k(\A^n_Y /(\A^n_Y-0)) \ar{r} \ar{d} & \h^{\efr}_k(Y,\sO^n) \ar{d} \\
	\h^{\fr}_k(\A^n_Y /(\A^n_Y-0)) \ar{r}{\Theta} & \h^{\fr}_k(Y,\sO^n)
\end{tikzcd}
\]
is commutative.
	In light of Lemma~\ref{lem:Nis-connected}, the result for $k$ finite follows immediately from \cite[Corollary B.2.5(2)]{EHKSY1}.
\end{proof}

\begin{proof}[Proof of Theorem~\ref{thm:thom}]
	The source and target of $\Sigma^\infty_{\T,\fr}L_\mot\Theta_{Y/S,\sE}$ are both stable under base change: this is obvious for the source, and for the target it follows from Theorem~\ref{thm:base-change}.
	The question is in particular local on $S$, so we can assume $S$ qcqs. We can also assume $Y$ qcqs as in the proof of Theorem~\ref{thm:base-change}. By Noetherian approximation, we can then assume $S$ of finite type over $\Spec \Z$. In this case, equivalences in $\SH(S)$ are detected pointwise on $S$ \cite[Proposition B.3]{norms}, so we can assume that $S$ is the spectrum of a field. Now the claim follows from Proposition~\ref{prop:GNP}.
\end{proof}

\ssec{Thom spectra of nonnegative virtual vector bundles}
\label{ssec:thom-virtual-bundles}

\begin{thm}\label{thm:thom-bundles}
	Let $Y$ be a smooth $S$-scheme and $\xi\in K(Y)$ of rank $\geq 0$. Then there is an equivalence
	\[
	\Th_{Y/S}(\xi) \simeq \Sigma^\infty_{\T,\fr} \h_S^\fr(Y,\xi)
	\]
	in $\SH(S)\simeq \SH^\fr(S)$, natural and symmetric monoidal in $(Y,\xi)$.
\end{thm}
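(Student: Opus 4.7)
The plan is to bootstrap from Theorem \ref{thm:thom} (the case of an actual vector bundle) to the virtual case by combining the defining shift-invariance of Thom spectra with a ``framed Thom isomorphism'' on the other side.

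First, I would extend Construction \ref{constr:Theta} to produce a symmetric monoidal natural transformation
\[
\Theta_{Y/S,\xi}\colon \Th_{Y/S}(\xi)\longrightarrow \Sigma^\infty_{\T,\fr}\h^\fr_S(Y,\xi)
\]
as functors on $(\Sm_S)_{/K_{\geq 0}}$. Both $\Th$ and $\h^\fr$ are functorial in $K$-theory classes (not only in honest bundles), so this extension is forced by naturality: the ``canonical zero-section class'' that builds $\Theta$ makes sense whenever one has a quasi-smooth morphism and a $K$-theoretic equivalence $\sL_f\simeq -g^*\xi$, as opposed to a literal splitting.

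Next, I would reduce to a local statement. Both sides are Nisnevich sheaves (for the framed side, this is Proposition~\ref{prop:fr-descent}(i); for the Thom spectrum side, Thom spectra are themselves Nisnevich-local in $Y$), so it suffices to work Zariski-locally on $Y$. Locally any $\xi\in K_{\geq 0}(Y)$ has the form $[\sE]-[\sO^n]$ for a vector bundle $\sE$ of rank $\geq n$. In this local situation the Thom spectrum is by definition $\Th_{Y/S}(\xi)\simeq \Sigma^{-n}_\T\Th_{Y/S}(\sE)$, and the core of the proof reduces to establishing a natural equivalence
\[
\Sigma^n_\T\Sigma^\infty_{\T,\fr}\h^\fr_S(Y,\eta)\simeq \Sigma^\infty_{\T,\fr}\h^\fr_S(Y,\eta+[\sO^n])
\]
in $\eta\in K_{\geq 0}(Y)$; applying it to $\eta=\xi$ and combining with Theorem~\ref{thm:thom} for $(Y,\sE)$ yields
\[
\Sigma^\infty_{\T,\fr}\h^\fr_S(Y,\xi)\simeq \Sigma^{-n}_\T\Sigma^\infty_{\T,\fr}\h^\fr_S(Y,\sE)\simeq \Sigma^{-n}_\T\Th_{Y/S}(\sE)\simeq \Th_{Y/S}(\xi).
\]

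The main obstacle is the framed Thom isomorphism displayed above. I would derive it from Theorem~\ref{thm:thom} applied to the trivial rank-$n$ bundle $\sO^n$ over $S$, combined with the right-lax symmetric monoidal structures in diagram \eqref{eqn:main-diagram}: the pairing $(Y,\eta)\otimes (S,\sO^n)\mapsto (Y,\eta+[\sO^n])$ on the $K_{\geq 0}$-slice corresponds to smashing with $\Th_{S/S}(\sO^n)=\T^{\wedge n}$ on the Thom side and, through the right-lax monoidality of $\h^\fr$, to the natural external product map $\Sigma^\infty_{\T,\fr}\h^\fr_S(Y,\eta)\wedge \T^{\wedge n}\to \Sigma^\infty_{\T,\fr}\h^\fr_S(Y,\eta+[\sO^n])$. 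That this external product is an equivalence after stabilization is precisely the statement of Theorem~\ref{thm:thom} applied to $(S,\sO^n)$, propagated along $\eta$ by naturality. The delicate point is coherence: one must verify that the extension of $\Theta$ from $\Vect$ to $K_{\geq 0}$ is compatible with these shift identifications in a symmetric monoidal way, so that the resulting equivalence is not merely objectwise but natural and monoidal in $(Y,\xi)$ as claimed.
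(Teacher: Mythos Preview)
Your outline captures the objectwise content of the argument, but the gap is exactly the step you flag as ``delicate'' and then leave open: constructing the extension of $\Theta$ from $(\Sm_S)_{/\Vect}$ to $(\Sm_S)_{/K_{\geq 0}}$ as a symmetric monoidal natural transformation. Saying this is ``forced by naturality'' is not enough. The original $\Theta_{Y/S,\sE}$ is built from the zero section of an honest vector bundle; for a virtual $\xi$ there is no such geometric input, so there is nothing to write down directly. What you need is a universal property that propagates $\Theta$ from $\Vect$ to $K_{\geq 0}$, and that is the part of the proof you have not supplied.

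The paper's argument provides precisely this mechanism, organized a bit differently from your sketch. One first restricts to $Y=S$ and observes that the stabilized functors $\Th_{S/S}(-)$ and $\Sigma^\infty_{\T,\fr}\h^\fr_S(S,-)$ both land in $\Pic(\SH(S))$ and are \emph{strictly} symmetric monoidal on $K_{\geq 0}(S)$ (the latter check is essentially your ``framed Thom isomorphism'', verified by reducing locally to actual bundles and invoking Theorem~\ref{thm:thom}). Since both sides take values in a groupoid of invertible objects, the natural equivalence on $\Vect(S)$ extends uniquely to $\Vect(S)^\gp$; one then passes to $K(S)$ using that $\Vect(-)^\gp\to K(-)$ is a Zariski equivalence and $\SH(-)$ is a Zariski sheaf. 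Only after this is the equivalence globalized to arbitrary smooth $Y$ via unstraightening and $f_\sharp$, with a final local check (again reducing to Theorem~\ref{thm:thom}) that $f_\sharp\Sigma^\infty_{\T,\fr}\h^\fr_Y(Y,\xi)\to\Sigma^\infty_{\T,\fr}\h^\fr_S(Y,\xi)$ is an equivalence. Your local computation matches this last step, but without the $\Pic$-valued/group-completion argument you have no global map to be checking.
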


\begin{proof}
	We consider the following restriction of the diagram~\eqref{eqn:main-diagram}:
	\[
	\begin{tikzcd}
		\Vect(S) \ar{r}{\Th} \ar{d} & \Pre_\Sigma(\Sm_S)_* \ar{d}{\gamma^*} \ar{r}{L_\mot} \ar[dl,shorten <>=10pt,Rightarrow,"\Theta"'] & \H(S)_* \ar{d}[swap]{\gamma^*} \ar{r}{\Sigma^\infty_\T} & \SH(S) \ar{d}{\simeq}[swap]{\gamma^*} \\
		K_{\geq 0}(S) \ar{r}[swap]{\h^\fr} & \Pre_\Sigma(\Span^\fr(\Sm_S)) \ar{r}[swap]{L_\mot} & \H^\fr(S) \ar{r}[swap]{\Sigma^\infty_{\T,\fr}} & \SH^\fr(S)\rlap.
	\end{tikzcd}
	\]
	This is a diagram of symmetric monoidal $\infty$-categories and right-lax symmetric monoidal functors, which is natural in $S$.
	By Theorem~\ref{thm:thom}, the boundary of this diagram is strictly commutative. The composite of the top row is strictly symmetric monoidal and lands in $\Pic(\SH(S))$, hence it extends uniquely to a symmetric monoidal functor $\Vect(S)^\gp\to \SH(S)$.
	We claim that the composite of the bottom row is also strictly symmetric monoidal, i.e., that for $\xi,\eta\in K_{\geq 0}(S)$ the structural map 
	\[
	\Sigma^\infty_{\T,\fr} \h_S^\fr(S,\xi) \otimes \Sigma^\infty_{\T,\fr} \h_S^\fr(S,\eta) \to \Sigma^\infty_{\T,\fr} \h_S^\fr(S,\xi+\eta)
	\]
	is an equivalence. Indeed, this assertion is local on $S$, so we can assume that $\xi$ and $\eta$ are finite locally free sheaves on $S$, in which case the claim follows from the commutativity of the diagram. Similarly, the composite of the bottom row lands in $\Pic(\SH^\fr(S))$, as can be checked locally on $S$. Thus, the bottom row extends uniquely to a symmetric monoidal functor $K_{\geq 0}(S)^\gp \simeq K(S) \to \SH^\fr(S)$, and we have an induced commutative diagram
	\[
	\begin{tikzcd}
		\Vect(S)^\gp \ar{r}{\Th} \ar{d} & \SH(S) \ar{d}{\simeq}[swap]{\gamma^*} \\
		K(S) \ar{r}[swap]{\h^\fr} & \SH^\fr(S)\rlap,
	\end{tikzcd}
	\]
	still natural in $S$.
	Since the canonical map $\Vect(-)^\gp\to K(-)$ is a Zariski equivalence and $\SH(-)$ is a Zariski sheaf,
	the top horizontal map factors uniquely through $K(-)$, giving rise to the motivic J-homomorphism $\Th_{S/S}(-)\colon K(S)\to \SH(S)$ \cite[\sectsign 16.2]{norms}. Hence, we obtain a symmetric monoidal equivalence
	\[
	\Th_{S/S}(-) \simeq \Sigma^\infty_{\T,\fr} \h_S^\fr(S,-) \colon K_{\geq 0}(S) \to \SH(S),
	\]
	natural in $S$. Unstraightening over $\Sm_S$ and composing with the symmetric monoidal functor
	\[
	(\Sm_S)_{/\SH^\simeq} \to \SH(S), \quad  (f\colon Y\to S, E\in\SH(Y)) \mapsto f_\sharp E,
	\]
	constructed in \cite[\sectsign 16.3]{norms}, we obtain an equivalence
	\[
	f_\sharp\Th_{Y/Y}(\xi) \simeq f_\sharp \Sigma^\infty_{\T,\fr} \h_Y^\fr(Y,\xi)
	\]
	which is natural and symmetric monoidal in $(f\colon Y\to S,\xi)\in(\Sm_S)_{/K_{\geq 0}}$.
	We now define the desired symmetric monoidal natural transformation $\Th_{Y/S}(\xi)\to \Sigma^\infty_{\T,\fr}\h_S^\fr(Y,\xi)$ by the commutative square
	\[
	\begin{tikzcd}
	f_\sharp\Th_{Y/Y}(\xi) \ar{r}{\simeq} \ar{d}[swap]{\simeq} & \Th_{Y/S}(\xi) \ar[dashed]{d} \\
	f_\sharp \Sigma^\infty_{\T,\fr} \h_Y^\fr(Y,\xi) \ar{r} & \Sigma^\infty_{\T,\fr}\h_S^\fr(Y,\xi)\rlap.
	\end{tikzcd}
	\]
It remains to show that the lower horizontal map is an equivalence.
	The assertion is local on $Y$ (by Propositions~\ref{prop:fr-descent}(iii) and \ref{prop:fr-additivity}), so we can assume that $\xi$ is a finite locally free sheaf. In this case, we know that the right vertical map is an equivalence by Theorem~\ref{thm:thom}, which concludes the proof.
\end{proof}

\begin{cor}\label{cor:thom-bundles}
	Let $k$ be a perfect field, $Y$ a smooth $k$-scheme, and $\xi\in K(Y)$ of rank $\geq 0$. Then there is an equivalence
	\[
	\Omega^\infty_{\T,\fr}\Th_{Y/k}(\xi) \simeq L_\zar \Lhtp\h_k^\fr(Y,\xi)^\gp
	\]
	in $\H^\fr(k)$, natural and symmetric monoidal in $(Y,\xi)$. Moreover, if the rank of $\xi$ is $\geq 1$, then $L_\nis \Lhtp\h_k^\fr(Y,\xi)$ is already grouplike.
\end{cor}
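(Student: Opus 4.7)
The plan is to apply $\Omega^\infty_{\T,\fr}$ to the equivalence from Theorem~\ref{thm:thom-bundles} and then invoke the motivic recognition principle over the perfect field $k$. So I would start from the equivalence $\Th_{Y/k}(\xi)\simeq \Sigma^\infty_{\T,\fr}\h_k^\fr(Y,\xi)$ in $\SH^\fr(k)\simeq \SH(k)$, which is natural and symmetric monoidal in $(Y,\xi)$. Since $\xi$ has rank $\geq 0$, this Thom spectrum is very effective, so we are in the situation where the recognition principle applies.

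Applying $\Omega^\infty_{\T,\fr}$, the motivic recognition principle \cite[Theorem 3.5.14]{EHKSY1} identifies $\Omega^\infty_{\T,\fr}\Sigma^\infty_{\T,\fr}$ with group completion on $\H^\fr(k)$; equivalently, on a general $\Sigma$-presheaf $\sF$ over $\Span^\fr(\Sm_k)$, the composite computes $(L_\mot\sF)^\gp\simeq L_\mot(\sF^\gp)$. This already gives the naturality and symmetric monoidality of an equivalence
\[
\Omega^\infty_{\T,\fr}\Th_{Y/k}(\xi)\simeq L_\mot\bigl(\h_k^\fr(Y,\xi)^\gp\bigr)
\]
in $\H^\fr(k)$.

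The next step is to rewrite the right-hand side as $L_\zar\Lhtp\h_k^\fr(Y,\xi)^\gp$. On a grouplike $\Sigma$-presheaf on $\Span^\fr(\Sm_k)$ over a perfect field, $L_\mot$ factors as $L_\nis\Lhtp$ by strict $\A^1$-invariance for framed presheaves (the version of the Garkusha--Panin theorem developed in \cite{EHKSY1}). Moreover, for grouplike $\A^1$-invariant framed Nisnevich sheaves, Nisnevich and Zariski sheafifications coincide---this is the framed analogue of the classical fact for sheaves with Voevodsky transfers. Combining these two steps gives $L_\mot=L_\zar\Lhtp$ on grouplike framed $\Sigma$-presheaves, yielding the claimed description. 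The final assertion, in the case $\mathrm{rk}(\xi)\geq 1$, is immediate from Lemma~\ref{lem:Nis-connected}: that lemma shows $L_\nis\Lhtp\h_k^\fr(Y,\xi)$ is connected, so already grouplike, and group completion is then redundant.

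The mathematical content here is essentially all inherited from prior results: Theorem~\ref{thm:thom-bundles} provides the geometric model, and the recognition principle provides the computation of $\Omega^\infty_{\T,\fr}\Sigma^\infty_{\T,\fr}$. The only real work is the sheafification bookkeeping in the second paragraph above---identifying abstract motivic localization with $L_\zar\Lhtp$ on the grouplike side---and this is the step where I would be most careful to cite the precise statements from \cite{EHKSY1} rather than re-derive strict homotopy invariance from scratch.
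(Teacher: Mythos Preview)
Your approach is essentially the same as the paper's: invoke Theorem~\ref{thm:thom-bundles}, apply the recognition principle (the paper cites the full faithfulness of $\Sigma^\infty_{\T,\fr}$ on $\H^\fr(k)^\gp$ from \cite[Theorem~3.5.13(i)]{EHKSY1}, which is equivalent to your formulation), and then use $L_\mot=L_\zar\Lhtp$ on grouplike framed $\Sigma$-presheaves (the paper cites \cite[Theorem~3.4.11]{EHKSY1} directly for this rather than decomposing it into strict $\A^1$-invariance plus a Zariski-versus-Nisnevich comparison). One small correction: Lemma~\ref{lem:Nis-connected} does not assert connectedness in general---it only gives connectedness when $k$ is infinite, and for finite $k$ concludes grouplikeness by a separate transfer argument (Lemma~\ref{lem:finite-fields}); so you should cite the lemma for ``grouplike'' directly rather than via ``connected, hence grouplike.''
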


\begin{proof}
	The first statement follows from Theorem~\ref{thm:thom-bundles}, the fact that the functor 
	\[
	\Sigma^\infty_{\T,\fr}\colon \H^\fr(k)^\gp \to \SH^\fr(k)
	\]
	 is fully faithful \cite[Theorem 3.5.13(i)]{EHKSY1}, and the fact that the motivic localization functor $L_\mot$ can be computed as $L_\zar\Lhtp$ on $\Pre_\Sigma(\Span^\fr(\Sm_k))^\gp$ \cite[Theorem 3.4.11]{EHKSY1}.
	If $\xi$ has rank $\geq 1$, then $L_\nis \Lhtp\h_k^\fr(Y,\xi)$ is grouplike by Lemma~\ref{lem:Nis-connected}.
\end{proof}

\begin{rem}\label{rem:connectivity}
	In the setting of Corollary~\ref{cor:thom-bundles}, if $\xi$ has rank $\geq n$, then $L_\nis\Lhtp\h^\fr_k(Y,\xi)$ is $n$-connective (as a Nisnevich sheaf). This is obvious if $n=0$. If $n\geq 1$, then it is grouplike and hence equivalent to $\Omega^\infty_\T\Th_{Y/k}(\xi)$, which is $n$-connective by Morel's stable $\A^1$-connectivity theorem.
\end{rem}

\begin{cor}\label{cor:thom-bundles-efr}
	Let $k$ be a perfect field, $Y$ a smooth $k$-scheme, and $\xi \in \sVect(Y)$ of rank $\geq 0$. Then there are equivalences
	\[
	\Omega^\infty_{\T}\Th_{Y/k}(\xi) \simeq L_\zar (\Lhtp\h_k^\nfr(Y,\xi))^\gp \simeq L_\zar (\Lhtp\h_k^\efr(Y,\xi))^\gp,
	\]
 natural in $(Y,\xi)$.
\end{cor}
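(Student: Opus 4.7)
The plan is to reduce Corollary~\ref{cor:thom-bundles-efr} to Corollary~\ref{cor:thom-bundles} via the comparison results of \ssecref{ssec:fr-comparison}. Corollary~\ref{cor:thom-bundles} already yields
\[
\Omega^\infty_\T \Th_{Y/k}(\xi) \simeq L_\zar \Lhtp \h^\fr_k(Y,\xi)^\gp,
\]
so both claimed equivalences amount to replacing $\h^\fr$ by $\h^\nfr$ or $\h^\efr$ on the right-hand side. A preliminary observation is that $\Lhtp$ and $L_\zar$ are product-preserving reflective localizations whose local subcategories are stable under the property of being a grouplike $\Einfty$-object; it follows that both localizations commute with group completion, so the various orderings of $(-)^\gp$, $\Lhtp$, and $L_\zar$ yield the same object when applied to $\h^?_k(Y,\xi)$.

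For the equivalence involving $\h^\nfr$, I would invoke Corollary~\ref{cor:nfr-vs-dfr}, which states that the comparison map $\Lhtp \h^\nfr_k(Y,\xi) \to \Lhtp \h^\fr_k(Y,\xi)$ is an equivalence on derived affine schemes and hence induces a Zariski equivalence. The $\Einfty$-structure on $\Lhtp \h^\nfr_k(Y,\xi)$ comes from Proposition~\ref{prop:additivity}, the one on $\Lhtp \h^\fr_k(Y,\xi)$ from its framed transfer structure, and the two are matched by the symmetric monoidal constructions in \ssecref{ssec:twisted-frames}; passing to group completion and using the preceding commutativity therefore gives $L_\zar (\Lhtp \h^\nfr_k(Y,\xi))^\gp \simeq L_\zar \Lhtp \h^\fr_k(Y,\xi)^\gp$.

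For the equivalence involving $\h^\efr$, Corollary~\ref{cor:efr-vs-nfr} provides the analogous Zariski equivalence $L_\zar \Lhtp \h^\efr_k(Y,\xi) \simeq L_\zar \Lhtp \h^\nfr_k(Y,\xi)$ when $Y$ admits an étale map to an affine bundle over $k$. To extend to arbitrary smooth $Y$, I would cover $Y$ by opens $U_i$ of this form and glue, invoking Propositions~\ref{prop:efr-descent}(iv), \ref{prop:nfr-descent}(iv), and \ref{prop:additivity}, in the same manner as the proof of Corollary~\ref{cor:efr-vs-nfr2}. Group completion once more preserves the resulting equivalence.

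The main obstacle is essentially organizational: verifying the commutativity of the localizations with group completion and checking that the $\Einfty$-structures on $\Lhtp\h^\efr$, $\Lhtp\h^\nfr$, and $\Lhtp\h^\fr$ are pairwise compatible under the comparison maps. Neither point involves substantive new mathematics beyond what has already been set up in \ssecref{ssec:fr-comparison}; the naturality in $(Y,\xi)$ follows automatically from the naturality of the comparison maps.
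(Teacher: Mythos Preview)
Your treatment of the $\h^\nfr$ equivalence is correct and matches the paper: Corollary~\ref{cor:nfr-vs-dfr} gives the Zariski-local $\A^1$-equivalence directly, and combining with Corollary~\ref{cor:thom-bundles} yields the first claimed equivalence.

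The gap is in the $\h^\efr$ step. Corollary~\ref{cor:efr-vs-nfr} indeed gives $L_\zar\Lhtp\h^\efr_k(Y,\xi)\simeq L_\zar\Lhtp\h^\nfr_k(Y,\xi)$ when $Y$ admits an \'etale map to an affine bundle, but your proposed extension to general $Y$ by gluing along an open cover does not go through at the level of $L_\zar\Lhtp$. Propositions~\ref{prop:efr-descent}(iv) and~\ref{prop:nfr-descent}(iv) only tell you that the \v{C}ech colimit maps to $\h^\efr_k(Y,\xi)$ (resp.\ $\h^\nfr_k(Y,\xi)$) via a \emph{Nisnevich} equivalence; after applying $L_\zar\Lhtp(-)^\gp$ this remains a Nisnevich equivalence, not an honest equivalence, so you cannot recover $L_\zar(\Lhtp\h^\efr_k(Y,\xi))^\gp$ from the pieces. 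In other words, what you actually obtain from the \v{C}ech argument is the motivic equivalence of Corollary~\ref{cor:efr-vs-nfr2}, i.e.\ an equivalence after $L_\mot$; the missing step is precisely to show that
\[
L_\zar(\Lhtp\h^\efr_k(Y,\xi))^\gp \simeq (L_\mot\h^\efr_k(Y,\xi))^\gp.
\]
This is not merely organizational: unlike $\h^\fr_k(Y,\xi)$, the presheaf $\h^\efr_k(Y,\xi)$ does not carry tangentially framed transfers, so the identification $L_\mot=L_\zar\Lhtp$ on grouplike objects (\cite[Theorem~3.4.11]{EHKSY1}) does not apply directly. The paper closes this gap by invoking \cite[Remark~3.4.12]{EHKSY1} together with the fact that $\Lhtp\h^\efr_k(Y,\xi)$ is a presheaf on $\Span^\efr_*(\Sm_k)$ for which the endomorphism $\sigma_X^*$ acts by the identity up to $\A^1$-homotopy (\cite[Lemma~3.1.4]{MuraThesis}); these inputs are what guarantee that $L_\zar(\Lhtp\h^\efr_k(Y,\xi))^\gp$ is already motivically local.
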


\begin{proof}
The first equivalence follows by combining Corollaries~\ref{cor:thom-bundles} and~\ref{cor:nfr-vs-dfr}. 
To deduce the second equivalence from Corollary~\ref{cor:efr-vs-nfr2}, it is enough to show that
\[
L_\zar (\Lhtp\h_k^\efr(Y,\xi))^\gp \simeq (L_\mot\h_k^\efr(Y,\xi))^\gp.
\]
This follows from \cite[Remark~3.4.12]{EHKSY1}, since $\Lhtp\h_k^\efr(Y,\xi)$ is a presheaf on $\Span^\efr_*(\Sm_k)$ and, for any $X\in\Sm_k$, the endomorphism $\sigma_X^*$ of $(\Lhtp\h_k^\efr(Y,\xi))(X)$ is homotopic to the identity \cite[Lemma~3.1.4]{MuraThesis}.
\end{proof}

\begin{rem}
	One can give a more direct proof of a less structured version of Corollary~\ref{cor:thom-bundles} if $\xi=(\sE,m)\in\sVect_{\geq 0}(Y)$.
	Voevodsky's lemma (Remark~\ref{rem:voev-lemma}) provides a map
	\[
	L_\mot \h^{\efr}_k(Y,\xi)^\gp \to \Omega^m_\T\Omega^\infty_\T\Sigma^\infty_\T(\bV(\sE)/ \bV^\times (\sE)).
	\]
	To show that it is an equivalence, we can assume that $\xi=\sO^n_Y$. In this case, it follows from \cite[Corollary 3.3.8]{EHKSY2} that this map is inverse to the equivalence of Theorem~\ref{thm:GNP} (which is tautological if $\rk\xi=0$). Note however that there is no hope of obtaining the monoidal equivalence of Corollary~\ref{cor:thom-bundles} in this way, because $\sVect(Y)$ has no monoidal structure.
\end{rem}

\begin{cor}\label{cor:sphere}
	Let $S$ be pro-smooth over a field. For every $\xi\in K(S)$ of rank $\geq 0$, there is an equivalence
	\[
	\Omega^\infty_{\T,\fr}\Sigma^\xi\1_S \simeq L_\zar\Lhtp\h^\fr_S(S,\xi)^\gp
	\]
	in $\H^\fr(S)$.
	Moreover, if $\xi$ has rank $\geq 1$, we can replace the group completion on the right-hand side by Nisnevich sheafification.
\end{cor}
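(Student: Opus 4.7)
The plan is to combine Theorem~\ref{thm:thom-bundles} with an extension of the motivic recognition principle from the perfect-field setting to pro-smooth bases by continuity. Applying Theorem~\ref{thm:thom-bundles} with base $S$ and $Y = S$ yields an equivalence $\Sigma^\xi \1_S \simeq \Sigma^\infty_{\T,\fr} \h^\fr_S(S, \xi)$ in $\SH(S) \simeq \SH^\fr(S)$. Applying $\Omega^\infty_{\T,\fr}$ reduces the claim to
\[
\Omega^\infty_{\T,\fr} \Sigma^\infty_{\T,\fr} \h^\fr_S(S, \xi) \simeq L_\zar \Lhtp \h^\fr_S(S, \xi)^\gp
\]
in $\H^\fr(S)$, which is an instance of the motivic recognition principle applied to the presheaf $\h^\fr_S(S, \xi)$.

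For $S = \Spec k$ with $k$ a perfect field, this is \cite[Theorem~3.5.14]{EHKSY1} together with \cite[Theorem~3.4.11]{EHKSY1}, exactly as in the proof of Corollary~\ref{cor:thom-bundles} specialized to $Y = \Spec k$ (using $\Th_{k/k}(\xi) = \Sigma^\xi \1_k$). To extend to arbitrary pro-smooth $S$ over a field, I would write $S = \lim_\alpha S_\alpha$ as a cofiltered limit of finite-type affine $k$-schemes with affine transition maps, reducing to perfect $k$ by a universal-homeomorphism descent if necessary. The key observation is that both sides of the equivalence are compatible with this pro-smooth limit: the left side by continuity of $\SH$ for qcqs cofiltered limits with affine transitions (passed to the framed setting via the reconstruction theorem), and the right side by Theorem~\ref{thm:base-change} combined with the finitary property (Proposition~\ref{prop:fr-descent}(ii)), using that $(-)^\gp$, $\Lhtp$, and $L_\zar$ all commute with the relevant filtered colimits. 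Verifying the equivalence over $S$ therefore reduces via this continuity to the case of the perfect field $\Spec k$, handled above; intermediate stages can be handled by localizing at Nisnevich stalks of $\Sm_S$, which are henselian local pro-smooth $k$-schemes.

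For the moreover clause, Lemma~\ref{lem:Nis-connected} shows that $L_\nis \Lhtp \h^\fr_k(Y, \xi)$ is grouplike when $\rk \xi \geq 1$. The same continuity argument extends this to $L_\nis \Lhtp \h^\fr_S(S, \xi)$ over pro-smooth $S$, so the group completion on the right-hand side becomes redundant and collapses to $L_\nis \Lhtp \h^\fr_S(S, \xi)$. The main technical obstacle is the continuity bookkeeping: one must verify the compatibility of pro-smooth base change with Zariski/Nisnevich sheafification, $\A^1$-localization, and group completion across the cofiltered limit, together with the universal-homeomorphism reduction needed for imperfect ground fields.
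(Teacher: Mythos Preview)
Your opening step agrees with the paper: apply Theorem~\ref{thm:thom-bundles} with $Y=S$ to identify $\Sigma^\xi\1_S\simeq\Sigma^\infty_{\T,\fr}\h^\fr_S(S,\xi)$, and then pass to the adjunction map. Where you diverge is in the reduction to a perfect field.

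The paper's reduction is considerably simpler than your continuity argument. Since the adjunction map is a morphism of Zariski sheaves on $\Sm_S$, checking that it is an equivalence is Zariski-local on $S$; one may therefore assume $\xi$ is trivial, say $\xi=\sO_S^n$. In that case, if $f\colon S\to\Spec k$ denotes the pro-smooth structure map (with $k$ perfect), both $\h^\fr_S(S,\sO^n)$ and $\Sigma^{\sO^n}\1_S$ are obtained from their counterparts over $k$ by applying $f^*$. Pro-smooth pullback commutes with $L_\zar$, $\Lhtp$, $(-)^\gp$, $\Sigma^\infty_{\T,\fr}$, and $\Omega^\infty_{\T,\fr}$, so the map over $S$ is $f^*$ of the map over $k$, which is an equivalence by Corollary~\ref{cor:thom-bundles}. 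The rank~$\geq 1$ clause is inherited from the same corollary via the same pullback.

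Your route---writing $S=\lim_\alpha S_\alpha$ and invoking continuity of both sides---can likely be pushed through, but it forces you to verify that $L_\zar$, $L_\nis$, and $\Lhtp$ interact correctly with the filtered colimit of base changes, which is exactly the bookkeeping you flag as an obstacle. The paper avoids all of this: once $\xi$ is trivialized, there is no limit to take, only a single pro-smooth base change along the structure map. The moral is that localizing on $S$ before reducing to the field is what buys the simplification.
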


\begin{proof}
	By Theorem~\ref{thm:thom-bundles}, we have an equivalence
	\[
	\Sigma^\xi\1_S \simeq \Sigma^\infty_{\T,\fr}\h^\fr_S(S,\xi)
	\]
	for any scheme $S$. By adjunction, we get a map
	\[
	L_\zar\Lhtp\h^\fr_S(S,\xi)^\gp \to \Omega^\infty_{\T,\fr}\Sigma^\xi_\T\1_S.
	\]
	To prove that it is an equivalence when $S$ is pro-smooth over a field, we can assume $\xi$ trivial since the question is local on $S$. We are then reduced to the case of a perfect field, which follows from Corollary~\ref{cor:thom-bundles}.
\end{proof}

\ssec{General nonnegative Thom spectra}
\label{ssec:thom-general}

\begin{defn}
	Let $S$ be a scheme. A \emph{stable tangential structure} over $S$ is a morphism $\beta\colon B\to K$ in $\Pre_\Sigma(\dSch_S)$. We say that $\beta$ has \emph{rank $n$} if $\beta$ lands in the rank $n$ summand of K-theory.
Given a quasi-smooth morphism $f\colon Z\to X$ with $Z\in \dSch_S$, a \emph{$\beta$-structure} on $f$ is a lift of $-\sL_f$ to $B(Z)$.
\end{defn}

Given a stable tangential structure $\beta$ over $S$, we denote by $\FQSM_S^\beta\colon \dSch_S^\op \to \Spc$ the moduli stack of $\beta$-structured finite quasi-smooth schemes over $S$:
\[
\FQSM_S^\beta(X) = \{\text{$\beta$-structured finite quasi-smooth derived $X$-schemes}\}.
\]

\begin{ex}\label{ex:iota_n}
	Let $\iota_n$ be the inclusion of the rank $n$ summand of $K$. A $\iota_n$-structure on a quasi-smooth morphism $f$ is simply the property that $f$ has relative virtual dimension $-n$.
	We will also write
		\[
		\FQSM_S^{n} = \FQSM_S^{\iota_n}
		\]
		for the moduli stack of finite quasi-smooth schemes of dimension $-n$. If $n=0$, this is the moduli stack $\FSYN_S$ of finite syntomic schemes (which is a smooth quasi-separated algebraic stack over $S$), by Lemma~\ref{lem:classical}.
\end{ex}

\begin{ex}\label{ex:framing}
	If $\beta\colon *\to K$ is the zero section, then a $\beta$-structure on a quasi-smooth morphism $f\colon Z\to X$ is an equivalence $\sL_f\simeq 0$ in $K(Z)$, i.e., a (stable) $0$-dimensional \emph{framing} of $f$. The presheaf $\FQSM_S^\beta$ coincides with the presheaf $\FSYN_S^\fr$ considered in \cite[3.5.17]{EHKSY1}.
\end{ex}

\begin{ex}\label{ex:SL-orientation}
	If $\beta$ is the fiber of $\det\colon K\to \Pic$, then a $\beta$-structure on a quasi-smooth morphism $f\colon Z\to X$ is an equivalence $\det(\sL_{f})\simeq \sO_Z$. We call this structure an \emph{orientation} of $f$, and we write
	\[
	\FQSM_S^{\ornt} = \FQSM_S^{\beta}
	\]
	for the moduli stack of oriented finite quasi-smooth schemes.
	We note that $\det^{-1}(\sO)$ coincides (on qcqs derived schemes) with the presheaf $K^\SL$ from Example~\ref{ex:LKE}, defined as the right Kan extension from derived affine schemes of the group completion of the monoidal groupoid $\Vect^\SL$ of locally free sheaves with trivialized determinant. Indeed, on derived affine schemes, $\sVect^\SL\to K^\SL$ is a plus construction (see Remark~\ref{rem:KSL-plus}), and in particular it is acyclic. The map $\sVect^\SL\to \det^{-1}(\sO)$ is also acyclic, being a pullback of $\sVect\to K$. It follows that $K^\SL \to \det^{-1}(\sO)$ is an acyclic map, whence an equivalence since the source has abelian fundamental groups.
\end{ex}


\begin{ex}
	If $\beta$ is the fiber of the motivic J-homomorphism $K\to \Pic(\SH)$, then a $\beta$-structure on a quasi-smooth morphism $f\colon Z\to X$ is an equivalence $\Th_{Z}(\sL_f)\simeq \1_Z$ in $\SH(Z)\simeq\SH(Z_\cl)$ (see \cite{KhanThesis} for the extension of $\SH(-)$ to derived schemes).
\end{ex}

\begin{ex}\label{ex:twisted-frames}
	Let $Y\in\dSch_S$ and $\xi\in K_{\geq 0}(Y)$. If $\beta\colon Y\to K$ classifies $\xi$, then $\FQSM_S^\beta=\h_S^\fr(Y,\xi)$ as defined in \ssecref{ssec:twisted-frames}.
\end{ex}

For $\beta$ a stable tangential structure over $S$, we formally have
\[
\FQSM_S^\beta \simeq \colim_{\substack{Y\in\dSch_S\\b\in B(Y)}} \h_S^\fr(Y,\beta(b))
\]
in $\Pre(\dSch_S)$, where the colimit is indexed by the source of the Cartesian fibration classified by $B\colon \dSch_S^\op \to \Spc$. Note that this $\infty$-category has finite sums (because $B$ is a $\Sigma$-presheaf) and hence is sifted, so that the above formula is also valid in $\Pre_\Sigma(\Span^\fr(\dSch_S))$.

\begin{prop}\label{prop:beta-descent}
	Let $S$ be a scheme.
	The functor 
	\[
	\Pre_\Sigma(\dSch_S)_{/K_{\geq 0}} \to \Pre_\Sigma(\Span^\fr(\dSch_S)), \quad \beta\mapsto \FQSM^\beta_S,
	\]
	preserves Nisnevich equivalences and étale equivalences.
\end{prop}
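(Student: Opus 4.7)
The plan is to exhibit $\beta \mapsto \FQSM^\beta_S$ as a colimit-preserving functor and then verify the preservation of Nisnevich (resp.\ étale) equivalences on a small generating class, where the conclusion follows directly from Proposition~\ref{prop:fr-descent}(iii).

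First I would record that, since $K_{\geq 0}$ is a $\Sigma$-presheaf, there is a canonical equivalence
\[
\Pre_\Sigma(\dSch_S)_{/K_{\geq 0}}\simeq \Pre_\Sigma((\dSch_S)_{/K_{\geq 0}}),
\]
the finite coproducts in $(\dSch_S)_{/K_{\geq 0}}$ being computed as coproducts in $\dSch_S$ with the coproduct structure map to $K_{\geq 0}$. Proposition~\ref{prop:fr-additivity} asserts precisely that the functor
\[
F\colon (\dSch_S)_{/K_{\geq 0}} \to \Pre_\Sigma(\Span^\fr(\dSch_S)),\quad (Y,\xi)\mapsto \h^\fr_S(Y,\xi),
\]
preserves finite coproducts, so by the universal property of $\Pre_\Sigma$ it extends uniquely to a colimit-preserving functor $\Pre_\Sigma((\dSch_S)_{/K_{\geq 0}})\to \Pre_\Sigma(\Span^\fr(\dSch_S))$. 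The colimit formula
\[
\FQSM_S^\beta \simeq \colim_{\substack{Y\in\dSch_S\\b\in B(Y)}} \h_S^\fr(Y,\beta(b))
\]
noted in the excerpt identifies this extension with $\beta\mapsto\FQSM^\beta_S$, which is therefore colimit-preserving.

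Next, the forgetful functor $\Pre_\Sigma(\dSch_S)_{/K_{\geq 0}}\to \Pre_\Sigma(\dSch_S)$ preserves and reflects Nisnevich (resp.\ étale) local equivalences, since these topologies are defined on $\dSch_S$. Hence, the class of Nisnevich (resp.\ étale) equivalences in the slice is the strongly saturated class generated by maps $R\hookrightarrow Y$ where $Y\in\dSch_S$ is equipped with some $\xi\colon Y\to K_{\geq 0}$ and $R$ is a Nisnevich (resp.\ étale) covering sieve on $Y$ generated by a single étale map (with structure map $R\hookrightarrow Y\xrightarrow{\xi} K_{\geq 0}$). By Example~\ref{ex:twisted-frames} and colimit preservation, the functor sends such a generator to $\h^\fr_S(R,\xi)\to \h^\fr_S(Y,\xi)$, which is a Nisnevich (resp.\ étale) equivalence by Proposition~\ref{prop:fr-descent}(iii).

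The bulk of the substantive work is already contained in Proposition~\ref{prop:fr-descent}(iii); the main subtle point to verify here is that Proposition~\ref{prop:fr-additivity} genuinely upgrades to finite-coproduct preservation of $F$ on the slice $(\dSch_S)_{/K_{\geq 0}}$, so that the colimit formula computes the left Kan extension into the $\Sigma$-presheaf category $\Pre_\Sigma(\Span^\fr(\dSch_S))$ rather than merely into $\Pre(\Span^\fr(\dSch_S))$.
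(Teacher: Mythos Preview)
Your proof follows essentially the same strategy as the paper's: use the equivalence $\Pre_\Sigma(\dSch_S)_{/K_{\geq 0}}\simeq \Pre_\Sigma((\dSch_S)_{/K_{\geq 0}})$ and colimit preservation to reduce to generating covering sieves, then invoke Proposition~\ref{prop:fr-descent}(iii). The identification of $\beta\mapsto\FQSM_S^\beta$ as colimit-preserving via Proposition~\ref{prop:fr-additivity} is correct (the paper only states sifted-colimit preservation, but your stronger claim is fine since the target is semiadditive).

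There is, however, a gap in your reduction step. You assert that the Nisnevich (resp.\ étale) equivalences are generated, as a strongly saturated class, by covering sieves \emph{generated by a single étale map}. This is not true on $\dSch_S$ once non-quasi-compact schemes are allowed. Concretely, if $Y=\coprod_{n\in\bbN} Y_n$ and $R$ is the sieve generated by the inclusions $\{Y_n\hookrightarrow Y\}$, then $R$ is a covering sieve, but a map $V\to Y$ lies in $R$ only if it factors through a single $Y_n$; in particular no covering map $W\to Y$ lies in $R$, so $R$ contains no single-map covering sieve. The sieve generated by $\coprod_n Y_n\to Y$ is strictly larger than $R$ (even after applying $L_\Sigma$, which only collapses \emph{finite} coproducts). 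Hence single-map sieves do not form a basis for the topology, and knowing the functor inverts them does not immediately give the result for all covering sieves.

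The paper handles exactly this point: after citing \cite[Lemma~2.10]{norms} to reduce to arbitrary covering sieves $R\hookrightarrow Y$, it treats quasi-compact $Y$ (where $R$ admits a finitely generated refinement, hence $L_\Sigma R$ is a single-map sieve and Proposition~\ref{prop:fr-descent}(iii) applies) and then reduces the general case to the quasi-compact one by writing $Y$ as a filtered union of quasi-compact opens and using that $\h^\fr_S(-,\xi)$ commutes with this colimit on quasi-compact test schemes. Your argument needs the same patch.
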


\begin{proof}
	Note that this functor preserves sifted colimits. By \cite[Corollary 5.1.6.12]{HTT}, there is a canonical equivalence $\Pre_\Sigma(\dSch_S)_{/K_{\geq 0}}\simeq \Pre_\Sigma((\dSch_S)_{/K_{\geq 0}})$. By \cite[Lemma 2.10]{norms}, it therefore suffices to prove the following: for any $Y\in\dSch_S$, any $\xi\in K_{\geq 0}(Y)$, and any Nisnevich (resp.\ étale) covering sieve $R\hook Y$, the induced map $\h_S^\fr(L_\Sigma R,\xi) \to \h_S^\fr(Y,\xi)$ is a Nisnevich (resp.\ étale) equivalence.
	If $R$ is a finitely generated sieve, this follows from Proposition~\ref{prop:fr-descent}(iii) since $L_\Sigma R$ is a sieve generated by a single map. If $Y$ is quasi-compact, then $R$ admits a finitely generated refinement, so we are done in this case. In general, write $Y$ as a filtered colimit of quasi-compact open subschemes $Y_\alpha\subset Y$ and let $R_\alpha=R\times_YY_\alpha$. Then the canonical map $\colim_\alpha \h^\fr_S(Y_\alpha,\xi) \to \h^\fr_S(Y,\xi)$ is an equivalence on quasi-compact derived schemes, and in particular it is a Nisnevich equivalence. Similarly, $\colim_\alpha \h^\fr_S(L_\Sigma R_\alpha,\xi) \to \h^\fr_S(L_\Sigma R,\xi)$ is a Nisnevich equivalence, and we conclude by 2-out-of-3.
\end{proof}

The following somewhat technical definition plays a crucial role in the sequel.

\begin{defn}
	A stable tangential structure $\beta\colon B\to K$ over $S$ is called \emph{smooth} if the counit map $\tilde B\to B$ is a Nisnevich equivalence, where $\tilde B$ is the left Kan extension of $B|\Sm_S$ to $\dSch_S$.
\end{defn}

\begin{lem}\label{lem:local-LKE}
	Let $S$ be a scheme and $\beta\colon B\to K$ a stable tangential structure over $S$.
	Suppose that $B$ is left Kan extended along $\SmAff_T \subset \dAff_T$ for every $T$ in some affine Nisnevich cover of $S$. Then $\beta$ is smooth.
\end{lem}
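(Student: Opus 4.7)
The plan is to show that the counit $\tilde B \to B$ is a Nisnevich equivalence by factoring it through the left Kan extension of $B|\SmAff_S$ all the way to $\dSch_S$, and proving that each factor is separately a Nisnevich equivalence.

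First, I would reduce to the case where the hypothesis holds with $T = S$, i.e., that $B|\dAff_S$ itself is left Kan extended from $B|\SmAff_S$. Since Nisnevich equivalences on $\dSch_S$ are detected after restriction to each $T$ in an affine Nisnevich cover, it suffices to prove the result over each such $T$. For $T \to S$ étale and affine, the slices $(\Sm_S)_{/T}$ and $(\dSch_S)_{/T}$ are naturally identified with $\Sm_T$ and $\dSch_T$ respectively (using that a smooth map composed with an étale map is smooth), so the restriction $\tilde B|\dSch_T$ coincides with $\mathrm{LKE}_{\Sm_T \hookrightarrow \dSch_T}(B|\Sm_T)$, and similarly for the affine variant.

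Next, write $j_1\colon \SmAff_S \hookrightarrow \Sm_S$ and $j_2\colon \Sm_S \hookrightarrow \dSch_S$. By transitivity of left Kan extension, the counit $\epsilon\colon \mathrm{LKE}_{j_2 j_1}(B|\SmAff_S) \to B$ factors as
\[
\mathrm{LKE}_{j_2 j_1}(B|\SmAff_S) \;\xrightarrow{\mathrm{LKE}_{j_2}(\epsilon_1)}\; \tilde B \;\xrightarrow{\epsilon_2}\; B,
\]
where $\epsilon_1\colon \mathrm{LKE}_{j_1}(B|\SmAff_S) \to B|\Sm_S$ and $\epsilon_2\colon \tilde B \to B$ are the respective counits. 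By 2-out-of-3, it suffices to show that $\epsilon$ and $\mathrm{LKE}_{j_2}(\epsilon_1)$ are Nisnevich equivalences. For $\epsilon$: its restriction to $\dAff_S$ is an equivalence by the (post-reduction) hypothesis, and since $\dAff_S$ is a basis for the Nisnevich topology on $\dSch_S$, this gives that $\epsilon$ is a Nisnevich equivalence on $\dSch_S$. For $\epsilon_1$: its restriction to $\SmAff_S$ is the identity (since $\mathrm{id}_U$ is terminal in $(\SmAff_S)_{/U}$ for $U \in \SmAff_S$), and since $\SmAff_S$ is a basis for the Nisnevich topology on $\Sm_S$, $\epsilon_1$ is a Nisnevich equivalence on $\Sm_S$.

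The central technical point—and the one I expect to require the most care—is showing that $\mathrm{LKE}_{j_2}$ carries Nisnevich equivalences on $\Sm_S$ to Nisnevich equivalences on $\dSch_S$. I would deduce this from the general principle that $j_2$ is continuous for the Nisnevich topologies (every Nisnevich cover in $\Sm_S$ remains a Nisnevich cover in $\dSch_S$), which implies that the square of left adjoints
\[
\begin{tikzcd}
\Pre(\Sm_S) \ar{r}{\mathrm{LKE}_{j_2}} \ar{d}[swap]{L_\nis} & \Pre(\dSch_S) \ar{d}{L_\nis} \\
\Sh_\nis(\Sm_S) \ar{r} & \Sh_\nis(\dSch_S)
\end{tikzcd}
\]
commutes up to equivalence—both composites being left adjoint to the same restriction-along-$j_2$ functor of Nisnevich sheaves. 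Applying $L_\nis \circ \mathrm{LKE}_{j_2}$ to $\epsilon_1$ then yields an equivalence in $\Sh_\nis(\dSch_S)$, so $\mathrm{LKE}_{j_2}(\epsilon_1)$ is a Nisnevich equivalence, completing the proof.
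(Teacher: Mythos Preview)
Your argument is correct. The underlying ingredients are the same as the paper's---the equivalences of Nisnevich $\infty$-topoi induced by $\SmAff\subset\Sm$ and $\dAff\subset\dSch$, and the fact that presheaf-level left Kan extension along a continuous functor preserves Nisnevich equivalences---but the organization differs.

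The paper does not first reduce to $T=S$. Instead it writes down a single square of adjunctions
\[
\begin{tikzcd}
\Pre_\nis(\Sm_S) \ar[shift left]{r}{\mathrm{LKE}} \ar[shift right]{d}[swap]{\mathrm{res}} & \Pre_\nis(\dSch_S) \ar[shift left]{l}{\mathrm{res}} \ar[shift right]{d}[swap]{\mathrm{res}} \\
\Pre_\nis(\SmAff_T) \ar[shift left]{r}{\mathrm{LKE}} \ar[shift right]{u}[swap]{\mathrm{RKE}} & \Pre_\nis(\dAff_T) \ar[shift left]{l}{\mathrm{res}} \ar[shift right]{u}[swap]{\mathrm{RKE}}
\end{tikzcd}
\]
and observes that the square of right adjoints commutes (using that $\SmAff_T\subset\Sm_T$ and $\dAff_T\subset\dSch_T$ induce equivalences of Nisnevich topoi), hence so does the square of left adjoints. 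This one commutativity statement immediately gives that $L_\nis\tilde B$ and $L_\nis B$ agree on $\dAff_T$ for each $T$ in the cover, so the counit is a Nisnevich equivalence. Your decomposition trades this compactness for an explicit base-change step (that $\tilde B|\dSch_T$ is the LKE from $\Sm_T$) plus a 2-out-of-3 argument; the base-change step is true (the forgetful functor $(\Sm_T)_{X/}\to(\Sm_S)_{X/}$ is a left adjoint, hence the opposite is cofinal), but you might want to say a word more than just identifying the slice categories.
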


\begin{proof}
	Consider the square of adjunctions
	\[
	\begin{tikzcd}
		\Pre_{\nis}(\Sm_S) \ar[shift left=1]{r}{\mathrm{LKE}} \ar[shift right=1]{d}[swap]{\mathrm{res}} & \Pre_{\nis}(\dSch_S) \ar[shift right=1]{d}[swap]{\mathrm{res}} \ar[shift left=1]{l}{\mathrm{res}} \\
		\Pre_\nis(\SmAff_T) \ar[shift left=1]{r}{\mathrm{LKE}} \ar[shift right=1]{u}[swap]{\mathrm{RKE}} & \Pre_\nis(\dAff_T)\rlap. \ar[shift left=1]{l}{\mathrm{res}} \ar[shift right=1]{u}[swap]{\mathrm{RKE}}
	\end{tikzcd}
	\]
	It is easy to show that the square of right adjoints commutes, because the inclusions $\SmAff_T\subset \Sm_T$ and $\dAff_T\subset \dSch_T$ induce equivalences of Nisnevich $\infty$-topoi. Hence, the square of left adjoints commutes as well. This shows that the counit map $\tilde B\to B$ is a Nisnevich equivalence when restricted to $\dAff_T$ for all $T$ in the cover, hence it is a Nisnevich equivalence.
\end{proof}

In Appendix~\ref{app:LKE}, we provide many examples of stable tangential structures satisfying the assumption of Lemma~\ref{lem:local-LKE}, which are therefore smooth. In particular, K-theory itself has this property. More generally, if $X$ is a smooth algebraic stack over $S$ with quasi-affine diagonal and with a structure of $\sE_1$-monoid over $\Vect$, and if $I\subset \Z$ is any subset, then the stable tangential structure $X^\gp\times_{L_\Sigma\Z}L_\Sigma I\to K$ is smooth (by Proposition~\ref{prop:akhil2} and Lemmas \ref{lem:gp} and~\ref{lem:PB-LKE}). For example, the stable tangential structures in Examples \ref{ex:iota_n}, \ref{ex:framing}, and \ref{ex:SL-orientation} are smooth, while the one in Example~\ref{ex:twisted-frames} is smooth if and only if $Y$ is smooth.

We briefly recall the formalism of motivic Thom spectra from \cite[Section 16]{norms}. To a morphism $\beta\colon B\to \Pic(\SH)$ in $\Pre(\Sm_S)$ one can associate a Thom spectrum $M\beta\in \SH(S)$. As in topology, it is given by a formal colimit construction:
\[
M\beta = \colim_{\substack{f\colon Y \to S\text{ smooth}\\ b\in B(Y)}} f_\sharp\beta(b).
\]
Moreover, it has good multiplicative properties: one has a symmetric monoidal functor
\[
M\colon \Pre(\Sm_S)_{/\Pic(\SH)} \to \SH(S).
\]
In particular, if $\beta$ is an $\sE_n$-morphism, then $M\beta$ is an $\sE_n$-ring spectrum.

Recall also that the motivic J-homomorphism is a morphism of $\Einfty$-spaces
\[
K(S) \to \Pic(\SH(S)), \quad \xi\mapsto \Th_{S/S}(\xi),
\]
natural in $S$.
Restricting $M$ along the J-homomorphism, we obtain a symmetric monoidal functor
\[
M\colon \Pre(\Sm_S)_{/K} \to \SH(S).
\]
For $n\in\Z$, the shifted algebraic cobordism spectrum $\Sigma^n_{\T}\MGL_S\in\SH(S)$ is the Thom spectrum of the restriction of the J-homomorphism to the rank $n$ summand of K-theory \cite[Theorem 16.13]{norms}.

\begin{thm}\label{thm:thom-general}
	Let $S$ be a scheme and $\beta\colon B\to K_{\geq 0}$ a smooth stable tangential structure over $S$. Then there is an equivalence
	\[
	M\beta \simeq \Sigma^\infty_{\T,\fr}\FQSM_S^{\beta}
	\]
	in $\SH(S)\simeq\SH^\fr(S)$, natural and symmetric monoidal in $\beta$. 
	In particular, if $\beta$ is $\sE_n$ for some $0\leq n\leq \infty$, then this is an equivalence of $\sE_n$-ring spectra.
\end{thm}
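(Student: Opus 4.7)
My plan is to realize both sides of the claimed equivalence as colimits over the same index $\infty$-category and match them termwise via Theorem~\ref{thm:thom-bundles}. By construction of the motivic Thom spectrum functor through the J-homomorphism, one has
\[
M\beta \simeq \colim_{\substack{f\colon Y\to S\text{ smooth}\\ b\in B(Y)}} f_\sharp\Th_{Y/Y}(\beta(b)),
\]
indexed by the Cartesian fibration over $\Sm_S$ classified by $B|\Sm_S$. On the other side, the formal colimit formula established just before the theorem reads
\[
\FQSM^\beta_S \simeq \colim_{\substack{Y\in\dSch_S\\ b\in B(Y)}} \h^\fr_S(Y,\beta(b))
\]
in $\Pre_\Sigma(\Span^\fr(\dSch_S))$. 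The two indexing $\infty$-categories differ, and the smoothness hypothesis on $\beta$ is precisely what is needed to identify them after motivic localization.

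To carry out that identification, let $\tilde\beta\colon \tilde B\to K_{\geq 0}$ denote the tangential structure whose source is the left Kan extension of $B|\Sm_S$ to $\dSch_S$. Smoothness of $\beta$ means the counit $\tilde B\to B$ is a Nisnevich equivalence, and Proposition~\ref{prop:beta-descent} then yields a Nisnevich (hence motivic) equivalence $\FQSM^{\tilde\beta}_S\to \FQSM^\beta_S$. Since left Kan extension commutes with colimits, this rewrites the right-hand side after motivic localization as
\[
\FQSM^\beta_S \simeq \colim_{\substack{f\colon Y\to S\text{ smooth}\\ b\in B(Y)}} \h^\fr_S(Y,\beta(b))
\]
in $\H^\fr(S)$. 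Applying the colimit-preserving functor $\Sigma^\infty_{\T,\fr}$ and invoking Theorem~\ref{thm:thom-bundles} termwise, which supplies the natural symmetric monoidal equivalence $\Sigma^\infty_{\T,\fr}\h^\fr_S(Y,\beta(b))\simeq \Th_{Y/S}(\beta(b)) \simeq f_\sharp\Th_{Y/Y}(\beta(b))$, produces the desired equivalence $\Sigma^\infty_{\T,\fr}\FQSM^\beta_S\simeq M\beta$.

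For the symmetric monoidal and $\sE_n$ refinements, both $\beta\mapsto M\beta$ and $\beta\mapsto \Sigma^\infty_{\T,\fr}\FQSM^\beta_S$ extend to colimit-preserving symmetric monoidal functors on $\Pre_\Sigma(\Sm_S)_{/K_{\geq 0}}$, with monoidal structures inherited via Day convolution from $(\Sm_S)_{/K_{\geq 0}}$. Since the identifications of Theorem~\ref{thm:thom-bundles} are symmetric monoidal natural in $(Y,\xi)$, the resulting comparison is symmetric monoidal natural in $\beta$, and passing to $\sE_n$-algebra objects gives the $\sE_n$-ring statement. The main obstacle I anticipate is making the termwise-to-global monoidal argument fully precise: one must verify that the symmetric monoidal structures on both functors of $\beta$ are genuinely determined by their restrictions to representables $(Y,\xi)$ in $(\Sm_S)_{/K_{\geq 0}}$, so that the symmetric monoidal naturality of Theorem~\ref{thm:thom-bundles} suffices to upgrade the underlying equivalence; the remainder of the argument is essentially formal manipulation of colimits and Kan extensions.
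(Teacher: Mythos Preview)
Your proposal is correct and follows essentially the same route as the paper: reduce via Proposition~\ref{prop:beta-descent} to the case where $B$ is left Kan extended from $\Sm_S$, then identify both sides as the same colimit over $(\Sm_S)_{/B}$ using Theorem~\ref{thm:thom-bundles} termwise. The only place where the paper is more explicit is the step you phrase as ``left Kan extension commutes with colimits'': the paper justifies this by observing that when $B$ is left Kan extended, the inclusion $(\Sm_S)_{/B}\subset(\dSch_S)_{/B}$ admits a left adjoint (hence is cofinal), which is a cleaner way to pass between the two indexing $\infty$-categories than the slogan you invoke.
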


\begin{rem}
	In the statement of Theorem~\ref{thm:thom-general}, $M\beta$ depends only on the restriction of $\beta$ to $\Sm_S$. If we start with a morphism $\beta_0\colon B_0\to K_{\geq 0}$ in $\Pre_\Sigma(\Sm_S)$, we can always apply the theorem with $\beta\colon B\to K_{\geq 0}$ the left Kan extension of $\beta_0$ to obtain an equivalence $M\beta_0 \simeq \Sigma^\infty_{\T,\fr}\FQSM_S^{\beta}$. (Note that $B$ is a $\Sigma$-presheaf on $\dSch_S$, because for $X_1,\dotsc,X_k\in\dSch_S$ the sum functor $\prod_{i}(\Sm_S)_{X_i/} \to (\Sm_S)_{\bigcoprod_i X_i/}$ is left adjoint, hence coinitial.)
\end{rem}

\begin{proof}
	This is a formal consequence of Theorem~\ref{thm:thom-bundles}. We have
	\[
	M\beta = \colim_{\substack{Y\in\Sm_S\\b\in B(Y)}} \Th_{Y/S}(\beta(b)) \simeq \colim_{\substack{Y\in\Sm_S\\b\in B(Y)}}\Sigma^\infty_{\T,\fr} \h_S^\fr(Y,\beta(b)).
	\]
	We therefore want to show that, when $\beta$ is smooth, the map
	\[
	\colim_{\substack{Y\in\Sm_S\\b\in B(Y)}}\Sigma^\infty_{\T,\fr} \h_S^\fr(Y,\beta(b))
	\to
	\colim_{\substack{Y\in\dSch_S\\b\in B(Y)}}\Sigma^\infty_{\T,\fr} \h_S^\fr(Y,\beta(b))
	\]
	induced by the inclusion $(\Sm_S)_{/B} \subset (\dSch_S)_{/B}$ is an equivalence.
	By Proposition~\ref{prop:beta-descent}, we may as well assume that $B$ is the left Kan extension of $B|\Sm_S$. In that case, we claim that the inclusion $(\Sm_S)_{/B} \subset (\dSch_S)_{/B}$ is right adjoint and hence cofinal. Indeed, we have
	\[
	B(Y) = \colim_{\substack{Y'\in\Sm_S\\Y\to Y'}}B(Y'),
	\]
	hence $(\dSch_S)_{/B}\simeq C_{/B\circ d_1}$ where $C\subset \Fun(\Delta^1,\dSch_S)$ is the full subcategory of morphisms whose codomain is smooth. Forgetting the domain is then left adjoint to the above inclusion.
\end{proof}

\begin{cor}\label{cor:thom-general}
	Let $k$ be a perfect field and $\beta\colon B\to K_{\geq 0}$ a smooth stable tangential structure over $k$. Then there is an equivalence
	\[
	\Omega^\infty_{\T,\fr}M\beta \simeq L_\zar \Lhtp(\FQSM_k^{\beta})^\gp
	\]
	in $\H^\fr(k)$, natural and symmetric monoidal in $\beta$.
	Moreover, if $\beta$ has rank $\geq 1$, $L_\nis\Lhtp \FQSM_k^{\beta}$ is already grouplike.
\end{cor}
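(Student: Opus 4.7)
The plan is to derive this corollary directly from \thmref{thm:thom-general} by applying $\Omega^\infty_{\T,\fr}$ and invoking the motivic recognition principle; this parallels the deduction of \corref{cor:thom-bundles} from \thmref{thm:thom-bundles}.

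First, by \thmref{thm:thom-general} we have a natural symmetric monoidal equivalence $M\beta\simeq \Sigma^\infty_{\T,\fr}\FQSM_k^\beta$ in $\SH^\fr(k)$. Applying $\Omega^\infty_{\T,\fr}$ and using that $\Sigma^\infty_{\T,\fr}\colon \H^\fr(k)^\gp\hookrightarrow \SH^\fr(k)$ is fully faithful (the motivic recognition principle, \cite[Theorem 3.5.13(i)]{EHKSY1}), the unit map exhibits $\Omega^\infty_{\T,\fr}\Sigma^\infty_{\T,\fr}\FQSM_k^\beta$ as the motivic localization of the group completion of $\FQSM_k^\beta$. The remaining identification $L_\mot\simeq L_\zar\Lhtp$ on grouplike $\Sigma$-presheaves over $\Span^\fr(\Sm_k)$ is \cite[Theorem 3.4.11]{EHKSY1}, which yields the stated formula. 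Naturality and the symmetric monoidal structure are inherited from \thmref{thm:thom-general}.

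For the second assertion, I would use the colimit presentation
\[
\FQSM_k^\beta \simeq \colim_{\substack{Y\in\Sm_k\\ b\in B(Y)}} \h_k^\fr(Y,\beta(b))
\]
in $\Pre_\Sigma(\Span^\fr(\Sm_k))$, which holds because $\beta$ is smooth (so that $B$ may be replaced by its left Kan extension from smooth $k$-schemes, and then the argument in the proof of \thmref{thm:thom-general} shows that $(\Sm_k)_{/B}\subset (\dSch_k)_{/B}$ is cofinal). When $\beta$ has rank $\geq 1$, every $\beta(b)\in K(Y)$ has rank $\geq 1$, so \lemref{lem:Nis-connected} applies to show that each $L_\nis\Lhtp\h_k^\fr(Y,\beta(b))$ is grouplike (indeed, Nisnevich-connected). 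Since $L_\nis$ and $\Lhtp$ are left adjoints and thus preserve the colimit, and since grouplike objects form a full subcategory of $\CMon$-valued Nisnevich sheaves closed under colimits, it follows that $L_\nis\Lhtp\FQSM_k^\beta$ is itself grouplike.

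The proof is essentially formal given the earlier results in this section, so there is no significant obstacle; the only subtle point is the cofinality step ensuring that the colimit presentation of $\FQSM_k^\beta$ is indexed over $\Sm_k$ once $\beta$ is smooth, which is needed both to reduce to \lemref{lem:Nis-connected} and to align with the colimit defining $M\beta$.
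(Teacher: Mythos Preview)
Your argument is essentially the same as the paper's, but there is one imprecision in the second part that the paper handles more carefully. You assert an equivalence
\[
\FQSM_k^\beta \simeq \colim_{\substack{Y\in\Sm_k\\ b\in B(Y)}} \h_k^\fr(Y,\beta(b))
\]
in $\Pre_\Sigma(\Span^\fr(\Sm_k))$, justified by ``replacing $B$ by its left Kan extension''. But smoothness of $\beta$ only says that $\tilde B\to B$ is a \emph{Nisnevich} equivalence, so the displayed colimit presentation holds for $\FQSM_k^{\tilde\beta}$, not for $\FQSM_k^\beta$ itself; and $L_\nis\Lhtp$ does not in general invert Nisnevich equivalences. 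The paper bridges this gap by observing (via \propref{prop:beta-descent}) that $\FQSM_k^{\tilde\beta}\to\FQSM_k^\beta$ is a Nisnevich equivalence, hence an equivalence on henselian local schemes, and that after applying $\Lhtp$ the map remains an effective epimorphism on Nisnevich stalks (since $\pi_0 F\to\pi_0\Lhtp F$ is always surjective). A surjection of commutative monoids from a group has a group as target, so grouplikeness of $L_\nis\Lhtp\FQSM_k^{\tilde\beta}$ implies that of $L_\nis\Lhtp\FQSM_k^\beta$. Once this reduction is made, your colimit argument (which is exactly the paper's) goes through; the paper cites \corref{cor:thom-bundles} rather than \lemref{lem:Nis-connected} directly, but this amounts to the same thing.
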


\begin{proof}
	The first statement follows from Theorem~\ref{thm:thom-general} as in the proof of Corollary~\ref{cor:thom-bundles}.
	Let $\tilde\beta\colon \tilde B\to K_{\geq 0}$ be the left Kan extension of $\beta|\Sm_k$. Since $\beta$ is smooth, the map
	\[
	\FQSM_k^{\tilde \beta} \to \FQSM_k^\beta
	\]
	is a Nisnevich equivalence by Proposition~\ref{prop:beta-descent}. After applying $\Lhtp$, it remains an effective epimorphism on Nisnevich stalks. To prove that $L_\nis\Lhtp \FQSM_k^{\beta}$ is grouplike when $\beta$ has rank $\geq 1$, we may therefore replace $\beta$ by $\tilde\beta$ and assume that $B$ is left Kan extended along $\Sm_k\subset \dSch_k$. In this case, we have an equivalence
	\[
	\FQSM_k^{\beta} \simeq \colim_{\substack{Y\in\Sm_k\\b\in B(Y)}} \h_k^\fr(Y,\beta(b)).
	\]
	Since $\Lhtp$ preserves colimits as an endofunctor of $\Pre(\Sm_k)$, we have
	\[
	L_\nis\Lhtp \FQSM_k^{\beta} \simeq L_\nis \colim_{\substack{Y\in\Sm_k\\b\in B(Y)}} \Lhtp\h_k^\fr(Y,\beta(b))
	\simeq L_\nis \colim_{\substack{Y\in\Sm_k\\b\in B(Y)}} L_\nis\Lhtp\h_k^\fr(Y,\beta(b)).
	\]
	Since this colimit is sifted, it can be computed in $\Pre_\Sigma(\Span^\fr(\Sm_k))$. 
	By Corollary~\ref{cor:thom-bundles}, each sheaf $L_\nis\Lhtp\h_k^\fr(Y,\beta(b))$ is grouplike, so we conclude using the fact that grouplike objects are stable under colimits.
\end{proof}

\begin{cor}
	Let $k$ be a perfect field and $\beta\colon B\to K_{\geq 0}$ a smooth stable tangential structure over $k$.
	For any $n\geq 1$, there is an equivalence
	\[
	\Omega^n_\T L_\mot\FQSM_k^{\beta+n} \simeq L_\mot (\FQSM_k^\beta)^\gp
	\]
	in $\Pre_\Sigma(\Sm_k)$.
\end{cor}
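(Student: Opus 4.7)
The plan is to deduce this directly from Corollary~\ref{cor:thom-general} applied twice, pivoting on the identification of $M(\beta+n)$ with $\Sigma^n_\T M\beta$ as $\T$-spectra. No new geometric input is needed; the statement is essentially a formal consequence of the main theorem, obtained by shifting by $n$ in $\SH(k)$ to eliminate the group completion that appears for rank-$0$ tangential structures.

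First I would observe that the motivic Thom spectrum functor $M\colon \Pre(\Sm_k)_{/K_{\geq 0}}\to\SH(k)$ is symmetric monoidal and sends the trivial rank-$n$ structure to $\T^{\wedge n}$ via the motivic J-homomorphism, yielding a canonical equivalence $M(\beta+n)\simeq\Sigma^n_\T M\beta$ in $\SH(k)$. Since $\beta+n$ has rank $\geq n\geq 1$, Corollary~\ref{cor:thom-general} gives
\[
\Omega^\infty_{\T,\fr}M(\beta+n)\simeq L_\zar\Lhtp\FQSM_k^{\beta+n}\simeq L_\mot\FQSM_k^{\beta+n},
\]
where the first step uses that $L_\nis\Lhtp\FQSM_k^{\beta+n}$ is already grouplike (so group completion is unnecessary) and the second uses the identity $L_\mot=L_\zar\Lhtp$ on grouplike framed $\Sigma$-presheaves invoked in the proof of Corollary~\ref{cor:thom-bundles}.

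Applying $\Omega^n_\T$ to both sides, and using that $\Sigma^n_\T$ and $\Omega^n_\T$ are mutually inverse equivalences on $\SH(k)$ and that $\Omega^n_\T$ commutes with $\Omega^\infty_{\T,\fr}$ (both being right adjoints), I obtain
\[
\Omega^n_\T L_\mot\FQSM_k^{\beta+n}\simeq \Omega^\infty_{\T,\fr}\Omega^n_\T\Sigma^n_\T M\beta\simeq \Omega^\infty_{\T,\fr}M\beta \simeq L_\mot(\FQSM_k^\beta)^\gp,
\]
where the last equivalence is Corollary~\ref{cor:thom-general} applied to $\beta$ itself (where the group completion is genuinely needed, as $\beta$ may have rank $0$). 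The only point requiring a modicum of care is confirming that the forgetful functor $\H^\fr(k)\to \Pre_\Sigma(\Sm_k)$ commutes with $\Omega^n_\T$ so that the chain of equivalences indeed lives in $\Pre_\Sigma(\Sm_k)$; this is immediate since all the functors involved are right adjoints. There is no genuine obstacle: once Theorem~\ref{thm:thom-general} and Corollary~\ref{cor:thom-general} are available, the result is pure bookkeeping in the stable category.
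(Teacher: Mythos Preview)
Your argument is correct and follows exactly the route the paper takes: the paper's proof is the single sentence ``This follows immediately from Corollary~\ref{cor:thom-general} since $M(\beta+n)\simeq \Sigma^n_\T M\beta$,'' and your proposal is simply an expanded version of that sentence, making explicit the two invocations of Corollary~\ref{cor:thom-general} and the commutation of $\Omega^n_\T$ with $\Omega^\infty_{\T,\fr}$.
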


\begin{proof}
	This follows immediately from Corollary~\ref{cor:thom-general} since $M(\beta+n)\simeq \Sigma^n_\T M\beta$.
\end{proof}

\begin{rem}\label{rem:fund-class-comparison}
	The equivalence of Theorem~\ref{thm:thom-general} admits a conditional description in terms of virtual fundamental classes as follows. For $\beta\colon B\to K$ a smooth stable tangential structure over $S$, let $\mathcal{PQS}\mathrm{m}_S^\beta$ denote the moduli stack of proper quasi-smooth $S$-schemes with $\beta$-structure. 
	There should exist a canonical morphism
	\[
	\fc_\beta\colon \mathcal{PQS}\mathrm m_S^{\beta} \to \Omega^\infty_{\T,\fr}M\beta
	\]
	in $\Pre_\Sigma(\Span^\fr(\Sm_S))$ sending $f\colon Z\to X$ to the image by the Gysin transfer $f_!\colon M\beta(Z,\sL_f) \to M\beta(X)$ of the Thom class $t_\beta(-\sL_f)\in M\beta(Z,\sL_f)$ determined by the lift of $-\sL_f$ to $B(Z)$. Assuming the existence of $\fc_\beta$ with evident naturality and multiplicativity properties, one can show that the equivalence of Theorem~\ref{thm:thom-general} is adjoint to $\fc_\beta|\FQSM_S^\beta$. Indeed, it is enough to prove this when $\beta$ is the class of a finite locally free sheaf $\sE$ on a smooth $S$-scheme $Y$. In this case, it is easy to check that the composite
	\[
	\h^\fr_S(\bV(\sE)/\bV^\times(\sE)) \xrightarrow{\Theta_{Y/S,\sE}} \h^\fr_S(Y,\sE) \xrightarrow{\fc_\sE} \Omega^\infty_{\T,\fr}\Sigma^\infty_\T(\bV(\sE)/\bV^\times(\sE))
	\]
	is adjoint to the identity.
	
	This can partially be made precise for $\beta$ of rank $0$. Gysin transfers for regular closed immersions between classical schemes are constructed in \cite{DJKFundamental}. Using the canonical factorization $Z\hook \bV(f_*\sO_Z)\to X$ of a finite syntomic morphism $f\colon Z\to X$, one can construct a morphism
	\begin{equation*}
		\fc_\beta\colon \FSYN_S^{\beta} \to \Omega^\infty_{\T}M\beta
	\end{equation*}
	in $\CMon(\Pre_\Sigma(\Sm_S))$, which is natural in $\beta$ (cf.\ \cite[\sectsign 3.1]{EHKSY2}). Unfortunately, this does not suffice to carry out the above argument.
	When $\beta=0\in K(Y)$ for some smooth $S$-scheme $Y$, the assertion that $\fc_\beta$ is induced by the equivalence of Theorem~\ref{thm:thom-general} is nevertheless verified by \cite[Theorem 3.3.10]{EHKSY2}.
\end{rem}

\begin{rem}\label{rem:quick-proof}
	For $\beta$ a smooth stable tangential structure of rank $0$ over a perfect field $k$, one can obtain more directly an equivalence $\Omega^\infty_\T M\beta \simeq L_\zar \Lhtp(\FSYN_k^\beta)^\gp$ using the morphism
	\[
	\fc_\beta\colon \FSYN_k^\beta \to \Omega^\infty_\T M\beta
	\]
	from Remark~\ref{rem:fund-class-comparison}. By \cite[Theorem 3.3.10]{EHKSY2} and \cite[Corollary 3.5.16]{EHKSY1}, $\fc_\beta^\gp$ is a motivic equivalence when $\beta=0\in K(Y)$ for some smooth $k$-scheme $Y$. It follows that $\fc_\beta^\gp$ is a motivic equivalence for any $\beta\in K(Y)$ of rank $0$, since the question is local on $Y$. Finally, since $\fc_\beta$ is natural in $\beta$ and $\Omega^\infty_\T$ preserves sifted colimits \cite[Corollary 3.5.15]{EHKSY1}, we deduce that $\fc_\beta^\gp$ is a motivic equivalence for any $\beta$ of rank $0$. However, this approach does not suffice to understand $\MGL$-modules over perfect fields (as in Theorem~\ref{thm:MGL-modules2} below), because we do not yet know if the morphism $\fc_\beta$ can be made symmetric monoidal in $\beta$, nor if it can be promoted to a morphism of presheaves with framed transfers.
\end{rem}

\begin{quest}
	Following the discussion in Remark~\ref{rem:fund-class-comparison}, it is natural to ask the following questions:
	\begin{enumerate}
		\item For $\beta$ a smooth stable tangential structure of rank $\geq 0$ over a perfect field $k$, is the inclusion $\FQSM_k^\beta\subset \mathcal{PQS}\mathrm m_k^\beta$ a motivic equivalence after group completion?
		\item For $\beta$ a smooth stable tangential structure of arbitrary rank over $k$, is there an equivalence $\Omega^\infty_\T M\beta \simeq L_\mot(\mathcal{PQS}\mathrm m_k^\beta)^\gp$ ?
	\end{enumerate}
	Regarding (1), Remark~\ref{rem:fund-class-comparison} implies that $L_\mot(\FQSM_k^\beta)^\gp$ is a direct factor of $L_\mot(\mathcal{PQS}\mathrm m_k^\beta)^\gp$. Moreover, for $\beta=\id_K$, one can show that the $\A^1$-localization $\Lhtp\mathcal{PQS}\mathrm m_k$ is already grouplike (the proof will appear elsewhere). An affirmative answer to (1) would therefore imply that $L_\mot\mathcal{PQS}\mathrm m_k^0$ is the group completion of $L_\mot\FSYN_k$.
\end{quest}

\ssec{Algebraic cobordism spectra}
\label{ssec:MGL}

\begin{thm}\label{thm:MGL}
	Let $S$ be a scheme. 
	
	\noindent{\em(i)}
	There is an equivalence of $\Einfty$-ring spectra
	\[
	\MGL_S \simeq \Sigma^\infty_{\T,\fr} \FSYN_S
	\]
	in $\SH(S)\simeq \SH^\fr(S)$.
	
	\noindent{\em(ii)}
	For every $n\geq 1$, there is an equivalence of $\MGL_S$-modules
	\[
	\Sigma^n_\T\MGL_S \simeq \Sigma^\infty_{\T,\fr}\FQSM_S^{n}
	\]
	in $\SH(S)\simeq \SH^\fr(S)$.
	
	\noindent{\em(iii)}
	There is an equivalence of $\Einfty$-ring spectra
	\[
	\bigvee_{n\geq 0}\Sigma_\T^n\MGL_S \simeq \Sigma^\infty_{\T,\fr}\FQSM_S
	\]
	in $\SH(S)\simeq \SH^\fr(S)$.
\end{thm}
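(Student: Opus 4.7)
I will deduce all three parts from Theorem~\ref{thm:thom-general} by applying it to appropriate smooth stable tangential structures $\beta\colon B\to K_{\geq 0}$. Concretely, I take $\beta = \iota_0\colon K_0\hookrightarrow K_{\geq 0}$ for (i), $\beta = \iota_n\colon K_n\hookrightarrow K_{\geq 0}$ for (ii), and $\beta = \mathrm{id}_{K_{\geq 0}}$ for (iii). In each case the left-hand side of the claimed equivalence is $M\beta$: for $\iota_n$ this is the definition of $\Sigma^n_\T\MGL_S$ recalled after Example~\ref{ex:twisted-frames}, and for $\mathrm{id}_{K_{\geq 0}}$ this follows from the decomposition $K_{\geq 0}=\bigsqcup_{n\geq 0} K_n$ together with the fact that the motivic Thom spectrum functor is symmetric monoidal, giving $M(\mathrm{id}_{K_{\geq 0}})\simeq\bigvee_{n\geq 0}\Sigma^n_\T\MGL_S$. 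On the right, Example~\ref{ex:iota_n} together with Lemma~\ref{lem:classical} identifies $\FQSM_S^{\iota_n}$ with $\FQSM_S^n$ (and with $\FSYN_S$ when $n=0$), and summing yields $\FQSM_S^{\mathrm{id}_{K_{\geq 0}}}\simeq\FQSM_S$.

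The remaining geometric input is that $\iota_n$ and $\mathrm{id}_{K_{\geq 0}}$ are smooth tangential structures. I will supply this via the discussion after Lemma~\ref{lem:local-LKE}: the general recipe spelled out there (and proved in Appendix~\ref{app:LKE}) produces smooth stable tangential structures of the form $X^\gp\times_{L_\Sigma\Z}L_\Sigma I\to K$, and applying it with $X=\Vect$ together with $I=\{n\}$ or $I=\Z_{\geq 0}$ yields the smoothness of $\iota_n$ and of $\mathrm{id}_{K_{\geq 0}}$, respectively.

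The ring and module structures are then extracted from the symmetric monoidal naturality built into Theorem~\ref{thm:thom-general}. For (i) and (iii), $\iota_0$ picks out the unit of the $\Einfty$-monoid $K_{\geq 0}$ while $\mathrm{id}_{K_{\geq 0}}$ is literally a morphism of $\Einfty$-monoids in $\Pre_\Sigma(\dSch_S)_{/K_{\geq 0}}$; both resulting equivalences are therefore $\Einfty$. For (ii), the restriction $K_0\times K_n\to K_n$ of the addition on $K_{\geq 0}$ exhibits $\iota_n$ as a module over $\iota_0$ in $\Pre_\Sigma(\dSch_S)_{/K_{\geq 0}}$; applying the symmetric monoidal functor $M$ produces the canonical $\MGL_S$-module structure on $\Sigma^n_\T\MGL_S$, and naturality of Theorem~\ref{thm:thom-general} transports this to the matching module structure of $\Sigma^\infty_{\T,\fr}\FQSM_S^n$ over $\Sigma^\infty_{\T,\fr}\FSYN_S$.

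I do not anticipate a serious obstacle: the substantive work is entirely packaged into Theorem~\ref{thm:thom-general}, and the only verification not handled directly by that theorem is the smoothness of the two tangential structures, which reduces via Lemma~\ref{lem:local-LKE} to the left-Kan-extension statements in Appendix~\ref{app:LKE}.
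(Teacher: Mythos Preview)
Your proposal is correct and follows essentially the same approach as the paper: apply Theorem~\ref{thm:thom-general} with $\beta=\iota_n$ for (i)--(ii) and $\beta=\id_{K_{\geq 0}}$ for (iii), verifying smoothness via Lemma~\ref{lem:local-LKE} and the left Kan extension results of Appendix~\ref{app:LKE} (the paper cites Example~\ref{ex:LKE-rank} directly). One minor imprecision: the identification $M(\id_{K_{\geq 0}})\simeq\bigvee_{n\geq 0}\Sigma^n_\T\MGL_S$ uses that $M$ preserves colimits (being a colimit construction), not merely that it is symmetric monoidal.
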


\begin{proof}
	These are instances of Theorem~\ref{thm:thom-general}, where $\beta$ is the inclusion of the rank $n$ summand of $K$-theory (for (i) and (ii)) or the identity map $K_{\geq 0}\to K_{\geq 0}$ (for (iii)). Indeed, these summands of K-theory satisfy the assumption of Lemma~\ref{lem:local-LKE} by Example~\ref{ex:LKE-rank}.
\end{proof}

\begin{cor}\label{cor:MGL}
	Let $S$ be a pro-smooth scheme over a field. 
	
	\noindent{\em(i)}
	There is an equivalence of $\Einfty$-ring spaces
	\[
	\Omega^\infty_{\T,\fr}\MGL_S \simeq L_\zar\Lhtp (\FSYN_S)^\gp
	\]
	in $\H^\fr(S)$.
	
	\noindent{\em(ii)}
	For every $n\geq 1$, there are equivalences of $\FSYN_S$-modules
	\[
	\Omega^\infty_{\T,\fr}\Sigma^n_\T\MGL_S \simeq L_\zar\Lhtp(\FQSM_S^{n})^\gp\simeq L_\nis\Lhtp \FQSM_S^{n}
	\]
	in $\H^\fr(S)$.
	
	\noindent{\em(iii)}
	There is an equivalence of $\Einfty$-ring spaces
	\[
	\Omega^\infty_{\T,\fr}\left(\bigvee_{n\geq 0}\Sigma_\T^n\MGL_S\right) \simeq L_\zar\Lhtp(\FQSM_S)^\gp
	\]
	in $\H^\fr(S)$.
\end{cor}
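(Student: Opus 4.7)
The plan is to obtain each of the three equivalences by applying $\Omega^\infty_{\T,\fr}$ to the spectrum-level identifications of Theorem~\ref{thm:MGL} and combining with Corollary~\ref{cor:thom-general}, extending from perfect fields to pro-smooth schemes over a field by the same base-change/continuity argument used in the proof of Corollary~\ref{cor:sphere}.

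First, I recognize that parts (i), (ii), (iii) correspond to the smooth stable tangential structures $\iota_0\colon*\to K$, $\iota_n\colon*\to K$, and $\iota\colon K_{\geq 0}\hookrightarrow K$ respectively, all of which satisfy the hypothesis of Lemma~\ref{lem:local-LKE} by Example~\ref{ex:LKE-rank} (exactly as used in the proof of Theorem~\ref{thm:MGL}). Theorem~\ref{thm:MGL} thereby provides spectrum-level identifications
\[
\MGL_S\simeq \Sigma^\infty_{\T,\fr}\FSYN_S,\quad \Sigma^n_\T\MGL_S\simeq \Sigma^\infty_{\T,\fr}\FQSM^n_S,\quad \bigvee_{n\geq 0}\Sigma^n_\T\MGL_S\simeq \Sigma^\infty_{\T,\fr}\FQSM_S
\]
over any base. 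Applying $\Omega^\infty_{\T,\fr}$ and taking adjoints to the unit of $\Sigma^\infty_{\T,\fr}\dashv\Omega^\infty_{\T,\fr}$ produces canonical comparison maps in $\H^\fr(S)$ from $L_\zar\Lhtp$ of the group-completed moduli stacks to the infinite $\P^1$-loop spaces. For the refinement in (ii), the grouplike assertion in Corollary~\ref{cor:thom-general} (applied to $\iota_n$ with $n\geq 1$) shows that group completion can be replaced by Nisnevich sheafification.

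For the base case $S=\Spec k$ with $k$ perfect, Corollary~\ref{cor:thom-general} applied to each of the three smooth tangential structures directly gives the required equivalences. To extend to $S$ pro-smooth over a field I mimic the argument of Corollary~\ref{cor:sphere}: the claim is Zariski-local on $S$, so I may assume $S$ affine, and continuity of both sides (using Proposition~\ref{prop:fr-descent}(ii) and the analogous finitariness of the moduli stacks on the left, and standard continuity of $\SH(-)$ and $\MGL$ under pro-smooth transitions on the right) reduces to $S$ smooth of finite type over a field $k$. Base-change compatibility (Theorem~\ref{thm:base-change} for the framed moduli stacks, together with $p^*\MGL_k\simeq\MGL_S$ and naturality of the adjunction unit) then reduces the problem to $S=\Spec k$, and finally passing to the perfect closure of $k$ -- a universal homeomorphism leaving both sides invariant -- reduces to the case where $k$ is perfect.

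The main obstacle is the final reduction: one must verify that the comparison maps are well-behaved under pro-smooth pullback \emph{after} group completion, $\A^1$-localization, and Zariski (or Nisnevich) sheafification. Individually each ingredient (Theorem~\ref{thm:base-change}, pullback compatibility of $\MGL$, naturality of the adjunction unit) is known, but assembling them so that the several localizations applied to both sides commute with the base-change data requires some care; this is where the pro-smoothness hypothesis on $S$ is used. Once this is in hand, the perfect-field statement from Step~3 propagates to $S$ and completes the proof of all three parts.
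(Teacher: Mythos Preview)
Your core strategy is correct and matches the paper: the perfect-field case is Corollary~\ref{cor:thom-general}, and the general case follows by pulling back along a pro-smooth morphism to a perfect field. However, your reduction to the perfect-field case is needlessly complicated, and one step is dubious.

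The paper's argument is a single line: if $S$ is pro-smooth over \emph{some} field, then $S$ is pro-smooth over its \emph{prime} field $k$ (i.e., $\bQ$ or $\bF_p$), which is already perfect. One then pulls back the equivalences of Corollary~\ref{cor:thom-general} along $f\colon S\to\Spec k$; since $f$ is pro-smooth, $f^*$ commutes with $\Omega^\infty_{\T,\fr}$, $L_\zar$, $L_\nis$, $\Lhtp$, group completion, and the formation of the moduli stacks $\FSYN$ and $\FQSM^n$. No continuity, no Noetherian approximation, and no perfect closures are needed.

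Your route through continuity and then ``passing to the perfect closure of $k$'' is both unnecessary and problematic. The map $\Spec k^{\mathrm{perf}}\to\Spec k$ goes in the wrong direction: knowing the result over $k^{\mathrm{perf}}$ does not formally yield it over $k$ without invoking invariance of $\SH$ and of $\H^\fr$ under universal homeomorphisms, which is a nontrivial external input you have not cited. Simply replacing the ground field by the prime field at the outset sidesteps this entirely.
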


\begin{proof}
	When $S$ is the spectrum of a perfect field, these statements are instances of Corollary~\ref{cor:thom-general}. 
	In general, we can choose a pro-smooth morphism $f\colon S\to \Spec k$ where $k$ is a perfect field, and the results over $k$ pull back to the results over $S$.
\end{proof}

Let us also spell out the specializations of Theorem~\ref{thm:thom-general} and Corollary~\ref{cor:thom-general} to the smooth stable tangential structure of Example~\ref{ex:SL-orientation}.

\begin{thm}\label{thm:MSL}
	Let $S$ be a scheme. 
	
	\noindent{\em(i)}
	There is an equivalence of $\Einfty$-ring spectra
	\[
	\MSL_S \simeq \Sigma^\infty_{\T,\fr} \FSYN_S^{\ornt}
	\]
	in $\SH(S)\simeq \SH^\fr(S)$.
	
	\noindent{\em(ii)}
	For every $n\geq 1$, there is an equivalence of $\MSL_S$-modules
	\[
	\Sigma^n_\T\MSL_S \simeq \Sigma^\infty_{\T,\fr}\FQSM_S^{\ornt,n}
	\]
	in $\SH(S)\simeq \SH^\fr(S)$.
	
	\noindent{\em(iii)}
	There is an equivalence of $\sE_1$-ring spectra
	\[
	\bigvee_{n\geq 0}\Sigma_\T^{n}\MSL_S \simeq \Sigma^\infty_{\T,\fr}\FQSM_S^{\ornt}
	\]
	restricting to an equivalence of $\Einfty$-ring spectra
	\[
	\bigvee_{n\geq 0}\Sigma_\T^{2n}\MSL_S \simeq \Sigma^\infty_{\T,\fr}\FQSM_S^{\ornt,\ev}
	\]
	in $\SH(S)\simeq \SH^\fr(S)$.
\end{thm}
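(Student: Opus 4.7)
The plan is to mimic the proof of Theorem~\ref{thm:MGL}, replacing $K$ throughout by $K^{\SL}=\fib(\det\colon K\to \Pic)$, the stable tangential structure from Example~\ref{ex:SL-orientation}. For parts~(i) and~(ii), one applies Theorem~\ref{thm:thom-general} to $\beta_n\colon K^{\SL}_n\to K$ given by the rank-$n$ summand. By Example~\ref{ex:SL-orientation} one has $\FQSM_S^{\beta_n}=\FQSM_S^{\ornt,n}$ (which is $\FSYN_S^{\ornt}$ when $n=0$), while $M\beta_n\simeq \Sigma^n_\T\MSL_S$ by the definition of $\MSL_S$ as the motivic Thom spectrum of the composite $K^{\SL}_0\to K\to \Pic(\SH)$. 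Smoothness of $\beta_n$ will follow from Lemma~\ref{lem:local-LKE} via the general criterion spelled out after it: $\Vect^{\SL}$ is a smooth algebraic stack with quasi-affine diagonal carrying an $\sE_1$-monoid structure over $\Vect$, and the rank-$n$ piece of $K^{\SL}$ is the associated pullback $(\Vect^{\SL})^\gp\times_{L_\Sigma\Z}L_\Sigma\{n\}$.

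For (iii), apply Theorem~\ref{thm:thom-general} to $K^{\SL}_{\geq 0}\to K_{\geq 0}$ to obtain the underlying equivalence $\bigvee_{n\geq 0}\Sigma^n_\T\MSL_S\simeq \Sigma^\infty_{\T,\fr}\FQSM_S^{\ornt}$; the remaining issue is the monoidal refinement. The tensor product of two trivialized-determinant bundles inherits a canonical trivialization of its determinant, but the symmetry $V\otimes W\simeq W\otimes V$ intertwines the two induced trivializations only up to a Koszul sign $(-1)^{\rk(V)\rk(W)}$. This means $K^{\SL}_{\geq 0}\to K_{\geq 0}$ refines to an $\sE_1$- but not an $\Einfty$-morphism over $K_{\geq 0}$, so Theorem~\ref{thm:thom-general} in its $\sE_1$-variant gives the $\sE_1$-ring equivalence. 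On the even-rank summand $\bigvee_{n\geq 0}K^{\SL}_{2n}$ the sign becomes trivial (since $\rk(V)\rk(W)$ is then always even), the morphism upgrades to $\Einfty$, and one obtains the $\Einfty$-ring equivalence restricted to $\bigvee_{n\geq 0}\Sigma^{2n}_\T\MSL_S$.

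The main obstacle will be to rigorously produce and verify these monoidal refinements: constructing $K^{\SL}_{\geq 0}$ as an $\sE_1$-algebra (and its even-rank part as an $\Einfty$-algebra) in $\Pre_\Sigma(\dSch_S)_{/K_{\geq 0}}$ using the $\sE_1$-monoidal structure on $\Vect^{\SL}$ (respectively its $\Einfty$-monoidal even-rank subcategory), and checking that the resulting $\sE_n$-ring structures on both sides of the equivalence coincide with those used to define $\MSL_S$ as an $\Einfty$-ring spectrum. All other ingredients are directly inherited from the proof of Theorem~\ref{thm:MGL}.
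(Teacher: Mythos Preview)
Your proposal is correct and matches the paper's approach: the paper simply states that Theorem~\ref{thm:MSL} is the specialization of Theorem~\ref{thm:thom-general} to the smooth stable tangential structure of Example~\ref{ex:SL-orientation}, with the relevant smoothness and $\sE_1$/$\Einfty$ facts recorded in Example~\ref{ex:LKE}. One minor terminological slip: the monoidal operation on $\Vect^{\SL}$ underlying the Koszul sign you describe is direct sum, not tensor product; the sign $(-1)^{\rk(V)\rk(W)}$ is the determinant of the swap $V\oplus W\simeq W\oplus V$, and the paper packages this by writing $\Vect^{\SL}\simeq \Vect\times_{\Pic^\dagger}L_\Sigma\Z$ with $\Pic^\dagger(R)=\Pic(\Mod_R(\Spt))$, where the map $\Z\to\Pic^\dagger$ is only $\sE_1$ but becomes $\Einfty$ on $2\Z$.
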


\begin{cor}\label{cor:MSL}
	Let $S$ be a pro-smooth scheme over a field. 
	
	\noindent{\em(i)}
	There is an equivalence of $\Einfty$-ring spaces
	\[
	\Omega^\infty_{\T,\fr}\MSL_S \simeq L_\zar\Lhtp (\FSYN_S^{\ornt})^\gp
	\]
	in $\H^\fr(S)$.
	
	\noindent{\em(ii)}
	For every $n\geq 1$, there are equivalences of $\FSYN_S^\ornt$-modules
	\[
	\Omega^\infty_{\T,\fr}\Sigma^n_\T\MSL_S \simeq L_\zar\Lhtp(\FQSM_S^{\ornt,n})^\gp\simeq L_\nis\Lhtp \FQSM_S^{\ornt,n}
	\]
	in $\H^\fr(S)$.
	
	\noindent{\em(iii)}
	There is an equivalence of $\sE_1$-ring spaces
	\[
	\Omega^\infty_{\T,\fr}\left(\bigvee_{n\geq 0}\Sigma_\T^n\MSL_S\right) \simeq L_\zar\Lhtp(\FQSM_S^{\ornt})^\gp
	\]
	restricting to an equivalence of $\Einfty$-ring spaces
	\[
	\Omega^\infty_{\T,\fr}\left(\bigvee_{n\geq 0}\Sigma_\T^{2n}\MSL_S\right) \simeq L_\zar\Lhtp(\FQSM_S^{\ornt,\ev})^\gp
	\]
	in $\H^\fr(S)$.
\end{cor}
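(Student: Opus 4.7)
The plan is to imitate the proof of Corollary~\ref{cor:MGL}, substituting Theorem~\ref{thm:MSL} for Theorem~\ref{thm:MGL} and the orientation tangential structure of Example~\ref{ex:SL-orientation} for the rank tangential structure of Example~\ref{ex:iota_n}.

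First I would handle the case $S=\Spec k$ with $k$ a perfect field. The four tangential structures entering Theorem~\ref{thm:MSL} are the rank-zero piece $\beta_0$, the rank-$n$ pieces $\beta_n$ for $n\ge 1$, and the subcoproducts $\bigsqcup_{n\ge 0}\beta_n$ and $\bigsqcup_{n\ge 0}\beta_{2n}$ of the orientation structure $\beta\colon K^\SL\to K$ from Example~\ref{ex:SL-orientation}. All of these are smooth stable tangential structures, by the criterion stated after Lemma~\ref{lem:local-LKE}, applied to $X=\Vect^\SL$ (a smooth algebraic stack with quasi-affine diagonal and a canonical $\sE_1$-monoid structure over $\Vect$) with the appropriate subsets $I\subset\Z$. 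Corollary~\ref{cor:thom-general} then immediately delivers the equivalences (i), (ii), and the first equivalence in (iii), together with the matching ring structures via the symmetric monoidality statement of the same corollary. The second equivalence in (ii) follows from the rank-$\ge 1$ clause of Corollary~\ref{cor:thom-general}, which asserts that $L_\nis\Lhtp\FQSM_k^{\ornt,n}$ is already grouplike.

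For a general pro-smooth $S$ over a field, I would choose a pro-smooth morphism $f\colon S\to\Spec k$ with $k$ perfect and pull back each perfect-field equivalence along $f$. On the left, stability under $f^*$ follows from symmetric monoidality of $f^*\colon\SH(k)\to\SH(S)$ and its compatibility with $\Sigma^n_\T$ and with $\Omega^\infty_{\T,\fr}$ on very effective spectra. On the right, stability under $f^*$ on framed motivic spaces follows from Theorem~\ref{thm:base-change} applied to each $\h_k^\fr(Y,\xi)$ appearing in the sifted colimit presentation of $\FQSM_k^\beta$, together with the fact that $L_\zar\Lhtp$, $L_\nis\Lhtp$, and group completion commute with base change along smooth morphisms.

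The only bookkeeping subtlety is the $\sE_1$-versus-$\Einfty$ dichotomy in (iii): tensoring two $\SL$-oriented vector bundles picks up a sign under the symmetric braiding in $\Vect^\SL$, so $\FQSM_S^\ornt$ is genuinely only $\sE_1$-monoidal, while its even-rank substack $\FQSM_S^{\ornt,\ev}$ is $\Einfty$-monoidal (cf.\ Remark~\ref{rem:KSL-plus}). This exactly matches the analogous dichotomy on the spectrum side between $\bigvee_{n\ge 0}\Sigma^n_\T\MSL_S$ and $\bigvee_{n\ge 0}\Sigma^{2n}_\T\MSL_S$, and is tracked automatically by the $\sE_n$-refinement built into Corollary~\ref{cor:thom-general}. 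Beyond this, I do not anticipate any real obstacle, as all substantive input has already been established in the preceding sections.
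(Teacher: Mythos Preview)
Your proposal is correct and follows exactly the approach the paper intends: the paper gives no explicit proof for this corollary, treating it as formally identical to the proof of Corollary~\ref{cor:MGL} with the orientation tangential structure of Example~\ref{ex:SL-orientation} in place of $\iota_n$. Your added remarks on smoothness of the relevant tangential structures and on the $\sE_1$/$\Einfty$ dichotomy are accurate elaborations of what the paper leaves implicit.
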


\ssec{Hilbert scheme models}
\label{ssec:hilbert}

Using the $\A^1$-contractibility of the space of embeddings of a finite scheme into $\A^\infty$ (Corollary~\ref{cor:Emb-contractible}), we can recast our models for $\Omega^\infty_\T\MGL$ and $\Omega^\infty_\T\MSL$ (and others) in terms of Hilbert schemes, at the cost of losing the identification of the framed transfers and of the multiplicative structures.

Let $X$ be an $S$-scheme. We define the functor $\Hilb^{\fqs}(X/S)\colon \Sch_S^\op \to \Spc$ by
\[
\Hilb^{\fqs}(X/S)(T) = \{\text{closed immersions $Z\to X_T$ such that $Z\to T$ is finite and quasi-smooth}\},
\]
and we denote by $\Hilb^{\fqs,n}(X/S)$ the subfunctor where $Z\to T$ has relative virtual dimension $-n$ (which is contractible unless $n\geq 0$). By Lemma~\ref{lem:classical}, we have
\[
\Hilb^{\fqs,0}(X/S) = \Hilb^{\flci}(X/S).
\]
In particular, if $X$ is smooth and quasi-projective over $S$, then $\Hilb^{\fqs,0}(X/S)$ is representable by a smooth $S$-scheme (see \cite[Lemma 5.1.3]{EHKSY1}).

We also define
\[
\Hilb^{\fqs}(\A^\infty_S/S) = \colim_{n\to\infty} \Hilb^{\fqs}(\A^n_S/S),
\]
and similarly for $\Hilb^{\fqs,n}(\A^\infty_S/S)$.

\begin{lem}\label{lem:Hilb}
	Let $S$ be a scheme. Then the forgetful map
	\[
	\Hilb^{\fqs}(\A^\infty_S/S) \to \FQSM_S
	\]
	is a universal $\A^1$-equivalence on affine schemes (i.e., any pullback of this map in $\Pre(\Sch_S)$ is an $\A^1$-equivalence on affine schemes).
\end{lem}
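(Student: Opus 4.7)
The plan is to deduce the lemma directly from Corollary~\ref{cor:Emb-contractible}, which identifies the (pointwise) fibers of the forgetful map with $\A^1$-contractible embedding presheaves.

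First I would reduce to the case of a representable base. Any presheaf $P \in \Pre(\Sch_S)$ can be written as a colimit $P \simeq \colim_\alpha T_\alpha$ of representable affine $S$-schemes (indexed by the slice over $P$). Pullback in $\Pre(\Sch_S)$, restriction to affines, and $\Lhtp$ all commute with colimits, so it suffices to show that for every affine $S$-scheme $T$ and every map $T \to \FQSM_S$, the pullback
\[
\Hilb^\fqs(\A^\infty_S/S) \times_{\FQSM_S} T \to T
\]
is an $\A^1$-equivalence when evaluated on affine $S$-schemes.

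Second, I would identify this pullback. A map $T \to \FQSM_S$ classifies a finite quasi-smooth derived $T$-scheme $f\colon Z \to T$. Unwinding definitions, for any $T' \in \Sch_S$ an element of $(\Hilb^\fqs(\A^\infty_S/S) \times_{\FQSM_S} T)(T')$ consists of a map $g\colon T' \to T$ together with a closed immersion $Z \times_T T' \hook \A^\infty_{T'}$ over $T'$ (the quasi-smoothness and finiteness of this immersion are automatic from those of $f$). Under the equivalence $\Pre(\Sch_S)_{/T} \simeq \Pre((\Sch_S)_{/T})$ this presheaf over $T$ corresponds to
\[
(T' \to T) \mapsto \Emb_{T'}(Z\times_T T', \A^\infty_{T'}).
\]

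Third, since $T$ is affine, an affine $S$-scheme equipped with a map to $T$ is the same as an affine $T$-scheme. Restricting to affines, the pullback over $T$ is precisely the presheaf
\[
\Aff_T^\op \to \Spc, \quad T' \mapsto \Emb_{T'}(Z\times_T T', \A^\infty_{T'}),
\]
which is $\A^1$-contractible by Corollary~\ref{cor:Emb-contractible} (noting that the corollary is stated for $\dAff_T$, and classical affines are derived affines). This contractibility is exactly the statement that the map to the terminal presheaf on affine $T$-schemes is an $\A^1$-equivalence, which translates back to the desired $\A^1$-equivalence on affine $S$-schemes, completing the proof.

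The only mildly delicate point is the bookkeeping in the first step: one must check that the decomposition of $P$ as a colimit of affines, the base change, and the $\A^1$-localization on affines all interact compatibly, but this is formal since $L_{\A^1}$ commutes with colimits and pullbacks in presheaves commute with colimits in either variable. There is no substantive obstacle: Corollary~\ref{cor:Emb-contractible} does all the geometric work.
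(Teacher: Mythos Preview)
Your proof is correct and follows essentially the same approach as the paper: reduce via universality of colimits to a representable base $T$, identify the pullback with the presheaf $T'\mapsto \Emb_{T'}(Z\times_T T',\A^\infty_{T'})$, and conclude by Corollary~\ref{cor:Emb-contractible}. The paper phrases the identification via $f_\sharp$ rather than the slice equivalence $\Pre(\Sch_S)_{/T}\simeq\Pre(\Sch_T)$, but this is the same thing.
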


\begin{proof}
	Let $f\colon T\to S$ and let $Z\in\FQSM_S(T)$. Form the Cartesian square
	\[
	\begin{tikzcd}
		P_Z \ar{r} \ar{d} & T \ar{d}{Z} \\
		\Hilb^{\fqs}(\A^\infty_S/S) \ar{r} & \FQSM_S\rlap.
	\end{tikzcd}
	\]
	By universality of colimits, it suffices to show that $P_Z\to T$ is an $\A^1$-equivalence on affine schemes. By inspection, $P_Z$ is $f_\sharp$ of the presheaf $T'\mapsto \Emb_{T'}(Z\times_TT',\A^\infty_{T'})$ on $\Sch_T$, and the latter is $\A^1$-contractible on affine schemes by Corollary~\ref{cor:Emb-contractible}.
\end{proof}

\begin{thm}\label{thm:MGL-Hilb}
	Suppose $S$ is pro-smooth over a field. 
	
	\noindent{\em(i)}
	There is an equivalence
	\[
	\Omega^\infty_\T\MGL_S \simeq L_\zar(\Lhtp\Hilb^\flci(\A^\infty_S/S))^\gp.
	\]
	
	\noindent{\em(ii)}
	For every $n\geq 1$, there are equivalences
	\[
	\Omega^\infty_\T\Sigma^n_\T\MGL_S \simeq L_\zar(\Lhtp\Hilb^{\fqs,n}(\A^\infty_S/S))^\gp\simeq L_\nis \Lhtp\Hilb^{\fqs,n}(\A^\infty_S/S).
	\]
\end{thm}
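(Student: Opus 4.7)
The plan is to combine Corollary~\ref{cor:MGL} with Lemma~\ref{lem:Hilb}, reducing the statement to showing that the forgetful maps from the Hilbert schemes to the appropriate moduli stacks induce equivalences after $L_\zar\Lhtp$ (and, where relevant, group completion). By Corollary~\ref{cor:MGL}(i), $\Omega^\infty_\T\MGL_S \simeq L_\zar\Lhtp(\FSYN_S)^\gp$, and by Corollary~\ref{cor:MGL}(ii), $\Omega^\infty_\T\Sigma^n_\T\MGL_S \simeq L_\zar\Lhtp(\FQSM_S^n)^\gp \simeq L_\nis\Lhtp\FQSM_S^n$ for $n\ge 1$. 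So it suffices to compare Hilbert schemes with moduli stacks after the relevant localizations.

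First I would extract from Lemma~\ref{lem:Hilb} the componentwise statements. The inclusion $\FQSM_S^n \hookrightarrow \FQSM_S$ is a summand inclusion (picking out the component of relative virtual dimension $-n$), and pulling back the forgetful map $\Hilb^{\fqs}(\A^\infty_S/S) \to \FQSM_S$ along it yields, by inspection, the map $\Hilb^{\fqs,n}(\A^\infty_S/S) \to \FQSM_S^n$. Since the original map is a universal $\A^1$-equivalence on affines, this pullback is also an $\A^1$-equivalence on affine schemes. In particular, taking $n = 0$ gives $\Hilb^\flci(\A^\infty_S/S) \to \FSYN_S$.

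Next I would promote these $\A^1$-equivalences on affines to equivalences after $L_\zar\Lhtp$ and $L_\nis\Lhtp$. The naive $\A^1$-localization $\Lhtp$ is computed pointwise by the cosimplicial formula, so it turns an $\A^1$-equivalence on affines into a pointwise equivalence on affines; Zariski (resp.\ Nisnevich) sheafification is determined by its values on affines, so $L_\zar\Lhtp$ (resp.\ $L_\nis\Lhtp$) converts an $\A^1$-equivalence on affines into an equivalence. For part~(i), it remains to commute group completion with $L_\zar$: since $L_\zar$ is a left-exact accessible localization, it preserves grouplike $\Einfty$-objects, and hence commutes with $(-)^\gp$. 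Combining these steps yields the equivalences claimed in (i) and the first equivalence in (ii); the second equivalence in (ii) is then inherited directly from Corollary~\ref{cor:MGL}(ii).

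The only real subtlety is handling the componentwise decomposition: one must verify that $\FQSM_S^n$ really is the connected component of $\FQSM_S$ selected by the rank, so that pullback of the universal $\A^1$-equivalence of Lemma~\ref{lem:Hilb} identifies with the map of Hilbert schemes of fixed virtual codimension. This is essentially by definition, since the relative virtual dimension of a finite quasi-smooth morphism is locally constant on the base. The remaining issues, commuting $L_\zar$ with group completion and extending equivalences from affines to all schemes via sheafification, are formal once one has set up the correct diagram.
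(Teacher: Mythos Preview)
Your proposal is correct and follows exactly the paper's approach: the paper's proof is the one-liner ``This follows immediately from Corollary~\ref{cor:MGL} and Lemma~\ref{lem:Hilb},'' and you have simply spelled out how those two inputs combine, correctly invoking the \emph{universal} $\A^1$-equivalence in Lemma~\ref{lem:Hilb} to pass to the rank-$n$ summands. The discussion of commuting $L_\zar$ with group completion is not actually needed (both sides already have the form $L_\zar((\Lhtp-)^\gp)$, so functoriality suffices), but it does no harm.
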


\begin{proof}
	This follows immediately from Corollary~\ref{cor:MGL} and Lemma~\ref{lem:Hilb}.
\end{proof}

Define the functor $\Hilb^{\ornt,n}(X/S)\colon \Sch_S^\op\to \Spc$ and the forgetful map 
\[
\Hilb^{\ornt,n}(X/S)\to \Hilb^{\fqs,n}(X/S)
\] 
so that the fiber over $Z\in \Hilb^{\fqs,n}(X/S)(T)$ is the $\infty$-groupoid of equivalences $\det(\sL_{Z/T})\simeq \sO_Z$.
In other words, $\Hilb^{\ornt,n}(X/S)$ is the Weil restriction of the $\G_m$-torsor $\Isom(\det(\sL_{\sZ}),\sO_{\sZ})$ over the universal $\sZ$. In particular, if $X$ is smooth and quasi-projective over $S$, then $\Hilb^{\ornt,0}(X/S)$ is representable by a smooth $S$-scheme.

\begin{thm}\label{thm:MSL-Hilb}
	Suppose $S$ is pro-smooth over a field. 
	
	\noindent{\em(i)}
	There is an equivalence
	\[
	\Omega^\infty_\T\MSL_S \simeq L_\zar(\Lhtp\Hilb^{\ornt,0}(\A^\infty_S/S))^\gp.
	\]
	
	\noindent{\em(ii)}
	For every $n\geq 1$, there are equivalences
	\[
	\Omega^\infty_\T\Sigma^n_\T\MSL_S \simeq L_\zar(\Lhtp\Hilb^{\ornt,n}(\A^\infty_S/S))^\gp\simeq L_\nis \Lhtp\Hilb^{\ornt,n}(\A^\infty_S/S).
	\]
\end{thm}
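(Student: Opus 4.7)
The plan is to follow exactly the pattern used to deduce Theorem~\ref{thm:MGL-Hilb} from Corollary~\ref{cor:MGL} and Lemma~\ref{lem:Hilb}, replacing the inputs with their oriented counterparts: Corollary~\ref{cor:MSL} and an oriented analog of Lemma~\ref{lem:Hilb}.

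The first step is to prove that the forgetful map
\[
\Hilb^{\ornt,n}(\A^\infty_S/S) \to \FQSM_S^{\ornt,n}
\]
is a universal $\A^1$-equivalence on affine schemes for every $n\geq 0$. For this I would observe that the commutative square
\[
\begin{tikzcd}
\Hilb^{\ornt,n}(\A^\infty_S/S) \ar{r} \ar{d} & \FQSM_S^{\ornt,n} \ar{d} \\
\Hilb^{\fqs,n}(\A^\infty_S/S) \ar{r} & \FQSM_S^{n}
\end{tikzcd}
\]
is Cartesian, since on both sides the orientation data is the $\infty$-groupoid of trivializations $\det(\sL_{Z/T})\simeq \sO_Z$ of the canonical sheaf of the universal finite quasi-smooth scheme $Z$ of relative virtual dimension $-n$, compatibly with the horizontal forgetful maps. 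Lemma~\ref{lem:Hilb} (applied rank by rank, which is valid since $\FQSM_S$ and $\Hilb^\fqs(\A^\infty_S/S)$ decompose as coproducts over $n$) shows that the bottom arrow is a universal $\A^1$-equivalence on affine schemes, and this property is stable under base change in $\Pre(\Sch_S)$ by definition, so the top arrow inherits it.

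Given this oriented analog of Lemma~\ref{lem:Hilb}, both statements follow formally. For part (i) I would take the $n=0$ case and combine it with Corollary~\ref{cor:MSL}(i): applying $L_\zar\Lhtp(\cdot)^\gp$, which preserves $\A^1$-equivalences between presheaves on affine schemes, transports the equivalence for $\MSL_S$ from $\FSYN_S^\ornt = \FQSM_S^{\ornt,0}$ to $\Hilb^{\ornt,0}(\A^\infty_S/S)$. Part (ii) is obtained identically for $n\geq 1$ by transporting Corollary~\ref{cor:MSL}(ii) along the same universal $\A^1$-equivalence, using the same fact for both $L_\zar\Lhtp(\cdot)^\gp$ and $L_\nis\Lhtp(\cdot)$. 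I do not foresee a real obstacle here; the entire argument is the Cartesian-square observation plus routine transport along Corollary~\ref{cor:MSL}, exactly mirroring the derivation of Theorem~\ref{thm:MGL-Hilb}.
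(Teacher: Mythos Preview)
Your proposal is correct and is essentially the same as the paper's proof: the paper also notes the Cartesian square
\[
\Hilb^{\ornt,n}(\A^\infty_S/S) \simeq \Hilb^{\fqs}(\A^\infty_S/S) \times_{\FQSM_S} \FQSM_S^{\ornt,n}
\]
and then invokes Lemma~\ref{lem:Hilb} and Corollary~\ref{cor:MSL}. Your rank-by-rank version of the square is an equivalent formulation, since $\FQSM_S^{\ornt,n}\to\FQSM_S$ already factors through $\FQSM_S^{n}$.
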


\begin{proof}
	This follows immediately from Corollary~\ref{cor:MSL} and Lemma~\ref{lem:Hilb}, noting that
	\[
	\Hilb^{\ornt,n}(\A^\infty_S/S) \simeq \Hilb^{\fqs}(\A^\infty_S/S) \times_{\FQSM_S} \FQSM_S^{\ornt,n}.\qedhere
	\]
\end{proof}

For any smooth stable tangential structure $\beta\colon B\to K_{\geq 0}$, Lemma~\ref{lem:Hilb} gives a description of $\Omega^\infty_\T M\beta$ in terms of the functor classifying derived subschemes $Z$ of $\A^\infty$ with some structure on the image of the shifted cotangent complex $\sL_Z[-1]$ in K-theory. However, it is perhaps more natural to classify derived subschemes $Z$ of $\A^\infty$ with some structure on the conormal sheaf $\sN_{Z/\A^\infty}\in \sVect(Z)$.
If $\beta\colon B\to \sVect_{\geq 0}$ is a morphism in $\Pre_\Sigma(\dSch_S)$, we define the functor $\Hilb^\beta(\A^\infty_S/S)\colon \Sch_S^\op\to\Spc$ by the Cartesian squares
\[
\begin{tikzcd}
	B(Z)\times_{\sVect(Z)}\{\sN_{Z/\A^\infty_T}\} \ar{r} \ar{d} & T \ar{d}{Z} \\
	\Hilb^{\beta}(\A^\infty_S/S)(T) \ar{r} & \Hilb^{\fqs}(\A^\infty_S/S)\rlap.
\end{tikzcd}
\]

\begin{lem}\label{lem:Hilb-beta}
	Let $S$ be a scheme and $\beta\colon B\to \sVect_{\geq 0}$ a stable tangential structure over $S$. Then the forgetful map
	\[
	\Hilb^\beta(\A^\infty_S/S) \to \FQSM_S^\beta
	\]
	is an $\A^1$-equivalence on affine schemes.
\end{lem}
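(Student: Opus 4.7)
The plan is to realize the forgetful map $\Hilb^\beta(\A^\infty_S/S)\to \FQSM_S^\beta$ as the colimit, over the category of elements of $B$, of the natural maps $\h^\nfr_S(Y,\xi)\to \h^\fr_S(Y,\xi)$ studied in Section~\ref{sec:twisted-frames}, and then to invoke Corollary~\ref{cor:nfr-vs-dfr} termwise. Using the formula $B\simeq \colim_{(Y,b_Y)\in\int B} Y$ valid for any presheaf, I would first establish the colimit descriptions
\[
\Hilb^\beta(\A^\infty_S/S)\simeq \colim_{(Y,b_Y)\in \int B} \h^\nfr_S(Y,\beta(b_Y))\big|_{\Sch_S},\quad \FQSM_S^\beta\simeq \colim_{(Y,b_Y)\in \int B} \h^\fr_S(Y,\beta(b_Y))\big|_{\Sch_S}
\]
in $\Pre(\Sch_S)$. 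Under these identifications, the forgetful map is the colimit of the natural transformations $\h^\nfr_S(Y,\beta(b_Y))\to \h^\fr_S(Y,\beta(b_Y))$. The second formula is essentially the colimit description of $\FQSM_S^\beta$ already recorded in \ssecref{ssec:thom-general}; the first is the key new input.

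By Corollary~\ref{cor:nfr-vs-dfr}, each map $\h^\nfr_S(Y,\beta(b_Y))\to \h^\fr_S(Y,\beta(b_Y))$ becomes an equivalence after applying $\Lhtp$ on derived affine schemes, hence in particular on classical affine schemes. Since $\Lhtp$ commutes with colimits as an endofunctor of $\Pre(\Sch_S)$, the induced map $\Lhtp\Hilb^\beta(\A^\infty_S/S)\to \Lhtp\FQSM_S^\beta$ is an equivalence on affine schemes, giving the desired $\A^1$-equivalence.

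The main obstacle will be verifying the colimit description of $\Hilb^\beta(\A^\infty_S/S)$. For fixed $(Y,b_Y)$ with $\beta(b_Y)=(\sE,m)\in\sVect_{\geq 0}(Y)$, one must match up an equivalence $\tilde\alpha\colon \beta(b_Y)|_Z\simeq \sN_{Z/\A^\infty_T}$ in $\sVect(Z)$ (the data parametrizing $\Hilb^\beta$) with the level-$n$ normal framing data $\sN_{Z/\A^n_T}\simeq \sO_Z^{n-m}\oplus g^*\sE$ appearing in the defining colimit of $\h^\nfr_S(Y,\beta(b_Y))$. Concretely, at finite level $n$ one has the pair $(\sN_{Z/\A^n_T},n)\in\sVect(Z)$ representing $\sN_{Z/\A^\infty_T}$, and an isomorphism $\sN_{Z/\A^n_T}\simeq \sO_Z^{n-m}\oplus g^*\sE$ of vector bundles is exactly an equivalence $(\sN_{Z/\A^n_T},n)\simeq (\sE,m)$ in $\sVect(Z)$; cofinality of the colimits over $n$ then yields the claim. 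Once this identification is in hand, the rest of the argument is a formal application of Corollary~\ref{cor:nfr-vs-dfr} together with the cocontinuity of $\Lhtp$.
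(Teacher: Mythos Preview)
Your proposal is correct and follows exactly the same approach as the paper: the paper's proof is the single sentence that the forgetful map is the colimit over $(Y,b)\in (\dSch_S)_{/B}$ of the maps $\h^\nfr_S(Y,\beta(b))\to \h^\fr_S(Y,\beta(b))$, which are $\A^1$-equivalences on affines by Corollary~\ref{cor:nfr-vs-dfr}. Your write-up simply unpacks the implicit identification $\Hilb^\beta(\A^\infty_S/S)\simeq \colim_{(Y,b)}\h^\nfr_S(Y,\beta(b))$ and makes explicit the use of cocontinuity of $\Lhtp$; both steps are exactly what the paper leaves to the reader.
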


\begin{proof}
	This map is the colimit of the maps
		\[
		\h^\nfr_S(Y,\beta(b)) \to \h^\fr_S(Y,\beta(b))
		\]
	over $Y\in\dSch_S$ and $b\in B(Y)$, which are $\A^1$-equivalences on affine schemes by Corollary~\ref{cor:nfr-vs-dfr}.
\end{proof}

\begin{thm}\label{thm:Mbeta-Hilb}
	Let $k$ be a perfect field and $\beta\colon B\to \sVect_{\geq 0}$ a smooth stable tangential structure over $k$. Then there is an equivalence
	\[
	\Omega^\infty_\T M\beta \simeq L_\zar(\Lhtp \Hilb^\beta(\A^\infty_k/k))^\gp.
	\]
	Moreover, if $\beta$ has rank $\geq 1$, $L_\nis\Lhtp\Hilb^\beta(\A^\infty_k/k)$ is already grouplike.
\end{thm}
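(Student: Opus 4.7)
The plan is to reduce directly to \corref{cor:thom-general} combined with \lemref{lem:Hilb-beta}. First I would observe that composing $\beta$ with the canonical map $\sVect_{\geq 0}\to K_{\geq 0}$ produces a stable tangential structure $\beta'\colon B\to K_{\geq 0}$ which is again smooth, since the smoothness condition only constrains the domain $B$ (through the Nisnevich equivalence $\tilde B\to B$) and is independent of the structure map. By construction, the motivic Thom spectrum $M\beta$ of \ssecref{ssec:thom-general} agrees with $M\beta'$, and $\FQSM_k^{\beta'}$ is precisely the presheaf whose $\beta$-structure datum on a finite quasi-smooth $f\colon Z\to X$ is a lift to $B(Z)$ of the class of $-\sL_f$ in K-theory.

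Applying \corref{cor:thom-general} to $\beta'$ then yields an equivalence
\[
\Omega^\infty_\T M\beta \simeq L_\zar\Lhtp(\FQSM_k^{\beta'})^\gp
\]
in $\H(k)$ (obtained from the equivalence in $\H^\fr(k)$ by forgetting transfers), and also gives that $L_\nis\Lhtp\FQSM_k^{\beta'}$ is grouplike whenever $\beta'$ has rank $\geq 1$. Next I would invoke \lemref{lem:Hilb-beta} to produce the canonical forgetful morphism
\[
\Hilb^\beta(\A^\infty_k/k)\to \FQSM_k^{\beta'}
\]
and use that it is an $\A^1$-equivalence on affine schemes.

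The remaining step is to promote this affine-level $\A^1$-equivalence, after applying $\Lhtp$ and group completion, to a Zariski equivalence on all of $\Sch_k$, and to promote it to a Nisnevich equivalence for the moreover statement. This is immediate because restriction along $\Aff_k\hookrightarrow \Sch_k$ induces equivalences on both the Zariski and Nisnevich $\infty$-topoi, so $L_\zar$ and $L_\nis$ on $\Pre(\Sch_k)$ are determined by values on affines; group completion, being pointwise on $\Einfty$-spaces, commutes with the restriction. Consequently the induced maps $L_\zar(\Lhtp\Hilb^\beta(\A^\infty_k/k))^\gp\to L_\zar(\Lhtp\FQSM_k^{\beta'})^\gp$ and $L_\nis\Lhtp\Hilb^\beta(\A^\infty_k/k)\to L_\nis\Lhtp\FQSM_k^{\beta'}$ are equivalences, which transports both conclusions of \corref{cor:thom-general} to the Hilbert-scheme model.

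I do not expect a substantive obstacle. The only point requiring care is the translation in the first paragraph between the $\sVect_{\geq 0}$-valued hypothesis on $\beta$ and the $K_{\geq 0}$-valued setting in which \corref{cor:thom-general} is stated; once $\beta'$ is identified and the definitions of $\FQSM_k^{\beta'}$ and $\Hilb^\beta(\A^\infty_k/k)$ are matched, the theorem follows by stringing together the two cited results with the Zariski/Nisnevich affine-detection principle.
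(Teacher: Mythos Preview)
Your proposal is correct and follows essentially the same route as the paper, which simply says the result ``follows immediately from \lemref{lem:Hilb-beta} and \corref{cor:thom-general}.'' Your added discussion of the composite $\beta'\colon B\to K_{\geq 0}$ and of the affine-detection principle for $L_\zar$ and $L_\nis$ just makes explicit what the paper leaves implicit in that one-line proof.
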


\begin{proof}
	This follows immediately from Lemma~\ref{lem:Hilb-beta} and Corollary~\ref{cor:thom-general}.
\end{proof}

One can recover Theorems~\ref{thm:MGL-Hilb} and \ref{thm:MSL-Hilb} from Theorem~\ref{thm:Mbeta-Hilb} using the motivic equivalences $\sVect\to K$ and $\sVect^\SL\to K^\SL$ and the fact that the functor $M\colon \Pre(\Sm_S)_{/K} \to \SH(S)$ inverts motivic equivalences \cite[Remark 16.11]{norms}.

\section{Modules over algebraic cobordism}
\label{sec:modules}

In this section, we show that modules over motivic Thom ring spectra can be described as motivic spectra with certain transfers.
We first treat the case of $\MGL$ in \ssecref{ssec:MGL-modules}, where we construct a symmetric monoidal equivalence between $\MGL$-modules and motivic spectra with finite syntomic transfers. We then treat the case of $\MSL$ and explain the general case in \ssecref{ssec:MSL-modules}. Finally, in \ssecref{ssec:HZ}, we describe the motivic cohomology spectrum $H\Z$, which is an $\MGL$-module, as a motivic spectrum with finite syntomic transfers: it is the suspension spectrum of the constant sheaf $\Z$ equipped with canonical finite syntomic transfers.

It is worth pointing out that, although the theorems in this section do not involve any derived algebraic geometry, their proofs use derived algebraic geometry in an essential way (via Section~\ref{sec:thom}).

\ssec{Modules over MGL}
\label{ssec:MGL-modules}

Let $\Span^\fsyn(\Sm_S)$ denote the symmetric monoidal $(2,1)$-category whose objects are smooth $S$-schemes and whose morphisms are spans
\[
\begin{tikzcd}
   & Z \ar[swap]{ld}{f}\ar{rd}{g} & \\
  X &   & Y
\end{tikzcd}
\]
where $f$ is finite syntomic. Let $\H^\fsyn(S)$ denote the full subcategory of $\Pre_\Sigma(\Span^\fsyn(\Sm_S))$ spanned by the $\A^1$-invariant Nisnevich sheaves, and let $\SH^\fsyn(S)$ be the symmetric monoidal $\infty$-category of $\T$-spectra in $\H^\fsyn(S)$. We have the usual adjunction
\[
\Sigma^\infty_{\T,\fsyn} : \H^\fsyn(S) \rightleftarrows \SH^\fsyn(S) : \Omega^\infty_{\T,\fsyn}.
\]
The symmetric monoidal forgetful functor
\[
\epsilon\colon \Span^\fr(\Sm_S) \to \Span^\fsyn(\Sm_S)
\]
(see \cite[4.3.15]{EHKSY1}) induces symmetric monoidal colimit-preserving functors 
\[
\epsilon^*\colon \H^\fr(S)\to \H^\fsyn(S)\quad\text{and}\quad \epsilon^*\colon \SH^\fr(S)\to \SH^\fsyn(S).
\]
For clarity, we will denote the tensor products in $\H^\fr(S)$ and $\SH^\fr(S)$ by $\otimes^\fr$ and the ones in $\H^\fsyn(S)$ and $\SH^\fsyn(S)$ by $\otimes^\fsyn$.

We denote by $\h^\fsyn_S(X)$ the presheaf on $\Span^\fsyn(\Sm_S)$ represented by $X\in\Sm_S$.

\begin{lem}\label{lem:epsilon1}
	The forgetful functor $\epsilon_*\colon \H^\fsyn(S)\to \H^\fr(S)$ is a strict $\H^\fr(S)$-module functor. In other words, for any $A\in \H^\fr(S)$ and $B\in\H^\fsyn(S)$, the canonical map
	\[
	A\otimes^\fr \epsilon_*(B) \to \epsilon_*(\epsilon^*(A)\otimes^\fsyn B)
	\]
	is an equivalence.
\end{lem}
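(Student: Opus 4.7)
The plan is to reduce the identity to the case of a representable $A$ by a colimit-preservation argument, then to invoke smooth base change along the structure morphism $p\colon X\to S$.

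Both functors $A\mapsto A\otimes^\fr\epsilon_*(B)$ and $A\mapsto\epsilon_*(\epsilon^*(A)\otimes^\fsyn B)$ preserve colimits in $A$: the first because $\otimes^\fr$ is colimit-preserving in each variable, the second because $\epsilon^*$ is a left adjoint, $\otimes^\fsyn B$ is colimit-preserving, and $\epsilon_*$ itself preserves colimits (on presheaves it is restriction along $\epsilon$ and admits a right Kan extension as further right adjoint; this passes through the Nisnevich--$\A^1$ localizations since $\epsilon$ is the identity on objects and respects covers and homotopies). Since $\H^\fr(S)$ is generated under colimits by the representables $\h^\fr_S(X)$ for $X\in\Sm_S$, it suffices to check the formula for $A=\h^\fr_S(X)$.

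For such $A$, the construction of $\epsilon^*$ gives $\epsilon^*(\h^\fr_S(X))\simeq\h^\fsyn_S(X)$, and the standard smooth projection formula yields $\h^?_S(X)\otimes^? F\simeq p_\sharp p^*F$ for $?\in\{\fr,\fsyn\}$ and any $F\in\H^?(S)$, since both sides preserve colimits in $F$ and agree on representables. The problem thus reduces to the base change identities $\epsilon_* p^*\simeq p^*\epsilon_*$ and $p_\sharp\epsilon_*\simeq\epsilon_* p_\sharp$. The first is immediate: both $p^*$ and $\epsilon_*$ are restriction functors (the first along the forgetful $\Sm_X\to\Sm_S$, the second along $\epsilon$), and the relevant square of base categories visibly commutes. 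The second is a Beck--Chevalley identity that holds because the square
\[
\begin{tikzcd}
\Span^\fr(\Sm_X) \ar{r} \ar{d}[swap]{\epsilon^X} & \Span^\fr(\Sm_S) \ar{d}{\epsilon^S} \\
\Span^\fsyn(\Sm_X) \ar{r} & \Span^\fsyn(\Sm_S)
\end{tikzcd}
\]
is a pullback of $\infty$-categories: tautologically on objects, and on mapping spaces because a framing lives on the apex of a span as data independent of whether the span is taken over $S$ or over $X$.

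Combining, $\h^\fr_S(X)\otimes^\fr\epsilon_*(B)\simeq p_\sharp p^*\epsilon_*(B)\simeq p_\sharp\epsilon_* p^*(B)\simeq\epsilon_* p_\sharp p^*(B)\simeq\epsilon_*(\h^\fsyn_S(X)\otimes^\fsyn B)$, proving the lemma. The main obstacle will be the Beck--Chevalley step: verifying the pullback property of the span square at the $\infty$-categorical level and checking that the identity descends correctly through the motivic localizations.
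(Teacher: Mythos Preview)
Your reduction to representable $A$ and the projection-formula rewriting are fine, and the identity $\epsilon_*p^*\simeq p^*\epsilon_*$ is indeed formal. The gap is in the last step: the Beck--Chevalley identity $p_\sharp\epsilon_*\simeq\epsilon_*p_\sharp$ does \emph{not} follow from the span square being a pullback of $\infty$-categories. In general, a pullback square of $\infty$-categories yields BC for left Kan extension against restriction only under an additional hypothesis (e.g.\ one leg being a (co)Cartesian fibration), and neither $\epsilon$ nor $p_!$ satisfies such a condition here: $\epsilon$ fails to be a fibration because a finite syntomic morphism need not admit a framing. Concretely, writing out the coend formulas at $T\in\Sm_S$ gives
\[
(p_\sharp\epsilon_*B)(T)=\int^{T'\in\Span^\fr(\Sm_X)}\Maps_{\Span^\fr(\Sm_S)}(T,T')\times B(T'),
\]
\[
(\epsilon_*p_\sharp B)(T)=\int^{T'\in\Span^\fsyn(\Sm_X)}\Maps_{\Span^\fsyn(\Sm_S)}(T,T')\times B(T'),
\]
and the second coend runs over strictly more morphisms than the first, both in the indexing category and in the mapping spaces. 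These do not agree on the nose; making them agree after motivic localization is essentially the content of the lemma itself, so your argument is circular at this point.

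The paper's proof supplies exactly the missing ingredient: rather than invoking an abstract base change, it reduces also in the variable $B$ to $B=\h^\fsyn_S(Y)$ and then uses that the stable tangential structure $\iota_0$ (rank-zero K-theory) is \emph{smooth}, so that by Proposition~\ref{prop:beta-descent} there is a Nisnevich equivalence
\[
\colim_{(Z,\xi)}\h^\fr_S(Y\times_SZ,\pi_Z^*(\xi))\to\h^\fsyn_S(Y),
\]
the colimit over $Z\in\Sm_S$ and $\xi\in K_0(Z)$. This expresses $\epsilon_*\h^\fsyn_S(Y)$ as a sifted colimit of twisted framed presheaves, reducing the lemma to the claim that $\h^\fr_S(X)\otimes^\fr\h^\fr_S(W,\xi)\to\h^\fr_S(X\times_SW,\xi)$ is a motivic equivalence, which after localizing $\xi$ to $0$ is tautological. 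The smoothness of $\iota_0$ (ultimately resting on the fact that K-theory is left Kan extended from smooth algebras, see Appendix~\ref{app:LKE}) is the genuine input your argument lacks.
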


\begin{proof}
Since $\epsilon_*$ preserves colimits, we can assume that $A=\h^\fr_S(X)$ and $B=\h^\fsyn_S(Y)$ for some smooth $S$-schemes $X$ and $Y$. Since the stable tangential structure $\iota_0$ is smooth, we have by Proposition~\ref{prop:beta-descent} a Nisnevich equivalence
	\[
	\colim_{(Z,\xi)} \h_S^\fr(Y\times_S Z,\pi_Z^*(\xi))\to \h_S^\fsyn(Y),
	\]
	where the colimit is over all $Z\in\Sm_S$ and $\xi\in K(Z)$ of rank $0$. Hence, it suffices to show that the map
	\[
	\h^\fr_S(X) \otimes^\fr \h_S^\fr(Y\times_SZ, \pi_Z^*(\xi)) \to \h^\fr_S(X\times_SY\times_SZ,\pi_Z^*(\xi))
	\]
	is a motivic equivalence for all such pairs $(Z,\xi)$. Since the question is local on $Z$ (by Propositions \ref{prop:fr-descent}(iii) and \ref{prop:fr-additivity}), we can assume $\xi=0$, in which case it is obvious.
\end{proof}

\begin{lem}\label{lem:epsilon2}
	$\Sigma^\infty_{\T,\fr}\epsilon_* \simeq \epsilon_*\Sigma^\infty_{\T,\fsyn}$.
\end{lem}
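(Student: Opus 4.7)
The plan is to realize the desired equivalence as a Beck--Chevalley mate and reduce its verification to \lemref{lem:epsilon1}. Since $\epsilon^*\colon \H^\fr(S)\to\H^\fsyn(S)$ is symmetric monoidal, colimit-preserving, and sends $\T$ to $\T$, and $\Sigma^\infty_\T$ is the universal such functor inverting $\T$, there is a canonical equivalence $\epsilon^*\Sigma^\infty_{\T,\fr}\simeq \Sigma^\infty_{\T,\fsyn}\epsilon^*$. Its mate is a natural transformation
\[
\alpha\colon \Sigma^\infty_{\T,\fr}\epsilon_* \longrightarrow \epsilon_*\Sigma^\infty_{\T,\fsyn},
\]
whose value at $B\in\H^\fsyn(S)$ is adjoint to $\epsilon_*(\eta_B)\colon \epsilon_*(B)\to \epsilon_*\Omega^\infty_{\T,\fsyn}\Sigma^\infty_{\T,\fsyn}(B)\simeq \Omega^\infty_{\T,\fr}\epsilon_*\Sigma^\infty_{\T,\fsyn}(B)$, where the last equivalence is the right-adjoint mate of the same commutativity.

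To check $\alpha_B$ is an equivalence, I would apply $\Omega^\infty_{\T,\fr}\Sigma^n_\T$ for each $n\geq 0$; these functors jointly detect equivalences in $\SH^\fr(S)$, since $\SH^\fr(S)$ is compactly generated by $\Sigma^\infty_{\T,\fr}(X)\otimes\T^{\otimes -n}$ for $X\in\Sm_S$. Expanding $\Omega^\infty_\T\Sigma^\infty_\T(A)\simeq\colim_m\Omega_\T^m(\T^{\otimes m}\otimes A)$ and using (a) \lemref{lem:epsilon1} to commute $\T^{\otimes m+n}\otimes^\fr -$ past $\epsilon_*$, (b) the mate identity $\Omega_\T^m\epsilon_*\simeq \epsilon_*\Omega_\T^m$ (obtained by passing to right adjoints from $\epsilon^*\T\simeq\T$), and (c) the fact that $\epsilon_*\colon \H^\fsyn(S)\to\H^\fr(S)$ preserves all colimits (being restriction along $\epsilon$), the left-hand side $\Omega^\infty_{\T,\fr}\Sigma^n_\T\Sigma^\infty_{\T,\fr}\epsilon_*(B)$ rewrites as $\epsilon_*\Omega^\infty_{\T,\fsyn}\Sigma^\infty_{\T,\fsyn}(\T^{\otimes n}\otimes^\fsyn B)$. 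Combining $\Omega^\infty_{\T,\fr}\epsilon_*\simeq \epsilon_*\Omega^\infty_{\T,\fsyn}$ with the stable projection formula $\T^{\otimes n}\otimes\epsilon_*(E)\simeq \epsilon_*(\T^{\otimes n}\otimes E)$ in $\SH^\fr(S)$ reduces the right-hand side to the same expression.

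The one substantive ingredient beyond \lemref{lem:epsilon1} is the stable projection formula, which is not directly \lemref{lem:epsilon1}. My plan for this is to model $\SH^\fsyn(S)$ as $\T$-$\Omega$-sequences $(E_m)_{m\geq 0}$ with equivalences $E_m\simeq \Omega_\T E_{m+1}$; then $\epsilon_*$ is computed levelwise (with structure maps transported via the mate $\epsilon_*\Omega_\T\simeq\Omega_\T\epsilon_*$) and $\T\otimes -$ shifts the tower, so both sides of the projection formula become levelwise equivalent by \lemref{lem:epsilon1}. The remaining work is purely bookkeeping: checking that all these identifications are natural in $B$ and assemble into the mate transformation $\alpha_B$ itself, rather than some other potentially different map.
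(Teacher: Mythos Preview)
Your approach is correct but takes a more computational route than the paper's. The paper observes that \lemref{lem:epsilon1} says precisely that $\epsilon_*\colon \H^\fsyn(S)\to\H^\fr(S)$ is an $\H^\fr(S)$-linear colimit-preserving functor; since $\SH^\fsyn(S)\simeq \SH^\fr(S)\otimes_{\H^\fr(S)}\H^\fsyn(S)$ (both sides invert the action of $\T\in\H^\fr(S)$ on $\H^\fsyn(S)$), the stable adjunction $\epsilon^*\dashv\epsilon_*$ is literally the base change of the unstable one along $\Sigma^\infty_{\T,\fr}$, and the commutation $\Sigma^\infty_{\T,\fr}\epsilon_*\simeq\epsilon_*\Sigma^\infty_{\T,\fsyn}$ is then a formal feature of extension of scalars of linear functors in $\PrL$. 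Your argument instead constructs the Beck--Chevalley mate explicitly and verifies it by probing with $\Omega^\infty_{\T,\fr}\Sigma^n_\T$ and unwinding $\Omega^\infty\Sigma^\infty A\simeq\colim_m\Omega^m_\T(\T^{\otimes m}\otimes A)$; this works, but the ``stable projection formula'' you single out and the residual bookkeeping (matching the identifications to the mate $\alpha$) are exactly what the module-category formalism packages automatically. Your route has the virtue of being self-contained and avoiding the $\PrL$-module machinery; the paper's is a one-liner that makes transparent that nothing beyond \lemref{lem:epsilon1} is required.
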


\begin{proof}
	By Lemma~\ref{lem:epsilon1}, the $\T$-stable adjunction
	\[
	\epsilon^*: \SH^\fr(S) \rightleftarrows \SH^\fsyn(S) : \epsilon_*
	\]
	is obtained from the unstable one by extending scalars along $\Sigma^\infty_{\T,\fr}\colon \H^\fr(S)\to \SH^\fr(S)$. This immediately implies the result.
\end{proof}

\begin{thm}\label{thm:MGL-modules}
	Let $S$ be a scheme. There is an equivalence of symmetric monoidal $\infty$-categories
	\[
	\Mod_{\MGL}(\SH(S)) \simeq \SH^{\fsyn}(S),
	\]
	natural in $S$ and compatible with the forgetful functors to $\SH(S)$.
\end{thm}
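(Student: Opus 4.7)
The plan is to work via the reconstruction theorem $\SH(S) \simeq \SH^\fr(S)$ and analyze the adjunction $\epsilon^* \colon \SH^\fr(S) \rightleftarrows \SH^\fsyn(S) \colon \epsilon_*$ induced by the forgetful functor from framed to finite syntomic spans. First I would identify $A := \epsilon_*(\mathbf{1}_{\SH^\fsyn(S)})$ with $\MGL_S$ as an $\Einfty$-ring spectrum. By \lemref{lem:epsilon2} we have $A \simeq \Sigma^\infty_{\T,\fr} \epsilon_*(\h^\fsyn_S(S))$; by inspection, $\epsilon_*(\h^\fsyn_S(S))$ is the presheaf on $\Span^\fr(\Sm_S)$ sending $X$ to spans $X \leftarrow Z \to S$ with $Z \to X$ finite syntomic (the framing being forgotten by $\epsilon_*$), which is exactly $\FSYN_S$; and \thmref{thm:MGL}(i) identifies $\Sigma^\infty_{\T,\fr}(\FSYN_S) \simeq \MGL_S$ as $\Einfty$-ring spectra. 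Combined with the projection formula of \lemref{lem:epsilon1} in its stable incarnation, this yields a symmetric monoidal lift $\bar\epsilon_*\colon \SH^\fsyn(S) \to \Mod_{\MGL}(\SH^\fr(S)) \simeq \Mod_{\MGL}(\SH(S))$, compatible with the forgetful functors to $\SH(S)$ by construction.

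The core step is then to show $\bar\epsilon_*$ is an equivalence via the monadic form of Barr--Beck--Lurie. The projection formula gives $\epsilon_*\epsilon^*(X) \simeq X \otimes^\fr \MGL$, identifying the monad $\epsilon_*\epsilon^*$ on $\SH^\fr(S)$ with $(-)\otimes \MGL$, whose category of algebras is precisely $\Mod_{\MGL}(\SH^\fr(S))$; hence it suffices to verify that $\epsilon^* \dashv \epsilon_*$ is monadic. Conservativity of $\epsilon_*$ follows directly from adjunction: if $\epsilon_*F \simeq 0$, then for every $X \in \Sm_S$ and $n \geq 0$ one has $\Maps(\Sigma^\infty_{\T,\fsyn}(X_+)\otimes \T^{-n}, F) \simeq \Maps(\Sigma^\infty_{\T,\fr}(X_+)\otimes \T^{-n}, \epsilon_*F) \simeq 0$, and such objects generate $\SH^\fsyn(S)$ under colimits and desuspensions. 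Preservation of colimits by $\epsilon_*$ reduces, since it is a right adjoint between stable $\infty$-categories, to preservation of filtered colimits, which in turn follows from the fact that the symmetric monoidal functor $\epsilon^*$ sends the compact generators $\Sigma^\infty_{\T,\fr}(X_+)\otimes \T^{-n}$ of $\SH^\fr(S)$ to the analogous compact generators of $\SH^\fsyn(S)$, and so preserves compact objects.

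The hard part really lies in the inputs: the projection formula of \lemref{lem:epsilon1} plays the dual role of making $\bar\epsilon_*$ symmetric monoidal and of identifying the monad $\epsilon_*\epsilon^*$ with $(-) \otimes \MGL$; and the geometric description $\MGL_S \simeq \Sigma^\infty_{\T,\fr}\FSYN_S$ from \thmref{thm:MGL}(i)—itself ultimately resting on the comparison of equationally, normally, and tangentially framed twisted correspondences over arbitrary bases in \thmref{thm:fr-comparison} and on the base-change result \thmref{thm:base-change}—is what enables the identification of the unit. Granting these, the symmetric monoidal structure on $\bar\epsilon_*$ is formal, and naturality in $S$ is inherited from the naturality in $S$ of $\epsilon^*\dashv\epsilon_*$, of the unit identification, and of Barr--Beck--Lurie.
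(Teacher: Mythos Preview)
Your proposal is correct and follows essentially the same route as the paper. Both arguments identify $\epsilon_*(\1_{\SH^\fsyn(S)})\simeq \MGL_S$ via \lemref{lem:epsilon2} and \thmref{thm:MGL}(i), then lift $\epsilon^*\dashv\epsilon_*$ to an adjunction between $\Mod_{\MGL}(\SH(S))$ and $\SH^\fsyn(S)$ and verify it is an equivalence using conservativity of $\epsilon_*$ together with the projection formula \lemref{lem:epsilon1}; the only cosmetic difference is that you package the endgame as Barr--Beck--Lurie monadicity, whereas the paper checks directly that the unit of the lifted adjunction is an equivalence on the generators $\MGL_S\otimes\Sigma^\infty_\T Y_+$. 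One small remark: your sentence ``this yields a symmetric monoidal lift $\bar\epsilon_*$'' is slightly ahead of itself---the lift is a priori only lax monoidal, and its strong monoidality is most cleanly deduced \emph{a posteriori} from the fact that its left adjoint $\Phi$ is symmetric monoidal and the adjunction is an equivalence---but this does not affect the argument.
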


\begin{proof}
	By Theorem~\ref{thm:MGL}(i), we have an equivalence of motivic $\sE_\infty$-ring spectra
	\[
	\MGL_S \simeq \Sigma^\infty_{\T,\fr}\h^\fsyn_S(S).
	\]
	By Lemma~\ref{lem:epsilon2}, the right-hand side is $\epsilon_*\Sigma^\infty_{\T,\fsyn}\h^\fsyn_S(S)$, which means that $\MGL_S$ is the image of the unit by the forgetful functor $\SH^\fsyn(S)\to\SH(S)$.
	We therefore obtain an adjunction
	\[
	\begin{tikzcd}
		\Mod_{\MGL}(\SH(S)) \ar[shift left=1]{r}{\Phi} & \SH^\fsyn(S) \ar[shift left=1]{l}{\Psi}
	\end{tikzcd}
	\]
	where $\Phi$ is symmetric monoidal and $\Psi$ is conservative. It remains to show that the unit map
	\[
	\MGL_S\otimes \Sigma^\infty_\T Y_+ \to \Psi \Phi(\MGL_S\otimes \Sigma^\infty_\T Y_+)\simeq \Psi\Sigma^\infty_{\T,\fsyn} \h^\fsyn_S(Y)
	\]
	is an equivalence for every $Y\in \Sm_S$.
	By Lemma~\ref{lem:epsilon2} again, this map is $\Sigma^\infty_{\T,\fr}$ of the map
	\[
	\h_S^\fsyn(S) \otimes^\fr \h^\fr_S(Y) \to \h^\fsyn_S(Y),
	\]
	which is an equivalence by Lemma~\ref{lem:epsilon1}.
\end{proof}

\begin{thm}\label{thm:MGL-modules2}
	Let $k$ be a perfect field. 
	
	\noindent{\em(i)}
	There is an equivalence of symmetric monoidal $\infty$-categories
	\[
	\Mod_{\MGL}(\SH^\veff(k)) \simeq \H^\fsyn(k)^\gp
	\]
	under $\SH^\veff(k)\simeq \H^\fr(k)^\gp$.
	
	\noindent{\em(ii)}
	There is an equivalence of symmetric monoidal $\infty$-categories
	\[
	\Mod_{\MGL}(\SH^\eff(k)) \simeq \SH^{S^1,\fsyn}(k)
	\]
	under $\SH^\eff(k)\simeq \SH^{S^1,\fr}(k)$.
\end{thm}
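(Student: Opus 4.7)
The plan is to deduce Theorem~\ref{thm:MGL-modules2} from Theorem~\ref{thm:MGL-modules} by transporting the equivalence $\Mod_{\MGL}(\SH(k)) \simeq \SH^\fsyn(k)$ to effective and very effective subcategories. Under this equivalence, which by construction is compatible with the forgetful functors to $\SH(k)\simeq \SH^\fr(k)$, the full subcategories $\Mod_{\MGL}(\SH^\veff(k))$ and $\Mod_{\MGL}(\SH^\eff(k))$ correspond respectively to the full subcategories of $\SH^\fsyn(k)$ on objects $X$ such that $\epsilon_*X$ is very effective, respectively effective. The task is thus to identify these subcategories of $\SH^\fsyn(k)$ with $\H^\fsyn(k)^\gp$ and $\SH^{S^1,\fsyn}(k)$, respectively.

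The essential input is the cancellation theorem for finite syntomic correspondences announced in the introduction: over a perfect field, the $\Gm$-stabilization $\Sigma^\infty_\Gm\colon \SH^{S^1,\fsyn}(k)\to\SH^\fsyn(k)$ is symmetric monoidal and fully faithful. Combined with the framed reconstruction theorem $\SH^\fr(k)\simeq\SH(k)$, the fact that $\epsilon_*$ is an $\H^\fr(k)$-module functor (Lemma~\ref{lem:epsilon1}) commuting with $\Sigma^\infty_\Gm$ (Lemma~\ref{lem:epsilon2} and its $S^1$-analog), and the conservativity of $\epsilon_*$, this yields that the essential image of $\Sigma^\infty_\Gm$ consists precisely of those $X\in\SH^\fsyn(k)$ with $\epsilon_*X$ effective, which establishes (ii). For (i), I would further restrict to grouplike objects: the composite $\Sigma^\infty_{\T,\fsyn}\colon \H^\fsyn(k)^\gp\to \SH^\fsyn(k)$ should be fully faithful with image the subcategory of $X$ such that $\epsilon_*X$ is very effective. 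This follows from the framed recognition principle \cite[Theorem 3.5.14]{EHKSY1} together with cancellation, by a diagram chase using that $\epsilon^*$ takes the framed generators $\h^\fr_k(X)$ to their finite syntomic counterparts $\h^\fsyn_k(X)$.

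The symmetric monoidal structure, and the compatibility of the resulting equivalences with the ambient identifications $\SH^\veff(k)\simeq \H^\fr(k)^\gp$ and $\SH^\eff(k)\simeq\SH^{S^1,\fr}(k)$, are inherited directly from Theorem~\ref{thm:MGL-modules}, since the matching of subcategories is induced by the forgetful functors $\epsilon_*$ on both sides. The main obstacle is the cancellation theorem for finite syntomic correspondences itself; its proof presumably adapts the strategy of the framed cancellation theorem of Ananyevskiy–Garkusha–Neshitov–Panin by replacing tangentially framed transfers with finite syntomic ones, and is the substantial separate result powering this refinement. Granting it, the $\fsyn$-recognition principle is a formal consequence of the framed version plus the module-compatibility of $\epsilon_*$, and the remaining matching of subcategories is routine bookkeeping.
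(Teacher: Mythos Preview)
Your approach has the logical dependency backwards, and the gap you identify as ``the main obstacle'' is in fact a symptom of this. In the paper, the cancellation theorem for finite syntomic correspondences (Corollary~\ref{cor:fsyn-cancellation}) is a \emph{consequence} of Theorem~\ref{thm:MGL-modules2}, not an input to it. You are proposing to deduce the very effective equivalence from the stable one by restricting to subcategories, which forces you to assume fsyn-cancellation independently; but there is no independent proof of fsyn-cancellation in the paper, and the ``presumable adaptation'' of the Ananyevskiy--Garkusha--Panin argument you allude to is not carried out anywhere.

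The paper's proof is instead a direct rerun of the argument for Theorem~\ref{thm:MGL-modules} at the unstable level. The point is that Lemma~\ref{lem:epsilon1} is already a statement about $\H^\fr(S)$ and $\H^\fsyn(S)$, so the adjunction $\epsilon^*\dashv\epsilon_*$ between $\H^\fr(k)^\gp$ and $\H^\fsyn(k)^\gp$ is monadic with the same projection-formula behavior. The only missing ingredient is the identification of $\MGL$, viewed in $\SH^\veff(k)\simeq\H^\fr(k)^\gp$, with $\epsilon_*$ of the unit of $\H^\fsyn(k)^\gp$; this is exactly what Corollary~\ref{cor:MGL}(i) supplies (playing the role that Theorem~\ref{thm:MGL}(i) played stably). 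With that in hand, the unit-on-generators check goes through verbatim, giving (i) directly. Part (ii) then follows by $S^1$-stabilizing (i). Cancellation drops out afterwards because $\Mod_{\MGL}(\SH^\veff(k))$ is prestable and $\Sigma_\Gm$ is fully faithful there, and the equivalence transports these properties to $\H^\fsyn(k)^\gp$.
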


\begin{proof}
	The proof of (i) is exactly the same as that of Theorem~\ref{thm:MGL-modules}, using Corollary~\ref{cor:MGL} instead of Theorem~\ref{thm:MGL}. We obtain (ii) from (i) by stabilizing.
\end{proof}

As a corollary, we obtain a cancellation theorem for $\A^1$-invariant sheaves with finite syntomic transfers over perfect fields:

\begin{cor}\label{cor:fsyn-cancellation}
	Let $k$ be a perfect field. Then the $\infty$-category $\H^\fsyn(k)^\gp$ is prestable and the functor $\Sigma_{\G}\colon \H^\fsyn(k)^\gp \to \H^\fsyn(k)^\gp$ is fully faithful.
\end{cor}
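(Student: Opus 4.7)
The plan is to invoke the symmetric monoidal equivalence
\[
\H^\fsyn(k)^\gp \simeq \Mod_\MGL(\SH^\veff(k))
\]
of Theorem~\ref{thm:MGL-modules2}(i) and transport both assertions from the right-hand side to the left. This reduces the statement to two routine structural facts about the very effective category and cancellation of an invertible tensor factor.

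For prestability: the $\infty$-category $\SH^\veff(k)$ is the connective part of the very effective t-structure on the stable $\infty$-category $\SH^\eff(k)$, and so is a presentably symmetric monoidal Grothendieck prestable $\infty$-category. Since $\MGL \in \SH^\veff(k)$ is an $\Einfty$-algebra there, it follows formally that $\Mod_\MGL(\SH^\veff(k))$ is again Grothendieck prestable (modules in a presentably symmetric monoidal Grothendieck prestable $\infty$-category form one). Transporting along the equivalence then gives prestability of $\H^\fsyn(k)^\gp$.

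For the full faithfulness of $\Sigma_\G$: the functor $\Sigma_\G\colon \SH(k) \to \SH(k)$ is an equivalence because $\G$ is $\otimes$-invertible, and it restricts to an endofunctor of the full subcategory $\SH^\veff(k)$ since very effective motivic spectra are closed under smashing with $\G$. A restriction of an equivalence of ambient $\infty$-categories to a full subcategory is always fully faithful, and this property passes to $\MGL$-modules, giving full faithfulness of $\Sigma_\G\colon \Mod_\MGL(\SH^\veff(k)) \to \Mod_\MGL(\SH^\veff(k))$. Transporting along the symmetric monoidal equivalence of Theorem~\ref{thm:MGL-modules2}(i), which carries the Tate twist $\G$ on one side to the Tate twist $\G$ on the other, yields full faithfulness of $\Sigma_\G$ on $\H^\fsyn(k)^\gp$.

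The main obstacle, such as it is, lies not in this corollary but in Theorem~\ref{thm:MGL-modules2}(i) itself; granted that result, the present statement is essentially formal. The only bookkeeping check is verifying that the tautologically-defined $\Sigma_\G$ on $\H^\fsyn(k)^\gp$ really matches, under the equivalence, the functor obtained by smashing with $\G$ on $\Mod_\MGL(\SH^\veff(k))$, but this is automatic from the symmetric monoidality of the equivalence and the fact that $\G$ on each side is the image of $\G_m \in \H(k)_\ast$ under the natural symmetric monoidal functor.
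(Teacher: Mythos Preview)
Your proposal is correct and takes essentially the same approach as the paper, which states the corollary without proof as a direct consequence of Theorem~\ref{thm:MGL-modules2}(i). You have simply spelled out the details the paper leaves implicit: that $\SH^\veff(k)$ is prestable (as the connective part of a t-structure), that this passes to $\MGL$-modules, and that full faithfulness of $\Sigma_\G$ follows from restricting the equivalence $\Sigma_\G$ on $\SH(k)$ (or on $\Mod_\MGL(\SH(k))$) to the full subcategory of very effective objects.
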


\ssec{Modules over MSL}
\label{ssec:MSL-modules}

We have completely analogous results for $\MSL$ instead of $\MGL$. Consider the symmetric monoidal $(2,1)$-category $\Span^\ornt(\Sm_S)$ whose objects are smooth $S$-schemes and whose morphisms are spans
\[
\begin{tikzcd}
   & Z \ar[swap]{ld}{f}\ar{rd}{g} & \\
  X &   & Y
\end{tikzcd}
\]
where $f$ is finite syntomic together with an isomorphism $\omega_f\simeq \sO_Z$. We can form as usual the symmetric monoidal $\infty$-categories $\H^\ornt(S)$ and $\SH^\ornt(S)$. The following results are proved in the same way as the corresponding results from \ssecref{ssec:MGL-modules}.

\begin{thm}\label{thm:MSL-modules}
	Let $S$ be a scheme. There is an equivalence of symmetric monoidal $\infty$-categories
	\[
	\Mod_{\MSL}(\SH(S)) \simeq \SH^{\ornt}(S),
	\]
	natural in $S$ and compatible with the forgetful functors to $\SH(S)$.
\end{thm}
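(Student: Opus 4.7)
The plan is to mimic the proof of \thmref{thm:MGL-modules} verbatim, replacing finite syntomic correspondences with oriented ones and using \thmref{thm:MSL} in place of \thmref{thm:MGL}. A framing $\sL_f\simeq 0$ in $K(Z)$ determines in particular a trivialization $\det(\sL_f)\simeq \sO_Z$, so there is a symmetric monoidal forgetful functor $\epsilon\colon \Span^\fr(\Sm_S)\to \Span^\ornt(\Sm_S)$, with an induced adjunction $\epsilon^*\dashv \epsilon_*$ on motivic spaces and on motivic spectra. We write $\h_S^\ornt(X)$ for the presheaf on $\Span^\ornt(\Sm_S)$ represented by $X\in\Sm_S$.

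The first step is to establish the analogs of \lemref{lem:epsilon1} and \lemref{lem:epsilon2}. For the former, one reduces as before to showing that for $X,Y\in\Sm_S$ the canonical map $\h^\fr_S(X)\otimes^\fr \epsilon_*\h^\ornt_S(Y)\to \epsilon_*(\epsilon^*\h^\fr_S(X)\otimes^\ornt \h^\ornt_S(Y))$ is a motivic equivalence. Using \propref{prop:beta-descent} together with the fact that the stable tangential structure from Example~\ref{ex:SL-orientation} is smooth, one rewrites $\h_S^\ornt(Y)$ as a Nisnevich colimit $\colim_{(Z,\xi)}\h_S^\fr(Y\times_SZ,\pi_Z^*(\xi))$ indexed by $Z\in\Sm_S$ and $\xi\in \det^{-1}(\sO)(Z)$ of rank $0$, and then reduces to the trivial case $\xi=0$ by localizing on $Z$ via \propref{prop:fr-descent}(iii) and \propref{prop:fr-additivity}. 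The analog of \lemref{lem:epsilon2}, namely $\Sigma^\infty_{\T,\fr}\epsilon_*\simeq \epsilon_*\Sigma^\infty_{\T,\ornt}$, then follows formally by extending scalars along $\Sigma^\infty_{\T,\fr}$.

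With these in hand, \thmref{thm:MSL}(i) gives an equivalence of motivic $\Einfty$-ring spectra $\MSL_S\simeq \Sigma^\infty_{\T,\fr}\h^\ornt_S(S)\simeq \epsilon_*\Sigma^\infty_{\T,\ornt}\h^\ornt_S(S)$, so $\MSL_S$ is the image of the unit under the forgetful functor $\SH^\ornt(S)\to \SH(S)$. This produces an adjunction
\[
\Phi\colon \Mod_{\MSL}(\SH(S))\rightleftarrows \SH^\ornt(S)\,{:}\,\Psi,
\]
with $\Phi$ symmetric monoidal and $\Psi$ conservative. It remains to verify that the unit
\[
\MSL_S\otimes \Sigma^\infty_\T Y_+ \to \Psi\Sigma^\infty_{\T,\ornt}\h^\ornt_S(Y)
\]
is an equivalence for every $Y\in\Sm_S$; by the analog of \lemref{lem:epsilon2} this reduces to showing that $\h_S^\ornt(S)\otimes^\fr \h^\fr_S(Y)\to \h^\ornt_S(Y)$ is a motivic equivalence, which is the analog of \lemref{lem:epsilon1} applied to $(S,Y)$. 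Naturality in $S$ and compatibility with the forgetful functors to $\SH(S)$ are built into the construction.

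The only nontrivial point is the module-functoriality of $\epsilon_*$, which is the analog of \lemref{lem:epsilon1}; this is the step where the smoothness of the orientation structure (established in Example~\ref{ex:SL-orientation} via \lemref{lem:local-LKE} and the discussion of $K^\SL\simeq \det^{-1}(\sO)$) is essential. Once this is granted, the remainder of the argument is a purely formal transcription of the proof of \thmref{thm:MGL-modules}.
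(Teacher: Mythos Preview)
Your proposal is correct and follows exactly the paper's approach: the paper simply states that the results of \ssecref{ssec:MSL-modules} are proved in the same way as those of \ssecref{ssec:MGL-modules}, and you have spelled out precisely what that entails, including the oriented analogs of Lemmas~\ref{lem:epsilon1} and~\ref{lem:epsilon2} and the use of \thmref{thm:MSL}(i) together with the smoothness of the stable tangential structure from Example~\ref{ex:SL-orientation}.
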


\begin{thm}\label{thm:MSL-modules2}
	Let $k$ be a perfect field. 
	
	\noindent{\em(i)}
	There is an equivalence of symmetric monoidal $\infty$-categories
	\[
	\Mod_{\MSL}(\SH^\veff(k)) \simeq \H^\ornt(k)^\gp
	\]
	under $\SH^\veff(k)\simeq \H^\fr(k)^\gp$.
	
	\noindent{\em(ii)}
	There is an equivalence of symmetric monoidal $\infty$-categories
	\[
	\Mod_{\MSL}(\SH^\eff(k)) \simeq \SH^{S^1,\ornt}(k)
	\]
	under $\SH^\eff(k)\simeq \SH^{S^1,\fr}(k)$.
\end{thm}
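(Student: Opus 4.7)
The plan is to run the argument of Theorem~\ref{thm:MGL-modules2} \emph{mutatis mutandis}, with the smooth rank-$0$ stable tangential structure $\iota_0$ (rank-$0$ $K$-theory) replaced by the orientation structure $K^\SL \to K$ from Example~\ref{ex:SL-orientation}, and with Corollary~\ref{cor:MGL}(i) replaced by Corollary~\ref{cor:MSL}(i). The forgetful symmetric monoidal functor $\Span^\fr(\Sm_k) \to \Span^\ornt(\Sm_k)$, which sends a framing $\sL_f \simeq 0$ in $K(Z)$ to the induced isomorphism $\omega_f = \det(\sL_f) \simeq \sO_Z$, induces a symmetric monoidal adjunction $\epsilon^* \dashv \epsilon_*$ on the unstable and the $\T$-stable levels.

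The first main step is the MSL analog of Lemma~\ref{lem:epsilon1}: the right adjoint $\epsilon_*\colon \H^\ornt(k) \to \H^\fr(k)$ is a strict $\H^\fr(k)$-module functor. As in \emph{loc.\ cit.}, it is enough to treat representables $A = \h^\fr_k(X)$ and $B = \h^\ornt_k(Y)$. Since $K^\SL \to K$ is smooth (Example~\ref{ex:SL-orientation}), Proposition~\ref{prop:beta-descent} together with the description $\FSYN_k^\ornt \simeq \FQSM_k^{K^\SL \to K}$ gives a Nisnevich equivalence
\[
\colim_{(Z,\xi)} \h^\fr_k(Y\times_k Z,\pi_Z^*(\xi)) \to \h^\ornt_k(Y),
\]
where the colimit runs over $Z \in \Sm_k$ and $\xi \in K^\SL(Z)$. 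The proof then reduces, exactly as in Lemma~\ref{lem:epsilon1}, to showing that
\[
\h^\fr_k(X) \otimes^\fr \h^\fr_k(Y\times_k Z,\pi_Z^*(\xi)) \to \h^\fr_k(X\times_k Y\times_k Z,\pi_Z^*(\xi))
\]
is a motivic equivalence, which is local on $Z$ and so reduces to the trivial case $\xi=0$. The MSL analog of Lemma~\ref{lem:epsilon2}, $\Sigma^\infty_{\T,\fr}\epsilon_* \simeq \epsilon_*\Sigma^\infty_{\T,\ornt}$, then follows formally from the module statement by extending scalars along $\Sigma^\infty_{\T,\fr}$.

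To prove (i), combine Corollary~\ref{cor:MSL}(i) with the MSL analog of Lemma~\ref{lem:epsilon2}: under the recognition equivalence $\SH^\veff(k) \simeq \H^\fr(k)^\gp$ of \cite[Theorem 3.5.14]{EHKSY1}, the spectrum $\MSL_k$ corresponds to $\epsilon_*(\1)$, the image of the unit under $\epsilon_*\colon \H^\ornt(k)^\gp \to \H^\fr(k)^\gp$. This yields an adjunction
\[
\Phi\colon \Mod_\MSL(\SH^\veff(k)) \rightleftarrows \H^\ornt(k)^\gp : \Psi
\]
with $\Phi$ symmetric monoidal and $\Psi$ conservative. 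On generators $\MSL_k \otimes \Sigma^\infty_{\T,\fr} Y_+$ with $Y \in \Sm_k$, the unit is identified, via the MSL Lemma~\ref{lem:epsilon2} analog, with the $\Sigma^\infty_{\T,\fr}(-)^\gp$ image of the module structure map
\[
\epsilon_*\h^\ornt_k(k) \otimes^\fr \h^\fr_k(Y) \to \epsilon_*\h^\ornt_k(Y),
\]
which is an equivalence by the MSL analog of Lemma~\ref{lem:epsilon1}. Part (ii) then follows by stabilizing with respect to $\Sigma_{\G_m}$, exactly as Theorem~\ref{thm:MGL-modules2}(ii) is deduced from Theorem~\ref{thm:MGL-modules2}(i).

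The main obstacle is the MSL analog of Lemma~\ref{lem:epsilon1}; everything else is essentially formal once one has Corollary~\ref{cor:MSL}(i) and the colimit presentation of $\h^\ornt_k(Y)$ coming from smoothness of the tangential structure. The analogous argument works for any smooth rank-$0$ stable tangential structure $\beta\colon B\to K$ in place of $K^\SL\to K$, producing an equivalence $\Mod_{M\beta}(\SH^\veff(k)) \simeq \H^\beta(k)^\gp$ of symmetric monoidal $\infty$-categories between very effective $M\beta$-modules and motivic spaces with $\beta$-transfers.
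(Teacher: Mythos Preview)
Your proposal is correct and follows exactly the paper's approach: the paper simply states that Theorems~\ref{thm:MSL-modules} and~\ref{thm:MSL-modules2} are ``proved in the same way as the corresponding results from \ssecref{ssec:MGL-modules}'', and you have unpacked precisely that, supplying the $\MSL$-analogues of Lemmas~\ref{lem:epsilon1} and~\ref{lem:epsilon2} and invoking Corollary~\ref{cor:MSL}(i) in place of Corollary~\ref{cor:MGL}(i). Two small slips: the colimit in the analogue of Lemma~\ref{lem:epsilon1} should run over $\xi\in K^{\SL}(Z)$ \emph{of rank $0$} (since $\Span^{\ornt}$ only involves finite syntomic correspondences), and the passage from (i) to (ii) is $S^1$-stabilization, not $\Sigma_{\G_m}$-stabilization.
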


\begin{cor}\label{cor:or-cancellation}
	Let $k$ be a perfect field. Then the $\infty$-category $\H^\ornt(k)^\gp$ is prestable and the functor $\Sigma_{\G}\colon \H^\ornt(k)^\gp \to \H^\ornt(k)^\gp$ is fully faithful.
\end{cor}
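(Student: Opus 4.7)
The plan is to mirror the proof of Corollary~\ref{cor:fsyn-cancellation}, substituting $\MSL$ for $\MGL$ and $\H^\ornt$ for $\H^\fsyn$ throughout. By Theorem~\ref{thm:MSL-modules2}(i), there is a symmetric monoidal equivalence
\[
\H^\ornt(k)^\gp \simeq \Mod_{\MSL}(\SH^\veff(k))
\]
under which the endofunctor $\Sigma_\G$ on the left-hand side corresponds to the restriction of $\Sigma_\G\colon \SH(k)\to\SH(k)$ to the subcategory of $\MSL$-modules in $\SH^\veff(k)$. It therefore suffices to prove that $\Mod_{\MSL}(\SH^\veff(k))$ is prestable and that $\Sigma_\G$ acts fully faithfully on it.

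For prestability, I would invoke the standard fact that $\SH^\veff(k)$ is a presentable prestable symmetric monoidal $\infty$-category, being the connective part of the very effective slice t-structure on $\SH(k)$ and generated under colimits and extensions by the compact connective objects $\Sigma^\infty_\T X_+$ for $X\in\Sm_k$. Since $\MSL$ is an $\Einfty$-algebra therein, the module category $\Mod_{\MSL}(\SH^\veff(k))$ inherits a presentable prestable symmetric monoidal structure.

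For fully faithfulness, I would first note that $\Sigma_\G=(-)\otimes\Sigma^\infty_\T\G_m$ is an equivalence on all of $\SH(k)$ by invertibility of $\G_m$, and that it restricts to an endofunctor of $\SH^\veff(k)$: indeed, $\Sigma^\infty_\T\G_m$ is itself very effective, as it fits in a cofiber sequence $\1_{\SH(k)}\to\Sigma^\infty_\T(\G_m)_+\to\Sigma^\infty_\T\G_m$ whose first two terms are very effective and $\SH^\veff(k)$ is closed under colimits. An equivalence of $\infty$-categories restricts to a fully faithful functor on any full subcategory, so $\Sigma_\G|_{\SH^\veff(k)}$ is fully faithful; being $\MSL$-linear, it lifts to a fully faithful endofunctor of $\Mod_{\MSL}(\SH^\veff(k))$, as required.

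The only point requiring mild care is the preservation of $\SH^\veff(k)$ under $\Sigma_\G$; once that is in hand, everything else in the argument is formal and follows exactly as in the proof of Corollary~\ref{cor:fsyn-cancellation}.
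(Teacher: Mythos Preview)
Your proposal is correct and matches the paper's approach: both \corref{cor:fsyn-cancellation} and \corref{cor:or-cancellation} are stated without proof in the paper, as immediate corollaries of the module-theoretic equivalences (Theorems~\ref{thm:MGL-modules2} and~\ref{thm:MSL-modules2}), and you have spelled out exactly the intended argument.

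One minor point: the phrase ``being $\MSL$-linear, it lifts to a fully faithful endofunctor of $\Mod_{\MSL}(\SH^\veff(k))$'' deserves one more sentence. The lift exists trivially, but its fully faithfulness is not automatic from fully faithfulness on $\SH^\veff(k)$, since $\Mod_{\MSL}(\SH^\veff(k))$ is not a full subcategory of $\SH^\veff(k)$. The clean justification is that the forgetful functor $U\colon \Mod_{\MSL}(\SH^\veff(k))\to \SH^\veff(k)$ is conservative and commutes with both $\Sigma_\G$ and its right adjoint $\Omega_\G$ (both being (co)tensoring by an object of $\SH^\veff(k)$); hence the unit $\id\to\Omega_\G\Sigma_\G$ is an equivalence on modules because it is after applying $U$. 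Alternatively, one can note that $\Mod_{\MSL}(\SH^\veff(k))\hookrightarrow\Mod_{\MSL}(\SH(k))$ is fully faithful (as the connective part of the induced t-structure) and that $\Sigma_\G$ is an equivalence on the latter.
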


\begin{rem}
	There are analogs of the above results for any $\sE_1$ smooth stable tangential structure $\beta$ of rank $0$ over $S$. Indeed, one can construct an $\infty$-category $\Span^\beta(\Sm_S)$ of $\beta$-structured finite syntomic correspondences using the formalism of labeling functors from \cite[\sectsign 4.1]{EHKSY1}, in a manner similar to the construction of $\Span^\fr(\Sm_S)$. Then for $S$ arbitrary and $k$ a perfect field, we have equivalences of $\infty$-categories
	\begin{gather*}
		\Mod_{M\beta}(\SH(S)) \simeq \SH^\beta(S),\\
		\Mod_{M\beta}(\SH^\veff(k)) \simeq \H^\beta(k)^\gp,\\
		\Mod_{M\beta}(\SH^\eff(k)) \simeq \SH^{S^1,\beta}(k),
	\end{gather*}
	which are symmetric monoidal if $\beta$ is $\Einfty$. Moreover, the $\infty$-category $\H^\beta(k)^\gp$ is prestable and the functor $\Sigma_{\G}\colon \H^\beta(k)^\gp \to \H^\beta(k)^\gp$ is fully faithful.
\end{rem}

\ssec{Motivic cohomology as an MGL-module}
\label{ssec:HZ}

For $A$ a commutative monoid, let $A_S$ denote the corresponding constant sheaf on $\Sm_S$, which is an $\A^1$-invariant Nisnevich sheaf. The sheaf $A_S$ has canonical finite locally free transfers \cite[Proposition 13.13]{norms}, and in particular finite syntomic transfers.

Let $H\Z_S\in\SH(S)$ be the motivic cohomology spectrum defined by Spitzweck \cite{SpitzweckHZ}, which is an $\Einfty$-algebra in $\Mod_{\MGL}(\SH(S))$ \cite[Remark 10.2]{SpitzweckHZ}. The following theorem is a refinement of \cite[Theorem 21]{framed-loc}:

\begin{thm}\label{thm:HZ}
	For any scheme $S$, there is an equivalence of $\Einfty$-algebras
	\[
	H\Z_S \simeq \Sigma^\infty_{\T,\fsyn} \Z_S
	\]
	in $\Mod_{\MGL}(\SH(S)) \simeq \SH^\fsyn(S)$.
\end{thm}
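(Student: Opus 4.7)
The plan is to transport the equivalence $H\Z_S \simeq \Sigma^\infty_{\T,\fr}\Z_S$ of \cite[Theorem 21]{framed-loc} across the symmetric monoidal equivalence $\Phi\colon \Mod_{\MGL}(\SH(S))\simeq \SH^\fsyn(S)$ of Theorem~\ref{thm:MGL-modules}. First, since $\Z$ is a commutative ring, the constant sheaf $\Z_S$ is canonically an $\Einfty$-ring in $\H^\fsyn(S)$: the canonical finite locally free transfers of \cite[Proposition 13.13]{norms} restrict to finite syntomic transfers on $\Z_S$ and are multiplicative. Hence $\Sigma^\infty_{\T,\fsyn}\Z_S$ is a well-defined $\Einfty$-algebra in $\SH^\fsyn(S)$.

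Next, applying the right adjoint $\Psi\colon \SH^\fsyn(S) \to \Mod_{\MGL}(\SH(S))$, which composed with the forgetful functor to $\SH(S)$ is identified with $\epsilon_*$ via $\SH^\fr(S)\simeq \SH(S)$, Lemma~\ref{lem:epsilon2} gives
\[
\Psi\Sigma^\infty_{\T,\fsyn}\Z_S \simeq \Sigma^\infty_{\T,\fr}\epsilon_*\Z_S \simeq \Sigma^\infty_{\T,\fr}\Z_S
\]
as $\Einfty$-algebras in $\SH(S)$, where $\epsilon_*\Z_S$ carries the canonical framed transfers inherited from the finite syntomic ones. Combining with \cite[Theorem 21]{framed-loc} yields an equivalence $\Psi\Sigma^\infty_{\T,\fsyn}\Z_S \simeq H\Z_S$ in $\CAlg(\SH(S))$. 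Since the equivalence of Theorem~\ref{thm:MGL-modules} is symmetric monoidal, it induces an equivalence $\CAlg_{\MGL_S/}(\SH(S)) \simeq \CAlg(\SH^\fsyn(S))$ that is compatible with the forgetful functors to $\CAlg(\SH(S))$.

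The remaining task, which I expect to be the main obstacle, is to upgrade the underlying equivalence of $\Einfty$-rings in $\SH(S)$ to one of $\MGL_S$-algebras; that is, to verify that the $\MGL_S$-algebra structure on $\Psi\Sigma^\infty_{\T,\fsyn}\Z_S$ (induced by applying $\Psi$ to the unit $\1_{\SH^\fsyn}\to \Sigma^\infty_{\T,\fsyn}\Z_S$ together with $\Psi\1_{\SH^\fsyn}\simeq \MGL_S$ from Theorem~\ref{thm:MGL}(i)) agrees with Spitzweck's canonical $\MGL_S$-algebra structure on $H\Z_S$. The cleanest route is to produce a direct morphism $\Sigma^\infty_{\T,\fsyn}\Z_S \to \Phi H\Z_S$ in $\CAlg(\SH^\fsyn(S))$: the presheaf $\Omega^\infty_{\T,\fsyn}\Phi H\Z_S$ is an $\Einfty$-ring in $\H^\fsyn(S)$ receiving a canonical morphism of $\Einfty$-rings from $\Z_S$ picking out the unit $1\in\pi_0 H\Z_S$, and adjunction yields the desired map. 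This map becomes an equivalence after applying $\Psi$ by the previous steps, and since $\Psi$ is conservative on both underlying objects and hence on $\Einfty$-algebras, it is itself an equivalence in $\CAlg(\SH^\fsyn(S))$.
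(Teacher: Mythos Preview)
Your overall strategy matches the paper's: both construct a map $\Sigma^\infty_{\T,\fsyn}\Z_S \to \Phi H\Z_S$ in $\CAlg(\SH^\fsyn(S))$ by adjunction from a map $\Z_S \to \Omega^\infty_{\T,\fsyn}H\Z_S$ in $\H^\fsyn(S)$, and then verify it is an equivalence. The difference lies in how carefully the construction step is justified.

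The gap is your assertion that $\Omega^\infty_{\T,\fsyn}\Phi H\Z_S$ ``receives a canonical morphism of $\Einfty$-rings from $\Z_S$ picking out the unit.'' A map of presheaves of $\Einfty$-rings $\Z_S \to \Omega^\infty_\T H\Z_S$ certainly exists, but you need it to respect the \emph{finite syntomic transfers} on both sides, and this is not automatic: $\Z_S$ is not the initial $\Einfty$-ring in $\H^\fsyn(S)$ (that is $\h^\fsyn_S(S)=\FSYN_S$), so there is no free map out of it. The paper fills this gap by first reducing to $S$ a Dedekind domain (using base-change stability of both sides, which itself requires Lemma~\ref{lem:epsilon2} and Theorem~\ref{thm:MGL-modules}), so that $\Omega^\infty_{\T,\fsyn}H\Z_S$ is the discrete sheaf $\Z_S$ equipped with \emph{some} finite syntomic transfers. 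It then identifies these as the canonical ones in two steps: the restricted \emph{framed} transfers are the canonical ones by the proof of \cite[Theorem~21]{framed-loc}, and since $\Z_S$ is a discrete constant sheaf and every finite syntomic morphism can be framed Zariski-locally on the base, the framed transfers determine the finite syntomic ones. Without this identification your adjoint map is not defined in $\H^\fsyn(S)$.

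The same gap affects your final step: to conclude that your map becomes an equivalence after $\Psi$, you need to know that under $\epsilon_*$ it agrees with the unit map of \cite[Theorem~21]{framed-loc}, not merely that source and target are abstractly equivalent; this again requires the transfer identification. (Once that gap is filled, your conservativity argument does work and is a valid alternative to the paper's reduction to perfect fields and appeal to Theorem~\ref{thm:MGL-modules2}.)
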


\begin{proof}
	We first note that the right-hand side is stable under base change, since $\Sigma^\infty_{\T,\fr}\epsilon_*(\Z_S)$ is \cite[Lemma 20]{framed-loc}, $\Sigma^\infty_{\T,\fr}\epsilon_* \simeq \epsilon_*\Sigma^\infty_{\T,\fsyn}$ (Lemma~\ref{lem:epsilon2}), and $\epsilon_*$ commutes with base change (by Theorem~\ref{thm:MGL-modules}).
	We can therefore assume that $S$ is a Dedekind domain. In this case, $\Omega^\infty_{\T,\fsyn}H\Z_S$ is the constant sheaf of rings $\Z_S$ with some finite syntomic transfers. As shown in the proof of \cite[Theorem 21]{framed-loc}, these transfers are the canonical ones for framed finite syntomic correspondences. Since $\Z_S$ is a discrete constant sheaf and every finite syntomic morphism $Z\to X$ can be framed Zariski-locally on $X$, we deduce that $\Omega^\infty_{\T,\fsyn}H\Z_S$ is $\Z_S$ with its canonical finite syntomic transfers. By adjunction, we obtain a morphism of $\Einfty$-algebras
	\[
	\phi_S\colon \Sigma^\infty_{\T,\fsyn} \Z_S \to H\Z_S
	\]
	in $\Mod_{\MGL}(\SH(S))$, which is stable under base change. It thus suffices to show that $\phi_S$ is an equivalence when $S$ is the spectrum of a perfect field, but this follows from Theorem~\ref{thm:MGL-modules2}.
\end{proof}

Arguing as in \cite[Corollary 22]{framed-loc}, we obtain the following corollary:

\begin{cor}
	Let $S$ be a scheme and $A$ an abelian group (resp.\ a ring; a commutative ring). Then there is an equivalence of $H\Z_S$-modules (resp.\ of $\sE_1$-$H\Z_S$-algebras; of $\Einfty$-$H\Z_S$-algebras)
	\[
	HA_S \simeq \Sigma^\infty_{\T,\fsyn} A_S
	\]
	in $\Mod_{\MGL}(\SH(S)) \simeq \SH^\fsyn(S)$.
\end{cor}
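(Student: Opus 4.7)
The plan is to deduce this from Theorem~\ref{thm:HZ} via the same tensor-product argument used in \cite[Corollary 22]{framed-loc} for the framed analogue.

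For the module case, I would observe that for any abelian group $A$, the constant sheaf $A_S$ equipped with its canonical finite syntomic transfers (from \cite[Proposition 13.13]{norms}) is naturally a $\Z_S$-module in $\H^\fsyn(S)^\gp$. Choosing a presentation $\Z^{\oplus I} \to \Z^{\oplus J} \to A \to 0$, and using that constant-sheaf formation together with the assignment of canonical finite syntomic transfers commutes with direct sums and cokernels of abelian groups, one obtains a corresponding cokernel sequence $\Z_S^{\oplus I} \to \Z_S^{\oplus J} \to A_S$ in $\H^\fsyn(S)^\gp$, which realizes $A_S$ as $A \otimes_\Z \Z_S$. Applying the symmetric monoidal colimit-preserving functor $\Sigma^\infty_{\T,\fsyn}$ and using Theorem~\ref{thm:HZ}, we get
\[
\Sigma^\infty_{\T,\fsyn} A_S \simeq A \otimes_{\Z} H\Z_S
\]
in $\Mod_{H\Z_S}(\SH(S))$. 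The right-hand side is canonically identified with $HA_S$, for instance via Spitzweck's construction of motivic Eilenberg--MacLane spectra, proving the module equivalence.

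For the $\sE_1$ (resp.\ $\Einfty$) algebra case, when $A = R$ is a ring (resp.\ commutative ring), the multiplication $R \otimes_\Z R \to R$ equips $R_S$ with an $\sE_1$- (resp.\ $\Einfty$-) algebra structure over $\Z_S$ in $\H^\fsyn(S)^\gp$, and the symmetric monoidality of $\Sigma^\infty_{\T,\fsyn}$ transports this to an $\sE_n$-algebra structure on $\Sigma^\infty_{\T,\fsyn} R_S$ over $\Sigma^\infty_{\T,\fsyn}\Z_S$. The main subtlety I anticipate is to check that, under the identification $\Sigma^\infty_{\T,\fsyn}\Z_S \simeq H\Z_S$ of Theorem~\ref{thm:HZ}, this $\sE_n$-algebra structure on $\Sigma^\infty_{\T,\fsyn} R_S$ agrees with the canonical one on $HR_S$; this reduces to the fact that Theorem~\ref{thm:HZ} was proved as an equivalence of $\Einfty$-algebras, so that base change along it is symmetric monoidal, and both algebra structures ultimately arise from the same multiplication on $R$.
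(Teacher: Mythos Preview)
Your proposal is correct and follows essentially the same approach as the paper, which simply defers to the argument of \cite[Corollary~22]{framed-loc}; the tensor-product-with-$H\Z$ argument using a free presentation of $A$ and the symmetric monoidality of $\Sigma^\infty_{\T,\fsyn}$ is exactly what that reference does in the framed setting.
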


\begin{rem}
	It follows from Theorem~\ref{thm:HZ} that the canonical morphism of $\Einfty$-ring spectra $\MGL_S\to H\Z_S$ is $\Sigma^\infty_{\T,\fsyn}$ of the degree map $\deg\colon \FSYN_S\to\N_S$.
\end{rem}

\appendix
\section{Functors left Kan extended from smooth algebras}
\label{app:LKE}

A surprising observation due to Bhatt and Lurie is that algebraic K-theory, as a functor on commutative rings, is left Kan extended from smooth rings. In this appendix, we present a general criterion for a functor on commutative rings to be left Kan extended from smooth rings, which we learned from Akhil Mathew, and we apply it to deduce some variants of the result of Bhatt and Lurie that are relevant for the applications of Theorem~\ref{thm:thom-general}.

A morphism of derived commutative rings $f\colon A\to B$ is called a \emph{henselian surjection} if $\pi_0(f)$ is surjective and $(\pi_0(A),\ker\pi_0(f))$ is a henselian pair \cite[Tag 09XD]{stacks}.

\begin{prop}[Mathew]\label{prop:akhil}
	Let $R$ be a commutative ring (resp.\ a derived commutative ring) and $F\colon \CAlg_R^\heart \to \Spc$ (resp.\ $F\colon \CAlg_R^\Delta \to \Spc$) a functor. Suppose that:
	\begin{enumerate}
		\item $F$ preserves filtered colimits;
		\item for every henselian surjection $A\to B$, the map $F(A)\to F(B)$ is an effective epimorphism (i.e., surjective on $\pi_0$);
		\item for every henselian surjections $A\to C \leftarrow B$, 
		the square
		\[
		\begin{tikzcd}
			F(A\times_CB) \ar{r} \ar{d} & F(B) \ar{d} \\
			F(A) \ar{r} & F(C)
		\end{tikzcd}
		\]
		is Cartesian.
	\end{enumerate}
	Then $F$ is left Kan extended from $\CAlg_R^\sm$.
\end{prop}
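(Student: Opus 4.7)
The plan is to let $\tilde F$ denote the left Kan extension of $F|_{\CAlg_R^\sm}$ along the inclusion $\CAlg_R^\sm \hookrightarrow \CAlg_R^\Delta$ (resp.\ $\CAlg_R^\heart$), and to show that the canonical counit $\tilde F \to F$ is an equivalence. Both functors preserve filtered colimits: hypothesis~(1) takes care of $F$, while $\tilde F$ does so automatically because smooth $R$-algebras are compact in $\CAlg_R^\Delta$. Thus it suffices to check the equivalence on finitely presented $A$.

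For such $A$ I would run a d\'evissage. Given a finitely generated ideal $I \subset \pi_0(A)$, the henselization $A^h_I$ fits into a Cartesian square
\[
\begin{tikzcd}
A \ar[r] \ar[d] & A^h_I \ar[d] \\
A/I \ar[r] & A^h_I / I A^h_I
\end{tikzcd}
\]
whose vertical arrows are henselian surjections in the sense of the statement. By N\'eron--Popescu desingularization, $A^h_I$ is a filtered colimit of smooth $R$-algebras, so the counit is already an equivalence on $A^h_I$. A Noetherian induction on the Krull dimension of $\pi_0(A)/I$, combined in the derived case with an induction on Postnikov sections (using that a square-zero extension is nilpotent, hence a henselian surjection), reduces to the situation where $A/I$ and $A^h_I/IA^h_I$ are also known to satisfy the comparison. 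Hypothesis~(3) together with the corresponding Cartesianness for $\tilde F$ then propagates the equivalence to $A$ itself.

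The principal obstacle is establishing the analogue of~(3) for $\tilde F$: for any Cartesian square $A = B \times_D C$ with $B \to D \leftarrow C$ henselian surjections, one must show that $\tilde F(A) \to \tilde F(B) \times_{\tilde F(D)} \tilde F(C)$ is an equivalence. I would approach this by resolving all three of $B$, $C$, $D$ by a compatible system of ind-smooth $R$-algebras --- using Elkik-type lifting to extend henselian surjections to the ind-smooth level --- and then exploit the fact that filtered colimits commute with finite limits in $\Spc$, so the resulting colimit of pullbacks of smooth algebras computes both sides. Equivalently, one argues directly that the class $\mathcal C = \{A : \tilde F(A) \simeq F(A)\}$ is closed under filtered colimits (clear) and under henselian Milnor pullbacks with the other three corners in $\mathcal C$ (reducing the closure under pullbacks for $\tilde F$ to Cartesianness for $F$ via the ind-smooth resolution above). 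Carrying out this resolution coherently in the derived setting, in particular keeping track of cotangent complexes so that the henselian Milnor squares remain Cartesian at the derived level rather than only at the level of $\pi_0$, is the main technical hurdle.
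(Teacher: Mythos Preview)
Your proposal has genuine gaps. The square you write down is not Cartesian: henselization along $I$ does not change the quotient, so $A^h_I/IA^h_I \simeq A/I$ and the bottom map is an isomorphism; were the square Cartesian, it would force $A \simeq A^h_I$. Relatedly, the left vertical map $A \to A/I$ is a henselian surjection precisely when $(A,I)$ is already henselian, so your square cannot serve as a d\'evissage step. The appeal to N\'eron--Popescu is also unjustified: Popescu's theorem requires a regular morphism between Noetherian rings, but nothing in the hypotheses makes $R \to A^h_I$ regular (indeed $A$ itself could be singular over $R$, and henselization will not repair that). Finally, the Noetherian induction on $\dim(\pi_0(A)/I)$ is not set up in a way that visibly terminates, and the reduction of property~(3) for $\tilde F$ to a compatible ind-smooth resolution of a Milnor square is precisely the hard part and is left as a hope.

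The paper's argument is quite different and avoids all of these issues. The key observation is that $\tilde F$ is a colimit of functors of the form $F(S)\times \Maps_R(S,-)$ with $S$ smooth, and such corepresentable functors \emph{themselves} satisfy conditions (1)--(3): condition~(3) because $\Maps_R(S,-)$ preserves all limits, (1) because $S$ is finitely presented, and (2) by the henselian lifting property for smooth schemes. So it suffices to show, once, that any $F$ satisfying (1)--(3) sends a certain fixed augmented simplicial object to a colimit diagram. This object $B_\bullet \to A$ is built inductively so that each $B[\Delta^n]$ is ind-smooth and each $B[\Delta^n]\to B[\partial\Delta^n]$ is a henselian surjection (the existence of such a resolution uses only that every algebra admits a henselian surjection from an ind-smooth one, e.g.\ the henselization of a polynomial ring along the kernel). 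Condition~(3) then gives $F(B[K])\simeq \lim_{\Delta^m\subset K} F(B[\Delta^m])$ for finite nonsingular $K$, and condition~(2) gives that $F(B)[\Delta^n]\to F(B)[\partial\Delta^n]$ is surjective on $\pi_0$, which is exactly the hypercover criterion for $F(B_\bullet)\to F(A)$ to be a colimit diagram.
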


\begin{rem}
	Conditions (1) and (2) of Proposition~\ref{prop:akhil} are also necessary, since they hold when $F=\Maps_R(S,-)$ for some smooth $R$-algebra $S$ \cite[Théorème I.8]{Gruson}. Condition (3), on the other hand, is not (for example, it fails for K-theory).
\end{rem}

\begin{proof}
	Let $\tilde F$ be the left Kan extension of $F|\CAlg_R^\sm$. Then $\tilde F$ is a colimit of functors satisfying conditions (1)–(3) and in particular it satisfies conditions (1) and (2).
	The canonical map $\tilde F\to F$ is an equivalence on smooth $R$-algebras, hence on ind-smooth $R$-algebras.
	For any $A\in\CAlg_R^\heart$ (resp.\ $A\in \CAlg_R^\Delta$), we can inductively construct an augmented simplicial object $B$ such that $B[\emptyset]=A$ and, for each $n\geq 0$, $B[\Delta^n]$ is ind-smooth and $B[\Delta^n]\to B[\partial\Delta^n]$ is a henselian surjection.
	To conclude, we prove that both $\tilde F$ and $F$ send $B$ to a colimit diagram. Since $\tilde F$ is a colimit of functors that satisfy (1)–(3), it will suffice to show that $F(B)$ is a colimit diagram.
	Henselian surjections are stable under pullback, so the map $B[L]\to B[K]$ is a henselian surjection for any inclusion of finite simplicial sets $K\subset L$. In particular, by (2), $F(B[L])\to F(B[K])$ is an effective epimorphism.
	
	Let $K$ be a finite nonsingular simplicial set. Then $K$ can be built from $\emptyset$ and simplices $\Delta^n$ by a finite sequence of pushouts, which are transformed by $B[-]$ into Cartesian squares of henselian surjections. By (3), we deduce that $F(B[K]) \simeq F(B)[K]$ for such $K$, since this is trivially true for $K=\emptyset$ and $K=\Delta^n$. 
	Applying this to $K=\partial\Delta^n$, we conclude that $F(B)[\Delta^n] \to F(B)[\partial\Delta^n]$ is an effective epimorphism, hence that $F(B)$ is a colimit diagram \cite[Lemma A.5.3.7]{SAG}.
\end{proof}

\begin{ex}
	$\SH^\omega\colon \CAlg_R^\heart\to \InftyCat$ is left Kan extended from $\CAlg_R^\sm$ (apply Proposition~\ref{prop:akhil} to $\Fun(\Delta^n,\SH(-)^\omega)^\simeq$ for $n\geq 0$).
\end{ex}

\begin{prop}[Mathew]\label{prop:akhil2}
	Let $R$ be a derived commutative ring and $X$ a smooth algebraic stack over $R$ with quasi-affine diagonal (e.g., a smooth quasi-separated algebraic space), viewed as a functor $X\colon \CAlg_R^\Delta\to \Spc$. Then $X$ is left Kan extended from $\CAlg_R^\sm$.
\end{prop}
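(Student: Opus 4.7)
The plan is to apply Proposition~\ref{prop:akhil} to the functor $F=X$, which reduces the claim to verifying conditions (1)--(3) of that proposition. The strategy throughout is to reduce each condition from algebraic stacks with quasi-affine diagonal to the case of (quasi-)affine schemes, where they are classical or follow directly from the definition of a henselian pair.

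For (1), I would simply invoke that a smooth algebraic stack over $R$ is locally of finite presentation, and that any functor $\CAlg_R^\Delta\to\Spc$ represented by an algebraic stack locally of finite presentation commutes with filtered colimits of derived commutative rings (this is essentially the definition of locally finite presentation in the derived setting).

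For (2), given a henselian surjection $A\to B$ and a point $\bar x\in X(B)$ corresponding to $\bar x\colon \Spec B\to X$, I would choose a smooth surjective atlas $\pi\colon U\to X$ with $U$ a disjoint union of affine schemes, which exists since $X$ is algebraic. The fibre product $W=\Spec B\times_X U\to \Spec B$ is then smooth surjective, and it is \emph{quasi-affine} over $\Spec B$ because $\pi$ is quasi-affine (this uses the quasi-affine diagonal hypothesis via the pullback square cutting out $W$ as $\Spec B\times_{X\times X}(U\times X)\times_X U$, equivalently, as the pullback of the quasi-affine map $U\times_X\Spec B\to \Spec B$). It is then enough to produce a section of $W\to\Spec B$ and lift it to an $A$-point. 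Zariski-locally on $\Spec B$ the smooth quasi-affine scheme $W$ admits étale coordinates, so the lifting becomes a statement about lifting solutions of smooth equations from $B$ to $A$, which is precisely the defining property of the henselian pair $(\pi_0 A,\ker\pi_0(A\to B))$, promoted to derived rings via the fact that $\Spec A$ and $\Spec\pi_0 A$ have the same small étale site \cite[Theorem 7.5.0.6]{HA}.

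Condition (3), Milnor patching, is where I expect the real work. I need to show that
\[
X(A\times_C B) \isoto X(A)\times_{X(C)}X(B)
\]
whenever $A\to C$ and $B\to C$ are henselian surjections. The case where $X$ is an affine scheme is a special case of \cite[Theorem 16.2.0.1]{SAG}, and the case where $X$ is quasi-affine follows because quasi-affine schemes are open subschemes of affine schemes and open immersions patch trivially for henselian surjections (a $T$-point lands in the open iff its restrictions to $T\times_{\Spec(A\times_C B)}\Spec A$ and $T\times_{\Spec(A\times_C B)}\Spec B$ do). For the general case, I would fix a smooth atlas $U\to X$ and describe an $(A\times_C B)$-point of $X$, étale-locally, as a pair of $A$- and $B$-points of $U$ together with an isomorphism of their images in $X$ after restriction to $C$; the space of such isomorphisms is classified by maps into the quasi-affine scheme $U\times_X U$, to which the quasi-affine case applies. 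Patching together the smooth-local data and descending along $U\to X$ then yields the equivalence.

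The main obstacle is precisely condition (3): the bookkeeping needed to descend the quasi-affine Milnor patching statement along a smooth atlas of $X$ requires the quasi-affine diagonal hypothesis in an essential way, since it is what guarantees that the cocycle data lives on a quasi-affine scheme where patching is available. Conditions (1) and (2) are essentially formal once one has chosen a smooth presentation.
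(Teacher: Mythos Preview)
Your overall strategy of verifying conditions (1)--(3) of Proposition~\ref{prop:akhil} matches the paper, and your treatment of~(1) is fine. However, you have the relative difficulty of (2) and (3) exactly reversed.

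Condition~(3) is the easy one. The paper dispatches it in a single line: since $A\to C$ and $B\to C$ are surjective on $\pi_0$, the maps $\Spec(C)\to\Spec(A)$ and $\Spec(C)\to\Spec(B)$ are closed immersions, and $\Spec(A\times_C B)$ is then the pushout of $\Spec(B)\leftarrow\Spec(C)\to\Spec(A)$ in the $\infty$-category of derived algebraic stacks \cite[Example~17.3.1.3]{SAG}. The Cartesian square for $X$ is immediate. Your descent-along-an-atlas argument might be made to work, but it is unnecessary.

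Condition~(2) is where the genuine content lies, and your argument has a real gap. You claim it suffices to produce a section of $W=\Spec B\times_X U\to\Spec B$, but such a section need not exist. Take $X=BG$ for $G$ a smooth affine $R$-group and the atlas $U=\pt\to BG$: for $\bar x\in X(B)$ classifying a $G$-torsor $P$, one has $W=P$, and $W\to\Spec B$ admits a section if and only if $P$ is trivial. Working Zariski- or \'etale-locally on $\Spec B$ does not rescue the argument: even if you lift \'etale-locally to an $A'$-point of $U$ for some \'etale $A$-algebra $A'$, you must then produce descent data for the resulting $A'$-point of $X$, and the pair $(A'\otimes_A A',\ker(A'\otimes_A A'\to B'\otimes_B B'))$ is not henselian in general, so you cannot lift the descent isomorphism.

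The paper's proof of~(2) is substantially deeper. After using smoothness and nilcompleteness of $X$ to reduce to the case where $A$ is discrete and $B=A/I$, one uses Noetherian approximation to reduce to the case where $(A,I)$ is the henselization of a pair of finite type over~$\Z$. Then $A\to A^\wedge_I$ is regular, hence ind-smooth by Popescu's theorem. One extends $f\colon\Spec(A/I)\to X$ compatibly to all $\Spec(A/I^n)$ using smoothness, and then invokes Grothendieck's algebraization theorem \cite[Corollary~1.5]{BhattH-L} to extend to $\Spec(A^\wedge_I)$; it is \emph{here} that the quasi-affine diagonal hypothesis is used, not in~(3). Finally one approximates by some smooth $A$-algebra $B_\alpha$ and uses \cite[Th\'eor\`eme~I.8]{Gruson} to split $\Spec B_\alpha\to\Spec A$ over $\Spec(A/I)$.
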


\begin{proof}
	We check conditions (1)–(3) of Proposition~\ref{prop:akhil}.
	 Condition (1) holds because $X$ is locally of finite presentation, and condition (3) holds because $\Spec(A\times_CB)$ is the pushout of $\Spec(B)\leftarrow \Spec(C)\to\Spec(A)$ in the $\infty$-category of derived algebraic stacks \cite[Example 17.3.1.3]{SAG}.
	 It remains to check condition (2).
	 Let $A\to B$ be a henselian surjection between derived commutative $R$-algebras, and let $A_0 = \pi_0A \times_{\pi_0B}B$ be the relative $0$-truncation of $A$ over $B$. Then $A\to A_0$ induces an isomorphism on $\pi_0$, so we can write $A\simeq \lim_{n\geq 0} A_n$ where $A\to A_n$ is $n$-connective and $A_{n+1}\to A_n$ is a square-zero extension \cite[Lemma 17.3.6.4]{SAG}. Since $X$ is smooth, each induced map $X(A_{n+1})\to X(A_n)$ is an effective epimorphism \cite[Remark 17.3.9.2]{SAG}. 
	 Moreover, as $X$ is nilcomplete \cite[Proposition 5.3.7]{LurieThesis}, we have $X(A)\simeq \lim_n X(A_n)$ \cite[Proposition 17.3.2.4]{SAG}. Hence, the induced map $X(A) \to X(A_0)$ is an effective epimorphism.
	 Since $X(A_0) \simeq X(\pi_0A) \times_{X(\pi_0B)} X(B)$ by (3), it remains to show that $X(\pi_0A) \to X(\pi_0B)$ is an effective epimorphism. In other words, we can assume $A$ discrete and $B=A/I$ for some ideal $I\subset A$ such that $(A,I)$ is a henselian pair.
	
	Let $f\colon \Spec(A/I)\to X$ be a morphism over $R$. We must show that $f$ can be extended to $\Spec (A)$. Replacing $X$ by $X\otimes_RA$, we may as well assume that $R=A$; in particular, since $A$ is discrete, $X$ is classical.
	 Since $\Spec(A/I)$ is quasi-compact, we can replace $X$ by a quasi-compact open substack \cite[Tags 06FJ and 0DQQ]{stacks} and assume $X$ finitely presented over $A$.
	By condition (1), we can also assume that $I\subset A$ is a finitely generated ideal. Then the pair $(A,I)$ is a filtered colimit of pairs that are henselizations of pairs of finite type over $\Z$. By \cite[Proposition B.2]{RydhApprox}, we are reduced to the case where $(A,I)$ is the henselization of a pair of finite type over $\Z$. In this case, the map $A\to A^\wedge_I$ is regular \cite[Tags 0AH2, 0AH3, 07PX]{stacks}, hence is a filtered colimit of smooth morphisms $A\to B_\alpha$ by Popescu \cite[Tag 07GC]{stacks}. Since $X$ is smooth over $A$, we can compatibly extend $f$ to $\Spec(A/I^n)$ for all $n$. As $X$ is now Noetherian with quasi-affine diagonal, we can extend $f$ to $\Spec(A^\wedge_I)$ by Grothendieck's algebraization theorem (as generalized by Bhatt and Halpern-Leistner \cite[Corollary 1.5]{BhattH-L} or Lurie \cite[Corollary 9.5.5.3]{SAG}), hence to $\Spec(B_\alpha)$ for some $\alpha$. We are thus in the following situation:
	\[
	\begin{tikzcd}
		& & X \ar{dd} \\
		& & \\
		\Spec(A/I) \ar{r} \ar[bend left=15]{uurr}{f} & \Spec (B_\alpha) \ar{r} \ar[dashed]{uur} & \Spec (A)\rlap.
	\end{tikzcd}
	\]
	Since $(A,I)$ is henselian and $B_\alpha$ is a smooth $A$-algebra, the morphism $\Spec(B_\alpha)\to\Spec(A)$ admits a section fixing $\Spec(A/I)$ \cite[Théorème I.8]{Gruson}, so we are done.
\end{proof}

\begin{lem}\label{lem:gp}
	Let $R$ be a derived commutative ring and $X\colon \CAlg_R^\Delta\to \Spc$ a functor left Kan extended from $\CAlg_R^\sm$. For any monoid structure on $X$, the group completion $X^\gp\colon \CAlg_R^\Delta\to \Spc$ is also left Kan extended from $\CAlg_R^\sm$.
\end{lem}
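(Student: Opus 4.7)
The plan is to combine two observations: the slice category computing the left Kan extension is sifted, and group completion is a left adjoint that commutes with all colimits.

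First, I would verify that $D := (\CAlg_R^\sm)_{/A}$ is sifted for every $A\in \CAlg_R^\Delta$. The $\infty$-category $\CAlg_R^\sm$ admits finite coproducts given by the derived tensor product $B\otimes_R B'$, which is again smooth over $R$ since smoothness is closed under base change and composition; these finite coproducts are inherited by the slice $D$ via the multiplication of $A$. Since $R\to A$ itself lies in $D$, the $\infty$-category $D$ is nonempty, and the existence of binary coproducts makes it sifted (the diagonal $D\to D\times D$ admits a left adjoint whose comma categories have initial objects, hence is cofinal).

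Next, I would invoke three standard facts: (a) the forgetful functor $\Mon(\Spc)\to \Spc$ preserves sifted colimits, as the free monoid monad is built from finite products which commute with sifted colimits in $\Spc$; (b) the forgetful functor from grouplike monoids (equivalently, connective spectra) to $\Spc$ also preserves sifted colimits; and (c) group completion $(-)^\gp$, being left adjoint to the inclusion of grouplike monoids into all monoids, preserves all colimits. Assembling these,
\[
X^\gp(A) \simeq X(A)^\gp \simeq \Bigl(\colim_{B\in D} X(B)\Bigr)^\gp \simeq \colim_{B\in D} X(B)^\gp = \colim_{B\in D} X^\gp(B),
\]
where the first equivalence is pointwise group completion, the second is the hypothesis on $X$ (equivalently computed in $\Mon(\Spc)$ by (a), since $D$ is sifted), the third uses (c) to move group completion past the colimit, and then (b) identifies the resulting colimit in grouplike monoids with a colimit in $\Spc$. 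The rightmost expression is precisely the value at $A$ of the left Kan extension of $X^\gp|_{\CAlg_R^\sm}$.

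The only subtle point is tracking in which $\infty$-category each colimit lives; siftedness of $D$ guarantees that the forgetful functors from both $\Mon(\Spc)$ and $\Mon^\gp(\Spc)$ to $\Spc$ preserve the relevant colimit, so all three incarnations agree.
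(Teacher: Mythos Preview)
Your proof is correct and follows essentially the same approach as the paper's: both arguments rest on the siftedness of $(\CAlg_R^\sm)_{/A}$ (via finite coproducts), the fact that the forgetful functors $\Mon^\gp(\Spc)\to\Mon(\Spc)\to\Spc$ preserve sifted colimits, and that group completion is a left adjoint. The paper organizes this by first noting that the forgetful functors commute with left Kan extension, then composing $X$ with group completion; you instead write out the chain of equivalences directly, but the content is identical.
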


\begin{proof}
	For every $A\in \CAlg_R^\Delta$, the $\infty$-category $(\CAlg_R^\mathrm{sm})_{/A}$ has finite coproducts and hence is sifted. It follows that the forgetful functors $\Mon^{\gp}(\Spc) \to \Mon(\Spc)\to \Spc$ commute with left Kan extension along the inclusion $\CAlg^\mathrm{sm}_R\subset \CAlg_R^\Delta$ (since they preserve sifted colimits). It therefore suffices to show that the functor
	\[
	X^\gp\colon \CAlg_R^\Delta\to \Mon^{\gp}(\Spc)
	\]
	is left Kan extended from $\CAlg_R^\mathrm{sm}$. This functor is the composition of $X\colon \CAlg_R^\Delta\to \Mon(\Spc)$, which is left Kan extended, with the group completion functor $\Mon(\Spc)\to \Mon^{\gp}(\Spc)$, which preserves colimits. 
\end{proof}

\begin{ex}\label{ex:LKE}
	Let $R$ be a derived commutative ring.
	Proposition~\ref{prop:akhil2} and Lemma~\ref{lem:gp} imply that the following functors $\CAlg_R^\Delta\to\Spc$ are left Kan extended from $\CAlg_R^\sm$, being the group completions of smooth algebraic stacks with affine diagonal (defined over $\Z$):
	\begin{enumerate}
		\item algebraic K-theory $K$, which is the group completion of the stack of finite locally free sheaves;
		\item oriented K-theory $K^\SL$, which is the group completion of the stack of finite locally free sheaves with trivialized determinant;
		\item symplectic K-theory $K^\Sp$, which is the group completion of the stack of finite locally free sheaves (necessarily of even rank) with a nondegenerate alternating bilinear form;
		\item quadratic Grothendieck–Witt theory $\GW^\mathrm q$, which is the group completion of the stack of finite locally free sheaves with a nondegenerate quadratic form;
		\item symmetric Grothendieck–Witt theory $\GW^\mathrm s$, which is the group completion of the stack of finite locally free sheaves with a nondegenerate symmetric bilinear form.
	\end{enumerate}
	All these examples are presheaves of $\Einfty$-spaces, except $K^\SL$ which is $\sE_1$. However, the even rank summand $K^\SL_\ev\subset K^\SL$ is $\Einfty$. Indeed, the stack in (2) is the the pullback $\Vect\times_{\Pic^\dagger}L_\Sigma\Z$, where $\Pic^\dagger(R)=\Pic(\Mod_R(\Spt))$; the map $\Z\to \Pic^\dagger$ is only $\sE_1$, but its restriction to $2\Z\subset \Z$ is $\Einfty$.
\end{ex}

\begin{lem}\label{lem:PB-LKE}
	Let $F\to H\stackrel f\leftarrow G$ be a diagram in $\Fun(\CAlg_R^\Delta,\Spc)$. If $F$ is left Kan extended from $\CAlg_R^\sm$ and $f$ is relatively representable by smooth affine schemes, then $F\times_HG$ is left Kan extended from $\CAlg_R^\sm$.
\end{lem}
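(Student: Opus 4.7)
The plan is to write $F$ as a colimit of representable presheaves indexed by smooth $R$-algebras, then use universality of colimits in the presheaf category to distribute the fiber product across that colimit, and finally apply the representability hypothesis on $f$ termwise.

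First, I would invoke that ``$F$ left Kan extended from $\CAlg_R^\sm$'' means $F$ lies in the essential image of the left Kan extension functor $j_{!}\colon \Fun(\CAlg_R^\sm,\Spc) \to \Fun(\CAlg_R^\Delta,\Spc)$, where $j$ is the inclusion. Applying $j_{!}$ to the canonical Yoneda presentation of $F|_{\CAlg_R^\sm}$ as a colimit of representables, and using the identification $j_{!}y(B) \simeq \Maps_R(B,-)$ for $B\in\CAlg_R^\sm$ (an immediate consequence of the adjunction $j_{!}\dashv j^*$ combined with Yoneda), yields a presentation
\[
F \simeq \colim_{\alpha} \Maps_R(B_\alpha,-)
\]
in $\Fun(\CAlg_R^\Delta,\Spc)$, with each $B_\alpha \in \CAlg_R^\sm$.

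Next, since $\Spc$ has universal colimits, so does the functor category $\Fun(\CAlg_R^\Delta,\Spc)$, and pullback along $G\to H$ therefore commutes with the colimit, giving
\[
F \times_H G \;\simeq\; \colim_{\alpha} \bigl(\Maps_R(B_\alpha,-) \times_H G\bigr).
\]
For each $\alpha$, the composite $\Maps_R(B_\alpha,-) \to F \to H$ is classified, by Yoneda, by a point of $H(B_\alpha)$. The hypothesis that $f$ is relatively representable by smooth affine schemes then produces a smooth $B_\alpha$-algebra $C_\alpha$ together with an equivalence $\Maps_R(B_\alpha,-)\times_H G \simeq \Maps_R(C_\alpha,-)$; since smoothness is stable under composition, $C_\alpha\in\CAlg_R^\sm$.

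Thus $F\times_H G$ is exhibited as a colimit of presheaves $\Maps_R(C_\alpha,-)$ with $C_\alpha$ smooth, each of which visibly lies in the essential image of $j_{!}$. Because that essential image is closed under colimits (as $j_{!}$ preserves them), $F\times_H G$ itself is left Kan extended from $\CAlg_R^\sm$, as required. The argument is essentially formal: there is no serious obstruction, and the only mild care needed is the identification $j_{!}y(B)\simeq \Maps_R(B,-)$ and the invocation of universality of colimits in the presheaf category, both of which are standard.
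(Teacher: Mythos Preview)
Your proof is correct, and the approach is somewhat different from the paper's. The paper works in slice categories: it sets up a square of adjunctions
\[
\begin{tikzcd}
\Pre(\SmAff_R)_{/G} \ar[shift left=1]{r}{\mathrm{LKE}} \ar[shift left=1]{d}{f_*} & \Pre(\dAff_R)_{/G} \ar[shift left=1]{d}{f_*} \ar[shift left=1]{l}{\mathrm{res}} \\
\Pre(\SmAff_R)_{/H} \ar[shift left=1]{r}{\mathrm{LKE}} \ar[shift left=1]{u}{f^*} & \Pre(\dAff_R)_{/H} \ar[shift left=1]{l}{\mathrm{res}} \ar[shift left=1]{u}{f^*}
\end{tikzcd}
\]
and, using the equivalence $\Pre(C)_{/X}\simeq\Pre(C_{/X})$ together with the representability hypothesis on $f$, identifies the functors $f_*$ as precomposition with pullback functors on comma categories; this makes the square of right adjoints visibly commute, whence the square of left adjoints commutes. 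Your argument instead unwinds the left Kan extension as an explicit colimit of corepresentables and invokes universality of colimits to distribute the fiber product across it, applying the representability hypothesis termwise. Your route is more elementary and self-contained; the paper's is more compact and isolates the underlying Beck--Chevalley statement. Both are entirely formal.
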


\begin{proof}
	We have a square of adjunctions
	\[
	\begin{tikzcd}
		\Pre(\SmAff_R)_{/G} \ar[shift left=1]{r}{\mathrm{LKE}} \ar[shift left=1]{d}{f_*} & \Pre(\dAff_R)_{/G} \ar[shift left=1]{d}{f_*} \ar[shift left=1]{l}{\mathrm{res}} \\
		\Pre(\SmAff_R)_{/H} \ar[shift left=1]{r}{\mathrm{LKE}} \ar[shift left=1]{u}{f^*} & \Pre(\dAff_R)_{/H} \ar[shift left=1]{l}{\mathrm{res}} \ar[shift left=1]{u}{f^*}
	\end{tikzcd}
	\]
	and we wish to prove that the square of left adjoints commutes. 
	Using the identification $\Pre(C)_{/X}\simeq \Pre(C_{/X})$ \cite[Corollary 5.1.6.12]{HTT} and the assumption on $f$, we see that the functors $f_*$ are precomposition with the pullback functors $f^*\colon (\SmAff_R)_{/H} \to (\SmAff_R)_{/G}$ and $f^*\colon (\dAff_R)_{/H} \to (\dAff_R)_{/G}$. It is then obvious that the square of right adjoints commutes.
\end{proof}

\begin{ex}\label{ex:LKE-rank}
	Let $F\colon \CAlg^\Delta_R\to \Spc$ be one of the functors from Example~\ref{ex:LKE}. Then there is a rank map $F\to L_\Sigma\Z$. For any subset $I\subset \Z$, let $F_I\subset F$ be the subfunctor consisting of elements with ranks in $I$. Then $F_I$ is left Kan extended from $\CAlg_R^\sm$. This follows from Lemma~\ref{lem:PB-LKE}, since $L_\Sigma I\subset L_\Sigma\Z$ is relatively representable by smooth affine schemes.
\end{ex}

\begin{rem}
	The proof of Proposition~\ref{prop:akhil2} is quite nonelementary. For the algebraic stacks from Example~\ref{ex:LKE}, it is possible to prove more directly that they are left Kan extended from $\CAlg_R^\sm$. Let us give such a proof for $\Vect$ itself.
	 Since the rank of a vector bundle on a derived affine scheme is bounded, we have
	\[
	\Vect = \colim_n \Vect_{\leq n}.
	\]
	Let
	\[
	\Gr_{\leq n} = \colim_{k\to\infty}\; \h(\Gr_0(\A^k)\sqcup \dotsb\sqcup \Gr_n(\A^k) ),
	\]
	where $\h$ is the Yoneda embedding.
	The canonical map $\Gr_{\leq n}\to \Vect_{\leq n}$ is an effective epimorphism of presheaves on $\dAff_R$, since every vector bundle on a derived affine scheme is generated by its global sections.
	For every $n\leq k$, choose a vector bundle torsor $U_{n,k}\to \Gr_n(\A^k)$ where $U_{n,k}$ is affine, and choose maps $U_{n,k}\to U_{n,k+1}$ compatible with $Gr_n(\A^k)\to Gr_n(\A^{k+1})$.
	Let
	\[
	U_{\leq n} = \colim_{k\to\infty}\; \h(U_{0,k} \sqcup\dotsb \sqcup U_{n,k}).
	\]
	Then the map $U_{\leq n} \to \Gr_{\leq n}$ is an effective epimorphism of presheaves on $\dAff_R$, because every vector bundle torsor over a derived affine scheme admits a section. 
	Thus, $U_{\leq n}\to \Vect_{\leq n}$ is an effective epimorphism, and hence $\Vect_{\leq n}$ is the colimit of the simplicial diagram
	\[
	\begin{tikzcd}
		\dotsb \arrow[r, shift left=2] \arrow[r] \arrow[r, shift right=2] & U_{\leq n}\times_{\Vect_{\leq n}} U_{\leq n} \arrow[r, shift left] \arrow[r, shift right] & U_{\leq n}\rlap.
	\end{tikzcd}
	\]
	Since $\Vect_{\leq n}$ is an Artin stack with smooth and affine diagonal, each term in this simplicial object is a filtered colimit of smooth affine $R$-schemes.
\end{rem}

\section{The \texorpdfstring{$\infty$}{∞}-category of twisted framed correspondences}
\label{app:category}

In this appendix, we construct a symmetric monoidal $\infty$-category $\Span^{\sL}((\dSch_S)_{/K})$ whose objects are pairs $(X,\xi)$ where $X$ is a derived $S$-scheme and $\xi\in K(X)$, and whose morphisms are spans
\[
\begin{tikzcd}
   & Z \ar[swap]{ld}{f} \ar{rd}{g} & \\
  (X,\xi) &   & (Y,\eta)
\end{tikzcd}
\]
where $\sL_f$ is perfect together with an equivalence $f^*(\xi)+\sL_f \simeq g^*(\eta)$ in $K(Z)$. 
We also construct symmetric monoidal functors
\begin{gather*}
\Span^\fr(\dSch_S) \to \Span^\sL((\dSch_S)_{/K}), \quad X\mapsto (X,0),\\
\gamma\colon (\dSch_{S})_{/K} \to \Span^\sL((\dSch_S)_{/K}),\quad (X,\xi)\mapsto (X,\xi),
\end{gather*}
where $\Span^\fr(\dSch_S)$ is the $\infty$-category of framed correspondences constructed in \cite[Section 4]{EHKSY1} (with ``scheme'' replaced by ``derived scheme'' and ``finite syntomic'' by ``finite quasi-smooth''), and $\gamma$ extends $\gamma\colon \dSch_{S}\to \Span^\fr(\dSch_S)$.
These constructions are used several times in the paper. For example, the presheaf $\h^\fr_S(Y,\xi)$ on $\Span^\fr(\dSch_S)$ is the restriction of the presheaf represented by $(Y,-\xi)$ on the wide subcategory of $\Span^\sL((\dSch_S)_{/K})$ whose morphisms have a finite quasi-smooth left leg, and the right-lax symmetric monoidal structure on the functor $(Y,\xi)\mapsto \h^\fr_S(Y,\xi)$ is induced by the symmetric monoidal functor $\gamma$.

We will construct $\Span^\sL((\dSch_S)_{/K})$ using the formalism of labeling functors developed in \cite[Section 4]{EHKSY1}, although we need a minor generalization of that formalism allowing objects to be labeled as well. The key is to generalize the notion of Segal presheaf as follows:
 
\begin{defn}\label{defn:Segal-presheaf}
	Let $X_\bullet$ be a simplicial $\infty$-category. A \emph{Segal presheaf} on $X_\bullet$ is a functor
\[
F\colon \int_{\Delta^\op} X_\bullet^\op \to \Spc
\]
such that, for every $n\geq 0$ and $\sigma \in X_n$, the map 
\[
F(\sigma) \rightarrow F(\rho_1^*(\sigma)) \times_{F(\sigma_1)} \cdots \times_{F(\sigma_{n-1})} F(\rho_n^*(\sigma))
\]
 induced by the Segal maps $\rho_i\colon [1] \rightarrow [n]$ is an equivalence.
 It is called \emph{reduced} if $F|X_0^\op$ is contractible, and it is called \emph{complete} if, for every $v\in X_0$, the map $F(v) \to F(\iota^*(v))$ induced by the unique map $\iota\colon [1]\to [0]$ is an equivalence (equivalently, if $F$ sends cocartesian edges over $\Delta_\mathrm{surj}^\op$ to equivalences).
\end{defn}

Here, $\int_\sC F\to \sC$ denotes the coCartesian fibration classified by a functor $F\colon \sC\to\InftyCat$.
Note that a Segal presheaf in the sense of \cite[Definition 4.1.14]{EHKSY1} is exactly a reduced Segal presheaf in the sense of Definition~\ref{defn:Segal-presheaf}. 

Let $C$ be an $\infty$-category and $M$ and $N$ two classes of morphisms in $C$ that are closed under composition and under pullback along one another. Recall from \cite[4.1.11]{EHKSY1} that we can associate to such a triple $(C,M,N)$ a simplicial $\infty$-category $\Phi_\bullet(C,M,N)\subset \Fun((\Delta^{\bullet})^\op, C)$, where $\Phi_n(C,M,N)$ is the subcategory of $\Fun((\Delta^n)^\op,C)$ whose objects are the functors sending every edge of $(\Delta^n)^\op$ to $M$ and whose morphisms are the Cartesian transformations with components in $N$. We now repeat \cite[Definition 4.1.15]{EHKSY1} with our generalized notion of Segal presheaf:

\begin{defn}\label{defn:labeling-functor}
	Let $(C,M,N)$ be a triple. A \emph{labeling functor} on $(C,M,N)$ is a Segal presheaf on $\Phi_\bullet(C,M,N)$.
\end{defn}

Given a triple with labeling functor $(C,M,N;F)$ and $n\geq 0$, we define the space $\Span^F_n(C,M,N)$ by applying the Grothendieck construction to the functor
\[
\Span_n(C,M,N)  \to \Spc
\]
sending an $n$-span $\sigma\colon (\Sigma_n,\Sigma_n^L,\Sigma_n^R) \to (C,M,N)$ to the limit of the composite
\[
\int_{\Delta^\op}\Phi_\bullet(\Sigma_n,\Sigma_n^L,\Sigma_n^R)^\op \xrightarrow{\sigma} \int_{\Delta^\op}\Phi_\bullet(C,M,N)^\op \xrightarrow{F}\Spc.
\]
As in \cite[4.1.18]{EHKSY1}, we obtain a functor
\[
\Lab\Trip \to \Fun(\Delta^\op,\Spc), \quad (C,M,N;F) \mapsto \Span^F_\bullet(C,M,N).
\]
Let us unpack the simplicial space $\Span^F_\bullet(C,M,N)$ in degrees $\leq 1$:
\begin{itemize}
	\item $\Span^F_0(C,M,N)$ is the space of pairs $(X,\alpha)$ where $X\in C$ and $\alpha\in F(X)$;
	\item $\Span^F_1(C,M,N)$ is the space of spans
\[
\begin{tikzcd}
   & Z \ar[swap]{ld}{f} \ar{rd}{g} & \\
  X &   & Y
\end{tikzcd}
\]
where $f\in M$ and $g\in N$, together with $\phi\in F(f)$, $\beta\in F(Y)$, and an equivalence $\delta_1^*(\phi)\simeq g^*(\beta)$;
\item the degeneracy map $s_0\colon\Span^F_0(C,M,N) \to \Span^F_1(C,M,N)$ sends $(X,\alpha)$ to the identity span on $X$ with $\phi=\iota^*(\alpha)$, $\beta=\alpha$, and $\delta_1^*\iota^*(\alpha)\simeq \alpha$ the canonical equivalence;
\item the face map $d_0\colon \Span^F_1(C,M,N) \to \Span^F_0(C,M,N)$ sends a span as above to $(X,\delta_0^*(\phi))$;
\item the face map $d_1\colon \Span^F_1(C,M,N) \to \Span^F_0(C,M,N)$ sends a span as above to $(Y,\beta)$.
\end{itemize}

\begin{prop}\label{prop:Corr^F}
	Let $(C,M,N;F)$ be a triple with labeling functor. Then $\Span^F_\bullet(C,M,N)$ is a Segal space. If $F$ is complete, then $\Span^F_\bullet(C,M,N)$ is a complete Segal space.
\end{prop}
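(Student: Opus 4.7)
The plan is to exploit the canonical projection $\pi_\bullet\colon \Span^F_\bullet(C,M,N) \to \Span_\bullet(C,M,N)$, which is levelwise the left fibration classified by the functor $G_n\colon \Span_n(C,M,N) \to \Spc$ sending an $n$-span $\sigma$ to the limit of $F$ over $\int_{\Delta^\op}\Phi_\bullet(\Sigma_n,\Sigma_n^L,\Sigma_n^R)^\op$. The untwisted simplicial space $\Span_\bullet(C,M,N)$ is known to be a complete Segal space by the results of \cite[Section~4.1]{EHKSY1}, and the plan is to upgrade these properties along the fibers of $\pi_\bullet$.

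For the Segal condition, for each $n\ge 2$ I consider the commutative square
\[
\begin{tikzcd}
\Span^F_n \ar{r} \ar{d}[swap]{\pi_n} & \Span^F_1 \times_{\Span^F_0} \cdots \times_{\Span^F_0} \Span^F_1 \ar{d} \\
\Span_n \ar{r}{\sim} & \Span_1 \times_{\Span_0} \cdots \times_{\Span_0} \Span_1\rlap.
\end{tikzcd}
\]
Both vertical maps are left fibrations and the bottom map is an equivalence, so it suffices to compare fibers pointwise over $\Span_n$. At an $n$-span $\sigma$ with Segal decomposition into $1$-spans $\sigma_1,\dots,\sigma_n$ and intermediate objects $X_1,\dots,X_{n-1}$, the fiber on the left is $G_n(\sigma)$ while the fiber on the right is $G_1(\sigma_1) \times_{G_0(X_1)} \cdots \times_{G_0(X_{n-1})} G_1(\sigma_n)$. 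Unpacking the limits defining each $G_k$ and reorganizing along the Segal decomposition of the simplicial $\infty$-category $\Phi_\bullet(\Sigma_n,\Sigma_n^L,\Sigma_n^R)$, this fiber-level comparison reduces to the Segal condition on $F$ applied to the image of $\sigma$ in $\Phi_n(C,M,N)$, which holds by hypothesis.

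For completeness, assume $F$ is complete. By completeness of $\Span_\bullet(C,M,N)$, an invertible edge of this Segal space is essentially a degenerate edge $s_0(X)$ for some $X\in C$, so it suffices to check fiberwise invertibility over such identity spans. From the explicit description of $\Span^F_1$ given before the proposition, the fiber of $\pi_1$ over the identity span on $X$ is $F(\id_X)$, and under this identification $s_0\colon \Span^F_0 \to \Span^F_1$ restricts fiberwise to $\iota^*\colon F(X)\to F(\id_X)$, which is an equivalence by completeness of $F$. Hence $s_0$ identifies $\Span^F_0$ with the subspace of invertible edges in $\Span^F_1$, completing the proof.

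The main obstacle will be the careful identification at the fiber level of $G_n(\sigma)$ with the iterated fiber product $G_1(\sigma_1) \times_{G_0(X_1)} \cdots \times_{G_0(X_{n-1})} G_1(\sigma_n)$. This requires analyzing how the Segal decomposition of $\sigma$ induces a compatible decomposition of the simplicial $\infty$-category $\Phi_\bullet(\Sigma_n,\Sigma_n^L,\Sigma_n^R)$ via the Segal maps $\rho_i\colon [1] \to [n]$, so that the Segal presheaf condition on $F$ (Definition~\ref{defn:Segal-presheaf}) can be applied to yield the fiber-product decomposition. Once this is in place, the remainder is a routine combination of the untwisted Segal and completeness results for $\Span_\bullet(C,M,N)$ with the defining properties of $F$.
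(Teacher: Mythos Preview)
Your proposal is correct and follows essentially the same approach as the paper. The paper's own proof simply refers to \cite[Theorem 4.1.23]{EHKSY1} for the Segal condition (whose proof is precisely the fiberwise comparison over $\Span_\bullet(C,M,N)$ that you outline) and declares the completeness statement ``obvious from the description of $1$-simplices above,'' which is exactly your argument that $s_0$ induces the equivalence $\iota^*\colon F(X)\to F(\id_X)$ on fibers.
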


\begin{proof}
	The proof of the first statement is exactly the same as the proof of \cite[Theorem 4.1.23]{EHKSY1}. The second statement is obvious from the description of $1$-simplices above.
\end{proof}

\begin{rem}
	One can show that the above construction subsumes Haugseng's $\infty$-categories of spans with local systems \cite[Definition 6.8]{haugseng}. Indeed, to a presheaf of (complete) Segal spaces $\sF$ on an $\infty$-category $C$ with pullbacks, one can associate a (complete) labeling functor $F$ on $C$ such that $\Span^F_\bullet(C)$ is the Segal space of spans in $C$ with local systems valued in $\sF$.
\end{rem}

Let us denote by ``$\perf$'' the class of morphisms of derived schemes with perfect cotangent complex.
We now seek to construct a labeling functor $\sK$ on the pair $(\dSch,\perf)$, such that the restriction of $\sK$ to $\Phi_0(\dSch,\perf)^\op=\dSch^\op$ is the K-theory presheaf $K$. Moreover, for an $n$-simplex
\[
\sigma=(X_0 \leftarrow X_1 \leftarrow\dotsb\leftarrow X_n)
\]
in $\Phi_n(\dSch,\perf)$, the first vertex map $\sK(\sigma) \to K(X_0)$ should be an equivalence (in particular, $\sK$ should be complete), and for $0\leq i\leq n$ the $i$th vertex map $K(X_0)\simeq \sK(\sigma)\to K(X_i)$ should be $\xi\mapsto f_{i}^*(\xi)+\sL_{f_{i}}$ where $f_{i}\colon X_i\to X_0$.

Let $p\colon X\to S$ be a coCartesian fibration classified by a functor $S\to \InftyCat^{\mathrm{pt,rex}}$. In \cite[Definition 4.2.8]{EHKSY1} we introduced the $\infty$-category $\Gap_S(n,X)$ of \emph{relative $n$-gapped objects} of $X$, which is equivalent to the full subcategory of $\Fun(\Delta^n,X)$ spanned by the functors sending $0$ to a $p$-relative zero object (i.e., a zero object in its fiber). 
Let
\[
\Filt_S(n,X) = \Fun(\Delta^n,X).
\]
The simplicial $\infty$-category $\Filt_S(\bullet,X)$ classifies a coCartesian fibration $\Filt_S(X) \to \Delta^\op$. Let $\Gap_S(X)\subset \Filt_S(X)$ be the full subcategory on those functors $\Delta^n\to X$ sending $0$ to a $p$-relative zero object. Then $\Gap_S(X) \to \Delta^\op$ is a coCartesian fibration classified by $\Gap_S(\bullet,X)$. Moreover, the inclusion $\Gap_S(X)\subset \Filt_S(X)$ has a left adjoint preserving coCartesian edges. By straightening, it gives rise to a morphism of simplicial $\infty$-categories
\[
\Filt_S(\bullet,X) \to \Gap_S(\bullet,X)
\]
over $\Fun(\Delta^\bullet,S)$, sending $x_0\to x_1\to\dotsb \to x_n$ to the relative $n$-gapped object $0_{p(x_0)}\to x_1/x_0\to\dotsb \to x_n/x_0$, where $x_i/x_0$ denotes a $p$-relative cofiber. 

\begin{rem}
	When $S=*$, the simplicial map $\Filt_S(\bullet,X)\to \Gap_S(\bullet,X)$ is the one constructed by Barwick in \cite[Corollary 5.20.1]{BarwickKtheory}. However, for general $S$, our notion of ``relative'' is essentially different.
\end{rem}

We specialize to the coCartesian fibration $p\colon \Perf \to \dSch^\op$. For $\sigma\colon \Delta^n\to \dSch^\op$, let $\Gap_\sigma(\Perf)$ be the fiber of the coCartesian fibration $p_*\colon \Gap_{\dSch^\op}(n,\Perf) \to \Fun(\Delta^n,\dSch^\op)$ over $\sigma$, and let $\Filt_\sigma(\Perf)$ be the fiber of the coCartesian fibration $p_*\colon\Filt_{\dSch^\op}(n,\Perf) \to \Fun(\Delta^n,\dSch^\op)$ over $\sigma$. Then, by the additivity property of K-theory, we have canonical equivalences
\begin{gather*}
	K(\Gap_\sigma(\Perf)) \simeq K(\sigma_1)\times \dotsb\times K(\sigma_n),\\
	K(\Filt_\sigma(\Perf)) \simeq K(\sigma_0) \times K(\sigma_1) \times \dotsb \times K(\sigma_n),
\end{gather*}
such that the map $\Filt_\sigma(\Perf) \to \Gap_\sigma(\Perf)$ induces the projection onto the last $n$ factors (cf.\ \cite[4.2.20]{EHKSY1}).

As in \cite[4.2.22]{EHKSY1}, we can take K-theory fiberwise to obtain a morphism of simplicial coCartesian fibrations in spaces
\[
 \begin{tikzcd}
 	 K\Filt_{\dSch^\op}(\bullet,\Perf) \ar{rr} \ar{dr} &[-4em] &[-4em]  K\Gap_{\dSch^\op}(\bullet,\Perf) \ar{dl} \\
	 & \Fun(\Delta^\bullet,\dSch^\op)\rlap. & 
 \end{tikzcd}
\]
Moreover, the horizontal arrow is itself a simplicial coCartesian fibration in spaces, by \cite[Lemma 1.4.14]{LurieGoodwillie}.
In \cite[4.2.17]{EHKSY1}, we packaged the cotangent complex into a section
\[
\begin{tikzcd}
	& K\Gap_{\dSch^\op}(\bullet,\Perf) \ar{d} \\
	\Phi_\bullet(\dSch,\perf)^\op \ar[hook]{r} \ar[dashed]{ur}{\sL} & \Fun(\Delta^\bullet,\dSch^\op)\rlap.
\end{tikzcd}
\]
We now form the Cartesian square
\begin{equation}\label{eqn:labeling}
\begin{tikzcd}
	P_\bullet \ar{r} \ar{d} & K\Filt_{\dSch^\op}(\bullet,\Perf) \ar{d} \\
	\Phi_{\bullet}(\dSch,\perf)^\op \ar{r}{\sL} & K\Gap_{\dSch^\op}(\bullet,\Perf)\rlap,
\end{tikzcd}
\end{equation}
where the vertical arrows are simplicial coCartesian fibrations in spaces.
By \cite[Lemma 1.4.14]{LurieGoodwillie}, the left vertical map induces a coCartesian fibration in spaces
\[
\int_{\Delta^\op} P_\bullet \to \int_{\Delta^\op} \Phi_{\bullet}(\dSch,\perf)^\op,
\]
which is classified by a functor
\[
\sK\colon \int_{\Delta^\op} \Phi_{\bullet}(\dSch,\perf)^\op \to \Spc.
\]
It is not difficult to show that $\sK$ is a labeling functor on $(\dSch,\perf)$ with the desired properties (cf.\ \cite[Proposition 4.2.31]{EHKSY1}).

\begin{defn}
	Let $S$ be a derived scheme.
	The $\infty$-category $\Span^\sL((\dSch_S)_{/K})$ is the complete Segal space $\Span^\sK_\bullet(\dSch_S,\perf)$.
\end{defn}

The labeling functor
\[
\fr\colon \int_{\Delta^\op} \Phi_{\bullet}(\dSch,\perf)^\op \to \Spc
\]
constructed in \cite[4.2.29]{EHKSY1} is obtained from~\eqref{eqn:labeling} by pulling back $\sL$ further along the zero section $\Phi_{\bullet}(\dSch,\perf)^\op \to K\Filt_{\dSch^\op}(\bullet,\Perf)$, so there is a canonical natural transformation $\fr\to\sK$ of labeling functors on $(\dSch,\perf)$, inducing a functor
\[
\Span^\fr(\dSch_S) \to \Span^\sL((\dSch_S)_{/K}), \quad X\mapsto (X,0).
\]
The functor
\[
\gamma\colon (\dSch_{S})_{/K} \to \Span^\sL((\dSch_S)_{/K})
\]
is simply the inclusion of the wide subcategory on those spans whose left leg is an equivalence.

Finally, we can equip $\Span^\sL((\dSch_S)_{/K})$ with a symmetric monoidal structure, where
\[
(X,\xi) \otimes (Y,\eta) = (X\times_SY, \pi_X^*(\xi)+\pi_Y^*(\eta)).
\]
To that end we must promote $\sK$ to a symmetric monoidal labeling functor \cite[Definition 4.3.5]{EHKSY1}. This is done exactly as in \cite[\sectsign 4.3]{EHKSY1}, starting with the functor $\Fin_*\to \coCart$ sending $I_+$ to the coCartesian fibration $\Perf_S^I \to (\dSch_S^\op)^I$.


\bibliographystyle{alphamod}

\let\mathbb=\mathbf

{\small
\newcommand{\etalchar}[1]{$^{#1}$}
\providecommand{\bysame}{\leavevmode\hbox to3em{\hrulefill}\thinspace}

}

\parskip 0pt

\end{document}